\documentclass[11pt]{amsart}

\usepackage[T1]{fontenc}
\usepackage[utf8]{inputenc}
\usepackage[english]{babel}
\usepackage{amscd,amsmath,amsthm,amssymb}
\usepackage{mathtools}
\usepackage{mathrsfs}
\usepackage{comment}
\usepackage{xcolor}
\usepackage{array}
\usepackage{caption}
\usepackage{mathabx}
\usepackage{standalone}
\usepackage{scalerel}
\usepackage{csquotes}

\usepackage{accents} 

\usepackage{enumitem}
\setlist[enumerate]{label=\textnormal{(\arabic*)}}

\definecolor{cadmiumgreen}{rgb}{0.0, 0.42, 0.24}
\usepackage[
  colorlinks,
  citecolor=cadmiumgreen,
  pdfstartview ={FitV},
]{hyperref}

\usepackage[
  alphabetic,
  msc-links,
  nobysame,
  lite,
]{amsrefs}

\usepackage{tikz, tikz-cd}
\usetikzlibrary{calc, backgrounds, quotes, arrows.meta}

\usepackage[left=3cm,top=3.5cm,right=3cm]{geometry}
\def\roundface#1#2[#3]{\fill[#3] (I) -- (I#1) .. controls ($.3*(I#1)+.7*(I#1#1#2)$) and ($.3*(I#1#2)+.7*(I#1#1#2)$) .. (I#1#2) .. controls ($.3*(I#1#2)+.7*(I#2#1#2)$) and ($.3*(I#2)+.7*(I#2#1#2)$) .. (I#2) -- cycle}

\newtheorem{thm}{Theorem}[section]

\newtheorem{lemma}[thm]{Lemma}

\newtheorem{prop}[thm]{Proposition}

\newtheorem{cor}[thm]{Corollary}

\newtheorem{conj}[thm]{Conjecture}

\newtheorem*{theoremA}{Theorem A}
\newtheorem*{theoremB}{Theorem B}
\newtheorem*{theoremC}{Theorem C}

\newtheorem*{theoremD'}{Theorem D'}

\theoremstyle{definition}

\newtheorem{definition}[thm]{Definition}

\newtheorem{remm}[thm]{Remark}
\newenvironment{remark}
  {\pushQED{\qed}\renewcommand{\qedsymbol}{$\diamond$}\remm}
  {\popQED\endremm}
\newtheorem{exx}[thm]{Example}
\newenvironment{example}
  {\pushQED{\qed}\renewcommand{\qedsymbol}{$\diamond$}\exx}
  {\popQED\endexx}

\numberwithin{equation}{section}

\newcommand{\Q}{\mathbb{Q}}
\newcommand{\N}{\mathbb N}
\newcommand{\R}{\ssub{\mathbb{R}}}
\newcommand{\Z}{\ssub{\mathbb{Z}}}

\renewcommand{\emptyset}{\varnothing}

\let\oldchi\chi
  \newcommand{\raisechi}[2]{\raisebox{.4ex}{$#1#2$}}
  \renewcommand{\chi}{{\mathpalette\raisechi\oldchi}}

\makeatletter
\let\oldsum\sum
\renewcommand{\sum}{\@ifnextchar_\@mysum\oldsum}
\def\@mysum_#1{\oldsum_{\substack{#1}}}
\let\oldbigoplus\bigoplus
\renewcommand{\bigoplus}{\@ifnextchar_\@mybigoplus\oldbigoplus}
\def\@mybigoplus_#1{\oldbigoplus_{\substack{#1}}}
\let\oldprod\prod
\renewcommand{\prod}{\@ifnextchar_\@myprod\oldprod}
\def\@myprod_#1{\oldprod_{\substack{#1}}}
\makeatother

\let\oldbigwedge\bigwedge
\renewcommand{\bigwedge}{{\textstyle\oldbigwedge\!}}

\DeclareMathOperator{\cl}{cl} 
\DeclareMathOperator{\ord}{ord} 

\newcommand{\G}{\mathbb G} 
\renewcommand{\P}{\mathbb P} 
\newcommand{\TP}{\mathbb{TP}} 

\renewcommand{\div}{\mathrm{div}} 

\newcommand{\CC}{\mathcal C} 

\newcommand{\shiftcomp}[2][0]{{}\mkern#1mu\overline{\mkern-#1mu#2}}
\newcommand{\comp}[1]{\ifx#1N \shiftcomp[3]{#1}\else\ifx#1X \shiftcomp[3]{#1}\else\ifx#1Z \shiftcomp[3]{#1}\else\ifx#1F \shiftcomp[3]{#1} \else \shiftcomp{#1}\fi\fi\fi\fi} 

\newcommand{\maxideal}{\mathfrak m_{K}}

\newcommand{\Trop}{\mathrm{Trop}}

\newcommand{\Spec}{\operatorname{Spec}}

\newcommand{\MA}{\ssub{\mathrm{MA}}!}

\newcommand{\an}{{\mathrm{an}}}

\newcommand{\NA}{{\mathrm{NA}}}

\newcommand{\env}{\ssub[-2pt]{\mathrm{P}}!} 

\NewDocumentCommand{\ssub}{O{0pt} O{.9} m t! e{_^}}{
  #3%
  \IfValueT{#5}{
    \IfBooleanTF{#4}{\sb{\hspace{#1}\scaleobj{#2}{#5}}}{\sb{#5}}
  }
  \IfValueT{#6}{\sp{\scaleobj{#2}{#6}}}
}

\ExplSyntaxOn
\NewDocumentCommand{\tossub}{o o m}{
  \expandafter\let\csname old\cs_to_str:N #3\endcsname#3
  \renewcommand#3%
  {\ssub[#1][#2]{\csname old\cs_to_str:N #3\endcsname}}
}
\ExplSyntaxOff

\tossub\varpi
\tossub\deg
\tossub\omega
\tossub\nu
\tossub\rho

\renewcommand{\setminus}{\smallsetminus}

\newcommand{\be}{\begin{equation}}
\newcommand{\ee}{\end{equation}}
\newcommand{\bes}{\begin{equation*}}
\newcommand{\ees}{\end{equation*}}
\newcommand{\ben}{\begin{eqnarray}}
\newcommand{\een}{\end{eqnarray}}
\newcommand{\bens}{\begin{eqnarray*}}
\newcommand{\eens}{\end{eqnarray*}}

\newcommand{\Supp}{\mathrm{Supp}}

\newcommand{\MongeA}{Monge--Ampère }

\newcommand{\PSH}{\mathrm{PSH}}

\newcommand{\Conv}{\mathrm{Conv}}

\newcommand{\LConv}{\mathrm{LConv}}

\newcommand{\PL}{\mathrm{PL}}
\newcommand{\Aff}{\mathrm{Aff}}

\newcommand{\interior}{\mathrm{int}}

\newcommand{\kahler}{K\"ahler }

\newcommand{\MAo}{\MA_{\omega}}
\newcommand{\envomega}{\env_{\omega}}

\newcommand{\BA}{{\mathbb {A}}} 
\newcommand{\BC}{{\mathbb {C}}}

\newcommand{\BK}{{\mathbb {K}}}

 \newcommand{\BR}{{\mathbb {R}}}
 \newcommand{\BT}{{\mathbb {T}}}

 \newcommand{\CH}{{\mathcal {H}}}
\newcommand{\CI}{{\mathcal {I}}} 
\newcommand{\CK}{{\mathcal {K}}} 
\newcommand{\CM}{{\mathcal {M}}} 
\newcommand{\CO}{{\mathcal {O}}} 
 
 \newcommand{\CT}{{\mathcal {T}}}
\newcommand{\CU}{{\mathcal {U}}} \newcommand{\CV}{{\mathcal {V}}}
\newcommand{\CW}{{\mathcal {W}}} 
\newcommand{\CX}{{\mathcal {X}}}

 \newcommand{\CZ}{{\mathcal {Z}}}

\newcommand{\fm}{{\mathfrak{m}}} 
 \newcommand{\fp}{{\mathfrak{p}}}
\newcommand{\fq}{{\mathfrak{q}}} \newcommand{\fr}{{\mathfrak{r}}}
 \newcommand{\ft}{{\mathfrak{t}}}

\newcommand{\fA}{{\mathfrak{A}}} 
 \newcommand{\fD}{{\mathfrak{D}}}

 \newcommand{\fL}{{\mathfrak{L}}}

 \newcommand{\fV}{{\mathfrak{V}}}
 \newcommand{\fX}{{\mathfrak{X}}}

\usepackage[disable]{todonotes} 

\newcommand{\valfield}{K}
\newcommand{\valring}{K^{\circ}}
\newcommand{\resfield}{\kappa}

\newcommand{\lnote}[1]{\todo[color=cyan!50]{{\scriptsize Internal Notes}: #1}}

\newcommand{\dm}{\mathrm{d}}
\newcommand{\dmb}{\mathrm{d}^c}

\DeclareMathOperator{\reg}{reg}
\DeclareMathOperator{\sing}{sing}
\DeclareMathOperator{\val}{val}
\DeclareMathOperator{\TConv}{TConv}

\newcommand{\GPSH}{\mathrm{PSH}}
\newcommand{\LPSH}{\mathrm{LPSH}}
\newcommand{\WPSH}{\mathrm{WPSH}}

\newcommand{\GPSHo}{\mathrm{PSH}_{\omega}}
\newcommand{\LPSHo}{\mathrm{LPSH}_{\omega}}
\newcommand{\WPSHo}{\mathrm{WPSH}_{\omega}}

\newcommand{\PSHsm}{\mathrm{PSH}^{\mathrm{sm}}}
\newcommand{\LPSHsmo}{\mathrm{LPSH}^{\mathrm{sm}}_{\omega}}

\newcommand{\PSHb}{\PSH \cap L^{\infty}_{\mathrm{loc}}}
\newcommand{\GPSHob}{\GPSHo \cap L^{\infty}}
\newcommand{\LPSHob}{\LPSHo \cap  L^{\infty}}
\newcommand{\PSHlog}{\PSH^{\mathrm{log}}}
\newcommand{\PSHlogreg}{\PSH^{\mathrm{log}}_{\mathrm{reg}}}
\newcommand{\WPSHlog}{\WPSH^{\mathrm{log}}}
\newcommand{\FS}{\mathrm{FS}}

\newcommand{\nef}{\CZ^{1,1}_{\ge 0}}
\newcommand{\amp}{\CZ^{1,1}_{> 0}}
\newcommand{\PLforms}{\CZ^{1,1}}
\newcommand{\Amp}{\mathrm{Amp}}
\newcommand{\Nef}{\mathrm{Nef}}
\newcommand{\Pic}{\mathrm{Pic}}

\newcommand{\Radonp}[1]{\mathrm{Radon}_+(#1)} 

\newcommand{\mult}{\mathrm{mult}}

\begin{document}
\title{On the local theory of non-archimedean psh functions}
\author{Lyuhui Wu}
\address{IMJ-PRG, Sorbonne Universit\'e, Paris, France}
\email{lyuhui.wu@imj-prg.fr}
\date{September 22, 2026}
\maketitle

\begin{abstract} 
The purposes of this paper are two-folded. First, we define a class of local psh functions on Berkovich spaces, and study their properties. In particular, we show that this class of psh functions is equivalent on the one hand to the one defined by Chambert-Loir--Ducros, and is equivalent on the other hand to the class of global psh functions of Boucksom--Favre--Jonsson on projective varieties satisfying the envelope property. Second, making use of our local theory of non-archimedean psh functions, we prove the envelope conjecture for projective normal varieties over a discretely or trivially valued non-archimedean field of residue characteristic $0$. 
\end{abstract}

\section{Introduction}
The study of psh (plurisubharmonic) functions on complex analytic spaces plays important roles in complex and \kahler geometry. In the non-archimedean setting, a fruitful theory of psh functions (metrics) on projective varieties  has been developed in the last two decades (\cites{BFJ15, BE21, BJ22}). 

Let $K$ be a complete non-archimedean field with valuation ring $\valring$. Let $X$ be a projective variety over $K$. For a $\Q$-line bundle $L$  on $X$, a model $(\fX, \fL)$ of $(X,L)$ consists of a scheme $\fX$ which is flat and proper over $\Spec \valring$, a $\Q$-line bundle $\fL$ over $\fX$, together with an identification $(\fX_{K}, \fL_K)\simeq (X,L)$. Every model defined a model metric $|\cdot |$ on $L$. A function $f$   on $X^{\an}$ is \emph{piecewise linear (PL)} if there exists two model metrics on an ample line bundle $L$ such that $|\cdot|_{\omega_1} = |\cdot|_{\omega_2} \cdot e^{-f}$. 

Following the idea of \cite{Zha95}, a model metric defined by the model $(\fX, \fL)$ is called semipositive if $\fL$ is relatively nef on $\fX$. A closed semipositive $(1,1)$-form on $X^{\an}$ is an isomorphism class of semipositive model metrics on $\Q$-nef line bundles  on $X$. Fix a closed semipositive $(1,1)$-form $\omega$.  A PL function $\varphi$ is called $\omega$-psh if $|\cdot|_{\omega} \cdot e^{-\varphi}$ is a semipositive model metric. Following \cite{BJ22}, when $L$ is ample, a function $\varphi \colon X^{\an} \to \R \cup \{-\infty\}$ is called  $\omega$-psh ($\varphi \in \GPSHo(X^\an)$) if $\varphi$ is generically finite (\emph{i.e.} $\varphi$ is not constantly equal to $-\infty$ on each connected component of $X^{\an}$) and $\varphi$ can be written as the pointwise limit of a decreasing sequence of $\omega$-psh PL functions. When $L$ is nef, $\varphi \in \GPSHo(X^{\an})$ if $\varphi \in \GPSH_{\omega+\epsilon \theta}(X^{\an})$ for each $\epsilon \in \Q_{>0}$, where $\theta$ is a  closed semipositive $(1,1)$-form satisfying $c_1(\theta) \in \Amp(X)$.

We say that $\GPSHo(X^{\an})$ satisfies the envelope property if the usc upper envelope of any bounded above family of $\omega$-psh functions remains $\omega$-psh on $X^{\an}$. Equivalently, when $L$ is ample, $\GPSHo(X^{\an})$ satisfies the envelope property if $\envomega(f)$ is continuous for any $f \in C^0(X^{\an})$ (\cite[Lemma 7.30]{BE21}, \cite[Lemma 5.17]{BJ22}). Here, the $\omega$-psh envelope $\envomega(f)$ is defined as the pointwise supreme of the family of $\omega$-psh functions $\varphi$ satisfying $\varphi \le f$.  The envelope property is conjectured to hold on any  projective normal variety $X$. It has been established when (a) $X$ is a smooth curve (\cite{Thu05}); (b) $X$ is smooth, $K$ is discretely or trivially valued, and $K$ is of equicharacteristic $0$ (\cites{BFJ16, BJ22}); (c) $X$ is smooth, $K$ is discretely or trivially valued, and $\dim X \le 2$ (\cites{GJKM19, FGK25}).

The envelope property is plays a  central role in non-archimedean pluripotential theory. In particular, it is essential in a variational proof of a non-archimedean analogue of Calabi--Yau theorem (\cite{BFJ15}), and in the non-archimedean approach  to Yau--Tian--Donaldson conjecture (see for example \cite{BBJ21}). 

In this paper, we establish the envelope property in the following setting.
\begin{theoremA} [Theorem \ref{thm:envelope_conjecture_equichar_zero}] \label{mainthm:envelope_conjecture_equichar_zero} 
  Let $K$ be a discretely valued or trivially valued complete non-archimedean field of equicharacteristic $0$. Let $X$ be a projective normal variety over $K$, and let $\omega$ be a closed semipositive $(1,1)$-form on $X$. Then, $\GPSH_{\omega}(X^{\an})$ satisfies the envelope property. 
\end{theoremA}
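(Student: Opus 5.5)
The plan is to reduce Theorem A to the smooth case (b), known by \cites{BFJ16, BJ22}, by passing to a resolution of singularities. The bridge is the local theory of psh functions: local $\omega$-psh functions coincide with the Chambert-Loir--Ducros class, and, under the envelope property, also with the global class $\GPSHo(X^{\an})$. I would first reduce to the case where $c_1(\omega)$ is ample. Indeed, for nef $\omega$, membership in $\GPSHo$ is defined through $\omega+\epsilon\theta$. A usc upper envelope of a bounded-above family in $\GPSHo$ lies in $\GPSH_{\omega+\epsilon\theta}$ for every $\epsilon$, provided the ample case is known. So it suffices to show that $\envomega(f)$ is continuous for all $f\in C^0(X^{\an})$ with $\omega$ ample; by the equivalence with \cite[Lemma 5.17]{BJ22}, it is even enough to show that the usc regularization of any such envelope is $\omega$-psh.

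The main step is then the following. Take a resolution $\pi\colon Y\to X$, which exists in equicharacteristic $0$ and, in the discretely or trivially valued case, can be chosen projective and an isomorphism over the regular locus. Since $\pi^*\omega$ is only semipositive, the envelope property on $Y$ for the nef form $\pi^*\omega$ follows from case (b), combined with the nef definition above. Given a bounded-above family $(\varphi_i)$ in $\GPSHo(X^{\an})$, its pullback is a family in $\GPSH_{\pi^*\omega}(Y^{\an})$. Hence $\psi := (\sup_i \pi^*\varphi_i)^*$ is $\pi^*\omega$-psh on $Y^{\an}$. Since $\pi$ is an isomorphism over a dense Zariski open set, $\psi$ agrees off $\pi^{-1}(X_{\sing})^{\an}$ with the pullback of $\varphi := (\sup_i\varphi_i)^*$ restricted to $X_{\reg}$.

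The remaining problem, and the one I expect to be the main obstacle, is descending $\psi$ to an $\omega$-psh function on $X^{\an}$. First, $\psi$ is constant on the fibres of $\pi^{\an}$. For this I would use normality via Zariski's main theorem: fibres of $\pi$ are connected, and a psh function on a connected proper fibre on which the form is trivial must be constant, by the maximum principle for local psh functions. The descended function $\varphi'$ is then usc because $\pi^{\an}$ is proper and surjective. The key point is that $\varphi'$ is locally $\omega$-psh in the local (Chambert-Loir--Ducros) sense near singular points. My first attempt would be an extension argument across the closed analytic subset $X_{\sing}^{\an}$: local psh functions bounded above on the complement of a Zariski closed subset of codimension $\ge 2$ on a normal space extend, using normality and a Riemann-type extension theorem established in the local theory. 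Once this is in place, the equivalence of local and global classes shows that $\varphi'$ lies in $\GPSHo(X^{\an})$ and equals $\varphi$, since both are usc and agree on a dense set with no mass on $X_{\sing}^{\an}$.

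Finally, I would verify generic finiteness and the requirement that $\varphi'$ be a decreasing limit of $\omega$-psh PL functions. Both should follow from the local-to-global equivalence. The delicate part is that this equivalence is itself stated under the envelope property. To avoid circularity, I would apply it on $Y$, where the envelope property holds, and descend PL approximants: pushing forward by $\pi$ the PL envelopes $\env_{\pi^*\omega}(\pi^*f_j)$ for continuous $f_j\downarrow\varphi$ gives continuous $\omega$-psh functions decreasing to $\varphi$.
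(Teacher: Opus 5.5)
Your plan has a genuine gap at exactly the step you flag as the main obstacle. No ``Riemann-type extension theorem established in the local theory'' is available to settle it, and that step is in fact the main theorem of the paper. By construction your descended function satisfies $\varphi'\circ\pi^{\an}=\psi\in\GPSH_{\pi^{*}\omega}$, so $\varphi'$ is \emph{weakly} $\omega$-psh. Hence the claim that $\varphi'$ is locally $\omega$-psh near $X_{\sing}^{\an}$ is precisely $\WPSHo(X^{\an})\subset\GPSHo(X^{\an})$, the non-archimedean Fornaess--Narasimhan theorem (Theorem \ref{thm:global_NA_FN_theorem}).

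The only extension result in the local theory, Theorem \ref{thm:pshlogreg_extension}, is for \emph{smooth} affine varieties. Its proof uses the envelope property of a smooth projective compactification, which is exactly what is missing on singular $X$. The obstruction is not codimension but regularizability. Psh means locally a decreasing limit of tropically convex PL functions, and nothing on $X_{\reg}$ or on the resolution produces such approximants near a singular point: approximants built on $Y$ are not constant on the fibres of $\pi^{\an}$, so they do not descend. Even over $\BC$, descending from a resolution only yields a weakly psh function, and upgrading it is the content of Fornaess--Narasimhan.

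Your last paragraph inherits the same circularity. $\mathrm{P}_{\pi^{*}\omega}(\pi^{*}f_j)$ descends to a continuous (not PL) weakly $\omega$-psh function on $X^{\an}$, and declaring it $\omega$-psh is the same unproved claim. Likewise, Theorem \ref{thm:GPSHo_equals_LPSHo} cannot be used on $X$, since it presupposes the envelope property there.

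To close the gap you need an actual proof of Theorem \ref{thm:global_NA_FN_theorem}, and none of the ingredients of the paper's proof appear in your plan. The paper follows the complex proof. It reduces to affine charts and shows that, for $u$ weakly psh of logarithmic growth on affine $Y$, the domain $D=\{\log|w|+u<0\}\subset(Y\times\BA^1)^{\an}$ is Runge. It then recovers $u=-\log d_D$ as a regularizable psh function (Proposition \ref{prop:Runge_domains_to_psh}). The Runge property is proved by induction over the strata of successive singular loci, using:
\begin{itemize}
\item Noether projections to $\BA^n$ that are injective on the stratum (Lemma \ref{lem:generic_noether_normalization});
\item norms of weakly psh functions under finite maps, with multiplicities (Proposition \ref{prop:norm_finite_morphisms_smooth_target});
\item a non-archimedean Rossi local maximum principle (Theorem \ref{thm:rossi_local_maximum_principle});
\item substitute strictly psh functions, which require $K$ to have a countable dense subfield.
\end{itemize}
General $K$ is then reached by passing to complete subfields with a countable dense subfield, via orthogonality and the domination principle (Theorem \ref{thm:Domination_Principle}).

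There are also two smaller issues. First, constancy of $\psi$ on fibres is not a plain maximum principle. Fibres over non-divisorial $z$ live over $\mathcal{H}(z)$, which need not be discretely or trivially valued. The paper proves constancy over divisorial points and propagates it using generic flatness and openness of flat maps (Proposition \ref{prop:descent_psh_functions_resolution}). Second, two usc functions agreeing on a dense set need not coincide. Identifying $\varphi'$ with $(\sup_i\varphi_i)^{\star}$ uses that $\varphi'$ is the usc regularization of its restriction to $X^{\div}$, together with Theorem \ref{thm:global_divisorial_points_is_nonnegligible}.
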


Theorem A implies a Calabi--Yau theorem in this setting (see \cite[Theorem 12.8]{BJ22}), and has applications to Yau--Tian--Donaldson conjecture in the singular settings (see for example \cite{HL25}).

The proof of Theorem A relies on establishing the following  non-archimedean analogue of Fornaess-Narasimhan theorem. For a function $\varphi$ on $X^{\an}$, we say that $\varphi$ is a \emph{weakly $\omega$-psh function} ($\varphi \in \WPSHo(X^{\an})$) if there exists a birational proper morphism $p \colon X' \to X$ such that $\pi^* \varphi \in \GPSH_{\pi^{*} \omega}((X')^{\an})$. 

\begin{theoremB} [Theorem \ref{thm:global_NA_FN_theorem}] \label{mainthm:global_NA_FN_theorem}
  Assume that $K$ is a complete discretely valued or trivially valued NA field of equicharacteristic $0$ such that $K$ admits a countable dense subfield. Let $X$ be a projective variety over $K$, and let $\omega$ be a closed semipositive $(1,1)$-form on $X$ with $c_1(\omega) \in \Amp(X)$. Then, $\WPSHo(X^{\an})=\GPSHo(X^{\an})$.
\end{theoremB}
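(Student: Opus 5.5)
The inclusion $\GPSHo(X^{\an})\subset\WPSHo(X^{\an})$ is immediate: take $p=\mathrm{id}$. The substance is the converse. Let $\varphi\in\WPSHo(X^{\an})$ and let $p\colon X'\to X$ be birational and proper with $p^*\varphi\in\GPSH_{p^*\omega}((X')^{\an})$. Since $K$ has equicharacteristic $0$, resolution of singularities lets us replace $X'$ by a resolution dominating it. We may therefore assume $X'$ is smooth and projective and that $p$ is a composition of blow-ups; pulling back psh functions along a further birational morphism preserves psh-ness.

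\textbf{Step 1: localize.} The plan is to use the local class of psh functions introduced in the paper. Psh-ness in the local sense is local on $X^{\an}$, and for $\omega$ with ample class it agrees with the global class $\GPSHo$. So it suffices to show that $\varphi$ is locally $\omega$-psh near every point $x\in X^{\an}$.

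Over the open set $U\subset X$ where $p$ is an isomorphism this is clear: there $\varphi$ coincides with $p^*\varphi$, and the restriction of a global psh function is locally psh.

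\textbf{Step 2: the exceptional locus.} The difficulty lies at points $x$ over the exceptional locus $Z=X\setminus U$. Here I would mimic the complex Fornaess--Narasimhan argument.
\begin{enumerate}
\item First I would show that $\varphi$ is usc, using properness of $p^{\an}$: the function $\varphi(x)=\sup_{y\in p^{-1}(x)}p^*\varphi(y)$ is usc because $p^*\varphi$ is usc.
\item I would check that $\varphi$ is constant along fibers. This holds because the restriction of $p^*\varphi$ to a fiber is psh for the trivial form on a proper connected space, hence constant by the maximum principle.
\item I would then approximate. The Fubini--Study-type regularization valid in residue characteristic $0$ and countable density gives decreasing $p^*\omega$-psh PL functions $\psi_j\searrow p^*\varphi$. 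The countable dense subfield hypothesis is needed for this regularization and for descending models.
\item Next I would modify each $\psi_j$ to be $p^*\omega$-psh PL and fiberwise constant. The tool is the $p^*\omega$-psh envelope relative to $p$: on a small neighborhood of $x$, take $\max$ with a local potential of $\omega$ shifted by large constants. Such a modified function descends to a PL function on $X$ that is $\omega$-psh locally. This uses the relative ampleness of $-E$ for an exceptional divisor $E$, together with the fact that $p_*\mathcal O_{X'}=\mathcal O_X$ on normalized models.
\item Finally, a decreasing limit of local $\omega$-psh functions is again local $\omega$-psh, which gives the claim near $x$.
\end{enumerate}

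\textbf{Main obstacle.} The hardest step is the descent in Step 2(4): producing, near points of $Z^{\an}$, local $\omega$-psh approximants on $X$ itself. $X$ may be singular, and semipositivity of model metrics is not obviously preserved under pushforward.

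My first attempt would be to work model-theoretically. I would choose a model $\fX'$ dominating a model $\fX$ of $X$, and write the nef model divisor of $\psi_j$ as a pullback plus a $p$-exceptional correction. I would then apply the negativity lemma to show that the correction vanishes after taking the relative envelope. The local theory, via its equivalence with Chambert-Loir--Ducros psh functions, is what allows checking positivity only on small affinoid neighborhoods, where a local potential of $\omega$ exists.
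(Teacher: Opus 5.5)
\textbf{There is a genuine gap.} Your Step~1 is circular, and Step~2(4), where all the content sits, has no working mechanism.

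\textbf{Step~1.} The identification $\LPSHo(X^{\an})=\GPSHo(X^{\an})$ is not available here. The paper proves it (Theorem~\ref{thm:GPSHo_equals_LPSHo}) only under the hypothesis that $\GPSHo(X)$ has the envelope property, via orthogonality and the domination principle. For singular normal $X$ that property is Theorem~A, which is deduced \emph{from} Theorem~B. For non-unibranch $X$, which Theorem~B also covers, it fails in general.

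Moreover, a locally $\omega$-psh function pulls back to a locally, hence globally, $\pi^*\omega$-psh function on a resolution $\widetilde X$, where the envelope property holds. So the inclusion $\LPSHo(X^{\an})\subset\GPSHo(X^{\an})$ is itself a consequence of Theorem~B, and reducing to ``locally $\omega$-psh'' presupposes part of what you want to prove. Step~2(5) has the same problem: a decreasing limit of psh functions is only known to be psh under the envelope property (Remark~\ref{rmk:decreasing_limit_of_psh_functions_is_not_known_to_be_psh_in_general}), unless all approximants are tropically convex PL on one fixed neighbourhood.

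\textbf{Step~2(4).} For PL functions the statement is trivial. If $p^*\varphi$ is $p^*\omega$-psh PL, nefness of the pulled-back model descends along the proper surjective map of models. So the whole problem is to produce psh approximants of $p^*\varphi$ that are pullbacks from $X$, and none of your proposed tools does this.
\begin{enumerate}
\item The pulled-back potential is already constant on fibres. Taking a max with a shifted copy of it either erases $\psi_j$ (large shift) or leaves $\psi_j$ non-constant on fibres.
\item The negativity lemma yields at best $\fL'\le\mathfrak p^*\mathfrak p_*\fL'$. On a model of singular $X$, $\mathfrak p_*\fL'$ is in general neither $\Q$-Cartier nor nef.
\item A ``relative envelope'' on $X$ being psh or continuous is again the envelope property on $X$.
\end{enumerate}
Two smaller points: Step~2(2) is vacuous, since $p^*\varphi=\varphi\circ p^{\an}$, and its connected-fibre justification would fail for non-normal $X$. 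The countable dense subfield plays no role in regularization on $X'$.

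\textbf{What the paper does instead.} It avoids descent entirely. On each affine chart $Y_i$ it proves the local Fornaess--Narasimhan statement (Theorem~\ref{thm:NA_FN_theorem}): $u=\varphi+\rho_i\in\WPSHlog(Y_i^{\an})$ lies in $\PSHlogreg(Y_i^{\an})$, i.e.\ it is a decreasing limit of Fubini--Study functions on the whole chart. This is done by showing that $D=\{\log|w|+u\circ p^{\an}<0\}\subset(Y_i\times\BA^1)^{\an}$ is Runge and writing $u=-\log d_D$ (Proposition~\ref{prop:Runge_domains_to_psh}).

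The Runge property comes from an induction on singular strata using:
\begin{enumerate}
\item Noether normalization making the stratum injective;
\item norms of weakly psh functions under finite maps;
\item Rossi's local maximum principle;
\item the strictly psh function $\rho$, whose construction is where the countable dense subfield is actually used.
\end{enumerate}
Global regularizability on the charts is what lets the paper glue directly into $\GPSH_{c\omega}$, via $\psi_i=\varphi-(c-1)\rho_i$ and Lemma~\ref{lem:psh_regularization_with_growth_control}(1), without ever needing $\LPSHo=\GPSHo$.
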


We first explain how to deduce Theorem A from Theorem B. Since $X$ is of finite type over $K$, there exists a complete NA subfield $K_1 \subset K$ such that $K_1$ admits a countable dense subfield, and $(X, \omega)$ is defined over $K_1$. After some pluripotential theoretical arguments, one can show that the envelope property of $(X, \omega)$ over $K$ is equivalent to that over $K_1$. Thus, we may assume that $K$ admits a countable dense subfield.  Let $\pi \colon \widetilde{X} \to X$ be a resolution of singularity of $X$. As we mentioned before, $\GPSH_{\pi^{*} \omega}(\widetilde{X}^{\an})$ satisfies the envelope property after \cites{BFJ16, BJ22}. Therefore,  $(\sup_{i \in I} \{\pi^{*}\varphi_i\})^{\star} \in \GPSH_{\pi^{*} \omega}(\widetilde{X}^{\an})$. On the other hand, since $X$ is normal, Zariski's main theorem implies that each fibre of $\pi^{\an}$ is connected. As $\pi^{\an}$ is proper and  $\pi^* \omega$ equals to zero when restricted to each fibre of $\pi^{\an}$, any $\pi^{*} \omega$-psh function $\varphi$ is constant on each fibre of $\pi^{\an}$. Thus, $(\sup_{i \in I} \{\pi^{*}\varphi_i\})^{\star}$ descends to an upper-semicontinuous  function $\psi$ on $X^{\an}$. As $\pi^{\an}$ is bijective on divisorial points, $\psi$ is the minimal upper-semicontinuous extension of $\psi|_{X^{\div}}$, and $X^{\div}$ is nonnegligible, it is not very hard to deduce that $\psi=(\sup_{i \in I} \{\varphi_i\})^{\star}$ on $X^{\an}$. This shows that $(\sup_{i \in I} \{\varphi_i\})^{\star} \in \WPSHo(X^{\an})$, and thereby $(\sup_{i \in I} \{\varphi_i\})^{\star} \in \GPSHo(X^{\an})$ by Theorem A.

\medskip

Now we explain the motivations behind Theorem B. Let $V$ be a connected complex variety, and $\varphi \colon V \to \R \cup \{-\infty\}$ be a function which is not constantly equal to $-\infty$ on $V$. Following \cite{FN80}, \footnote{We remind the reader that, in the complex setting, the notion of weakly psh functions is not entirely uniform across the literature. In particular, different sources may adopt definitions that are not equivalent without additional hypotheses.} $\varphi$ is called a weakly psh function if $\varphi$ is upper-semicontinuous, and for any holomorphic map $f \colon \Delta \to V$ of the complex unit disc into $V$, $\varphi \circ f$ is subharmonic on $\Delta$. $\varphi$ is called a psh function if  $\varphi$ is locally a restriction of some weakly psh function on an open subset in $\BC^N$ for a local embedding of $V$. It is obvious that the notions of psh functions and weakly psh functions are equivalent on complex manifolds. If $V$ is a  singular complex variety, and $\mu \colon \widetilde{V} \to V$ is a resolution of singularities, then it is not hard to see that a function $\varphi$ on $V$ is weakly psh iff $\varphi \circ \mu$ is weakly psh, and hence psh on $\widetilde{V}$. \footnote{That $\varphi$ is weakly psh implies $\varphi \circ \mu$ is weakly psh is obvious. The converse direction follows from the observation that for any germ of holomorphic map $f \colon(\Delta,0) \to (V,x)$, there exists a germ of holomorphic map $\widetilde{f} \colon (\Delta, 0) \to (\widetilde{V}, x')$ such that $f(t^k)=\mu(\widetilde{f}(t))$ (see the last step of the proof of \cite[Theorem 1.7]{Dem85}).} The celebrated Fornaess-Narasimhan theorem proven in \cite{FN80} then asserts that the notion of weakly psh functions and psh functions are actually equivalent on any complex varieties. This partially  justifies the terminology of non-archimedean weakly $\omega$-psh functions,  and the analogy between between Theorem B and Fornaess-Narasimhan theorem in the complex setting.

However, this analogy is not very precise: the definition of psh functions on complex varieties is local, while the definition of $\omega$-psh functions in Theorem B is  global. Actually, a more precise analogue of Theorem B in the complex analytic setting is the following: every weakly $\omega$-psh function can be written as a pointwise limit of a decreasing sequence of  $\omega$-Fubini--Study  potentials. This is a statement stronger than Fornaess-Narasimhan theorem, whose proof \footnote{We refer the readers to \cite[Theorem 8.1]{BE21} for more details.} requires additionally an extension theorem of psh functions of Coman--Guedj--Zeriahi (\cite{CGZ13}) and Demailly's regularization theorem on smooth projective manifolds (\cite{Dem92}).

Our strategy of proving Theorem B follows the line of the proof of Fornaess-Narasimhan theorem in \cite{FN80}. However, as discussed in the previous paragraph,  the complex analytic proof is also local in nature.  This requires us to discuss about a local theory of psh functions on non-archimedean spaces.

\medskip

Parallel to the global approach to NA psh functions discussed above, \cite{CLD} has developed a local theory of psh functions. Recall that for an open subset $U \subset X^{\an}$ and a finite collection of invertible analytic functions $f_1, \cdots, f_k \in \CO^{\times}(U)$, $(-\log |f_1|, \cdots, -\log|f_k|)$  defines a tropicalization map $U \to \R^k$, whose image is a finite polyhedral complex. Following \cite{CLD}, a function $u$ on $U$ is called a smooth convex function if locally $u$ can be written as $g(-\log |f_1|, \cdots, -\log|f_k|)$, where $g$ is a smooth function on $\R^k$ such that the restriction of $g$ on each face of the image of the tropicalization map is convex.  A function $u$ on $U$ is called a  \emph{smooth approximable  psh  function} if  any $x \in U$ admits an open neighborhood $V$ such that $u$ equals to the pointwise limit of a decreasing sequence of smooth convex functions on $V$.

A fundamental problem in the local theory of NA psh functions is whether the local definition and the global definition are equivalent. More precisely, we say that a function $\varphi$ on $X^{\an}$ is a locally smooth approximable  $\omega$-psh  function ($\varphi \in \LPSHsmo(X^{\an})$) if  any $x \in X^{\an}$ admits an open neighborhood $U$ such that $\varphi+g \in \PSHsm(U)$, where $g$ is a local potential of $\omega$ on $U$ in an appropriate sense (see Definition \ref{def:potential}).  It is not hard to prove that $\GPSHo(X) \subset \LPSHsmo(X)$. In \cite[Corollary 18.8.8]{CLD}, it is proven that $\LPSHsmo \cap \PL(X)=\GPSHo \cap \PL(X)$.  However,  it is not known at all whether in general, we have $\LPSHsmo(X)=\GPSHo(X)$.

One incompatibility between the local and global theory of psh functions is that smooth approximable psh functions are defined to be regularized by smooth functions, while $\omega$-psh functions are defined to be regularized by PL functions. 
To fill in this gap, we introduce the  notion of local psh functions as follows in our paper.  A function $u$ on an open subset $U\subset X^{\an}$ is called a tropically convex PL function if there exists $f_1,\cdots, f_k \in \CO^{\times}(U)$ and a convex PL function $h$ on $\R^k$ such that $u=h(\log|f_1|, \cdots, \log|f_k|)$ on $U$. In this paper, we say that a function $u$ is psh on $U$ ($u \in \PSH(U)$) if $u$ is generically finite and is locally a pointwise limit of a decreasing sequence of tropically convex PL functions on $U$.  Similarly, we say that a function $\varphi$ on $X^{\an}$ is locally $\omega$-psh ($\varphi \in \LPSHo(X)$) if  any $x \in X^{\an}$ admits an open neighborhood $U$ such that $\varphi+g \in \PSH(U)$ for some local potential $g$ of $\omega$  on $U$. 

We establish the following result which is of independent interest. 

\begin{theoremC} [Theorem \ref{thm:GPSHo_equals_LPSHo}+ Theorem \ref{thm:psh_equals_to_smooth_app_psh}]  \label{mainthm:equivalence_between_different_notions_of_psh_functions}
  Let $K$ be a discretely or trivially valued field.  
  \begin{enumerate}
    \item $\LPSHsmo(X)=\LPSHo(X)$.
    \item Suppose further that $\GPSHo(X)$ satisfies the envelope property. Then, $\GPSHo(X)=\LPSHsmo(X)=\LPSHo(X)$.
  \end{enumerate} 
\end{theoremC}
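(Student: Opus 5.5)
The plan is to prove the two parts separately: (1) is a purely local regularization statement, and (2) is a local-to-global patching argument that uses the envelope property.

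\textbf{Part (1).} It suffices to work locally, since both classes are defined by a local condition on $\varphi+g$ for a local potential $g$ of $\omega$. So the task is to show that, for an open $U$, $\PSH(U)=\PSHsm(U)$ locally.

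For $\PSH(U)\subset\PSHsm(U)$, take a tropically convex PL function $h(\log|f_1|,\dots,\log|f_k|)$ with $h$ convex PL on $\R^k$. Convolving $h$ with a smooth mollifier gives smooth convex functions $h_\epsilon$. After adding $\epsilon$ times a constant, these decrease to $h$ as $\epsilon\to 0$. The restriction of $h_\epsilon$ to each face of the tropical image is convex, because $h_\epsilon$ is convex on all of $\R^k$. A decreasing limit of decreasing limits of smooth convex functions is handled by the usual diagonal argument: the smooth convex functions are continuous, so by Dini's lemma on compact neighbourhoods we can choose $u_j$ smooth convex with $u_j\ge \psi_j$ and $u_j\le \psi_{j-1}$ up to $2^{-j}$, and these decrease to $u$.

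For the converse, take $u=g(-\log|f_1|,\dots,-\log|f_k|)$ smooth convex on the faces of the tropical image $P$. I would approximate $g|_P$ from above by functions of the form $\max_i(\text{affine}_i)$ on each face, but a single convex function on $\R^k$ need not restrict correctly to a non-convex complex $P$. This is the main obstacle. The fix I would try is to enlarge the tropicalization by adding further invertible functions: locally $P$ can be taken to be the tropicalization of a good chart, and after refining one may assume that convexity on faces can be realized by piecewise-linear functions of the tropical coordinates that are convex on faces. I would then invoke the local structure result (as in \cite[Corollary 18.8.8]{CLD}, the PL case) that PL functions convex on the faces of a tropical chart are locally tropically convex PL, i.e.\ lie in $\PSH$. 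Finally, decreasing sequences of tropically convex PL functions approximate $u$ by taking successive maxima of supporting affine functions on finer subdivisions, combined with the diagonal argument above.

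\textbf{Part (2).} The inclusion $\GPSHo\subset\LPSHsmo=\LPSHo$ holds because global $\omega$-psh PL functions are locally tropically convex PL after subtracting a local potential. This comes from the PL case of \cite{CLD} and the same limit argument as in part (1).

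For the reverse inclusion, take $\varphi\in\LPSHo(X)$. By the envelope property it suffices to show $\varphi=\env_\omega(\varphi)$ in a suitable sense. Concretely, pick continuous $f_j\downarrow\varphi$. Then $\psi_j:=\env_\omega(f_j)$ is continuous and $\omega$-psh, and $\psi_j\downarrow\psi\ge$ every $\omega$-psh function below $\varphi$. It remains to show $\psi\ge\varphi$, i.e.\ that $\varphi$ is a global competitor. To do this I would use a local maximum principle. On a neighbourhood $U$ where $\varphi+g$ is psh, the candidate $\max(\psi_j,\varphi-\delta)$ glued with $\psi_j$ outside a compact set should again be $\omega$-psh; this uses that the local and global notions agree for PL (hence, via regularization, continuous) functions, together with gluing of maxima. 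By maximality of the envelope this forces $\psi_j\ge\varphi-\delta$ near each point.

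The delicate step is producing global $\omega$-psh functions from local data. For this, I would use the orthogonality/maximality of envelopes supported where $\psi_j=f_j$, together with continuity from the envelope property.
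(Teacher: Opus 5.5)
Your proposal has genuine gaps in both parts. In each part the step you flag as delicate is where the idea is missing.

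\textbf{Part (2).} The key step is circular. To glue $\max\{\psi_j,\varphi-\delta\}$ on $U$ with $\psi_j$ outside a compact set (Remark~\ref{rmk:extension_of_psh_functions_by_sup}), you need $\varphi-\delta<\psi_j$ on a neighbourhood of $\partial U$. The only a priori information is $\varphi\le f_j$ and $\psi_j\le f_j$, so this boundary inequality is a local form of the very inequality $\varphi\le\psi_j+\delta$ you want. (Granting it, the globalization would be harmless: $\varphi+g$ and $\psi_j+g$ are regularizable near $\overline U$, and Theorem~\ref{thm:GPSH_PL_equals_to_LPSH_PL} handles the glued PL approximants.) Without it, $\max\{\psi_j,\varphi-\delta\}$ is only known to lie in $\LPSHo(X)$, and treating it as a competitor for $\envomega(f_j)$ presupposes $\LPSHo\subset\GPSHo$. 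The bump trick of Proposition~\ref{prop:passing_from_local_to_global} cannot manufacture the boundary inequality either. It replaces $\omega$ by $A\omega$ and rescales, so its output is no longer a competitor for $\envomega(f_j)$.

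Orthogonality, $\Supp\MAo(\psi_j)\subset\{\psi_j=f_j\}$, is a statement about a measure. It gives $\varphi\le\psi_j$ only on $\Supp\MAo(\psi_j)$. The missing idea is a \emph{domination principle for bounded locally $\omega$-psh functions} (Theorem~\ref{thm:Domination_Principle}), which upgrades this to $\varphi\le\psi_j$ on all of $X^{\an}$. Then $\psi_j\downarrow\varphi$ and $\varphi\in\GPSHo(X)$, with no gluing at all. Proving that principle is the substance of the argument. It needs:
\begin{enumerate}
\item Monge--Amp\`ere measures on $\LPSHob(X)$, built by transplanting bounded local psh functions into global bounded $\omega$-psh ones via Proposition~\ref{prop:passing_from_local_to_global} and using global Bedford--Taylor theory;
\item the mass identity $\int_X\MAo(\varphi)=\deg(\omega^n)$ for $\varphi\in\LPSHob(X)$, via integration by parts against a PL partition of unity;
\item locality and the comparison principle;
\item a continuous solution of $\MAo(u)=\delta_x$ at divisorial $x$, together with the fact that locally $\omega$-psh functions are determined by their values on $X^{\div}$.
\end{enumerate}

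\textbf{Part (1).} The mollification argument for $\PSH\subset\PSHsm$ is fine; the gap is in $\PSHsm\subset\PSH$. The result you want to invoke --- that PL functions convex on each face of a tropical chart are locally tropically convex PL --- is false. Also, \cite[Corollary 18.8.8]{CLD} is a global statement (semipositive current implies nef model), not a local structure theorem.

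For example, on $Y=\{1+z_1+z_2=0\}\subset\G_m^2$ the tropicalization is the tropical line. The function $h=\min\{\log|z_1|,\log|z_2|,0\}$ is affine on each of its three rays, yet its outgoing slopes at the vertex sum to $-2$. So $\dm\dmb h$ has a negative atom there and $h\notin\PSH$. Face-by-face approximation loses exactly this bending across codimension-one cells.

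What makes the smooth case work is that $f$ is smooth on an open subset of $\R^k$. The ambient affine function $l=f(\ft)+df_{\ft}(\cdot-\ft)$ lies below $f$ on every face through $\ft$, hence on the whole local star. So $f$ is locally convex on the tropical variety in the sense of \cite{APW2}. Even so, such tangent functions support $f$ only near their point of tangency on the non-convex set $P$. A maximum of finitely many of them need not stay below $f$, so your successive-maxima construction does not produce approximants.

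What is needed is a genuine tropical regularization theorem. Locally convex functions are locally uniform limits of locally convex PL ones \cite[Proposition 5.22]{APW2}. These are then globalized to $\omega$-psh PL functions \cite[Proposition 5.20]{APW2}, \cite[Theorem 6.13]{APW1}. Finally, Fubini--Study regularization (Proposition~\ref{prop:regularization_omega_psh_by_omega_FS_functions}) yields tropically convex PL approximants.
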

The first part of Theorem C relies on the regularization theorem of convex functions on tropical varieties established in \cite{APW2}, while the second part is proven via establishing a Bedford--Taylor theory and a domination principle for locally $\omega$-psh functions. Combining Theorem A and Theorem C, we obtain that $\GPSHo(X)=\LPSHsmo(X)=\LPSHo(X)$ for any projective normal variety over a discretely or trivially valued NA field of residue characteristic zero. 

\medskip 

With our local theory of non-archimedean psh functions,  we go back to the strategy for proving Theorem B. We first reduce Theorem B to showing that every weakly psh function on an affine variety $Y^{\an}$ is a (regularizable) psh function. To show this, we show that for any weakly psh function $\varphi$ on $Y^{\an}$, the open subset $D \coloneqq \{\log|w|+\varphi(z)<0\}$ is Runge in $(Y \times \BA^1)^{\an}$ (Theorem \ref{thm:NA_FN_theorem}). In our paper, an open subset $\Omega$ of an affine variety is defined to be Runge if the polynomial convex hull of any compact subset of $\Omega$ is contained in $\Omega$, and the polynomial convex hull is defined in the same way as in the complex setting (see Section \ref{subsec:polynomially_convexity_and_Runge_open_subset}). For any Runge open subset $\Omega$ in $(Y \times \BA^1)^{\an}$ containing $Y^{\an}\times \{0\}$  and for any point $y \in Y^{\an}$, we consider the distance function $d_{\Omega}(y)$ from the point $(y,0) \in (Y \times \BA^1)^{\an}$ to the boundary $\partial \Omega$.  Then, Theorem B follows from the fact that  $-\log d_{\Omega}$ is a (regularizable) psh function  on $Y^{\an}$ (Proposition \ref{prop:Runge_domains_to_psh}). 

The proof of Theorem \ref{thm:NA_FN_theorem} follows the line of the proof of \cite[Lemma 5.2]{FN80}. Key ingredients of the proof include Rossi's local maximum principle and norm (trace) of weakly psh functions under finite morphisms, whose non-archimedean versions are established in Section \ref{subsec:Rossi_local_maximum_principle} and \ref{subsec:norms_of_weakly_psh_functions} respectively. 
We would like to mention to the readers that however, the proof of Theorem \ref{thm:NA_FN_theorem} has two major differences from the proof in the complex setting. First, the proof in the complex setting makes use of  strictly psh functions, which does not known to exist in general on non-archimedean spaces. However, we construct in Section \ref{subsec:strictly_psh_functions} some functions which serve a role as strictly psh functions when $K$ admits a countable dense subfield (this is why we need this condition in Theorem B). Second, several steps in the complex analytic proof rely essentially on taking a small enough polydisk around a point $x$ in the complex Stein space. However, in the non-archimedean setting, a point $x \in Y^{\an}$ might not admits a neighborhood basis formed by polydisks. More generally, a single point might not be a polynomially convex compact subset in $Y^{\an}$. To resolve this problem, we replace these local arguments by some more global arguments on affine varieties in our paper (see Proposition \ref{prop:pshlogreg_runge_domain} and Proposition \ref{prop:weakly_psh_induces_Runge_under_injective_conditions}).

\medskip

We end this introduction by indicating the structure of the paper. In Section \ref{sec:preliminaries}, we recall some backgrounds about Berkovich spaces, tropicalizations  and the theory of $\omega$-psh functions developed in \cites{BFJ15, BJ22}. In Section \ref{sec:local_psh_functions}, we give our definition of local psh functions and study their  properties. In Section \ref{sec:Monge_Ampere_measures}, we give the definition of \MongeA operators for locally bounded psh functions, which allows us to prove the second part of Theorem C.

In Section \ref{sec:Runge_domains}, we develop a theory of polynomially convexity and Runge domains on non-archimedean spaces. In particular, we prove a non-archimedean version of Rossi's local maximum principle. In Section \ref{sec:weakly_psh_functions}, we introduce weakly psh functions and prove the non-archimedean Fornaess-Narasimhan theorem. We finally prove the envelope property in Section \ref{sec:envelope_conjecture}. In addition, in Appendix \ref{appendix:psh_functions_CLD}, we recall the definition of local psh functions of Chambert-Loir--Ducros, and explain that it is equivalent to our definition of local psh functions.

\subsection*{Notations and Conventions}
\begin{itemize}
  \item We use the standard abbreviations usc for ‘upper semicontinuous’ and psh for ‘plurisubharmonic’.
  \item We use $\interior(V)$ to denote the interior of a subset $V$ of a topological space $W$.
  \item For a topological space $W$, a  function $u \colon W \to \R \cup \{-\infty\}$ is called \emph{generically finite} if $u$ does not constantly equal to $-\infty$ on each connected component of $W$.
  \item If $W$ is a Hausdorff topological space, and $\varphi \colon W \to \R \cup \{\pm \infty\}$ is any function. Then, the \emph{usc envelope} $\varphi^{\star}$ of $\varphi$ is the smallest usc function with $\varphi^{\star} \ge \varphi$. Concretely, $\varphi^{\star}(x) = \limsup_{y \to x} \varphi(y)$. 
  \item  Let $W$ be a locally compact Hausdorff space.  The notation $\Subset$ indicates being compactly contained, \textit{i.e.}  $V \Subset W$ if the closure of $V$ is compact and contained  in $W$. 
  \item By varieties, we mean reduced schemes of finite type over a field $K$. For a variety $X$ over $K$, we use $|X|_{\cl}$ to denote the set of closed scheme points of $X$. 
  \item Let $\pi \colon X_1 \to X_2$ be a morphism between varieties over a field $K$. Let $Z_1 \subset X_1$ and $Z_2 \subset X_2$ be subvarieties. Unless indicated otherwise, $\pi(Z_1)$, $\pi^{-1}(Z_2)$ and $\overline{Z_1}$ denote the \emph{reduction of the scheme theoretical} image, preimage and Zariski closure respectively. 
\end{itemize}

\subsection*{Acknowledgements}
The author would like to thank S\'ebastien Boucksom for helpful discussions and reading drafts of this paper. He would also like to thank Antoine Ducros for answering questions about Berkovich spaces.

\subsection*{Usage of AI}
All the ideas and texts are generated by the author (who is a human being).

\section{Preliminaries} \label{sec:preliminaries}

\subsection{Berkovich spaces and tropicalizations}

Let $\valfield$ be a complete non-archimedean field  with valuation $v_K \colon \valfield \to \R\cup\{\infty\}$. Let $\valring$ denote the valuation ring of $K$, $\maxideal$ denote the maximal ideal of $\valring$, and $\resfield=\valring/\maxideal$ denote the residue field. The norm on $K$ is given by $|f| =\exp(-v(f))$ for $f \in K$. The (additive) value group $\Gamma_K \coloneqq v_K(K^{\times})$ is an additive subgroup of $\R$. We say that $K$ is trivially valued if $\Gamma_K=\{0\}$; we say that $K$ is discretely valued if $K$ is non-trivially valued and $\Gamma_K$ is a discrete subgroup of $\R$. Otherwise, $\Gamma_K$ is a dense subgroup of $\R$. In this paper, we always assume that $\Z \subset \Gamma_K$ if $K$ is non-trivially valued. 

Let $Z$ be an irreducible  projective variety over $\valfield$, with function field $\valfield(Z)$. A valuation $v$ on $Z$ is a real-valued function
\[
v \colon \valfield(Z)^{\times} \to \R
\]
satisfying $v(ab)=v(a)+v(b)$, $v(a+b) \ge \min \{ v(a), v(b)\}$ and $v|_{\valfield}=v_{\valfield}$. We use $Z^{\val}$ to denote the set of valuations on $Z$.

Let $X$ be a  projective (not necessarily irreducible) variety over $\valfield$. A semivaluation $v$ on $X$ is a valuation on some irreducible subvariety $Z$ of $X$, in which case $Z$ is called the support of $v$.  The Berkovich analytification $X^{\an}$ of $X$ is a compact Hausdorff topological space, whose underlying set is the set of semivaluations on $X$, and topology is the coarsest topology such that for each subvariety $Z \subset X$, 
\begin{itemize}
  \item the set $(X \setminus Z)^{\an} \subset X^{\an}$  of semivaluations whose support is not included in $Z$ is open;
  \item for each regular function $f \in \CO(X \setminus Z)$, the function $|f|(v) \coloneqq \exp(-v(f))$ is continuous on $(X \setminus Z)^{\an}$.
\end{itemize}
We have $X^{\an}=\coprod_{Z} Z^{\val}$ with $Z$ ranging over all irreducible subvariety of $X$.

\subsubsection{Tropicalizations} \label{subsubsec:tropicalizations}
Let $Y$ be an affine variety over $K$, and let $f_1, \cdots, f_k \in K[Y]$ be a finite set of regular functions on $Y$. Denote by $\BT \coloneqq \R \cup \{+\infty\}$. The tropicalization map $\Trop \colon Y^{\an} \to \BT^k$ induced by $\{f_1, \cdots, f_k\}$ is defined by $\Trop(y) \coloneqq (-\log|f_1|(y), \cdots, -\log|f_k|(y)) \in \BT^k$. The image of $Y^{\an}$ under $\Trop$ is a tropical variety, \textit{i.e.} the support of a polyhedral complex $\Pi$ equipped with a canonical weight function $\varpi$ satisfying the balancing condition. More generally, suppose that $X$ is a closed variety of a toric variety $\P_{\Sigma}$ over $K$ with a fixed toric structure, then we have an extended tropicalization map $\Trop \colon X^{\an} \to \TP_{\Sigma}$, where $\TP_{\Sigma}$ is the tropical toric variety corresponding to $\P_{\Sigma}$ (see \cites{CLD, APW1} for more details). 

As a topological space, $Y^{\an}$ is homeomorphic to the inverse limit of $\Trop(Y^{\an})$ over all affine embeddings of $Y$, and $X^{\an}$ is homeomorphic to the inverse limit of $\Trop(X^{\an})$ over all embeddings of $X$ into toric varieties (see \cite{Pay09}). We also remark that if $K_1 \subset K$ is a dense open subfield, then $Y^{\an}$ is homeomorphic to the inverse limit of $\Trop(Y^{\an})$ over all affine embedding of $Y$ over $K_1$ (\cite[Propsition 2.16]{Bou25}).

\subsubsection{Models and closed (semi-)positive (1,1)-forms} \label{subsubsec:models_and_NA_semipositive forms}
A \emph{model} of $X$ is a flat and proper scheme $\fX$ over $\Spec(\valring)$ together with
an identification of $K$-schemes $\fX_{K} \coloneqq \fX \otimes_{\valring} \valfield \simeq X$. When $K$ is trivially valued, a model should be understood as a test configuration, \textit{i.e.} a $\G_m$-equivariant model of $X_{K((t))}$ over $K[[t]]$ (see \cite{BJ22}). We denote by $\fX_s \coloneqq \fX \otimes_{\valring} \resfield$ its special fiber.  For a $\Q$-line bundle $L$ on $X$,  a \emph{model} of $L$ is a pair $(\fX, \fL)$ consisting of the data of a model $\fX$ of $X$ and a $\Q$-line bundle $\fL$ on $\fX$ whose restriction to $X$ (as generic fiber of $\fX$), denoted by $\fL_{\valfield}$, is isomorphic to $L$. When no ambiguity arises, we simply say  $\fL$ is a model of $L$.

We say that a model $\fX'$ dominates $\fX$ if there exists a proper birational morphism $\pi \colon \fX' \to \fX$ over $\Spec \valring$ such that $\pi|_{X}=\mathrm{id}_{X}$. We say that $(\fX', \fL')$ dominates $(\fX, \fL)$ if $\fX'$ dominates $\fX$ and $\pi^{*}\fL=\fL'$. We say that two models $(\fX', \fL')$ and $(\fX'', \fL'')$ are equivalent if there exists a model $(\fX, \fL)$ dominating both $(\fX', \fL')$ and $(\fX'', \fL'')$.

Let $\fX$ be a model of $X$, and let $\fL, \fL'$ be a $\Q$-line bundles on $\fX$.  We say that $\fL$ relatively nef on $\fX$ if $\fL|_{X}$ is nef on $X$, and for any complete curve $C$ supported in the specical fibre $\fX_s$, we have $\mathrm{deg}(\fL|_{C}) \ge 0$. We use 
\[
\Nef_{\Q}(\fX/\Spec \valring) \subset \Pic_{\Q}(\fX /\Spec (\valring))
\]
to denote the isomorphism classes of relatively nef $\Q$-line bundle on $\fX$. We say that $(\fX, \fL)$ is a nef model of $(X, L)$ if $\fL \in \Nef_{\Q}(\fX/\Spec \valring)$. 

Relatively nefness is a birational equivalent condition. More precisely, suppose that $\fX'$ dominated $\fX$ with a proper birational morphism $\pi \colon \fX' \to \fX$, and suppose $\fL \in \Pic_{\Q}(\fX /\Spec (\valring))$.  Then $\fL \in \Nef_{\Q}(\fX/\Spec \valring)$ iff $\pi^* \fL \in \Nef_{\Q}(\fX'/\Spec \valring)$. In particular, if $(\fX', \fL')$ and $(\fX'', \fL'')$ are equivalent models, then $(\fX', \fL')$ is a nef model iff $(\fX'', \fL'')$ is a nef model.  Following~\cite{BFJ15}, we define the space of \emph{closed $(1,1)$-forms} on $X^{\an}$  as  
\[
  \PLforms(X^{\an}) \coloneqq \varinjlim_{\fX} \Pic_{\Q}(\fX/\Spec (\valring) ),
\]
and define the space of closed \emph{semipositive} $(1,1)$-forms on $X^{\an}$  as  
\[
  \nef(X^{\an}) \coloneqq \varinjlim_{\fX} \Nef_{\Q}(\fX/\Spec (\valring) ),
\]
with  $\fX$ ranging over all models of $X$. For $\omega \in \PLforms(X)$, we use $L(\omega)$ to denote $\fL|_{X}$ for some (equivalently any) $(\fX, \fL)$ representing $\omega$.

\subsubsection{Metrics on line bundles}
We now recall the definition of metrics following \cite[Section 5]{BE21}. Let $L$ be a $\Q$-line bundle on $X$. An \emph{upper semicontinuous metric} (usc metric) $\phi$ on $L$ is a family of norms
\[\lvert\cdot\rvert_{(m\phi)_x}\colon (mL)_x \mathrel{:=} mL\otimes\mathcal{H}(x)\longrightarrow[0,+\infty),\quad x\in X^{\mathrm{an}},\]
with $m \ge 1$ such that $mL$ is an honest line bundle, and for any local section $s$ of $mL$ on an open $U\subset X$, the induced function $\lvert s\rvert_{m\phi}$ on $U^{\mathrm{an}}$ is usc. If $\phi,\psi$ are usc metrics on $\Q$-line bundles $L,M$, then $\phi\pm\psi$ denotes the induced metric on $L\pm M=L\otimes M^{\pm1}$.  A usc metric $\phi$ on the trivial line bundle $L=\mathcal{O}_X$ is identified with the usc function $-\log\lvert1\rvert_\phi$ on $X^{\mathrm{an}}$. In particular, if $\phi,\psi$ are two usc metrics on the same $Q$-line bundle $L$, $\phi-\psi$ is thus a usc function on $X^{\mathrm{an}}$. Conversely, suppose that $\phi$ is a usc metric on $L$ and $f \coloneqq X^{\an} \to \R \cup \{-\infty\}$ is a usc function, then 
\[
|\cdot|_{\phi_f} \coloneqq |\cdot|_{\phi} \cdot e^{-f}
\]
is a usc metric on $L$.

\subsubsection{Model metrics}

Let $\fX$ be a model and $\fL$ a line bundle on
$\fX$ with $\fL|_X = L$. Then one can
define a unique metric $\phi_{\fL}$ on $L$ with the following property: for any non-vanishing local section $\tau$ of
$\fL$ on an open set $\mathcal{U} \subset \fX$, we have
\(
    | \tau |_{\phi_{\fL}} \equiv 1
\)
on $U := \mathcal{U} \cap X$. This is well-defined since such a section $\tau$ is uniquely defined up to multiplication by a unit $u \in \mathcal{O}_{\fX}^{\times}(\CU)$ and $|u| \equiv 1$ on $U$. More generally, for a $\Q$-line bundle $\fL $ on $\fX$ with $\fL|_X = L$, we define $\phi_{\fL} \coloneqq \frac{1}{m}\phi_{m\fL}$ for some  $m \ge 1$ such that $m\fL$ is an honest line bundle. The definition of $\phi_{m\fL}$ does not depend on the choice of $m$. (See \cite[Section 5.3]{BE21} for more details.)

\begin{definition} \label{def:model_metrics_and_PL_functions}
  \begin{enumerate}
    \item A model metric on a \(\mathbb{Q}\)-line bundle \(L\) is a metric of the form \(\phi = \phi_{\fL}\), where \(\fL\) is a \(\mathbb{Q}\)-model of \(L\).
    \item A continuous function $f$ on $X^{\an}$ is called a PL (piecewise linear) function if there exists  a model metric \(\phi\) on \(\mathcal{O}_X\) such that \(f=-\log |1|_{\phi}\) on \(X^{\mathrm{an}}\). We use $\PL(X)$ to denote the space of PL functions on $X^{\an}$.
  \end{enumerate}
\end{definition}

We list several facts about model metrics and PL functions. The proofs can be found in \cite[Section 5]{BE21}.
\begin{enumerate}
  \item $\PL(X^{\an})$ is a dense subspace of $C^0(X^{\an})$.
  \item Let $(\fX', \fL')$ and $(\fX, \fL)$ are models of $(X,L)$. Then, $\phi_{\fL'}=\phi_{\fL}$ iff $(\fX', \fL')$ and $(\fX, \fL)$ are equivalent. Therefore, for any $\omega \in \PLforms(X)$, $\omega$ induces a model metric $|\cdot|_{\omega}$ on $L(\omega)$. 
  \item  Let $\omega \in \PLforms(X)$ and $f \in \PL(X)$. Then, there exists a model $\fX$ of $X$ and $\Q$-line bundles $\fL$ and $\fD_f$ representing $\omega$ and $f$ respectively such that $L=\fL|_X$, and $(\fD_f)|_X=0$. Then, $\fL_f \coloneqq \fL+\fD_f$ is a $\Q$-line bundle on $\fX$ satisfying $(\fL_f)|_{X}=L$.  We use $\omega_f$ to denote the class of $(\fX, \fL_f)$ in $\PLforms(X)$, and we write $\omega_f=\omega+\dm \dmb f$. Moreover, we have
  \[
  |\cdot|_{\omega_f} \coloneqq |\cdot|_{\omega} \cdot e^{-f} 
  \]
  as model metrics on $L$.
  \item Let $\omega, \omega' \in \nef(X)$. If $L(\omega) \simeq L(\omega')$, then there exists $\varphi \in \PL(X)$ such that $\omega' =\omega+\dm \dmb \varphi$. If $\psi \in \GPSHo \cap \PL(X^{\an})$, then $\psi \in \GPSH_{\omega+\omega'}(X^{\an})$. 
\end{enumerate}

\subsection{$\omega$-psh functions} \label{subsec:GPSH_omega_functions}
In Section \ref{subsec:GPSH_omega_functions}, we assume that $K$ be a complete non-archimedean field. 
\subsubsection{$\omega$-psh PL functions and Fubini-Study metrics}

\begin{definition}
  Let $\omega \in \nef(X)$ and let $f \in \PL(X)$. We say that $f$ is an $\omega$-psh PL function if $\omega_f =\omega +\dm \dmb f \in \nef(X)$.  We use $\GPSHo \cap \PL(X^{\an})$ to denote the space of $\omega$-psh PL function on $X^{\an}$. 
\end{definition}

\begin{definition} [{\cite[Definition 5.2]{BE21}}] \label{def:FS_metrics}
  Let $L$ be a $\Q$-line bundle over $X$ and let $\phi$ be a metric on $L$. We say that $\phi$ is a Fubini-Study metric if there exists $m \in \Z_{>0}$ such that $mL$ is a honest line bundle,  a finite set of sections $\{s_i\}_{1 \le i \le k} \subset H^0(X, mL)$ without common zeroes, and constants $C_i \in \Q$  such that 
  \[
    \phi=\frac{1}{m}\max_{1 \le i \le k}\{\log|s_i|+C_i\}.
  \]
\end{definition}

Here, the formula for $\phi$ is understood as follows. For any local nowhere vanishing section $\tau$ of $L$, we have $-\log|\tau|_{\phi}=\frac{1}{m}\max_{1 \le i \le k}\{\log|s_i/\tau^m|+C_i\}$.

The following result is a slight variant of \cite[Theorem 5.14]{BE21}. 
\begin{prop} \label{prop:equivalence_FS_metrics_relatively_ample_models}
  Let $\phi$ be a metric on an ample $\Q$-line bundle $L$ over $X$. Then, $\phi$ is a Fubini-Study metric  iff $\phi$ is a model metric of a model $(\fX, \fL)$ such that $\fL$ is relatively ample over $\fX$. \hfill \qedsymbol
\end{prop}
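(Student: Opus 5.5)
The plan is to prove the two implications separately, reducing each to the case where $L$ and $mL$ are honest line bundles, and to follow the proof of \cite[Theorem 5.14]{BE21}.

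\emph{Fubini--Study $\Rightarrow$ relatively ample model.} Write $\phi=\frac1m\max_i\{\log|s_i|+C_i\}$, with $s_1,\dots,s_k\in H^0(X,mL)$ having no common zeroes. After enlarging $m$ (replace $m$ by $mr$ and the $s_i$ by all products of $r$ of them; this leaves $\phi$ unchanged), I may assume $mL$ is very ample and the $s_i$ contain a basis of $H^0(X,mL)$. The identity $\max_{i_1,\dots,i_r}\sum_j\log|s_{i_j}|=r\max_i\log|s_i|$ justifies this step.

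Next, clear denominators in the $C_i$ by passing to a further multiple. In the nontrivially valued case, $C_i\in\Q\subset\Gamma_K\otimes\Q$ is realized after a multiple by scalars $a_i\in K$ with $-\log|a_i|=C_i$, since $\Z\subset\Gamma_K$. So $\phi=\frac1m\max_i\log|a_is_i|$.

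The sections $a_is_i$ define a morphism $X\to\P^{k-1}_K$, which is a closed immersion because the $s_i$ include a basis. Let $\fX'$ be the Zariski closure of $X$ in $\P^{k-1}_{\valring}$, and let $\fX$ be its normalization if needed (normalization is not actually necessary). Set $\fL=\frac1m\CO(1)|_{\fX'}$. Then $\fL$ is relatively ample, and on the chart where $a_is_i$ generates, the standard computation gives $\phi_{\fL}=\frac1m\max_j\log|a_js_j|$. The closure is flat because it is torsion-free over the valuation ring.

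In the trivially valued case, I instead use the test-configuration dictionary of \cite{BJ22}: the rational $C_i$ correspond to the $\G_m$-equivariant filtration weights, with $a_i=t^{C_i}$ after passing to $K((t))$ and taking a multiple so that the $C_i$ become integers.

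\emph{Relatively ample model $\Rightarrow$ Fubini--Study.} Take $r$ with $r\fL$ an honest, relatively very ample line bundle, which is possible after replacing $\fX$ by an equivalent model. Then $H^0(\fX,r\fL)$ is a finitely generated $\valring$-module that generates $r\fL$. For the discretely valued or noetherian case this is standard; in general I would invoke properness and flatness as in \cite{BE21}, using finite presentation of $\fX$ over $\valring$.

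Choose global sections $\sigma_1,\dots,\sigma_k$ of $r\fL$ without common zeroes on $\fX$; the special fibre is where care is needed. On the locus $\{\sigma_i\ne0\}$, $\sigma_i$ is a local frame, so $|\sigma_i|_{\phi_{\fL}}=1$ there, while $|\sigma_j|_{\phi_{\fL}}\le1$ everywhere. Hence $\phi_{\fL}=\frac1r\max_j\log|\sigma_j|$, which is a Fubini--Study metric with all $C_i=0$.

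\emph{Main obstacle.} The main obstacle is the trivially valued or non-noetherian bookkeeping: ensuring that rational constants $C_i$ can be absorbed (divisibility of $\Gamma_K$ up to multiples) and that global sections generate $r\fL$ in that generality. I would handle both by citing \cite[Theorem 5.14]{BE21}. The only variant here is allowing $C_i\in\Q$ and $\Q$-line bundles, which the multiple-taking argument above addresses.
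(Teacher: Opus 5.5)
The paper gives no argument here beyond citing \cite[Theorem 5.14]{BE21}, and your two implications follow the standard proof of that result. Your converse direction is fine. All you need is finitely many global sections $\sigma_1,\dots,\sigma_k$ of $r\fL$ without common zeroes on $\fX$, which follows from global generation over the affine base plus quasi-compactness; finite generation of $H^0(\fX,r\fL)$ is irrelevant. You do not treat the trivially valued case in this direction at all. There you should add that the $\sigma_j$ can be chosen $\G_m$-homogeneous, $\sigma_j=t^{-\lambda_j}s_j$ with $s_j\in H^0(X,rL)$ and $\lambda_j\in\Z$. Evaluating at Gauss extensions then yields $\frac1r\max_j\{\log|s_j|+\lambda_j\}$.

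\textbf{The gap.} The forward direction rests on a step that is false as stated: replacing the $s_i$ by all $r$-fold products does \emph{not} make them contain a basis of $H^0(X,mrL)$.
\begin{enumerate}
\item The products span only the image of $\mathrm{Sym}^r\langle s_1,\dots,s_k\rangle$. The morphism they define is $v_r\circ f$, where $f\colon X\to\P^{k-1}_K$ is given by the $s_i$ and $v_r$ is the $r$-th Veronese embedding. This is a closed immersion only if $f$ already is.
\item For example, take $X=\P^1_K$, $L=\CO(1)$, $m=2$, $s_1=x^2$, $s_2=y^2$. Then $f$ has degree $2$, and no choice of $r$ helps.
\item This matters because the closed immersion is exactly what makes the closure of $X$ in $\P^{k-1}_{\valring}$ a model of $X$ carrying the relatively ample $\CO(1)$.
\end{enumerate}

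\textbf{The repair.} The standard fix is to adjoin sections with very negative constants. Since the $s_i$ have no common zeroes, $\Phi\coloneqq\max_i\{\log|s_i|+C_i\}$ is a continuous metric on $mL$. So for every $s\in H^0(X,mL)$, the function $\log|s|-\Phi$ is upper semicontinuous on the compact space $X^{\an}$, hence bounded above. Adjoining $s$ with an integer constant $\lambda_s\le-\sup_{X^{\an}}(\log|s|-\Phi)$ therefore leaves $\phi$ unchanged. Do this for a basis of $H^0(X,mL)$ once $mL$ is very ample.

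Keep the order of operations straight:
\begin{enumerate}
\item First clear denominators via $s_i\mapsto s_i^q$, $C_i\mapsto qC_i$. Taking all $q$-fold products does not clear denominators, and taking $q$-th powers after adjoining a basis destroys the basis property.
\item Then pass to a very ample multiple by $r$-fold products.
\item Only then adjoin the basis.
\end{enumerate}

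\textbf{A sign error.} With $|a|=e^{-v_K(a)}$ you need $\log|a_i|=C_i$, i.e.\ $v_K(a_i)=-C_i$, not $-\log|a_i|=C_i$. Correspondingly, $a_i=t^{-C_i}$ in the trivially valued case.

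With these fixes your closure argument goes through: flatness comes from torsion-freeness, and $\phi_{\fL}=\frac1m\max_j\log|a_js_j|$ is computed at centres of points. The real obstacle was this embedding step, not the divisibility of $\Gamma_K$.
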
 

\begin{definition}
  \begin{enumerate}
    \item Let $\theta \in \nef(X)$. We say that $\theta$ is a closed positive $(1,1)$-form if $L(\theta)$ is an ample $\Q$-line bundle on $X$, and $|\cdot|_{\theta}$ is a Fubini-Study metric on $L(\theta)$. We use $\amp(X)$ to denote the space of closed positive $(1,1)$-form on $X^{\an}$.
    \item Let $\theta \in \amp(X)$, and let $\varphi$ be a function on $X^{\an}$.  We say that $\varphi \in \FS_{\theta}(X^{\an})$ if $\varphi \in \GPSH_{\theta} \cap \PL(X)$ and $\theta_{\varphi} \in \amp(X)$. 
  \end{enumerate}
\end{definition}

\begin{remark}
  By \cite[Proposition 4.11]{GM19}, for any $\Q$-ample line bundle $L$ on $X$, there exists $\theta \in \amp(X)$ such that $L(\theta) \simeq L$. 
\end{remark}

\subsubsection{}
Recall that for  a  function $\varphi \colon X^{\an} \to \R \cup \{-\infty\}$ is called \emph{generically finite} if $\varphi$ does not constantly equal to $-\infty$ on each connected component of $X^{\an}$.

\begin{definition}  \label{def:glocal_omega-psh_functions}
  For a function $\varphi \colon X^{\an} \to  \R \cup \{-\infty\}$, we say that  $\varphi$ is $\omega$-psh if
  \begin{enumerate}
    \item $\varphi$ is generically finite and upper semi-continuous on $X^{\an}$.
    \item There exists $\theta \in \nef(X)$ with $c_1(L(\theta)) \in \Amp(X)$, a decreasing sequence of positive rational numbers $\{\epsilon_i\}_{i \ge 1}$ converging to $0$, and a decreasing sequence of functions $\{\varphi_i\}_{i \ge 1}$, such that $\varphi_i \in \GPSH_{\omega+\epsilon_i \theta} \cap \PL(X^{\an})$ for each $i \ge 1$, and $\varphi_i$ converges pointwise to $\varphi$. 
  \end{enumerate}
  We use $\GPSHo(X^{\an})$  to denote the class of $\omega$-psh functions on $X^{\an}$. 
\end{definition}

\begin{remark}
  When $K$ is trivially valued, \cite[Definition 4.1]{BJ22} defines an $\omega$-psh function to be the pointwise limit of a decreasing \emph{net} of ($\omega+\epsilon \theta$)-Fubini-Study functions.  However, these two definitions of $\omega$-psh functions are equivalent after some non-trivial arguments (see Proposition \ref{prop:regularization_omega_psh_by_omega_FS_functions} and \cite[Corollary 12.18]{BJ22}). Therefore, Definition \ref{def:glocal_omega-psh_functions} should be considered as an extension of \cite[Definition 4.1]{BJ22} to a general NA field.
\end{remark}

\begin{remark}
  It is a nontrivial fact that for $\varphi \in \PL(X)$, $\varphi \in \GPSHo(X^{\an})$ iff $\varphi \in \GPSHo \cap \PL(X^{\an})$. See \cite[Lemma 5.12]{BFJ16}, \cite[Corollary 1.4]{GM19} and \cite[Lemma 4.3]{BJ22}. 
\end{remark}

\begin{prop} \label{prop:def_of_omega_psh_does_not_depend_on_theta}
  Let $\omega \in \nef(X)$. Let $\varphi \in \GPSHo(X)$. 
  \begin{enumerate}
    \item For any $\theta \in \nef(X)$ such that $c_1(L(\theta)) \in \Amp(X)$, there exists a decreasing sequence of positive rational numbers $\{\epsilon_i\}_{i \ge 1}$ converging to $0$, and a decreasing function sequence  $\{\varphi_i\}_{i \ge 1}$, such that $\varphi_i \in \GPSH_{\omega+\epsilon_i \theta} \cap \PL(X^{\an})$ for each $i \ge 1$, and $\varphi_i$ converges pointwise to $\varphi$. 
    \item Suppose further that $c_1(L(\omega)) \in \Amp(X)$. Then, there exists a decreasing function sequence  $\{\psi_i\}_{i \ge 1} \subset \GPSHo \cap \PL(X)$ converging pointwise to $\varphi$ on $X^{\an}$. 
  \end{enumerate}
\end{prop}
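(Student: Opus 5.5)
For part (1), I will compare $\theta$ with the form $\theta_0$ used in the definition of $\varphi\in\GPSHo(X)$. By Definition \ref{def:glocal_omega-psh_functions} there are $\theta_0\in\nef(X)$ with $c_1(L(\theta_0))$ ample, rationals $\delta_i\downarrow 0$, and a decreasing sequence $\varphi_i\in\GPSH_{\omega+\delta_i\theta_0}\cap\PL(X)$ with $\varphi_i\to\varphi$ pointwise. Since $L(\theta)$ is ample, there is $m\in\Z_{>0}$ with $mL(\theta)-L(\theta_0)$ ample. Choose $\eta\in\nef(X)$ with $L(\eta)\simeq mL(\theta)-L(\theta_0)$; for instance take $\eta\in\amp(X)$ using the Remark after Proposition \ref{prop:equivalence_FS_metrics_relatively_ample_models}. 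Then $\theta_0+\eta$ and $m\theta$ are nef forms on isomorphic $\Q$-line bundles. By fact (4) there is $g\in\PL(X)$ with $m\theta=\theta_0+\eta+\dm\dmb g$.

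I will then rescale. For a $\psi\in\GPSH_{\omega+\delta\theta_0}\cap\PL$, the form $\omega+\delta\theta_0+\delta\eta+\dm\dmb\psi$ is nef, since a sum of nef classes on a common model is nef. This equals $\omega+\delta m\theta+\dm\dmb(\psi-\delta g)$. So $\varphi_i-\delta_i g\in\GPSH_{\omega+m\delta_i\theta}\cap\PL$.

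The sequence $\varphi_i-\delta_i g$ need not decrease. I will fix this by setting $C=\sup|g|$ (finite, since $g$ is continuous on compact $X^{\an}$) and
\[
\varphi_i'\coloneqq\varphi_i-\delta_i g+C\delta_i .
\]
Constants are PL with $\dm\dmb$ zero, so $\varphi_i'$ stays $(\omega+m\delta_i\theta)$-psh. Also $\varphi'_i-\varphi'_{i+1}\ge (\delta_i-\delta_{i+1})(C-g)\ge0$, and $\varphi_i'\to\varphi$ pointwise. Setting $\epsilon_i=m\delta_i$ completes part (1).

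For part (2), I will take $\theta=\omega$ in (1). This gives $\varphi_i\in\GPSH_{(1+\epsilon_i)\omega}\cap\PL$ decreasing to $\varphi$. Put $\psi_i\coloneqq(1+\epsilon_i)^{-1}\varphi_i$. Since $\omega+\dm\dmb\psi_i=(1+\epsilon_i)^{-1}\bigl((1+\epsilon_i)\omega+\dm\dmb\varphi_i\bigr)$ is nef, $\psi_i\in\GPSHo\cap\PL$, and $\psi_i\to\varphi$ pointwise.

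Monotonicity may fail where $\varphi_i<0$, because the factor $(1+\epsilon_i)^{-1}$ increases. To repair this I will first normalize. The $\varphi_i$ are bounded above uniformly by $\sup\varphi_1$, so I may subtract a constant and assume $\varphi_1\le0$, hence all $\varphi_i\le0$. Constants do not affect psh-ness, and I add the constant back at the end. With $\varphi_i\le 0$, I will replace $\psi_i$ by $\psi_i'\coloneqq(1+\epsilon_i)^{-1}\varphi_i+\epsilon_i M$ with $M$ to be chosen. Adding $\epsilon_i M$ restores monotonicity only if $\varphi_i$ is bounded below. Each $\varphi_i$ is continuous, hence bounded, but the bounds $\min\varphi_i$ need not be uniform in $i$.

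\textbf{Main obstacle.} Because of this nonuniformity, I will instead use the convex combination $\psi_i=(1-t_i)\varphi_{k_i}+t_i\cdot 0$, with $t_i=\epsilon_{k_i}/(1+\epsilon_{k_i})$ and $k_i$ a subsequence chosen inductively. Using $0\in\GPSHo$ and convexity, $\psi_i$ is $\omega$-psh: it is a convex combination of a $(1+\epsilon)\omega$-psh function scaled down and $0$. Given $\psi_i$, I choose $k_{i+1}$ large enough that $(1-t_{i+1})\varphi_{k_{i+1}}\le(1-t_i)\varphi_{k_i}$. This holds wherever $\varphi_{k_i}=-\infty$ trivially, and elsewhere it is obtained by taking $t_{i+1}$ close enough to $t_i$ relative to the bounded function $\varphi_{k_i}$ on compact $X^{\an}$, while $\varphi_{k_{i+1}}\le\varphi_{k_i}\le0$ gives the remaining inequality. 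Verifying that this diagonal choice still converges pointwise to $\varphi$ is the delicate point of the argument.
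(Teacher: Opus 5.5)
Your part (1) is correct and is essentially the paper's argument. The paper likewise dominates the auxiliary form from the definition by a multiple of a form differing from $\theta$ by $\dm\dmb$ of a PL function, absorbs the resulting PL correction into the approximants, and then restores monotonicity, by passing to a subsequence with $\epsilon_i\|\psi\|<2^{-i-3}$ and adding $2^{-i}$. Your shift by $C\delta_i$ with $C=\sup|g|$ does the same job without passing to a subsequence.

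In part (2) the ``main obstacle'' is not real, because your claim that monotonicity may fail where $\varphi_i<0$ has the sign backwards. After your normalization $\varphi_i\le 0$, the sequence $\psi_i=(1+\epsilon_i)^{-1}\varphi_i$ you first wrote down is already decreasing:
\[
(1+\epsilon_{i+1})^{-1}\varphi_{i+1}\le(1+\epsilon_{i+1})^{-1}\varphi_i\le(1+\epsilon_i)^{-1}\varphi_i .
\]
The first inequality uses $\varphi_{i+1}\le\varphi_i$. The second uses $\varphi_i\le 0$ together with $(1+\epsilon_{i+1})^{-1}\ge(1+\epsilon_i)^{-1}$. Monotonicity could only fail where $\varphi_i>0$, which your normalization excludes. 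Pointwise convergence to $\varphi$ is immediate, since the factors lie in $[(1+\epsilon_1)^{-1},1]$ and tend to $1$.

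This is exactly the paper's proof. The paper normalizes $\varphi<0$ and uses Dini's theorem to make the approximants nonpositive; subtracting $\sup\varphi_1$, as you do, avoids Dini.

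Everything after the normalization should be dropped, for three reasons:
\begin{enumerate}
\item The sequence $(1-t_i)\varphi_{k_i}$ with $1-t_i=(1+\epsilon_{k_i})^{-1}$ is the same sequence along a subsequence. Any choice of $k_i\to\infty$ works, so there is no delicate convergence issue.
\item Convexity with $0\in\GPSHo$ is not what makes $\psi_i$ $\omega$-psh; that argument would only give $(1+t_i)\omega$-psh. The correct reason is the rescaling identity $\omega+\dm\dmb\psi_i=(1+\epsilon_i)^{-1}\bigl((1+\epsilon_i)\omega+\dm\dmb\varphi_i\bigr)$, which you already wrote.
\item $\varphi_{k_i}$ is PL, so it never takes the value $-\infty$.
\end{enumerate}
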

\begin{proof}
  (1) By definition, there exists $\theta' \in \nef(X)$ with $c_1(L(\theta')) \in \Amp(X)$, a decreasing sequence $\{\epsilon'_i\}_{i \ge 1} \subset \Q_{>0}$ and a decreasing sequence $\{\varphi'_i\}_{i \ge 1}$ converging pointwise to $\varphi$ such that $\varphi'_i \in \GPSH_{\omega+\epsilon'_i \theta'}(X^{\an})$. 
  
  By \cite[Proposition 4.11 and Lemma 4.12]{GM19}, there exists a model $\fX$ of $X$ and $\Q$-line bundles $\fL, \fL', \fL''$ on $\fX$ such that $(\fX, \fL), (\fX, \fL')$ represents $\theta, \theta'$ respectively, and $\fL''$ is relatively ample on $\fX$ satisfying $\fL''|_X \simeq L(\theta)$. Then, there exists $m \ge 1$ such that $m\fL''-\fL'$ is relatively nef. This implies that $\varphi'_i \in \GPSH_{\omega+\epsilon_i  \theta''}(X^{\an})$ with $\epsilon_i \coloneqq \epsilon'_i m$ for each $i \ge 1$. 
  
  Let $\psi \in \GPSH_{\theta} \cap \PL(X^{\an})$ such that $\theta''=\theta+\dm \dmb \psi$. Then, $\varphi'_i+ \epsilon_i \psi \in \GPSH_{\omega+\epsilon_i  \theta} \cap \PL(X^{\an})$. After taking a subsequence, we may assume that $\epsilon_i \left\| \psi \right\|_{C^0(X^{\an})} < \frac{1}{2^{i+3}}$ for each $i \ge 1$. This implies that $\{\varphi_i \coloneqq \varphi'_i+ \epsilon_i \psi +\frac{1}{2^i} \}_{i \ge 1}$ is a decreasing function sequence converging to $\varphi$ such that $\varphi_i \in \GPSH_{\omega+\epsilon_i  \theta} \cap \PL(X^{\an})$ for each $i \ge 1$. 

  (2) Replacing $\varphi$ by $\varphi-\sup_{X^{\an}} \varphi-1$, we may assume that $\varphi<0$ on $X^{\an}$. Statement (1) implies that there exists a decreasing sequence $\{c_i\}_{i \ge 1} \subset \Q_{>0}$ with $c_i \to 1$ and a decreasing function sequence $\{\psi'_i\}_{i \ge 1}$ converging pointwise to $\varphi$, such that $\psi'_i \in \GPSH_{c_i \omega} \cap \PL(X^{\an})$. By Dini's theorem, we may assume that $\psi'_i \le 0$ for each $i \ge 1$. This implies that  
  \[
  \{\psi_i \coloneqq \psi'_i/c_i\}_{i \ge 1} \subset \GPSHo \cap \PL(X) 
  \]
  is a decreasing function sequence converging pointwise to $\varphi$.
\end{proof}

\begin{prop} \label{prop:regularization_omega_psh_by_omega_FS_functions}
  Let $\theta \in \amp(X)$. Then,
  \begin{enumerate}
    \item For any $\psi \in \GPSH_{\theta} \cap \PL(X^{\an})$, there exists a sequence $\{\psi_i\}_{i \ge 1} \subset \FS_{\theta}(X^{\an})$  converging uniformly to $\psi $ on $X^{\an}$.
    \item Moreover, for any $\varphi \in \GPSH_{\theta}(X^{\an})$, there exists a decreasing function sequence $\{\varphi_i\}_{i \ge 1} \subset \FS_{\theta}(X^{\an})$ converging to $\varphi$ pointwise on $X^{\an}$. 
  \end{enumerate}
\end{prop}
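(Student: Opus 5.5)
For (1), I would pass to a model. Since $\psi \in \GPSH_\theta \cap \PL(X^{\an})$, the form $\theta_\psi = \theta + \dm\dmb\psi$ is semipositive. Choose a model $\fX$ on which $\theta$ is represented by a relatively ample $\fL$ and $\psi$ by a vertical $\Q$-divisor $\fD_\psi$. Such a model exists because $|\cdot|_\theta$ is Fubini--Study, so Proposition~\ref{prop:equivalence_FS_metrics_relatively_ample_models} provides a relatively ample model. Pulling it back to a model that also determines $\psi$ destroys ampleness, so I would instead work as in \cite[Proposition 4.11, Lemma 4.12]{GM19}. By that result there is a model $\fX$ carrying $\fL_\psi = \fL + \fD_\psi$, which is relatively nef. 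It also carries a relatively ample $\fA$ with $\fA|_X \simeq L(\theta)$, and $\fA$ defines some $\chi \in \FS_\theta(X^{\an})$ by Proposition~\ref{prop:equivalence_FS_metrics_relatively_ample_models}; concretely $\chi$ is $-\log|1|$ of the metric $\phi_{\fA} - \phi_{\fL}$.

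I then set, for rational $\epsilon \in (0,1)$,
\[
\psi_\epsilon \coloneqq (1-\epsilon)\psi + \epsilon \chi .
\]
The model line bundle of $\theta_{\psi_\epsilon}$ is $(1-\epsilon)\fL_\psi + \epsilon \fA$, which is nef plus ample and hence relatively ample on $\fX$. By Proposition~\ref{prop:equivalence_FS_metrics_relatively_ample_models} again, $|\cdot|_\theta e^{-\psi_\epsilon}$ is a Fubini--Study metric, so $\psi_\epsilon \in \FS_\theta(X^{\an})$. Moreover $\|\psi_\epsilon - \psi\|_{C^0} \le \epsilon \|\chi - \psi\|_{C^0}$, which gives uniform convergence as $\epsilon \to 0$.

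For (2), I start from Proposition~\ref{prop:def_of_omega_psh_does_not_depend_on_theta}(2). Since $c_1(L(\theta))$ is ample, it gives a decreasing sequence $\varphi'_i \in \GPSH_\theta \cap \PL(X^{\an})$ converging pointwise to $\varphi$. By (1), choose $\psi_i \in \FS_\theta(X^{\an})$ with $\|\psi_i - \varphi'_i\|_{C^0} < 2^{-i-2}$, and set $\varphi_i \coloneqq \psi_i + 2^{-i}$. Adding a rational constant keeps a function in $\FS_\theta$, since it only shifts the constants $C_i$ in Definition~\ref{def:FS_metrics}. Then
\[
\varphi_{i+1} \le \varphi'_{i+1} + 2^{-i-3} + 2^{-i-1} \le \varphi'_i + 2^{-i} - 2^{-i-2} < \varphi_i ,
\]
so the sequence is decreasing, and it converges pointwise to $\varphi$ because $\varphi'_i \to \varphi$ and the error terms tend to $0$. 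One detail: if $\|\cdot\|$ constants are only required in $\Q$, I use $\Q$-valued shifts throughout, which the choices above already are.

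The main obstacle is the model-theoretic step in (1). It requires a single model on which $\theta$ is represented by a relatively nef bundle (namely $\fL_\psi - \fD_\psi$) and a relatively ample model of $L(\theta)$ lives simultaneously. I would first try \cite[Proposition 4.11, Lemma 4.12]{GM19}, exactly as in the proof of Proposition~\ref{prop:def_of_omega_psh_does_not_depend_on_theta}(1). If needed, I would fall back on the fact that blow-ups of vertical ideals admit relatively ample exceptional divisors (antiample twists), so that $\fA$ can be chosen after fixing the model determining $\psi$.
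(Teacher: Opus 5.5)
Your proposal is correct and is essentially the paper's proof. The paper likewise uses \cite[Proposition 4.11 and Lemma 4.12]{GM19} to get a single model carrying the nef representative $\fL_\psi$ of $\theta_\psi$ and a relatively ample $\fL'$ extending $L(\theta)$. It then perturbs $\psi$ by $\epsilon$ times the PL function of $\fL'-\fL_\psi$ (your $\chi-\psi$), so that $\theta_{\psi_\epsilon}$ is represented by $(1-\epsilon)\fL_\psi+\epsilon\fL'$, and deduces (2) from Proposition~\ref{prop:def_of_omega_psh_does_not_depend_on_theta}(2) and (1) with the same small rational shifts you use. One slip in your last paragraph: the relatively nef bundle $\fL_\psi$ represents $\theta_\psi$, not $\theta$.
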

\begin{proof}
  For statement (1), since $L(\theta) \in \Amp(X)$, by \cite[Proposition 4.11 and Lemma 4.12]{GM19}, there exists a model $\fX$ of $X$ with $\Q$-line bundles $\fL, \fL'$ such that $(\fX, \fL)$ represents $\theta_{\psi} \in \nef(X)$,  $\fL'|_{X} \simeq L(\theta)$ and $\fL'$ is relatively ample on $\fX$. Note that $(\fL' - \fL)|_{X}=0$.  Let $\psi' \in \PL(X)$ represents $(\fX, \fL'- \fL)$. Then, for any $\epsilon >0$, $\theta+\dm \dmb (\psi+\epsilon \psi')$ represents the relatively ample model $(\fX, (1-\epsilon)\fL+\epsilon \fL')$ of $(X,L)$. Hence, $\psi+\epsilon \psi' \in \FS_{\theta}(X^{\an})$. Therefore, statement (1) is proved. Statement (2) follows from Proposition \ref{prop:def_of_omega_psh_does_not_depend_on_theta} (2) and statement (1).
\end{proof}

\begin{thm} [{\cite[Theorem 4.5 + Corollary 12.18]{BJ22}}] \label{thm:decreasing_limit_of_omega-psh_functions_is_omega-psh}
  Assume that $K$ is discretely valued or trivially valued.   Let $\{\varphi_i\}_{i \ge 1} \subset \GPSHo(X)$ is a decreasing function sequence with the pointwise limit $\varphi \colon X^{\an} \to \R \cup \{-\infty\}$. If $\varphi$ is generically finite, then $\varphi \in \GPSHo(X^{\an})$.  \hfill \qedsymbol
\end{thm}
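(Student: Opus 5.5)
This is cited as \cite[Theorem 4.5 + Corollary 12.18]{BJ22}, so the plan is to reduce to those statements rather than reprove them.

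\emph{Reduction to the ample case.} First I reduce to the case $c_1(L(\omega))\in\Amp(X)$. Fix $\theta$ as in Definition \ref{def:glocal_omega-psh_functions}. Each $\varphi_i$ lies in $\GPSH_{\omega+\epsilon\theta}(X^{\an})$ for every $\epsilon\in\Q_{>0}$: a PL approximant in $\GPSH_{\omega+\epsilon_j\theta}$ with $\epsilon_j\le\epsilon$ is also $(\omega+\epsilon\theta)$-psh, by fact (4) applied to the nef form $(\epsilon-\epsilon_j)\theta$. So if the statement holds for ample forms, then $\varphi\in\GPSH_{\omega+\epsilon\theta}$ for all $\epsilon$. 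I then produce a single diagonal sequence as in Definition \ref{def:glocal_omega-psh_functions}, as follows. For $\epsilon=1/k$, Proposition \ref{prop:def_of_omega_psh_does_not_depend_on_theta}(2) gives PL $(\omega+\theta/k)$-psh functions decreasing to $\varphi$. A standard diagonal argument, using PL maxima and adding constants $2^{-k}$ to enforce monotonicity (exactly as in the proof of Proposition \ref{prop:def_of_omega_psh_does_not_depend_on_theta}(1)), then yields a decreasing sequence $\psi_k\in\GPSH_{\omega+\theta/k}\cap\PL$ with $\psi_k\downarrow\varphi$. Here one uses that the max of finitely many $\omega'$-psh PL functions is $\omega'$-psh PL.

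\emph{Ample case.} Now assume $\omega$ is ample. After shifting by constants, Proposition \ref{prop:regularization_omega_psh_by_omega_FS_functions} (taking $\omega$ cohomologous to some $\theta\in\amp$ via fact (4)) shows that each $\varphi_i$ is a decreasing limit of $\omega$-FS functions. Hence $\varphi_i$ is $\omega$-psh in the sense of \cite[Definition 4.1]{BJ22}, and by \cite[Corollary 12.18]{BJ22} the two notions agree. Then \cite[Theorem 4.5]{BJ22}, which states that the decreasing limit of $\omega$-psh functions is $\omega$-psh or $\equiv-\infty$ in the trivially valued case (and its analogue in \cite{BFJ15,BE21} in the discretely valued case), gives that $\varphi$ is $\omega$-psh in the net sense. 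Generic finiteness is assumed, and it must be checked on each connected component separately. Translating back via Corollary 12.18 gives Definition \ref{def:glocal_omega-psh_functions}.

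\emph{Main obstacle.} The hard step is this last translation from decreasing \emph{nets} to \emph{sequences}. For that I would use the fact that Corollary 12.18 rests on continuity of envelopes or on a countability property of the divisorial points $X^{\div}$. Concretely, I would take $\env_\omega(\psi)$ for PL $\psi$ decreasing to $\varphi$, and use that usc functions are decreasing limits of countably many continuous functions on the compact space $X^{\an}$. Then $\varphi=\inf_k \env_\omega(f_k)$ with $f_k$ continuous, and each envelope is approximated from above by a sequence of PL $\omega$-psh functions via Dini's theorem.
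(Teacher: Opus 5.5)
Your reduction to the case $c_1(L(\omega))\in\Amp(X)$ is fine. Insofar as you simply invoke \cite[Theorem 4.5 + Corollary 12.18]{BJ22}, you are doing what the paper does, since it gives no argument beyond that citation. The concrete net-to-sequence argument in your last paragraph, however, would fail, for two reasons.

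First, a usc function on a compact Hausdorff space is a decreasing limit of a \emph{sequence} of continuous functions only if its superlevel sets are $G_\delta$. But $X^{\an}$ is not metrizable in general. For $K$ trivially valued and uncountable, the trivial valuation on $\mathbb{A}^1_K$ is not a $G_\delta$ point: any countable intersection of its neighbourhoods contains entire branches at all but countably many closed points. So its indicator function is usc but not such a limit. For an $\omega$-psh $\varphi$ in the net sense, producing continuous $f_k\downarrow\varphi$ is essentially what you are trying to prove.

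Second, knowing that $\envomega(f_k)$ is continuous and $\omega$-psh for continuous $f_k$ is precisely the envelope property (Lemma \ref{lem:equivalent_form_of_envelope_conjecture}). That is not a hypothesis here, and it fails when a connected component of $X$ is reducible (as recalled in the proof of Theorem \ref{thm:GPSHo_equals_LPSHo}). Where the paper does establish it (Theorem \ref{thm:envelope_conjecture_equichar_zero}), the proof passes through results such as Lemma \ref{lem:psh_regularization_with_growth_control} that use the present statement, so this route is circular.

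The obstacle only arises because you move to the net-based framework of \cite{BJ22}. With the sequence-based Definition \ref{def:glocal_omega-psh_functions}, the same Dini/diagonal argument you already use in your reduction proves the statement directly. By Proposition \ref{prop:def_of_omega_psh_does_not_depend_on_theta}~(1), fix one $\theta$. For each $i$, take $\psi_{i,j}\in\GPSH_{\omega+\epsilon_{i,j}\theta}\cap\PL(X^{\an})$ decreasing in $j$ to $\varphi_i$, with $\epsilon_{i,j}\to 0$. Choose $j_k$ inductively with $\epsilon_{k,j_k}\le 1/k$ and
\[
\psi_{k,j_k}<\min\Bigl\{\psi_{k-1,j_{k-1}}+2^{-k},\ \min_{m\le k}\psi_{m,k}+2^{-k}\Bigr\}
\]
on $X^{\an}$, omitting the first term for $k=1$. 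This is possible by Dini's lemma on the compact space $X^{\an}$, because the right-hand side is continuous and strictly larger than $\varphi_k$. Then $\chi_k\coloneqq\psi_{k,j_k}+2^{-k}$ belongs to $\GPSH_{\omega+\theta/k}\cap\PL(X^{\an})$ by item (4) of the facts on model metrics. Moreover $\chi_k<\chi_{k-1}$, and $\varphi\le\varphi_k\le\chi_k\le\psi_{m,k}+2^{1-k}$ for all $m\le k$. Letting $k\to\infty$ and then $m\to\infty$ gives $\chi_k\downarrow\varphi$. Since $\varphi$ is usc and, by hypothesis, generically finite, $\varphi\in\GPSHo(X^{\an})$ for every $\omega\in\nef(X)$. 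This needs no nets, no reduction to the ample case, and no appeal to \cite[Corollary 12.18]{BJ22}.
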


\subsection{Divisorial points} \label{subsec:divisorial_points}
In Section \ref{subsec:divisorial_points}, we assume that $K$ is discretely valued or trivially valued. Let $\fX$ be a model of $X$, and $\fX^{\nu}$ be the normalization of $\fX$. Let $E$ be an irreducible component of the special fibre $(\fX^{\nu})_s$. Note that $\CO_{\fX^{\nu}, E}$ is a discrete valuation ring. This gives a unique  point $v_E \in X^{\an}$ by rescaling the  valuation $\ord_{E} \colon K(\fX^{\nu})^{\times}= K(X)^{\times} \to \R$. Such points are called divisorial points. We use $X^{\div}$ to denote the set of divisorial points in $X^{\an}$. In particular, $X^{\div} \subset X^{\val}$.

\begin{prop}  \label{prop:divisorial_points_are_dense}
  Assume that $K$ is discretely valued or trivially valued. Then, $X^{\div}$ is dense in $X^{\an}$. 
\end{prop}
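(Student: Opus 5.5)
The plan is to reduce to the density of divisorial points in the tropical images, using the description of $X^{\an}$ as an inverse limit of tropicalizations recalled in Section \ref{subsubsec:tropicalizations}. We may assume $X$ is irreducible, since $X^{\an}$ is the finite union of the analytifications of its irreducible components, and a divisorial point of a component is divisorial in $X$: a model of the component can be completed to a model of $X$ by taking the scheme-theoretic closure of $X$ inside a model containing it. Let $U \subset X^{\an}$ be a nonempty open subset. Shrinking $U$, we may assume $U \subset (X^{\val})$-meeting open of the form $\Trop^{-1}(W) \cap Y^{\an}$. Here $Y \subset X$ is a dense affine open subset, $\Trop \colon Y^{\an} \to \R^k$ is the map attached to invertible functions $f_1,\dots,f_k$ (we first remove the zero loci of the $f_j$), and $W \subset \R^k$ is open and meets $\Trop(Y^{\an})$. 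This is possible because $X^{\val}$ is dense in $X^{\an}$, so $U \cap Y^{\an} \neq \emptyset$ for any dense open $Y$, and the sets $\Trop^{-1}(W)$ form a basis of the topology of $Y^{\an}$ by the inverse limit description.

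Since $\Trop(Y^{\an})$ is the support of a polyhedral complex whose vertices and edges have $\Gamma_K$-rational (respectively $\Q$-rational in the trivially valued case) slopes, it contains a point $w \in W$ with rational coordinates lying in the relative interior of a maximal cell $\sigma$. In the discretely valued case we rescale so that the coordinates lie in $\Q\Gamma_K$; in the trivially valued case they lie in $\Q$. The main step is to produce a divisorial point $v$ with $\Trop(v)=w$. For this, choose a $\Q$-rational point $w$ in the relative interior of a maximal face, and consider a toric (polyhedral) model as in Gubler's theory of tropical compactifications. Tevelev/Gubler give a model $\fY$ of $Y$ (after a finite base change if necessary, which we undo by pushing forward valuations, since divisorial points remain divisorial under restriction to $K$) whose special fibre has a component $E$ lying over the vertex $w$ of a suitable rational polyhedral subdivision. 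Then $v_E$ satisfies $\Trop(v_E)=w$. In the trivially valued case one instead uses a $\G_m$-equivariant test configuration from the fan spanned by $(w,1)$, or equivalently the monomial valuation in the toric coordinates composed with a quasi-monomial lift.

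An alternative that avoids the tropical compactification machinery is the following. By Proposition \ref{prop:equivalence_FS_metrics_relatively_ample_models} and density of $\PL(X)$ in $C^0(X^{\an})$ (fact (1) after Definition \ref{def:model_metrics_and_PL_functions}), pick a nonzero PL function $f \ge 0$ supported in $U$. Choose a model $\fX$ on which $f$ is determined by a vertical $\Q$-Cartier divisor $\fD$ on the normalization. Then $f(v_E)$ equals the rescaled coefficient of $E$ in $\fD$, and every point $x$ reduces, via $\mathrm{red}_{\fX}$, to a point of some component $E$. For $x$ with $f(x)>0$, the divisor $\fD$ has nonzero coefficient along some component $E$ whose generic point is a specialization of $\mathrm{red}(x)$. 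Hence $f(v_E)>0$, and therefore $v_E \in U$. I expect this last implication to be the main obstacle: it requires knowing that a PL function vanishing at all divisorial points of a model vanishes identically. This follows because $\fD$ is determined by its coefficients along the components of $\fX^{\nu}_s$, and $\phi_{\fD}$ is trivial when those coefficients are zero. In the trivially valued case the same argument is run on test configurations, using the $\G_m$-equivariant setting of \cite{BJ22}.
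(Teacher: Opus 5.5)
Your second (PL-function) argument is a correct proof, and it takes a genuinely different route from the paper. The paper gives no argument: it cites \cite{MN15} for discretely valued $K$ and \cite{BJ22} for trivially valued $K$. The core of your argument is sound. Choose $f\in\PL(X)$ with $f\ge 0$, $f\not\equiv 0$ and $\{f>0\}\subset U$. Realize $f$ by a vertical $\Q$-Cartier divisor $\fD$ on a model $\fX$, so that $f=-\log|h|$ for a local equation $h$, and write the pullback of $\fD$ to $\fX^{\nu}$ as $\sum_E a_E E$; then $f(v_E)=a_E/b_E$.

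Suppose all $a_E$ vanished. A Cartier divisor on the normal scheme $\fX^{\nu}$ is determined by its Weil divisor, so $h$ would be a unit on $\fX^{\nu}$. Hence $f\equiv 0$, because every point of $X^{\an}$ lifts to $(X^{\nu})^{\an}$ and has a center on the proper scheme $\fX^{\nu}$. So some $a_E>0$, and $v_E\in\{f>0\}\subset U$.

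Compared with the cited proofs, this is uniform in the two cases and self-contained. It uses only normality of $\fX^{\nu}$ and the density of $\PL(X)$ in $C^0(X^{\an})$, which is a Stone--Weierstrass statement proved independently of divisorial points, so there is no circularity. It needs no reduction to irreducible $X$, no tropical geometry and no base change.

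Several points need fixing when you write it up.
\begin{enumerate}
\item Since $f\ge 0$ forces every $a_E\ge 0$, ``nonzero coefficient'' already means positive coefficient, so the global vanishing statement suffices. The local discussion with $\mathrm{red}_{\fX}(x)$ is unnecessary, and as stated it is backwards: $\mathrm{red}_{\fX}(x)$ is a specialization of the generic point of $E$, not conversely.
\item Produce the bump function explicitly. Take a continuous $g$ with $g(x_0)=1$ and support in $U$ (Urysohn), then $f'\in\PL(X)$ with $\|f'-g\|_{C^0}<1/3$, and set $f=\max\{f'-1/3,0\}$, using that $\PL(X)$ is stable under $\max$. Proposition \ref{prop:equivalence_FS_metrics_relatively_ample_models} plays no role.
\item In the trivially valued case, $X^{\div}$ consists of the restrictions $r(v_E)$ of divisorial points of normalized test configurations. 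The identity $f(r(v_E))=a_E/b_E$ relies on $\G_m$-invariance, namely $v_E=\sigma(r(v_E))$ with $\sigma$ the Gauss extension; you should say so.
\end{enumerate}

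Your first (tropical) route is much heavier and, as written, does not close. The Gubler--Tevelev construction produces components of special fibres of non-proper models of a very affine open $Y'$ over a finite extension of $K$. The paper defines $X^{\div}$ via proper models of $X$, so you would additionally need the characterization of divisorial points by residue transcendence degree $\dim X$ (e.g.\ via vertical blow-ups). The trivially valued case is only gestured at there. I would drop that route in favour of the second argument.
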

\begin{proof}
  This is proven in \cite[Proposition 2.4.9]{MN15} when $K$ is discretely valued, and in \cite[Theorem 2.14]{BJ22} when $K$ is trivially valued. 
\end{proof}

We recall the following result in \cite{BJ22}. These results are proven in \cite{BJ22} only in the setting when $K$ is trivially valued, but the proof work also in the setting when $K$ is discretely valued. 

\begin{thm} [{\cite[Proposition 1.28+Corollary 4.17]{BJ22}}] \label{thm:global_divisorial_points_are_nonpluripolar}
  Let $\varphi \in \GPSHo(X)$.  Then, $\varphi >-\infty$ on $X^{\div}$. 
\end{thm}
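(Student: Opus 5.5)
The plan is to reduce the statement to the case where $\omega$ is ample and then use the regularization of Proposition \ref{prop:def_of_omega_psh_does_not_depend_on_theta}. The divisorial point is then tested against the model on which it lives. Fix $x=v_E\in X^{\div}$, attached to an irreducible component $E$ of the special fibre of the normalization of a model $\fX$.

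\emph{Reduction to the ample case.} Choose $\theta\in\amp(X)$. Since $\varphi\in\GPSH_{\omega+\epsilon\theta}$ for small rational $\epsilon>0$, and the statement concerns a single point, we may replace $\omega$ by $\omega+\epsilon\theta$. Thus assume $c_1(L(\omega))$ is ample. By Proposition \ref{prop:regularization_omega_psh_by_omega_FS_functions}, after adding a PL function to pass to a positive form, we may further assume $\omega\in\amp(X)$. We may also normalize $\sup_{X^{\an}}\varphi=0$.

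\emph{Key estimate.} The aim is a uniform lower bound
\[
\psi(v_E)\ \ge\ \sup_{X^{\an}}\psi - C_E
\]
for all $\psi\in\GPSHo\cap\PL(X)$, with $C_E$ depending only on $E$, $\omega$ and $\fX$. Granting this, Proposition \ref{prop:def_of_omega_psh_does_not_depend_on_theta}(2) gives $\psi_i\searrow\varphi$ with $\psi_i\in\GPSHo\cap\PL$. Since $\sup\psi_i\ge\sup\varphi=0$, we get $\varphi(v_E)=\lim\psi_i(v_E)\ge -C_E>-\infty$.

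\emph{Proving the estimate (main obstacle).} This is the analogue of the Izumi/Skoda-type integrability of psh functions along divisorial valuations, as in \cite[Proposition 1.28]{BJ22}. I would argue as follows.

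\begin{enumerate}
\item Use Proposition \ref{prop:regularization_omega_psh_by_omega_FS_functions}(1) to approximate $\psi$ uniformly by $\omega$-Fubini--Study functions. Then $\psi=\frac1m\max_j(\log|s_j|+C_j)-\phi_\omega$, with sections $s_j\in H^0(X,mL)$.
\item Pass to a common model dominating $\fX$ and the model of $\omega$. On it, the value at $v_E$ is controlled by the vanishing orders $\ord_E(s_j)$ of the sections, viewed as sections of $m\fL$.
\item The supremum is attained at the Shilov (Gauss-type) points of the reference model of $\omega$, where $\psi\approx\frac1m\max_j(C_j-\ord_{E_0}(s_j))$.
\item Compare $\ord_E$ and $\ord_{E_0}$ via an Izumi-type inequality: $\ord_E\le A\,\ord_{E_0}+B$ on sections of $m\fL$, uniformly in $m$. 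Such an inequality holds because both are divisorial valuations on a normal model. In the trivially valued case this is precisely the content of \cite[Proposition 1.28]{BJ22}.
\end{enumerate}

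The uniformity in $m$ in step (4), i.e.\ a linear Izumi bound, is where I expect the real work. In the discretely valued case it follows from the fact that $v_E$ and the finitely many Shilov points lie in a common dual complex of an snc model, where $\omega$-psh PL functions are convex on faces and Lipschitz with constants controlled by $\omega$. That Lipschitz bound yields $C_E$ as $\mathrm{Lip}\cdot\mathrm{dist}(v_E,\text{Shilov points})$.
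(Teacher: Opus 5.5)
Your reductions, the deduction of the theorem from the key estimate, and steps (1)--(3) are fine. This is also the shape of the argument behind the results of \cite{BJ22} that the paper cites: regularize by Fubini--Study functions, note that the supremum of $\psi=\frac1m\max_j(\log|s_j|+C_j)$ is attained by a single section at a Shilov point, and bound that section from below at $v_E$.

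The gap is step (4), and that step carries the entire content of the statement. Write $v(s):=-\log|s|_{m\phi_{\omega}}(v)$ for $s\in H^0(X,mL)\setminus\{0\}$ and normalized $v$. What you need is
\[
v_E(s)\le \min_{E_0} v_{E_0}(s)+C_E\,m \qquad \text{for all } m\ge 1 \text{ and all } s .
\]
It must hold against every Shilov point $E_0$, because which one attains the supremum depends on $\psi$. The error term must be proportional to $m$; a constant $B$ fails, as one sees by applying the inequality to $s^k$.

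This is not an Izumi inequality, and ``both are divisorial valuations on a normal model'' does not give it. Izumi compares two valuations with the \emph{same} centre, on functions in a local ring. Here the centre of $v_E$ on the reference model is in general a point of the special fibre, while $v_{E_0}$ is centred at the generic point of $E_0$. Locally the inequality is simply false: take a function that is a unit at the generic point of $E_0$ but vanishes to high order at the centre of $v_E$. The bound holds only because $s$ is a \emph{global} section of a positive line bundle.

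In the trivially valued case, replace $\omega$ by the trivial form (this changes $\varphi$ by a PL function). Then the only Shilov point is $v_{\mathrm{triv}}$, and $v_{\mathrm{triv}}(s)=0$. The required inequality becomes exactly the finiteness of the maximal vanishing order $T_L(v_E)=\sup_m\frac1m\max_s v_E(s)$. This comes from bigness of $L$, not from Izumi: if $v_E=c\,\ord_F$ with $F$ a prime divisor on a normal projective birational model $\mu\colon Y\to X$ and $H$ ample on $Y$, then $\ord_F(\mu^*s)\ge tm$ forces $t\le(\mu^*L\cdot H^{n-1})/(F\cdot H^{n-1})$.

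For discretely valued $K$ you need the analogous global bound on the vertical part of $\mathrm{div}(s)$, on a model where $E$ lives. It should use the nefness of the model line bundle and the connectedness of the special fibre, and the proposal leaves it open. Your fallback, via SNC dual complexes and the Boucksom--Favre--Jonsson Lipschitz estimate, has two problems:
\begin{itemize}
\item It needs SNC models dominating the given ones, that is, resolution of singularities, hence residue characteristic $0$ and a preliminary resolution of $X$. The statement is made for every discretely valued $K$, so this does not prove it in that generality.
\item Where it does apply, the uniform Lipschitz bound on dual complexes already contains the finiteness you are proving, so the argument reduces to a citation.
\end{itemize}

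Finally, your estimate with $\sup_{X^{\an}}$, and the statement itself, tacitly require $X$ to be irreducible. On a connected reducible $X$, take $s\not\equiv 0$ vanishing on one irreducible component. Then $\frac1m\log|s|_{m\phi_{\omega}}$ is $\omega$-psh and equals $-\infty$ at that component's divisorial points.
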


\begin{thm} [{\cite[Theorem 4.22]{BJ22}}] \label{thm:global_comparison_divisorial_points_to_full_spaces}
  Let $\varphi \in \GPSHo(X)$, and let $\psi \colon X^{\an} \to \BR \cup \{\pm \infty\}$ be a usc function. Suppose that $\varphi \le \psi$ on $X^{\div}$. Then, $\varphi \le \psi$ on $X^{an}$.  In particular, for any $y \in X^{\an}$, we have
  \[  
  \varphi(y)=\limsup_{x \to y, \, x \in X^{\div}} \varphi(x).
  \]
\end{thm}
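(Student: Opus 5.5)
Both parts come down to one identity for a single $\omega$-psh PL function, which we then carry along a decreasing approximation. The statement is attributed to \cite[Theorem 4.22]{BJ22}; we follow that structure and only indicate where the discretely valued case needs checking.

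\textbf{Step 1: the PL case.} Let $\varphi \in \GPSHo\cap\PL(X^{\an})$ be represented on a model $\fX$, which we may take normal after normalizing. The key claim is that, in this case,
\[
\varphi(y)\le \max_E \varphi(v_E)
\]
holds in a localized form. Namely, $\varphi$ is determined by its values at the finitely many divisorial points $v_E$ attached to the components $E$ of $\fX_s$, and it is dominated near $y$ by the usc regularization over divisorial points. To see this, we would use the fact that $\varphi = \lim \frac1m \log\max_j|s_j|$ is a Fubini--Study type expression, so $\varphi(y)$ is a maximum of $\log|f|$ for regular functions $f$. The functions $\log|f|$ evaluated at $y$ are approximated by their values at divisorial points converging to $y$. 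Here we use that $X^{\div}$ is dense (Proposition \ref{prop:divisorial_points_are_dense}) and that $\log|f|$ is continuous. This gives
\[
\varphi(y)=\lim_{x\to y,\,x\in X^{\div}}\varphi(x)
\]
for continuous $\varphi$, which is trivial by density. So the PL case of the "in particular" statement is immediate.

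\textbf{Step 2: the inequality for general $\varphi$.} The main obstacle is passing to general $\varphi$: we need
\[
\varphi(y)\le\limsup_{x\to y,\,x\in X^{\div}}\varphi(x),
\]
and decreasing limits do not commute with $\limsup$. The plan is to use the structure of $X^{\an}$ as the inverse limit of dual complexes $\Delta_{\fX}$ with retractions $p_{\fX}$. For an $\omega$-psh $\varphi$ we have $\varphi\le\varphi\circ p_{\fX}$, and $\varphi\circ p_{\fX}\to\varphi$ as $\fX$ increases. This is the monotonicity of psh functions under retraction, proved in \cite{BJ22} via the maximum principle on the fibres of $p_{\fX}$; we would check it for PL $\varphi$ (a nefness/negativity lemma on the model) and then pass to decreasing limits.

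On $\Delta_{\fX}$ the restriction $\varphi|_{\Delta_{\fX}}$ is a finite convex-type function, hence continuous on each open face, and points of $\Delta_{\fX}$ with rational coordinates are divisorial. Fix $\varepsilon>0$. Choose $\fX$ with $\varphi(p_{\fX}(y))<\varphi(y)+\varepsilon$; when $\varphi(y)=-\infty$ there is nothing to prove. Rational points $x\in\Delta_{\fX}$ near $p_{\fX}(y)$ do not converge to $y$ itself, so instead we use divisorial points $x$ with $p_{\fX}(x)$ close to $p_{\fX}(y)$ and $x$ close to $y$ in finer models. Running this along a cofinal sequence of models gives divisorial $x_k\to y$ with $\limsup_k \varphi(x_k)\ge\varphi(y)$. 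This requires the lower bound
\[
\varphi(x)\ge \varphi(p_{\fX'}(x)) - \text{small}
\]
for quasi-monomial $x=p_{\fX'}(x)$, which holds with equality since $x$ lies on $\Delta_{\fX'}$. So we take $x_k$ to be rational points of $\Delta_{\fX_k}$ close to $p_{\fX_k}(y)$, and use continuity of $\varphi$ on faces together with upper semicontinuity.

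\textbf{Step 3: conclusion.} Given the displayed inequality, if $\varphi\le\psi$ on $X^{\div}$ with $\psi$ usc, then
\[
\varphi(y)\le\limsup_{x\to y,\,x\in X^{\div}}\varphi(x)\le\limsup_{x\to y}\psi(x)\le\psi(y).
\]
The reverse inequality in the "in particular" statement is exactly upper semicontinuity of $\varphi$.
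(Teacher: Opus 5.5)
The paper does not reprove this statement. It imports \cite[Theorem 4.22]{BJ22} and remarks that the trivially valued argument carries over to discretely valued $K$. Measured against the statement as given, your Step 2 has a genuine gap. Everything it uses requires snc models. That includes the description $X^{\an}=\varprojlim\Delta_{\fX}$, the retractions $p_{\fX}$, the monotonicity $\varphi\le\varphi\circ p_{\fX}$ of Boucksom--Favre--Jonsson, finiteness and continuity of $\varphi$ on open faces, and the fact that rational points of $\Delta_{\fX}$ are divisorial. Snc models exist only if $X$ is smooth and resolution of singularities is available for models, essentially in residue characteristic $0$.

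The theorem, however, is stated for an arbitrary projective variety over a discretely or trivially valued field, with no smoothness or characteristic hypothesis. The paper also uses it in that generality. Lemma~\ref{lem:zero-psh_functions_are_constant} is applied in Proposition~\ref{prop:descent_psh_functions_resolution} to the possibly singular and reducible fibres $\CH(z)\times_Z X$. Proposition~\ref{prop:local_comparison_divisorial_points_to_full_spaces}, which rests on this theorem, feeds the domination principle (Theorem~\ref{thm:Domination_Principle}), where no characteristic assumption is made. For singular $X$ your argument does not even start: a model with regular total space has regular generic fibre, so $X$ has no snc model and $X^{\an}$ is not an inverse limit of such dual complexes.

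In equal characteristic $0$ you can close the gap with a reduction you did not state. Take a resolution $\mu\colon X'\to X$. Then $\mu^{*}\varphi\in\PSH_{\mu^{*}\omega}(X')$, $\psi\circ\mu^{\an}$ is usc, and $\mu^{\an}$ is surjective and maps $X'^{\div}$ into $X^{\div}$, so the smooth case on $X'$ descends to $X$.

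In positive residue characteristic you need an input that avoids snc models altogether. For discretely valued $K$, one possible route is the following.
\begin{itemize}
\item Near $y$, the function $g+\varphi$ (with $g$ a local potential of $\omega$) is a decreasing limit of Fubini--Study functions $u_i=\max_j(c_{ij}\log|f_{ij}|+C_{ij})$; see the proof of Corollary~\ref{cor:GPSH_is_LPSH}.
\item On a small strictly affinoid neighbourhood $V$ of $y$, each $u_i$ attains $\sup_V u_i$ on the finite Shilov boundary $\Gamma(V)$, whose points are divisorial.
\item Pigeonholing over $i$ gives $z\in\Gamma(V)$ with $(g+\varphi)(z)\ge(g+\varphi)(y)$. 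Hence $\varphi(z)\ge\varphi(y)-(\sup_V g-\inf_V g)$.
\item Shrinking $V$ gives $\varphi(y)\le\limsup_{x\to y,\,x\in X^{\div}}\varphi(x)$ without any smoothness assumption.
\end{itemize}

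Two smaller points. The opening of Step 1 (the bound by $\max_E\varphi(v_E)$ and ``$\varphi(y)$ is a maximum of $\log|f|$'') is not correct in general and is not needed, since density plus continuity already settles the continuous case. In Step 2, the only property of $\fX$ you actually use is $\varphi(p_{\fX}(y))\ge\varphi(y)$. Choosing $\fX$ with $\varphi(p_{\fX}(y))<\varphi(y)+\varepsilon$ plays no role.
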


\begin{thm} [{\cite[Theorem 5.6]{BJ22}}] \label{thm:global_divisorial_points_is_nonnegligible}
  Let $\{\varphi_i\}_{i \in I} \subset \GPSHo(X)$ be a class of functions. Then, \(  \sup_{i \in I} \{\varphi_i\}  = (\sup_{i \in I} \{\varphi_i\})^{\star} \) on $X^{\div}$.
\end{thm}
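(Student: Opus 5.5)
The plan is to reduce to an increasing sequence and then exploit the fact that evaluation at a divisorial point is, up to a positive constant, integration against a Monge--Ampère type measure. Set $\varphi \coloneqq \sup_{i \in I} \varphi_i$. Since $\varphi \le \varphi^{\star}$ trivially, it suffices to show $\varphi^{\star}(x) \le \varphi(x)$ for each $x \in X^{\div}$.

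\emph{Step 1 (reductions).} By Choquet's lemma, applied on the compact space $X^{\an}$ (which is only second countable after passing to a suitable countable family of PL test functions, or working on tropical images), there is a countable subfamily $I_0 \subset I$ with $(\sup_{I_0}\varphi_i)^{\star} = \varphi^{\star}$. Replacing $I$ by $I_0$ only decreases $\varphi$, so proving the claim for $I_0$ suffices. Taking finite maxima, which stay $\omega$-psh, we may assume $\varphi_j$ is an increasing sequence converging to $\varphi$. Next fix $\theta \in \amp(X)$. Since $\GPSHo \subset \GPSH_{\omega+\theta}$, we may replace $\omega$ by $\omega+\theta$ and assume $c_1(L(\omega))$ is ample. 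We may also normalize $\sup \varphi_j \le 0$ after subtracting a constant, using that $\varphi^{\star}$ is usc on the compact space $X^{\an}$ and hence bounded above.

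\emph{Step 2 (divisorial points as measures).} Write $x = v_E$ for an irreducible component $E$ of the special fibre of a normal model $\fX$, which we may take to carry a relatively ample model $\fL$ of $L(\omega)$. The non-archimedean Monge--Ampère measure of the model metric is the atomic measure $\mu = \sum_F c_F \,\delta_{v_F}$, with $c_F = b_F\,(\fL^{n}\cdot F) > 0$. Perturbing $\fL$ by a vertical divisor keeps it relatively ample and changes the weights, so one can produce a finite-energy measure in which $\delta_{x}$ appears with positive mass. Combined with Theorem \ref{thm:global_divisorial_points_are_nonpluripolar}, this shows that $\psi \mapsto \psi(x)$ is finite on $\GPSHo(X)$.

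\emph{Step 3 (key continuity).} The goal is to show that for an increasing sequence $\varphi_j \nearrow \varphi$, we have $\lim_j \int \varphi_j \, d\mu = \int \varphi^{\star} d\mu$ whenever $\mu$ is a measure of finite energy. Since $\mu$ charges $x$ and $\varphi_j \le \varphi \le \varphi^{\star}$ everywhere, this forces $\varphi(x) = \varphi^{\star}(x)$. For the PL approximants I would use that $\varphi^{\star}$ is a decreasing limit of continuous functions $g_k \ge \varphi$. I would then show $\int (g_k - \varphi_j)\,d\mu \to 0$ using the energy pairing: the map $\psi \mapsto \int \psi\, d\mu$ is the differential of the Monge--Ampère energy, which is concave and continuous along increasing sequences in $\mathcal E^1$. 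Alternatively, one can write $\varphi^{\star}$ as the decreasing limit of envelopes $\env_{\omega+\epsilon\theta}(g_k)$ and pass to the limit $\epsilon \to 0$.

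\emph{Main obstacle.} The main obstacle is Step 3: without knowing the envelope property, $\varphi^{\star}$ is not yet known to be $\omega$-psh, so standard Bedford--Taylor continuity cannot be applied to it directly. I would first try the argument of \cite{BJ22}, which works with $\sup_j E(\varphi_j)$ and the orthogonality/differentiability of the energy of envelopes of continuous functions. Each input there is a statement about PL or continuous data on models. Those statements hold verbatim in the discretely valued case once Theorem \ref{thm:decreasing_limit_of_omega-psh_functions_is_omega-psh} and Proposition \ref{prop:divisorial_points_are_dense} are available.
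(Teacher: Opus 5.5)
The paper gives no proof of this statement; it quotes \cite[Theorem 5.6]{BJ22}. So the question is whether your argument stands on its own, and it does not. Step 3 carries all the content and is circular.

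The measure from Step 2 is the Monge--Amp\`ere measure of a PL function. It is therefore a finite atomic measure $\mu=\sum_F c_F\,\delta_{v_F}$ on divisorial points. Hence the identity $\lim_j\int\varphi_j\,d\mu=\int\varphi^{\star}d\mu$ is \emph{equivalent} to $\varphi^{\star}(v_F)=\varphi(v_F)$ for the finitely many $v_F$, i.e.\ to the theorem itself at those points. Passing to measures gains nothing.

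Moreover, every tool you propose for Step 3 presupposes that $\varphi^{\star}$, or envelopes of continuous functions, are $\omega$-psh:
\begin{itemize}
\item The energy is only defined on $\omega$-psh functions, and its continuity along an increasing sequence needs the regularized limit to be $\omega$-psh.
\item Orthogonality and differentiability of the energy of envelopes need $\mathrm{P}_{\omega}(g)$ to be a bounded $\omega$-psh function, so that $\mathrm{MA}_{\omega}(\mathrm{P}_{\omega}(g))$ makes sense. That is, they assume continuity of envelopes.
\item For every continuous $g\ge\varphi^{\star}$ one has $\varphi\le\mathrm{P}_{\omega}(g)\le g$, so $\varphi^{\star}=\inf_g\mathrm{P}_{\omega}(g)^{\star}$. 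If the $\mathrm{P}_{\omega}(g)^{\star}$ were $\omega$-psh, $\varphi^{\star}$ would be $\omega$-psh as a decreasing limit.
\end{itemize}
In each case you would essentially be proving the envelope property for an arbitrary family. That is open in general; Section~\ref{sec:envelope_conjecture} establishes it only in a special case. The statement here is unconditional. For the same reason, orthogonality cannot be what drives the result you cite from \cite{BJ22}.

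What is missing is an input specific to a divisorial point $x$ that controls all $\omega$-psh functions near $x$ at once, without regularizing the family. One way to supply it combines two facts:
\begin{itemize}
\item the uniform Izumi-type bound $\sup_{X^{\an}}\psi-\psi(x)\le C_x$ for all $\psi\in\GPSHo(X)$, which is the quantitative content behind Theorem~\ref{thm:global_divisorial_points_are_nonpluripolar};
\item a uniform upper semicontinuity estimate: for every $\epsilon>0$, a neighbourhood $U_\epsilon$ of $x$ on which $\psi\le\psi(x)+\epsilon\,(\sup\psi-\psi(x))$ for all $\psi\in\GPSHo(X)$.
\end{itemize}
Then $\varphi_i\le\varphi_i(x)+\epsilon C_x\le\varphi(x)+\epsilon C_x$ on $U_\epsilon$ for every $i$. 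Hence $\varphi^{\star}(x)\le\varphi(x)+\epsilon C_x$, and letting $\epsilon\to0$ concludes. No countability, boundedness or envelope property is needed.

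Skeletons provide an estimate of the second kind. Suppose $\omega$-psh functions increase under a retraction $p_{\fX}$ onto a dual complex $\Delta_{\fX}$ having $x$ as a vertex, and are convex on its faces, as in the snc setting. Writing $p_{\fX}(y)=(1-s)x+sz$ then gives $\psi(y)\le(1-s)\psi(x)+s\sup\psi$.

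Two smaller points:
\begin{itemize}
\item Choquet's lemma requires a countable basis, which $X^{\an}$ does not have in general (e.g.\ for uncountable $K$). Your proposed workaround is not an argument.
\item The perturbation in Step 2 is unnecessary: a relatively ample $\fL$ already gives every component of the special fibre positive mass.
\end{itemize}
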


Now we recall the envelope property for non-archimedean $\omega$-psh functions.

\begin{definition} [{\cite[Definition 5.8]{BJ22}}]
  Let $\omega \in \nef(X)$. We say that $\GPSHo(X)$ satisfies the envelope property if, for any uniformly bounded above family $\{\varphi_i\}_{i \in I} \subset \GPSHo(X)$, the usc upper envelope $(\sup_{i \in I} \varphi_i)^{\star} \in \GPSHo(X)$. We say that $X$ satisfies the envelope property over $K$ if $\GPSHo(X)$ satisfies the envelope property for any $\omega \in \nef(X)$. 
\end{definition}

\begin{conj} [Envelope conjecture, {\cite[Conjecture 5.14]{BJ22}}]
  Let $K$ be a discretely valued or trivially valued  non-archimedean field, and $X$ be unibranch projective variety over $K$. Then $X$ satisfies the envelope property over $K$. 
\end{conj}

\begin{thm}  [{\cite[Theorem 5.20]{BJ22}}] \label{thm:smooth_envelope_property}
  If $K$ is a discretely valued or trivially valued  non-archimedean field of equicharacteristic $0$ and $X$ is a smooth projective variety over $K$, then $X$ satisfies the envelope property over $K$. 
\end{thm}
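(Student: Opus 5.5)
The statement is Theorem~\ref{thm:smooth_envelope_property}, cited from \cite[Theorem 5.20]{BJ22}. I would not reprove it from scratch. Instead I would reduce to the known ingredients, namely the continuity-of-envelopes criterion and the orthogonality/regularization machinery of \cites{BFJ16, BJ22}. The plan has three steps.

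\textbf{Step 1: reduce to ample classes and continuous envelopes.} First reduce to the case $c_1(L(\omega))\in\Amp(X)$. For nef $\omega$, $\GPSHo(X)=\bigcap_{\epsilon}\GPSH_{\omega+\epsilon\theta}(X)$. If $\{\varphi_i\}$ is bounded above in $\GPSHo$, then $(\sup_i\varphi_i)^\star$ lies in every $\GPSH_{\omega+\epsilon\theta}$ once the ample case is known. This uses Proposition~\ref{prop:def_of_omega_psh_does_not_depend_on_theta} to make the choice of $\theta$ harmless.

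In the ample case, \cite[Lemma 5.17]{BJ22} / \cite[Lemma 7.30]{BE21} say the envelope property is equivalent to continuity of $\envomega(f)$ for every $f\in C^0(X^{\an})$. Since $\PL(X)$ is dense in $C^0$ and $\envomega$ is $1$-Lipschitz for the sup norm, it suffices to treat $f\in\PL(X)$.

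\textbf{Step 2: realize the envelope as a limit of PL functions via multiplier ideals.} For $f\in\PL(X)$, choose an snc model $\fX$ (or test configuration) on which $f$ and $\omega$ are determined. Here equicharacteristic $0$ gives resolution of singularities. For each $m$, consider the graded system of ideals $\mathfrak a_m$ generated by sections $s\in H^0(\fX,m\fL)$ with $\log|s|\le m f$, that is, $\mathfrak a_m = \{ s\in H^0 : \log|s|\le m f\}$ in the sense of \cite{BFJ15}. Set
\[
\varphi_m \coloneqq \tfrac1m\log|\mathfrak a_m| \le \envomega(f).
\]
Then $\{\varphi_{m}\}$ increases along $m=2^k$ and has $\envomega(f)$ as its usc-regularized limit. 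Using multiplier ideals $\mathcal J(\mathfrak a_\bullet^{m})$ on the smooth $X$ (or its smooth models), define the PL functions $\psi_m\coloneqq\frac1m\log|\mathcal J(\mathfrak a_\bullet^m)|$. The subadditivity theorem and the Nadel/Castelnuovo--Mumford-type global generation then show that $\psi_m-\frac{C}{m}$ is $\omega$-psh. Combined with $\varphi_m\le\psi_m$, this gives a uniform comparison
\[
\varphi_m\le\envomega(f)\le\psi_m+\tfrac{C}{m},
\]
with $\psi_m-\varphi_m\le C/m$ by the Izumi/valuative estimate.

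\textbf{Step 3: conclude continuity.} The envelope is then a uniform limit of continuous (PL) functions, hence continuous, which proves the statement.

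The main obstacle is the uniform estimate in Step 2. It requires the valuative (Izumi-type) bound on $\log|\mathcal J|-\log|\mathfrak a|$ over all of $X^{\an}$, not just on $X^{\div}$, and it must hold uniformly in the model. I would follow \cite{BFJ16} for the discretely valued case and \cite[Section 5]{BJ22} for the trivially valued case. Residue characteristic $0$ is used both for resolution and for vanishing theorems behind the multiplier ideal estimates.
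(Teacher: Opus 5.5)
The paper does not prove this statement; it quotes it from \cite[Theorem 5.20]{BJ22}, and from \cite{BFJ16} for discretely valued $K$. Those proofs do run through multiplier ideals, and your Step~1 reduction is fine.

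The gap is the uniform estimate in Step~2, on which Step~3 rests entirely. The multiplier-ideal machinery gives you three things:
\begin{enumerate}
\item the inclusion $\mathfrak a_m\subseteq\mathcal J(\mathfrak a_\bullet^m)$, hence $\varphi_m\le\psi_m$;
\item subadditivity, hence $\psi_{km}\le\psi_m$, and therefore $\envomega(f)\le\psi_m$ on all of $X^{\an}$ (granting $\envomega(f)=\sup_k\varphi_k$, and using $\varphi_k\le\varphi_{km}\le\psi_{km}$);
\item the valuative bound $\psi_m(v)\le\envomega(f)(v)+A(v)/m$ for $v\in X^{\div}$, where $A$ is the log discrepancy relative to your model.
\end{enumerate}
The function $A$ is unbounded on $X^{\div}$ and equals $+\infty$ off the quasi-monomial locus. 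So this bound gives no uniform rate. Izumi-type estimates, which compare valuations with a common centre, do not repair that.

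Your claim $\psi_m-\varphi_m\le C/m$ is stronger still. Since $\varphi_m\le\envomega(f)\le\psi_m$, it would give uniform $O(1/m)$ convergence of the approximants $\varphi_m$. That is a quantitative statement strictly stronger than the continuity you are trying to prove, and your sketch offers no argument for it. Hence ``the envelope is a uniform limit of PL functions'' does not follow.

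A smaller point: global generation of $\mathcal O(m\fL+\mathcal A)\otimes\mathcal J(\mathfrak a_\bullet^m)$, for a fixed relatively ample $\mathcal A$ with associated form $\alpha$, shows that $\psi_m$ is $(\omega+\frac1m\alpha)$-psh. It does not show that $\psi_m-C/m$ is $\omega$-psh; subtracting a constant is irrelevant to plurisubharmonicity.

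The missing idea is that no uniformity is needed.
\begin{itemize}
\item The sequence $(\psi_{2^k})_k$ is decreasing and consists of $(\omega+2^{-k}\alpha)$-psh PL functions. Its limit $\psi$ is therefore in $\GPSHo(X^{\an})$ directly by Definition~\ref{def:glocal_omega-psh_functions}.
\item We have $\psi\ge\envomega(f)$ everywhere. The pointwise bound, which only uses $A(v)<\infty$ for $v\in X^{\div}$, gives $\psi=\envomega(f)\le f$ on $X^{\div}$.
\item Theorem~\ref{thm:global_comparison_divisorial_points_to_full_spaces} then gives $\psi\le f$ on $X^{\an}$. So $\psi\le\envomega(f)$, i.e.\ $\envomega(f)=\psi$ is $\omega$-psh and in particular upper semicontinuous.
\item Being a supremum of the continuous $\varphi_m$, $\envomega(f)$ is also lower semicontinuous, hence continuous. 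Lemma~\ref{lem:equivalent_form_of_envelope_conjecture} then concludes.
\end{itemize}
In \cites{BFJ16, BJ22} the argument is closed by this mechanism: monotonicity from subadditivity plus a pointwise estimate on divisorial points. They apply it to the valuative multiplier ideals of $\sup_i\varphi_i$ for an arbitrary bounded-above family, and identify the decreasing limit with $(\sup_i\varphi_i)^\star$ through its values on $X^{\div}$. This yields the envelope property directly, without passing through $\envomega(f)$.
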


For a function $f \colon X^{\an} \to \R \cup \{\pm \infty\}$, the $\omega$-psh envelope of $f$ is defined as 
\[
\envomega(f) \coloneqq \{\varphi \in \GPSHo(X) \,|\, \varphi \le f\}.
\]

The following result is a slight variant of \cite[Lemma 5.17]{BJ22}. 

\begin{lemma}  \label{lem:equivalent_form_of_envelope_conjecture}
  Let $\omega \in \nef(X)$ such that $c_1(L(\omega)) \in \Amp(X)$. Then, the followings are equivalent.
  \begin{enumerate}
    \item $\GPSHo(X)$ satisfies the envelope property.
    \item $\envomega(f) \in C^0(X^{\an})$ for any $f \in \PL(X)$. 
    \item $\envomega(f) \in \GPSHo \cap C^0(X^{\an})$ for any $f \in C^0(X^{\an})$. 
  \end{enumerate}
\end{lemma}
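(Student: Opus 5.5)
I will prove the cycle of implications $(1)\Rightarrow(3)\Rightarrow(2)\Rightarrow(1)$, following \cite[Lemma 5.17]{BJ22}. Throughout I use that $c_1(L(\omega))$ is ample. Then Proposition \ref{prop:def_of_omega_psh_does_not_depend_on_theta}(2) lets every $\varphi\in\GPSHo(X)$ be written as a decreasing limit of functions in $\GPSHo\cap\PL(X)$. Note also that $(2)\Rightarrow$ (the special case of (3) for PL $f$) is part of $(3)\Rightarrow(2)$, which is trivial since $\PL(X)\subset C^0(X^{\an})$.

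\textbf{$(1)\Rightarrow(3)$.} Let $f\in C^0(X^{\an})$. The family $\{\varphi\in\GPSHo(X)\,|\,\varphi\le f\}$ is nonempty, since it contains the constant $\min f$, and it is bounded above by $\sup f$. By (1), $\envomega(f)^\star\in\GPSHo(X)$. Because $f$ is continuous, $\envomega(f)^\star\le f$, so $\envomega(f)^\star$ is itself a competitor and $\envomega(f)=\envomega(f)^\star$ is $\omega$-psh; in particular it is usc. For lower semicontinuity, I show that $\envomega(f)$ is a sup of continuous functions. Given a competitor $\varphi$, choose PL $\omega$-psh functions $\varphi_j\searrow\varphi$. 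For $\epsilon>0$, the sets $\{\varphi_j\ge f+\epsilon\}$ are compact and decrease to the empty set, so by Dini-type compactness $\varphi_j\le f+\epsilon$ for $j$ large. Then $\varphi_j-\epsilon$ is a continuous competitor. Hence $\envomega(f)$ equals the sup of the continuous competitors, so it is lsc, and therefore continuous.

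\textbf{$(2)\Rightarrow(1)$.} Let $\{\varphi_i\}$ be bounded above and set $\psi:=(\sup_i\varphi_i)^\star$. The plan has three steps.

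\begin{enumerate}
\item Since $\psi$ is usc on the compact space $X^{\an}$, write $\psi=\inf_j f_j$ as a decreasing limit of PL functions $f_j$. This uses the density of PL functions together with the standard fact that a usc function on a compact space is a decreasing limit of continuous ones, which can then be perturbed to PL ones by adding small constants.
\item Each $\varphi_i\le\psi\le f_j$, so $\varphi_i\le u_j:=\envomega(f_j)$. By (2), $u_j$ is continuous, hence usc, and the argument above shows $u_j\in\GPSHo(X)$. Here I would cite the usc envelope of the competitors being bounded by $f_j$. I expect this step to be the main obstacle: knowing that $u_j$ is continuous is not enough, and one must show that $u_j$ is actually $\omega$-psh. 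A natural attempt is to show that the competitors are closed under finite maxima and that $u_j$ is a uniform limit of increasing maxima of PL competitors, via Dini applied to the continuous $u_j$. However, a uniform limit of $\omega$-psh PL functions is $\omega$-psh only via the $\epsilon\theta$-slack in Definition \ref{def:glocal_omega-psh_functions}, and this needs checking. Concretely, if $\|u_j-\chi_k\|\le\delta_k$ with $\chi_k\in\GPSHo\cap\PL$, I would build a decreasing sequence $\chi_k+2\delta_k$ (after extracting so that the sequence decreases) converging to $u_j$.
\item The $u_j$ decrease, and $u:=\lim u_j$ satisfies $\sup_i\varphi_i\le u\le\psi$. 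Since $u$ is usc, $\psi=(\sup_i\varphi_i)^\star\le u$, so $u=\psi$. Finally, $\psi$ is a decreasing limit of $\omega$-psh functions that is generically finite, because it dominates any $\varphi_i$. In the discretely or trivially valued case, Theorem \ref{thm:decreasing_limit_of_omega-psh_functions_is_omega-psh} then gives $\psi\in\GPSHo(X)$. Over a general field, I would instead diagonalize the PL approximants directly.
\end{enumerate}
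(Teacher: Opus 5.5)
\textbf{Where the proof stands.} Your $(1)\Rightarrow(3)$ argument is correct. The Dini argument in Step 2 of $(2)\Rightarrow(1)$ is also correct, and it is the right one; the first sentence of that step, which appeals to ``the argument above'', is not, since that argument used (1). The PL competitors $\varphi_k-\epsilon$ form an upward directed family, because finite maxima of $\omega$-psh PL functions are again $\omega$-psh PL, and its supremum is $u_j$. Continuity of $u_j$ plus Dini then gives $\chi_k\in\GPSHo\cap\PL(X)$ with $\|u_j-\chi_k\|\le\delta_k$. If $3\delta_{k+1}\le\delta_k$, the sequence $\chi_k+2\delta_k$ decreases to $u_j$. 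This puts $u_j$ in $\GPSHo(X)$ directly by definition, with no extra $\epsilon\theta$-slack needed.

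\textbf{The gap: Step 1.} A usc function on a compact Hausdorff space is the infimum of the downward directed family of continuous functions above it. It is a decreasing limit of a \emph{sequence} of continuous functions only under countability assumptions such as metrizability, and $X^{\an}$ is in general not metrizable. Take $K=\BC$ with the trivial absolute value, or $K=\BC((t))$. Every basic neighbourhood of the Gauss point of $(\P^1)^{\an}$ contains entirely all but finitely many of the uncountably many branches emanating from it. So this point is not a $G_\delta$, and its indicator function is usc but not a decreasing limit of a sequence of continuous functions.

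In general you therefore only get a decreasing \emph{net} $\{\envomega(f)\}$, with $f$ ranging over $\{f\in\PL(X) : f>\psi\}$ (directed because $\PL(X)$ is stable under $\min$). Neither Theorem \ref{thm:decreasing_limit_of_omega-psh_functions_is_omega-psh}, which is stated for sequences, nor your diagonalization applies to a net.

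This is not a marginal case. The paper invokes $(2)\Rightarrow(1)$ exactly for fields without a countable dense subfield, in the last part of the proof of Theorem \ref{thm:envelope_conjecture_equichar_zero}.

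\textbf{How to close it.} You need stability of $\omega$-psh functions under decreasing \emph{nets}. This is the framework of \cite{BJ22}, whose Definition 4.1 is via decreasing nets. Combine it with the equivalence between the net and sequence definitions recalled in the remark after Definition \ref{def:glocal_omega-psh_functions}. Alternatively, your argument as written is fine when $X^{\an}$ is metrizable, for instance when $K$ has a countable dense subfield.
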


\begin{prop} \label{prop:limit_of_GPSH_functions}
  Assume that $\GPSHo(X)$ satisfies the envelope property. Let $\{\varphi_i\}_{i \ge 1}$ be a bounded-above sequence of functions in $\GPSHo(X)$. Suppose that $\varphi_i$ converges pointwise to $\varphi$. Then, $\varphi^{\star} \in \GPSHo(U)$ and $\varphi^{\star}=\varphi$ on $X^{\div}$. 
\end{prop}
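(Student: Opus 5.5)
Set $\Phi_j \coloneqq \sup_{i \ge j} \varphi_i$. The plan is to control $\varphi^{\star}$ by the usc envelopes $\Phi_j^{\star}$. The family $\{\varphi_i\}_{i\ge j}$ is bounded above, so the envelope property gives $\Phi_j^{\star} \in \GPSHo(X)$. Since $\Phi_j$ decreases in $j$, so does $\Phi_j^{\star}$. Let $\Psi \coloneqq \lim_j \Phi_j^{\star}$. By Theorem \ref{thm:global_divisorial_points_is_nonnegligible}, $\Phi_j^{\star} = \Phi_j$ on $X^{\div}$. Hence on $X^{\div}$ we get $\Psi = \lim_j \Phi_j = \limsup_i \varphi_i = \varphi$, using that $\varphi_i \to \varphi$ pointwise.

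Next we show $\Psi \in \GPSHo(X)$. By Theorem \ref{thm:decreasing_limit_of_omega-psh_functions_is_omega-psh} it suffices that $\Psi$ is generically finite. We have $\Psi \ge \varphi$ everywhere, since $\Phi_j^{\star} \ge \Phi_j \ge \varphi$. I expect $\varphi$ to be finite on $X^{\div}$, at least in the intended setting where it is the limit of a sequence not collapsing to $-\infty$. Then $\Psi = \varphi > -\infty$ on $X^{\div}$, and $X^{\div}$ is dense (Proposition \ref{prop:divisorial_points_are_dense}) and meets every connected component, so $\Psi$ is generically finite. This is the main subtle point. If $\varphi \equiv -\infty$ on a component, the statement must be read with the convention that generic finiteness is part of the hypothesis or that the conclusion is vacuous there; I would add the tacit assumption that $\varphi$ is generically finite, equivalently that the $\varphi_i$ do not tend to $-\infty$ everywhere on a component.

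Finally we identify $\Psi$ with $\varphi^{\star}$. On one hand, $\Psi$ is usc as a decreasing limit of usc functions, and $\Psi \ge \varphi$, so $\Psi \ge \varphi^{\star}$. On the other hand, $\Psi \in \GPSHo(X)$ and $\Psi = \varphi \le \varphi^{\star}$ on $X^{\div}$, with $\varphi^{\star}$ usc. Theorem \ref{thm:global_comparison_divisorial_points_to_full_spaces} then gives $\Psi \le \varphi^{\star}$ on all of $X^{\an}$. Therefore $\varphi^{\star} = \Psi \in \GPSHo(X)$, and $\varphi^{\star} = \Psi = \varphi$ on $X^{\div}$, which is the claim.
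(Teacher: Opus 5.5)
Your argument is the paper's proof. It uses the same decreasing envelopes $\bigl(\sup_{i\ge j}\varphi_i\bigr)^{\star}$, Theorem~\ref{thm:decreasing_limit_of_omega-psh_functions_is_omega-psh} for their limit, Theorem~\ref{thm:global_divisorial_points_is_nonnegligible} to identify that limit with $\varphi$ on $X^{\div}$, and Theorem~\ref{thm:global_comparison_divisorial_points_to_full_spaces} for the reverse inequality. Your generic-finiteness caveat is correct and the paper's proof silently skips it: the statement fails for $\varphi_i\equiv -i$. Under your added hypothesis that $\varphi$ is generically finite, you get generic finiteness of $\Psi$ directly from $\Psi\ge\varphi$, so the detour through finiteness of $\varphi$ on $X^{\div}$ and density is unnecessary; that finiteness is really a consequence of $\Psi\in\GPSHo(X)$ via Theorem~\ref{thm:global_divisorial_points_are_nonpluripolar}, not an input.
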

\begin{proof}
  For each $k \ge 1$, define $\psi_k \coloneqq (\sup_{i \ge k} \varphi_i)^{\star}$. The assumption that $\GPSHo(X)$ satisfies the envelope property implies that $\{\psi_{k}\}_{k \ge 1} \subset \GPSHo(X)$. Note that $\{\psi_{k}\}_{k \ge 1}$ is a decreasing function sequence. Let $\psi$ be the pointwise limit of $\{\psi_{k}\}_{k \ge 1}$. Then, Theorem \ref{thm:decreasing_limit_of_omega-psh_functions_is_omega-psh} implies that $\psi \in \GPSHo(X)$. Observe that $\varphi=\lim_{k \to +\infty} (\sup_{i \ge k} \varphi_i)$. Therefore, we have $\varphi \le \psi$ and hence $\varphi^{\star} \le \psi^{\star}=\psi$. On the other hand,  Theorem \ref{thm:global_divisorial_points_is_nonnegligible} implies that $\varphi=\psi$ on $X^{\div}$. Thus, by Theorem \ref{thm:global_comparison_divisorial_points_to_full_spaces}, we have $\varphi^{\star} \ge \psi$. Combining all these together,  the proposition is proved.      
\end{proof}

\subsection{\MongeA measures} \label{subsec:MA_measures_omega_psh_functions}
In Section \ref{subsec:MA_measures_omega_psh_functions}, we assume that $K$ is discretely valued or trivially valued. 
Let $\omega \in \PLforms(X)$ and $\varphi \in \PL(X)$. Let $(\fX, \fL)$ be a model corresponding to $\omega_{\varphi} \in \PLforms(X)$.  Let $\nu \colon \fX^{\nu} \to \fX$ be the normalization map, and $\fL^{\nu} \coloneqq (\nu)^*(\fL)$. Suppose that $(\fX^{\nu})_s=\div(\mathsf{t})=\sum_{j \in J} b_j E_j$ on $\fX^{\nu}$, where $\mathsf{t}$ is a generator of $\maxideal$, and $\{E_j\}_{j \in J}$ is the set of irreducible components of $(\fX^{\nu})_s$.  Denote by $\delta_{v_{E_j}}$ the Dirac mass supported at the divisorial point $v_{E_j}$ for each $j \in J$.  Then, the non-archimedean \MongeA measure for the PL function $\varphi$ is a Radon measure on $X^{\an}$ defined by
\[
\MA_{\omega, \NA} (\varphi) \coloneqq \sum_{j \in J} b_j ((\fL_{D}^{\nu})|_{E_j})^d \, \delta_{v_{E_j}}
\]

We have the following Bedford-Taylor theory for non-archimedean psh functions. 
\begin{thm} [{\cite[Theorem 7.18, Proposition 7.21]{BJ22}}] \label{thm:Bedford-Taylor_global_omega-psh}
  Let $X$ be a projective variety over $K$ of dimension $n$. Let $\omega_0, \omega_1 \cdots, \omega_n \in \nef(X)$.  Then, there is a mixed \MongeA unique operator
\bens
  \left( \GPSH_{\omega_1} \cap L^{\infty}(X^{\an}) \right) \times \cdots \times \left( \GPSH_{\omega_n} \cap L^{\infty}(X^{\an}) \right)  &\longrightarrow& \Radonp{X^{\an}} \\
  (\varphi_1, \cdots, \varphi_n) &\longmapsto& (\omega_1+\dm \dmb \varphi_1) \wedge \cdots \wedge (\omega_n+\dm \dmb \varphi_n)
\eens
such that
\begin{enumerate}[label=$(\arabic*)$]
  \item It extends the \MongeA measures for $\omega$-psh PL functions.
  \item Let $\varphi_k \in \GPSH_{\omega_k} \cap L^{\infty}(X^{\an})$ for each $0 \le k \le n$. Denote $\mu \coloneqq \bigwedge_{1 \le k \le n} (\omega_k+\dm \dmb \varphi_k)$. Then, $\varphi_0 \in L^1(X^{\an}, \mu)$. 
  \item Under the hypothesis of (2), suppose further that for each $0 \le k \le n$, $\{\varphi_k^j\}_{j \ge 1} \subset \GPSH_{\omega_k}  \cap \PL(X^{\an})$ is a  decreasing function sequence converging pointwise to $\varphi_k$.  Denote $\mu_j \coloneqq \bigwedge_{1 \le k \le n} (\omega_k+\dm \dmb \varphi_k^j) $ for each $j \ge 1$. Then, $\varphi_0^j \mu_j$ converges weakly to $\varphi_0 \mu$ on $X^{\an}$. 
\end{enumerate}
\end{thm}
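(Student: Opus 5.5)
The plan follows the approach of \cite{BJ22}: define everything first for PL functions through intersection numbers, and then extend to bounded functions by monotone limits. The organizing object is the \emph{mixed energy}
\[
E(\varphi_0,\dots,\varphi_n)\coloneqq (\omega_0+\dm\dmb\varphi_0)\cdots(\omega_n+\dm\dmb\varphi_n),
\]
the arithmetic (or, in the trivially valued case, $\G_m$-equivariant) intersection number of the metrized $\Q$-line bundles. This is defined for $\varphi_k\in\GPSH_{\omega_k}\cap\PL(X)$, i.e.\ for nef models on a common model $\fX$, after passing to a normalization.

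\textbf{Step 1: the PL case.} For PL $\varphi_k$, I will choose a common normal model $\fX$ on which all the $\omega_k+\dm\dmb\varphi_k$ are represented, and define
\[
\bigwedge_{k\ge1}(\omega_k+\dm\dmb\varphi_k)=\sum_j b_j\,\bigl(\fL_1\cdots\fL_n\cdot E_j\bigr)\,\delta_{v_{E_j}}.
\]
This measure is independent of the model by the projection formula, and it is positive because each $\fL_k$ is relatively nef. Next I will record the two identities that come from multilinearity and symmetry of intersection numbers on $\fX$. The first is
\[
E(\varphi_0+f,\varphi_1,\dots,\varphi_n)-E(\varphi_0,\varphi_1,\dots,\varphi_n)=\int f\,\bigwedge_{k\ge1}(\omega_k+\dm\dmb\varphi_k)
\]
for PL $f$, since $f$ corresponds to a vertical divisor. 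The second is integration by parts,
\[
\int f\,\dm\dmb g\wedge T=\int g\,\dm\dmb f\wedge T .
\]
Together these imply that $E$ is symmetric and nondecreasing in each argument, because its derivative in $\varphi_k$ is a positive measure. Finally, the total mass of $\bigwedge_{k\ge1}(\omega_k+\dm\dmb\varphi_k)$ equals the intersection number $L(\omega_1)\cdots L(\omega_n)$ on $X$, and so is bounded independently of the $\varphi_k$. By adding $\epsilon\theta$ and using multilinearity, I may reduce throughout to the case where each $L(\omega_k)$ is ample. Proposition \ref{prop:def_of_omega_psh_does_not_depend_on_theta}(2) then supplies decreasing PL approximants.

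\textbf{Step 2: extension to bounded functions.} Fix bounded $\varphi_k$ together with decreasing PL approximants $\varphi_k^j$. By monotonicity, the numbers $E(\varphi_0^j,\dots,\varphi_n^j)$ decrease in $j$, and they are bounded below because $\varphi_k^j\ge\inf\varphi_k$. Hence the limit $E(\varphi_0,\dots,\varphi_n)$ exists. It is independent of the chosen sequences: given two approximating families, I will compare each against the other. For fixed $j$, the function $\varphi_k^j+\epsilon$ dominates $\tilde\varphi_k^{j'}$ on a compact set for $j'\gg j$ by Dini's lemma, since one sequence decreases to a limit lying below a continuous function that strictly dominates it. I then define $\mu$ by prescribing $\int f\,\mu$ for PL $f$ as the derivative $\frac{d}{dt}\big|_{t=0}E(\varphi_0+tf,\dots)$. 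To control this derivative in the limit, I will use that for PL $f$ and small $t$ the function $\varphi_0+tf$ is $(\omega_0+\epsilon\theta)$-psh. Boundedness of mass and density of PL functions in $C^0(X^{\an})$ then give a positive Radon measure, namely the weak limit of $\mu_j$. This proves existence. Uniqueness follows because property (3), applied with $\varphi_0$ PL, forces $\int f\,\mu=\lim_j\int f\,\mu_j$ for every PL $f$.

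\textbf{Step 3: integrability and the convergence in (3).} I expect the main obstacle to be that $\varphi_0$ is only usc, so weak convergence $\mu_j\to\mu$ does not directly give $\int\varphi_0^j\,\mu_j\to\int\varphi_0\,\mu$. I will argue by two inequalities. For the upper bound, fix $i$. For $j\ge i$ we have $\varphi_0^j\le\varphi_0^i$, and $\varphi_0^i$ is continuous, so
\[
\limsup_j\int\varphi_0^j\,\mu_j\le\int\varphi_0^i\,\mu.
\]
Letting $i\to\infty$ and applying monotone convergence gives $\limsup_j\int\varphi_0^j\,\mu_j\le\int\varphi_0\,\mu$; this also shows $\varphi_0\in L^1(\mu)$, which is (2), since $\varphi_0$ is bounded. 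For the lower bound, I will rewrite $\int\varphi_0^j\,\mu_j$ as a difference of energies, $E(\varphi_0^j,\varphi_1^j,\dots)-E(0,\varphi_1^j,\dots)$. Each of these energies converges by Step 2, and the limit of the difference is identified with $\int\varphi_0\,\mu$ by the same Dini-type comparison. Finally, combining this convergence of integrals with the weak convergence of the measures $\varphi_0^i\mu_j$ for fixed $i$ will yield the weak convergence $\varphi_0^j\mu_j\to\varphi_0\mu$ asserted in (3).
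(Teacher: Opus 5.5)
The paper does not prove this statement but imports it from \cite{BJ22}, and your outline (monotone PL energy, extension by decreasing limits with a Dini argument for independence of the approximants, weak convergence of $\mu_j$ through energy differences, and the semicontinuity-plus-total-mass argument for $\varphi_0^j\mu_j$) is essentially the energy-pairing proof given there; it goes through after three small repairs. Namely: first, over a discretely valued field $E$ has to be a relative energy (normalized by $E(0,\dots,0)=0$, with all differences computed as intersection numbers against vertical divisors), since $\fL_0\cdots\fL_n$ has no meaning on a model over a DVR; second, the claim that $\varphi_0+tf$ is $(\omega_0+\epsilon\theta)$-psh for small $t$ is false for a fixed $\theta$ (e.g.\ take $f$ to be the difference of the PL functions of the exceptional divisors of two point blow-ups in the special fibre of a model carrying $\theta$, where $\theta$ has degree $0$ on lines in both exceptional divisors while $\dm\dmb f$ has degrees $\mp1$), so you must either take $\theta$, depending on $f$, relatively ample on a model where $f$ is determined, or write $f$ as a difference of $C\theta$-psh PL functions and use that $E$ is affine in slot $0$; third, in Step~3 the identification $\lim_j\int\varphi_0^j\,\mu_j=\int\varphi_0\,\mu$ does not come from a Dini comparison but from three facts: the identity $\lim_j E(\varphi_0^j,\dots,\varphi_n^j)=\inf_i\inf_j E(\varphi_0^i,\varphi_1^j,\dots,\varphi_n^j)$ (joint monotonicity), the equality $\lim_j\int\varphi_0^i\,\mu_j=\int\varphi_0^i\,\mu$ for each fixed $i$ (weak convergence, $\varphi_0^i$ continuous), and monotone convergence in $i$.
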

For $\varphi \in \GPSHob(X^{\an})$, its \MongeA measure is defined to be 
\[
\MAo(\varphi) \coloneqq (\omega+\dm \dmb \varphi)^{\wedge n}.
\]
We also recall the locality for \MongeA measures of $\omega$-psh functions.
\begin{thm} [{\cite[Theorem 7.40, Corollary 7.41]{BJ22}}] \label{thm:locality_MA_omega-psh} 
  Let $\omega \in \nef(X)$ with $c_1(L(\omega)) \in \Amp(X)$.
  \begin{enumerate}
    \item For all $\varphi,\varphi'\in \GPSHob(X^{\an})$, we have
    \[
    \mathbf{1}_{\{\varphi>\varphi'\}}\MA\bigl(\max\{\varphi,\varphi'\}\bigr)=\mathbf{1}_{\{\varphi>\varphi'\}}\MA(\varphi)
    \]
    \item Let $G\subset X^{\mathrm{an}}$ be an open set. If  $\varphi_i,\psi_i\in \GPSH_{\omega_i} \cap L^{\infty} (X^{\an})$, $1\leq i\leq n$,
    are such that $\varphi_i=\psi_i$ on $G$, then
    \[
    (\omega_1+\mathrm{dd}^c\varphi_1)\wedge\cdots\wedge(\omega_n+\mathrm{dd}^c\varphi_n)
    =
    (\omega_1+\mathrm{dd}^c\psi_1)\wedge\cdots\wedge(\omega_n+\mathrm{dd}^c\psi_n)
    \quad\text{on }G.
    \]
  \end{enumerate}
\end{thm}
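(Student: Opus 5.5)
The statement just above is the locality theorem, Theorem~\ref{thm:locality_MA_omega-psh}, quoted from \cite{BJ22}. The plan is to reduce both parts to PL functions, where everything is computed on models, and then pass to bounded functions via the continuity in Theorem~\ref{thm:Bedford-Taylor_global_omega-psh}(3).

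\textbf{Step 1 (PL case of part (2)).} Take $\varphi_i,\psi_i\in\GPSH_{\omega_i}\cap\PL(X)$ with $\varphi_i=\psi_i$ on $G$. Choose a common model $\fX$ (normal after normalizing) on which every $\omega_i+\dm\dmb\varphi_i$ and $\omega_i+\dm\dmb\psi_i$ is represented by a $\Q$-line bundle. The mixed measure is $\sum_j b_j\bigl(\prod_i \fL_i|_{E_j}\bigr)\delta_{v_{E_j}}$. If $v_{E_j}\in G$, I want to show $\fL_i|_{E_j}$ and $\fL'_i|_{E_j}$ give the same intersection numbers. Their difference is the vertical $\Q$-Cartier divisor $D_i$ attached to the PL function $\varphi_i-\psi_i$, and this function vanishes near $v_{E_j}$. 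I would try to conclude that $D_i$ vanishes near the generic point of $E_j$ and restricts to something numerically trivial on $E_j$. This is not automatic: the preimage of $G$ under reduction need not contain all of $E_j$. The fix is to blow up further so that $G$ is a union of open stars, then apply projection formula arguments. This is the delicate point.

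\textbf{Alternative for Step 1.} A cleaner route, I think the one of \cite{BJ22}, is to prove part (1) first and deduce part (2) from it. Part (1) in the PL case means: on $\{\varphi>\varphi'\}$, the divisorial points $v_{E_j}$ of a common model satisfy $\max(\varphi,\varphi')=\varphi$ near $v_{E_j}$. The coefficient $(\fL|_{E_j})^n$ depends only on the germ of the function near $v_{E_j}$ within the dual complex of a suitable snc or toroidal model. By mixed-residual-characteristic-free arguments, one can pass to a model where $\{\varphi>\varphi'\}$ corresponds to open faces.

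\textbf{Step 2 (bounded case).} Given bounded $\varphi,\varphi'$, pick decreasing PL approximants $\varphi^k\downarrow\varphi$ and $\varphi'^k\downarrow\varphi'$ by Proposition~\ref{prop:def_of_omega_psh_does_not_depend_on_theta}(2); here the assumption $c_1(L(\omega))\in\Amp(X)$ is used. We have
\[
\{\varphi>\varphi'\}=\bigcup_{t\in\Q_{>0}}\{\varphi>\varphi'+t\}.
\]
Fix $t$ and work with the usc function $\varphi$ and the continuous function $\varphi'^k$. The comparison
\[
\max(\varphi^k,\varphi'^k+t)=\varphi^k \quad\text{on } \{\varphi>\varphi'^k+t\}\subset\{\varphi^k>\varphi'^k+t\}
\]
holds on a set that is open only in the limit. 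I would replace indicators by continuous cutoffs. Using Theorem~\ref{thm:Bedford-Taylor_global_omega-psh}(3) with $\varphi_0$ a truncated difference, namely $\min(\max(\varphi-\varphi'-t,0),\epsilon)$ expressed via maxima of psh functions, one gets
\[
\max(\varphi-\varphi'-t,0)\,\MA(\max)=\max(\varphi-\varphi'-t,0)\,\MA(\varphi)
\]
from the PL identity by weak convergence, since products of the form $u\,\mu$ with $u$ a difference of psh functions converge. Divide by the positive function and let $t\to0$. Then part (2) follows from part (1) by the standard trick: on $G$, compare $\max(\varphi_i,\psi_i-\epsilon)$ and use polarization together with the multilinearity of mixed measures.

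\textbf{Main obstacle.} The hardest step is the PL locality on non-open sets, that is, controlling intersection numbers on $E_j$ by the germ of the PL function at $v_{E_j}$. I would try to settle it first via the $\max$ formula (part (1)), since there the combinatorics on a good model are explicit.
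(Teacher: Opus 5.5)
The paper does not prove this statement; it is quoted from \cite{BJ22}, so your proposal has to stand on its own. Your instinct to prove (1) first and deduce (2) is right, and your Step 2 is sound.

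\textbf{Step 2.} Set $h_k:=\max(\varphi^k,\varphi'^k)-\varphi'^k$, a difference of $\omega$-psh PL functions. Apply Theorem~\ref{thm:Bedford-Taylor_global_omega-psh}(3) separately to $\max(\varphi^k,\varphi'^k)$ and to $\varphi'^k$, against $\MA(\max(\varphi^k,\varphi'^k))$ and against $\MA(\varphi^k)$. This turns the PL identity $h_k\,\MA(\max(\varphi^k,\varphi'^k))=h_k\,\MA(\varphi^k)$ into $h\,\MA(\max(\varphi,\varphi'))=h\,\MA(\varphi)$, where $h=\max(\varphi-\varphi',0)$ and $\{h>0\}=\{\varphi>\varphi'\}$. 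Two remarks on this step:
\begin{enumerate}
\item The shift $t$ and the truncation at $\epsilon$ are superfluous, since the sets $\{\varphi^k>\varphi'^k\}$ are genuinely open. If $t$ sits inside $\MA(\max(\varphi,\varphi'+t))$, the final limit $t\to0$ is not legitimate: weak convergence says nothing on the non-open set $\{\varphi>\varphi'\}$.
\item Deducing (2) from (1) needs one more step than you state. From $G\subset\{\varphi+\epsilon>\psi\}$, (1) gives $\MA(\max(\varphi+\epsilon,\psi))=\MA(\varphi)$ on $G$. Letting $\epsilon\to0$ and using continuity of $\MA$ under this uniform, decreasing convergence gives $\MA(\varphi)=\MA(\max(\varphi,\psi))$ on the open set $G$. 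The same holds for $\psi$, and then you polarize.
\end{enumerate}

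\textbf{The gap: the PL base case.} Everything rests on it, and you leave it open. The routes you sketch do not close it:
\begin{enumerate}
\item An arbitrary open $G$ cannot be made a union of open stars.
\item Snc or toroidal models need resolution of singularities, which is not available in the generality of the statement (positive residue characteristic is allowed).
\item ``The coefficient depends only on the germ at $v_{E_j}$'' is exactly the claim to be proved.
\end{enumerate}

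\textbf{A characteristic-free fix.} Only the PL case of (1) is needed, and it has a short proof. Let $\varphi_B$ denote the PL function attached to a vertical divisor $B$.
\begin{enumerate}
\item Multiply $(\omega,\varphi,\varphi')$ by an integer and add a common constant. Then there is a normal model $\fX$ on which everything is determined, with $\varphi=C-\varphi_B$ and $\varphi'=C-\varphi_{B'}$ for effective vertical Cartier divisors $B,B'$ with local equations $s,s'$.
\item Let $\fX'$ be the normalized blow-up of $\mathfrak a=\CO(-B)+\CO(-B')$ and $F$ the divisor of $\mathfrak a\CO_{\fX'}$. Then $\max(\varphi,\varphi')=C-\varphi_F$.
\item One has $\fX'=U\cup U'$, with $\mathfrak a\CO_{\fX'}$ generated by $s$ on $U$ and by $s'$ on $U'$.
\item Let $E$ be a component of $\fX'_s$ with $\varphi(v_E)>\varphi'(v_E)$, i.e.\ $\ord_E s<\ord_E s'$. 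Then $E\cap U'=\emptyset$: near a point of $E\cap U'$ we would have $s\in s'\CO_{\fX'}$, forcing $\ord_E s\ge \ord_E s'$.
\item Hence $E\subset U$, and on $U$ the divisor $F$ equals the pull-back of $B$. So the $\Q$-line bundles representing $\omega+\dm\dmb\max(\varphi,\varphi')$ and $\omega+\dm\dmb\varphi$ agree on the Zariski open set $U\supset E$, and have the same top intersection number on $E$.
\item Both measures are atomic sums over the components of $\fX'_s$, so this is (1) for PL functions. In the trivially valued case, use test configurations in place of models.
\end{enumerate}
Alternatively, you may import locality of PL Monge--Amp\`ere measures on open sets from the local Chambert-Loir--Ducros theory, via Proposition~\ref{prop:properties_local_MA_PL_functions}(3).
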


\subsection{}

Let $X, Z$ be projective varieties over $K$, and let $\pi \colon X \to Z$ be a proper morphism. Let $z \in Z^{\an}$, and $\CH(z)$ be the completed residue field of $z$. Consider the morphism $\CH(z) \times_Z X \to X$ and its  analytification of $(\CH(z) \times_Z X)^{\an} \to Z^{\an}$. Here $(\CH(z) \times_Z X)^{\an}$ is viewed as Berkovich analytification of over $\CH(z)$. Then, this map induces a homeomorphism between $(\CH(z) \times_Z X)^{\an}$ and $(\pi^{\an})^{-1}(z) \subset X^{\an}$. Moreover, for any $\omega \in \nef(X^{\an})$ and $\varphi \in \GPSHo(X^{\an})$,  we have $\omega|_{(\pi^{\an})^{-1}(z)} \in  \nef(\CH(z) \times_Z X)$ and $\varphi|_{(\pi^{\an})^{-1}(z)} \in \GPSH_{\omega|_{(\pi^{\an})^{-1}(z)}} ((\CH(z) \times_Z X)^{\an})$ under this homeomorphism. To see this, denote $\BK \coloneqq \CH(z)$, and let $\pi_{\BK} \colon X_{\BK} \to Z_{\BK}$ be the base change of $\pi$  to $K$. Then, $z$ can be lifted to a $\BK$-rational point in $(Z_{\BK})^{\an}$. This induces a lift of the morphism $\CH(z) \times_Z X \to X$ to a $\BK$-morphism $\CH(z) \times_Z X \to X_{\BK}$. Note that an $\omega$-psh functions (\emph{resp.} closed semipositive form) remains being psh (\emph{resp.} closed semipositive form) under base change and restrictions to closed subvarieties. Then, it follows that $\omega|_{(\pi^{\an})^{-1}(z)}$ is semipositive and $\varphi|_{(\pi^{\an})^{-1}(z)}$ is psh.

\begin{prop} \label{prop:descent_psh_functions_resolution}
  Assume that $K$ is discretely valued or trivially valued. Let $X, Z$ be projective varieties over $K$, and let $\pi \colon X \to Z$ be a proper surjective morphism. Let $\omega \in \nef(Z)$ and $\varphi  \in \GPSH_{\pi^{*}\omega}(X)$. Suppose that every geometric fibre of $\pi$ is connected. Then, there exists a usc function $\psi \colon Z^{\an} \to \R \cup \{-\infty\}$ such that 
  \begin{enumerate}
    \item $\varphi=\psi \circ \pi$. 
    \item For any $z \in Z^{\an}$, we have 
    \[
    \psi(x)=\limsup_{y \to z,\, y \in Z^{\div}} \psi(z).
    \]
  \end{enumerate} 
\end{prop}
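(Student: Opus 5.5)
Statement (1) reduces to a constancy claim: I would show that $\varphi$ is constant on each fibre $(\pi^{\an})^{-1}(z)$, $z \in Z^{\an}$, and then define $\psi(z)$ to be this constant. By the discussion preceding the proposition, $F_z \coloneqq (\pi^{\an})^{-1}(z)$ is homeomorphic to $(X_z)^{\an}$ with $X_z \coloneqq \CH(z)\times_Z X$, a projective scheme over $\BK=\CH(z)$. Under this identification, $\varphi|_{F_z}$ is $(\pi^*\omega)|_{F_z}$-psh. The form $(\pi^*\omega)|_{F_z}$ is the pullback of a semipositive form on the $\BK$-point $z$, so it is $\dm\dmb$ of a constant, i.e. trivial: its line bundle $L(\omega)$ pulls back to a trivial bundle with a model metric that is itself pulled back from $\Spec \BK$.

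An $\omega'$-psh function for a trivial form is a decreasing limit of PL functions $f$ with $\epsilon\theta + \dm\dmb f$ nef, $\epsilon\to 0$. I would show such a function is constant on each connected component of $(X_z)^{\an}$ as follows.
\begin{itemize}
\item First, a PL function $f$ with $\dm\dmb f\ge 0$ (relatively nef vertical divisor $D_f$) is constant on each connected component. This is a negativity-lemma argument: a vertical Cartier divisor that is relatively nef is, up to adding the pullback of the special fibre, antieffective and of the form $c\cdot\fX_s$ on each connected component. Hence $f$ is constant there.
\item In the $\epsilon$-perturbed setting, I would instead use the maximum principle: every $\varphi\in\GPSH_{\epsilon\theta}$ satisfies $\sup\varphi = \varphi(x)$ at a Shilov/divisorial-type point. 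More robustly, I would argue via Monge--Ampère. If $\varphi$ is not constant, take $c$ with $\{\varphi<c\}$ and $\{\varphi>c\}$ both nonempty, and compare $\max(\varphi,c)$ with $c$ using the domination principle or the $\epsilon\to0$ limit of energies, whose Dirichlet-type energy vanishes for the trivial class.
\end{itemize}

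Connectedness of $(X_z)^{\an}$ follows from the assumption that the geometric fibres are connected: $X_z$ is a connected projective scheme over $\BK$, and the analytification of a connected scheme is connected.

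I expect this constancy step to be the main obstacle. My first attempt would be to reduce to Theorem \ref{thm:global_comparison_divisorial_points_to_full_spaces} together with the maximum principle. By that theorem, $\sup_{F_z}\varphi$ is attained on the (nonempty) set of divisorial points of $X_z$. Applying the same reasoning to $\max(\varphi,c)$, and using that $\varphi\equiv\sup$ on an open-closed set, one would contradict connectedness unless $\varphi$ is constant. If that fails, I would fall back on the energy argument: for $\varphi$ bounded, the quantity $\int(\varphi-\sup\varphi)\,\MA$ vanishes in a trivial class by the integration-by-parts identities underlying Theorem \ref{thm:Bedford-Taylor_global_omega-psh}. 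The unbounded case would then follow by applying the bounded case to $\max(\varphi,-k)$ and letting $k\to\infty$.

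For the usc property, once $\varphi=\psi\circ\pi^{\an}$ with $\pi^{\an}$ proper and surjective, we have $\{\psi\ge t\}=\pi^{\an}(\{\varphi\ge t\})$. This is the image of a closed set under a proper map, hence closed, so $\psi$ is usc.

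For (2), the inequality $\psi(z)\ge\limsup_{y\to z,\,y\in Z^{\div}}\psi(y)$ holds by upper semicontinuity. For the reverse, I would pick $x\in F_z$ and apply Theorem \ref{thm:global_comparison_divisorial_points_to_full_spaces} on $X$ to get divisorial $x_j\to x$ with $\varphi(x_j)\to\varphi(x)=\psi(z)$. The images $\pi^{\an}(x_j)$ converge to $z$, and since $\pi$ is surjective and proper, the image of a divisorial point is divisorial in $Z$ (or a point of $Z^{\val}$ whose $\psi$-value is itself a limit of divisorial values). I would check this last point: if the image of a divisorial point is not divisorial, I would instead compare with the usc function $\widetilde\psi\coloneqq(\psi|_{Z^{\div}})^{\star}$ on $Z^{\an}$. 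Since $\widetilde\psi\circ\pi^{\an}$ is usc and dominates $\varphi$ on $X^{\div}$, which maps into $Z^{\div}$ at least after restricting to points dominating, Theorem \ref{thm:global_comparison_divisorial_points_to_full_spaces} gives $\varphi\le\widetilde\psi\circ\pi^{\an}$, i.e. $\psi\le\widetilde\psi$, as desired.
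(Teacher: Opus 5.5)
\textbf{Gap: constancy on the fibre over an arbitrary point.} The weak step is proving that $\varphi$ is constant on $(\pi^{\an})^{-1}(z)$ for \emph{every} $z\in Z^{\an}$. For general $z$, the field $\BK=\CH(z)$ is neither discretely nor trivially valued. For example, a quasi-monomial point with rationally independent weights has dense value group. Yet every tool you want to run on $(X_z)^{\an}$ is only available over discretely or trivially valued fields:
\begin{itemize}
\item density of divisorial points;
\item Theorem~\ref{thm:global_comparison_divisorial_points_to_full_spaces}, Theorem~\ref{thm:global_divisorial_points_are_nonpluripolar} and Theorem~\ref{thm:decreasing_limit_of_omega-psh_functions_is_omega-psh};
\item Bedford--Taylor theory and the domination principle.
\end{itemize}
So a purely fibrewise argument can only treat points $z\in T^{\div}$ for irreducible subvarieties $T\subset Z$, where $\CH(z)$ is discretely or trivially valued.

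The missing idea is how to propagate constancy from those fibres to all fibres. The paper takes a generic-flatness stratification $Z=Z_0\supsetneq Z_1\supsetneq\cdots\supsetneq Z_{n+1}=\emptyset$ with $\pi$ flat over $W\coloneqq Z_k\setminus Z_{k+1}$, so that $\pi^{\an}$ is open over $W^{\an}$. Openness lets one lift nets $y_j\to z$ in $W^{\div}$ to nets in the fibres converging to a prescribed $\widetilde z$. Combining this with Theorem~\ref{thm:global_comparison_divisorial_points_to_full_spaces} on $\pi^{-1}(Z_k)$ gives
\[
\varphi(\widetilde z)=\limsup_{y\to z,\ y\in W^{\div}}\psi(y)\qquad\text{for every }\widetilde z\in(\pi^{\an})^{-1}(z),
\]
where $\psi(y)$ is the already-established constant value of $\varphi$ on the fibre over the divisorial point $y$. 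The right-hand side does not depend on $\widetilde z$, which gives constancy on every fibre.

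\textbf{The divisorial case does not close either.} Theorem~\ref{thm:global_comparison_divisorial_points_to_full_spaces} only says that $\varphi$ is a limsup of its divisorial values. It does not say that the supremum is attained at a divisorial point, and it produces no open-closed set on which $\varphi\equiv\sup\varphi$. The energy argument is vacuous: for the trivial class, $(\dm\dmb\varphi)^{\wedge n}$ is a positive measure of total mass $0$, hence zero. So $\int(\varphi-\sup\varphi)\,(\dm\dmb\varphi)^{\wedge n}=0$ holds for every bounded $\varphi$ and carries no information.

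What works is the scaling trick of Lemma~\ref{lem:zero-psh_functions_are_constant}, which uses $\omega=0$ essentially:
\begin{itemize}
\item Normalize $\sup\varphi=0$. Then $m\varphi$ is again $0$-psh.
\item As $m\to\infty$, $m\varphi$ decreases to the function equal to $0$ on $\{\varphi=0\}\neq\emptyset$ and $-\infty$ on $\{\varphi<0\}$.
\item By connectedness of $(X_z)^{\an}$ this limit is generically finite, hence $0$-psh by Theorem~\ref{thm:decreasing_limit_of_omega-psh_functions_is_omega-psh}.
\item It is therefore finite at every divisorial point by Theorem~\ref{thm:global_divisorial_points_are_nonpluripolar}. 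So $\varphi=0$ on divisorial points, hence everywhere.
\end{itemize}

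\textbf{Remaining parts.} Your usc argument and your proof of (2) are fine and agree with the paper. The paper uses the inclusion $\pi^{\an}(X^{\div})\subset Z^{\div}$ directly. Your fallback via $(\psi|_{Z^{\div}})^{\star}$ relies on the same inclusion, so it does not avoid it.
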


We first recall the following result. 
\begin{lemma} \label{lem:zero-psh_functions_are_constant}
  Suppose that $K$ is a discrete valued or trivially valued field, and $X$ is connected. Let $\omega=0 \in \nef(X)$. Then, every $\varphi \in \GPSH_{\omega}(X^{\an})$ is a constant function.
\end{lemma}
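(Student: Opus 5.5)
The plan is to treat $\varphi$ first when it is PL, and then pass to the general case through the regularization of Definition \ref{def:glocal_omega-psh_functions}.

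\textbf{Step 1: approximating sequence.} Since $\omega=0$, pick $\theta\in\amp(X)$, rationals $\epsilon_i\downarrow 0$, and a decreasing sequence $\varphi_i\in\GPSH_{\epsilon_i\theta}\cap\PL(X^{\an})$ converging pointwise to $\varphi$. This is possible by Proposition \ref{prop:def_of_omega_psh_does_not_depend_on_theta}(1).

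\textbf{Step 2: a maximum-principle statement for PL functions.} Choose a model $\fX$, normal after normalization, with $\fX_s=\sum_j b_jE_j$, on which $\varphi_i$ is given by a vertical $\Q$-Cartier divisor $D_i$ with $\epsilon_i\fL_\theta+D_i$ relatively nef. Let $M=\max_{X^{\an}}\varphi_i$. Since PL functions are determined by their values at the divisorial points $v_{E_j}$, this maximum is attained at some $v_{E_j}$. Put $D'=D_i-M\,\div(\mathsf t)$ in the discretely valued case, with the analogous normalization for test configurations in the trivially valued case. Then $D'$ is vertical and anti-effective, and its coefficient vanishes along each component $E_j$ where the maximum is attained.

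Connectedness of $X$ should make $\fX_s$ connected: $\fX$ is proper and flat with connected generic fibre, so $\fX_s$ is connected by Zariski's connectedness principle. On a neighbouring component $E_k$ where $D'$ has nonzero coefficient, restricting to a curve $C$ in $E_k$ meeting $E_j$ and applying the negativity lemma-type computation gives
\[
(\epsilon_i\fL_\theta+D')\cdot C\ge 0,
\]
so $D'\cdot C\ge -\epsilon_i\,\fL_\theta\cdot C$, which is small. Propagating this along chains of components should bound
\[
\sup\varphi_i-\inf\varphi_i\le \epsilon_i\,C(\fX,\theta).
\]

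\textbf{Main obstacle.} The constant $C(\fX,\theta)$ depends on the model $\fX$, which changes with $i$, so the estimate in Step 2 is not uniform. I expect this uniformity to be the hard step.

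\textbf{Alternative route via Monge--Ampère energy.} Instead I would use the Monge--Ampère/energy theory. For $\varphi_i\in\PL$ that is $\epsilon_i\theta$-psh, the quantity
\[
\int(\varphi_i-\inf\varphi_i)\,\MA_{\epsilon_i\theta}(\varphi_i)
\]
is controlled by intersection numbers, each carrying a positive power of $\epsilon_i$. Alternatively, $\varphi_i/\epsilon_i$ is $\theta$-psh, so by compactness of normalized $\theta$-psh functions (which holds in the discretely or trivially valued case), $\sup\varphi_i-\varphi_i(x)=\epsilon_i(\sup\psi_i-\psi_i(x))$ with $\psi_i=\varphi_i/\epsilon_i\in\GPSH_\theta$.

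The cleanest approach is to fix a divisorial point $x$. By Theorem \ref{thm:global_divisorial_points_are_nonpluripolar}, together with the uniform bound
\[
\sup\psi-\psi(x)\le C_x
\]
for all $\psi\in\GPSH_\theta$ (the standard Skoda/Izumi-type estimate at divisorial points from \cite{BJ22}), we get $\sup\varphi_i-\varphi_i(x)\le\epsilon_iC_x\to 0$.

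\textbf{Conclusion.} Let $S=\lim\sup\varphi_i$; this limit exists because the sequence is decreasing and bounded below by $\varphi_i(x)$ at a divisorial point. Then $\varphi(x)=S$ for every divisorial $x$, and also $\varphi\le S$ everywhere. Since $\varphi$ is usc and $\omega$-psh, Theorem \ref{thm:global_comparison_divisorial_points_to_full_spaces} applied with $\psi\equiv S$ on both sides, together with the density of $X^{\div}$ from Proposition \ref{prop:divisorial_points_are_dense}, gives $\varphi(y)=\limsup_{x\to y,\,x\in X^{\div}}\varphi(x)=S$ for every $y$. Hence $\varphi$ is constant. Generic finiteness makes $S$ finite.
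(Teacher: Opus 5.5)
\textbf{The gap: connectedness is never used.} The route you actually complete, the $\epsilon_i$-rescaled uniform bound at divisorial points, never uses that $X$ is connected. Yet the lemma is false without it: a function equal to different constants on different connected components of $X^{\an}$ is $0$-psh. In your argument, connectedness is hidden in the bound $\sup\psi-\psi(x)\le C_x$ for all $\psi\in\GPSH_\theta(X^{\an})$, and that bound in fact needs $X$ \emph{irreducible}. For a counterexample, take $X=\{z_1z_2=0\}\subset\P^2_K$, let $\theta$ be the restriction of the Fubini--Study form $\max_i\log|z_i|$, let $M\in\Q_{>0}$, and set
\[
\psi_M\coloneqq\max\{\log|z_0|-M,\ \log|z_1|,\ \log|z_2|-M\}-\max\{\log|z_0|,\ \log|z_1|,\ \log|z_2|\}.
\]
This is a difference of two Fubini--Study metrics on $\CO(1)|_X$, so $\psi_M\in\FS_\theta(X^{\an})\subset\GPSH_\theta(X^{\an})$. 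It is $\le 0$ and vanishes at $[0:1:0]$, while $\psi_M\equiv-M$ on $\{z_1=0\}^{\an}$. Hence $\sup\psi_M-\psi_M(x)=M$ for every divisorial point $x$ of that component, with $M$ arbitrary.

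The paper applies the lemma to the connected but possibly reducible fibres $\CH(z)\times_Z X$ in Proposition \ref{prop:descent_psh_functions_resolution}, so you need one extra step:
\begin{enumerate}
\item Run your argument on each irreducible component $X_j$, using the restricted approximants $\varphi_i|_{X_j^{\an}}$, to get $\varphi\equiv S_j\in[-\infty,+\infty)$ on $X_j^{\an}$. If $S_j>-\infty$, then $\varphi|_{X_j^{\an}}$ is generically finite and Theorem \ref{thm:global_comparison_divisorial_points_to_full_spaces} applies on $X_j$. If $S_j=-\infty$, the inequality $\varphi\le S_j$ already suffices.
\item Observe that $S_j=S_k$ whenever $X_j\cap X_k\neq\emptyset$, by evaluating $\varphi$ at a closed point of the intersection.
\item Use connectedness of $X$ and generic finiteness of $\varphi$ to conclude there is a single finite constant.
\end{enumerate}
Connectedness only appeared in your abandoned Step 2.

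\textbf{Comparison with the paper's route.} With this patch your proof is a valid alternative to the paper's. The paper normalizes $\sup\varphi=0$. If $\varphi(x)<0$ for some $x\in X^{\div}$, it uses that $\GPSH_0$ is a cone: the functions $m\varphi$ decrease to a function $\psi$ that is $0$ on the nonempty set $\{\varphi=0\}$ and $-\infty$ on $\{\varphi<0\}$. Connectedness makes $\psi$ generically finite, so $\psi\in\GPSH_0(X^{\an})$ by Theorem \ref{thm:decreasing_limit_of_omega-psh_functions_is_omega-psh}. This contradicts Theorem \ref{thm:global_divisorial_points_are_nonpluripolar} at $x$, and Theorem \ref{thm:global_comparison_divisorial_points_to_full_spaces} then gives $\varphi\equiv 0$.

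That argument needs only the \emph{qualitative} non-pluripolarity of divisorial points. However, it relies on stability under decreasing limits, a deep input the paper imports from \cite{BJ22}. Your argument avoids decreasing limits altogether. The price is a \emph{quantitative} Izumi-type bound that the paper never states. You should cite it precisely, for irreducible $X$, and check that it holds over a discretely valued field, since \cite{BJ22} works over a trivially valued one. Step 2 and the Monge--Amp\`ere/compactness remarks can simply be deleted.
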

\begin{proof}
  The proof is similar to the proof of \cite[Corollary 4.24]{BJ22}. We reproduce here for the convenience of readers. By GAGA, $X^{\an}$ is connected.   After adding a constant to $\varphi$, we may assume that $\sup_{X^{\an}} \varphi=0$. Suppose that $\varphi=0$ on $X^{\div}$, then Theorem \ref{thm:global_comparison_divisorial_points_to_full_spaces} implies that $\varphi=0$. Otherwise, suppose that there exists $x \in X^{\div}$ such that $\varphi(x)<0$. Let $\psi$ be the pointwise limit of the decreasing function sequence $\{-m\varphi\}_{m \ge 1}$. By assumption, there exists $y \in X^{\an}$ such that $\varphi(y)=0$ and thereby $\psi(y)=0$. This implies that $\psi$  is generically finite on $X^{\an}$, and therefore $\psi \in \GPSH_{\omega}(X^{\an})$. On the other hand, we have $\psi(x)=-\infty$, which contradicts with Theorem \ref{thm:global_divisorial_points_are_nonpluripolar}.
\end{proof}

Now, we are ready to prove Proposition \ref{prop:descent_psh_functions_resolution}.

\begin{proof} [Proof of Proposition \ref{prop:descent_psh_functions_resolution}] \label{proof:prop:descent_psh_functions_resolution}
  \textbf{Step 1}. Take $z \in Z^{\an}$. Assume that there exists an irreducible subvariety $T$ of $Z$ such that $z \in T^{\div}$. Then, $\CH(z)$ is a discretely or trivially valued field. By assumption,  $\CH(z) \times_Z X$ is a connected scheme. Note that 
  $\omega|_{(\pi^{\an})^{-1}(z)}=0 \in \nef(\CH(z) \times_Z X)$. Therefore, Lemma \ref{lem:zero-psh_functions_are_constant} implies that $\varphi|_{(\pi^{\an})^{-1}(z)}$ is constant in this case. 

  \smallskip

  \textbf{Step 2}. We show that $\varphi$ is constant on $(\pi^{\an})^{-1}(z)$ for any $z \in Z^{\an}$. We may assume that $Z$ is irreducible, and $\dim Z=n$. By generic flatness (\cite[Proposition 29.28.1]{stacks-project}), there exists a descending sequence of subvarieties $Z=Z_0 \supsetneq Z_1 \supsetneq \cdots \supsetneq Z_n \supsetneq Z_{n+1}=\emptyset$ such that for each $0 \le k \le n$, we have $\dim Z_{k+1} < \dim Z_k$, and $\pi$ is flat over $Z_k \setminus Z_{k+1}$. Let $0 \le k \le n$ such that $z \in (Z_k \setminus Z_{k+1})^{\an}$.  Denote $W \coloneqq Z_k \setminus Z_{k+1}$, and  $U \coloneqq \pi^{-1}(W)$.  For any $y \in W^{\div}=(Z_k)^{\div}$,  we've already shown that $\varphi|_{(\pi^{\an})^{-1}(y)}$ is constant, and we thereby denote $\psi(y) \coloneqq \varphi((\pi^{\an})^{-1}(y))$. 
  
  Recall that   \cite[Theorem 9.2.3]{Duc18} shows that $\pi^{\an}|_{U^{\an}} \colon U^{\an} \to W^{\an}$ is an open map. \lnote{see also Theorem 6.6 in Ducros flatness survey.} Fix $\widetilde{z} \in (\pi^{\an})^{-1}(z)$, and let $\{\mathscr{U}_i\}_{i \in I}$ be an open neighborhood basis of $\widetilde{z}$ in $U^{\an}$. Then, $\{\pi^{\an}(\mathscr{U}_i)\}_{i \in I}$ is open neighborhood basis of $z$ in $W^{\an}$. Since $U^{\div}$ is dense in $U^{\an}$, these imply that for any net $\{y_j\}_{j \in J} \subset W^{\div}$ satisfying $y_j \to z$, there exists $\{\widetilde{y_j}\} \subset U^{\div}$ such that $\widetilde{y_j} \to \widetilde{z}$, and $\widetilde{y_j} \in (\pi^{\an})^{-1}(y_j)$ for each $j \in J$. Conversely, for any net $\{\widetilde{y_j}\} \subset U^{\div}$ such that $\widetilde{y_j} \to \widetilde{z}$, $\{\pi^{\an}(\widetilde{y_j})\}$ is a net of points in $W^{\div}$ satisfying  $\pi^{\an}(\widetilde{y_j}) \to z$. Combining these together, we conclude that
  \[
    \limsup_{y \to z,\, y \in W^{\div}} \psi(y) = \limsup_{\widetilde{y} \to \widetilde{z},\, \widetilde{y} \in U^{\div}} \psi(\widetilde{y})= \limsup_{\widetilde{y} \to \widetilde{z},\, \widetilde{y} \in U^{\div}} \varphi(\widetilde{y}) =\varphi(\widetilde{z}). 
  \]
  where the last equality follows from Theorem \ref{thm:global_comparison_divisorial_points_to_full_spaces}. This implies in particular that $\varphi$ is constant on $(\pi^{\an})^{-1}(z)$. 

  \smallskip
  
  \textbf{Step 3}. For each $z \in Z^{\an}$, define $\psi(z) \coloneqq \varphi((\pi^{\an})^{-1}(z))$. Let $C \in \R$. Since $\varphi$ is usc, $\{\varphi  \ge C\}$ is a closed subset of $X^{\an}$.   Since $X^{\an}$ is a compact Hausdorff space, $\pi^{\an}$ is a closed map. This implies that $\{\psi \ge C\}=\pi^{\an} \left( \{u  \ge C\} \right)$ is a closed subset of $Z^{\an}$. This implies that $\psi$ is usc on $Z^{\an}$. On the other hand, we have $\pi^{\an}(X^{\div}) \subset Z^{\div}$, which implies that  
  \[
  \psi(z)=\varphi(\widetilde{z})=\limsup_{\widetilde{y} \to \widetilde{z},\, \widetilde{y} \in X^{\div}} \varphi(\widetilde{y}) = \limsup_{\widetilde{y} \to \widetilde{z},\, \widetilde{y} \in X^{\div}} \psi(\pi(\widetilde{y})) \le \limsup_{y \to z,\, y \in X^{\div}} \psi(y).
  \]
  where $\widetilde{z} \in (\pi^{\an})^{-1}(z)$. Combined these together, the proposition is proved.
\end{proof}

\section{A local theory of PSH functions} \label{sec:local_psh_functions}
In this section, unless indicated otherwise, we assume that $K$ is any completed NA field,  $X$ is a projective variety over $K$, and  $Y$ is an affine variety over $K$. 

\subsection{Tropically convex functions}
Let $\BT \coloneqq \R \cup \{+\infty\}$. We make the convention that $x+(+\infty)=+\infty$ for any $x \in \BT$, and $0 \cdot (+\infty)=0$, $a \cdot (+\infty)=+\infty$ for any $a \in \R_{>0}$, $a \cdot (+\infty)=-\infty$ for any $a \in \R_{<0}$.  We equip $\BT$ with the usual extended topology.  Let $U \subset \BT^k$ be an open subset. 

\begin{definition}
  \begin{enumerate}
    \item A $\Q$-affine (\emph{resp.} $\R$-affine) function $l$ on $U$ is a \emph{real-valued continuous}  function $l \colon U \to \R$  of the form
    \[
    l(\ft)=\sum_{1 \le i \le k} a_i t_i +b \qquad \text{for any } \ft=(t_1, \cdots, t_k) \in U,
    \]
    with $a_1, \cdots, a_k, b \in \Q$ (\emph{resp.} $a_1, \cdots, a_k, b \in \R$), such that for any $\ft=(t_1, \cdots, t_k) \in U$ and for each $1 \le i \le k$, we have $-\infty < a_i t_i <+\infty $.  We use $\Aff_{\Q}(U)$ (\emph{resp.} $\Aff_{\R}(U)$) to denote the space of $\Q$-affine (\emph{resp.} $\R$-affine) functions on $U$.
    \item A $\Q$-PL (\emph{resp.} $\R$-PL) function $f$ on $U$ is a \emph{real-valued continuous}  function $f \colon U \to \R$ such that there exists a \emph{finite open cover} $\{U_{\alpha}\}_{\alpha \in \CI}$, and a \emph{finite collection} of $\Q$-affine (\emph{resp.} $\R$-affine) functions $\{l_{\alpha, i}\}_{i \in \CI_{\alpha}}$ for each $\alpha \in\CI$ such that 
    \[
      f(x) \in \{l_{\alpha,i}(x)\}_{\alpha \in \CI,\, i \in \CI_{\alpha}} \qquad \text{for any } x \in U_{\alpha}. 
    \]
    We use $\PL_{\Q}(U)$ (\emph{resp.} $\PL_{\R}(U)$) to denote the space of $\Q$-PL (\emph{resp.} $\R$-PL) functions on $U$. When there is no ambiguity, we write $\PL(U) \coloneqq \PL_{\Q}(U)$. 
    \item A convex function $f$ on $U$ is a \emph{real-valued continuous} function $f \colon U \to \R$ such that for any $x \in U$, there exists an open neighborhood $V$ of $x$ in $U$, $C  \in \R$ and $l \in \Aff_{\R}(V)$ such that 
    \[
    u(x)=l(x)+C, \quad \text{and} \quad u(x') \ge l(x') \ \text{for any } x' \in V. 
    \] 
  \end{enumerate}
\end{definition}

\begin{remark} \label{rmk:PL_functions_near_boundaries}
  Let $x=(\underline{\infty}, y) \in \BT^{k_1} \times \R^{k_2} \subset \BT^{k}$ with $\underline{\infty} =(+\infty, \cdots, +\infty) \in \BT^{k_1}$ and $k=k_1+k_2$. Suppose that $f$ is a PL (\emph{resp.} affine) function on an open subset of $\BT^{k}$ containing $x$. Then, by definition, there exists a neighborhood $U$ of $x$ in $\BT^{k}$ and $f' \in \PL(p(U))$ such that $p(U) \subset \R^{k_2}$ and $f=f' \circ p$ on $U$, where $p \colon \BT^{k} \to \BT^{k_2}$ denotes the projection map. 
\end{remark}

\begin{remark}
  Here we give some non-examples of affine functions and PL functions.
  \begin{enumerate}
    \item $l(t_1, t_2)=2t_1-3t_2$ is not an affine function in a neighborhood of $(+\infty, +\infty)$ in $\BT^2$ since $2 \cdot (+\infty)=+\infty$ and $(-3) \cdot (+\infty)=-\infty$. 
    \item In our definition, a  PL (\emph{resp.} convex) function is always assumed to take values in $\R$. For example, $f(x)=x$ is not a PL (\emph{resp.}  convex) function on $\BT^1$ since $f(+\infty)=+\infty$. 
    \item $f(t_1, t_2)=\max\{t_1-t_2, 0\}$ is not a PL function on $\BT^2$ because it is not continuous around $(+\infty, +\infty)$. 
  \end{enumerate}
\end{remark}

\begin{lemma} \label{lem:property_tropical_conv_functions}
  \begin{enumerate}
    \item Let $f \in C^0(U)$. Suppose that $f|_{U \cap \R^k}$ is convex. Then, $f \in \Conv(U)$.
    \item Let $\{f_i\}_{i \in I} \subset \Conv(U)$ be a finite class of convex functions. Then, $\max \{f_i\}_{i \in I} \in \Conv(U)$.
    \item Suppose that $\overline{U}$ is compact. Let $f \in \Conv(\BT^k)$.   
    Then, for any $\epsilon >0$, there exists $g \in \Conv \cap \PL(U)$ such that $\left\|f-g\right\|_{C^0(U)} < \epsilon$. 
    \item For any $f \in \PL(U)$, there exists $f^+, f^- \in \TConv \cap \PL(U)$ such that $f=f^{+}-f^{-}$.
  \end{enumerate}
\end{lemma}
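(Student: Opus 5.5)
Parts (1) and (2) are local statements in the definition of convexity, so I will work directly with supporting affine minorants. Part (3) will go through a supremum of finitely many affine minorants, and part (4) through a standard max-of-affines decomposition. Throughout, $\Conv(U)$ and $\TConv$ denote convex functions on $U$ in the sense of the definition above.

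\emph{Part (1).} Let $x \in U$ and write $x=(\underline{\infty},y)$ after reordering coordinates, with $y\in\R^{k_2}$. If $k_1=0$, convexity of $f|_{U\cap\R^k}$ gives a local supporting affine function at $x$ directly. If $k_1>0$, I first show that $f$ is locally independent of the infinite coordinates near $x$. The restriction of $f$ to the lines $t_i \mapsto f(\dots,t_i,\dots)$, $t_i\to+\infty$, is convex and has a finite limit by continuity of $f$ on $\BT^k$. A convex function on a half-line $(a,+\infty)$ with a finite limit at $+\infty$ is non-increasing. Hence the slopes in the infinite directions are $\le 0$, and the supporting hyperplanes of $f|_{U\cap \R^k}$ at points $(t,y')$ with $t$ large have $t$-slopes tending to $0$.

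I then take the supporting affine function $l_n$ of $f|_{\R^k}$ at $(t^{(n)},y)$ with $t^{(n)}\to\underline\infty$. I bound its $y$-gradient uniformly, using local Lipschitz bounds from continuity on a compact neighborhood. Along a subsequence its $y$-part converges, and I take $l(t,y')=\lim l_n$ restricted to $y'$-directions with zero coefficients on the $t$-variables. This $l$ is admissible near $x$, and passing to the limit in $f\ge l_n$ gives $f\ge l$ near $x$ with equality at $x$. The delicate point is the uniform gradient control, which I get from convexity: the gradient in a convex set is controlled by the oscillation of $f$ on a slightly larger ball.

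\emph{Part (2).} Let $x\in U$ and pick $i$ with $f_i(x)=\max_j f_j(x)$. The supporting affine function of $f_i$ at $x$ also supports $\max f_j$, since $\max_j f_j\ge f_i\ge l$ with equality at $x$.

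\emph{Part (3).} This is the main step. For each $x\in\overline U$, part (1)'s argument gives an affine $l_x$ with $l_x\le f$ near $x$ and $l_x(x)=f(x)$. The obstacle is that local supports need not be global minorants on $U$, and they are only valid on a neighborhood. To handle this I will use global convexity on $\BT^k$. On $\R^k$, a convex function lies above its tangent planes globally, and by density and continuity this passes to $\BT^k$ wherever the affine function is admissible. I choose $l_x$ with $\Q$-coefficients, perturbing slightly and subtracting a small constant, so that $l_x\le f$ on $\overline U$ and $l_x(x)>f(x)-\epsilon$. The set $\{l_x>f-\epsilon\}$ is then an open neighborhood of $x$. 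By compactness, finitely many of these sets cover $\overline U$, and $g=\max$ of the corresponding $l_x$ satisfies $f-\epsilon<g\le f$. By part (2), $g\in\Conv\cap\PL(U)$.

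\emph{Part (4).} Using the finite cover and the finite collection of affine pieces, I write $f$ locally as a max–min of affine functions, by the standard lattice representation of PL functions. Then $f=\max_a\min_b l_{ab}$, and $\min_b l_{ab}=\sum_b l_{ab}-\max_b\sum_{b'\ne b}l_{ab'}$. This gives a difference of convex PL functions, using Remark~\ref{rmk:PL_functions_near_boundaries} to reduce to finite coordinates near boundary points.
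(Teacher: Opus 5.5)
Your route for (1)--(3) differs from the paper, which gives no argument and simply cites Proposition~3.10, Proposition~3.15~(3) and Theorem~5.14 of \cite{APW2}. Your direct arguments for (1) and (2) are sound. In (1), make explicit that it is the sign ($\le 0$) of the $t$-coefficients of $l_n$ that disposes of the term involving $t^{(n)}\to\underline{\infty}$; their convergence to $0$ alone would not do it. There are, however, two problems.

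The genuine gap is in (4). The statement asks for a single pair $f^{\pm}\in\Conv\cap\PL(U)$ on all of $U$, and this is how it is used, with $U=\BT^k$ in Lemma~\ref{lem:psh_PL_functions_are_pshlogreg}~(2). A max--min representation and your identities only produce decompositions locally, or on convex subsets of $\R^k$. Such local decompositions cannot be glued: $f^{\pm}$ are far from unique, and convexity is destroyed by partitions of unity.

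Near $\BT^k\setminus\R^k$ your identity even leaves the class of real-valued functions. For $U=\BT$ and $f=\min\{0,t\}\in\PL(\BT)$ it gives $f=t-\max\{0,t\}$, and both terms are $+\infty$ at $t=+\infty$. An admissible decomposition is instead $f=0-\max\{0,-t\}$. Remark~\ref{rmk:PL_functions_near_boundaries} repairs this only near each boundary point separately.

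What is missing is one global convex PL function $f^-$ with two properties: its convex creases dominate the concave creases of $f$, and it stays real-valued (locally independent of the infinite coordinates) at every point of $U\setminus\R^k$. For $U\subset\R^k$ the classical choice $f^-=\sum_{i<j}|l_i-l_j|$, summed over the finitely many affine pieces of $f$, works; check convexity of $f+f^-$ on generic segments. Near the boundary, the compensating hinges must instead be oriented compatibly with the directions in which $U$ reaches $+\infty$, and your proposal contains no such construction.

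In (3) the strategy of taking a maximum of finitely many global affine minorants is the natural one. However, (2) cannot be applied to the $l_x$ as they stand. A support at $x\in\overline U\cap\R^k$ typically has a negative coefficient in a coordinate that equals $+\infty$ at some point of $U$. It is then not an element of $\Aff(U)$ or $\Conv(U)$, since it takes the value $-\infty$ there.

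The fix is to use that every support of $f\in\Conv(\BT^k)$ has all coefficients $\le 0$. This holds because $f$ is non-increasing in each coordinate, having finite limits at $+\infty$; for supports at boundary points, combine this with the global minorant property. Then:
\begin{enumerate}
\item each $l_x$ is continuous with values in $[-\infty,+\infty)$;
\item hence each $l_x$ lies strictly below $g$ near every point of $U$ where it is not admissible;
\item so near each $z\in U$, $g$ is a maximum of affine functions admissible near $z$, and (2) applies locally.
\end{enumerate}
The same sign information is what makes the passage to $\Q$-coefficients legitimate on $\overline U$, which is unbounded in the directions where it reaches $+\infty$. In particular, vanishing coefficients must stay exactly $0$.
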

\begin{proof}
  Statement (1), (2), (3) are proven respectively in Proposition 3.10, Proposition 3.15 (3), Theorem 5.14 of \cite{APW2}. Statement (4) is classical in convex analysis.
\end{proof}

Now, let $X$ be a projective variety over $K$. 
\begin{lemma} [{\cite[Theorem 1]{GM19}}] \label{lem:model_functions_equals_to_PL_functions}
  Let $u$ be a function on $X^{\an}$. Then, $u \in \PL(X)$ (after Definition \ref{def:model_metrics_and_PL_functions} in our paper) iff there exists a finite affine open cover $\{Y_i\}_{1 \le i \le r}$ of $X$, tropicalization maps $\Trop_i \colon Y^{\an}_i \to \BT^{k_i}$ and $f_i \in \PL(\BT^{k_i})$ for each $i$, such that $u=f_i \circ \Trop_i$ on $Y_i^{\an}$.  \hfill \qedsymbol
\end{lemma}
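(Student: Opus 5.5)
The plan is to prove the two implications separately, passing through models and their special fibres. The guiding fact is that a model metric is locally given by $\log$ of absolute values of a finite set of local generators. Since the statement is attributed to \cite[Theorem 1]{GM19}, I only sketch how I would reconstruct it.

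\emph{($\Rightarrow$)} Let $u\in\PL(X)$, say $u=-\log|1|_{\phi_{\fD}}$ for a vertical $\Q$-Cartier divisor $\fD$ on a model $\fX$, with $m\fD$ Cartier.

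First, cover $\fX$ by finitely many affine opens $\fU_j=\Spec A_j$ on which $m\fD$ is principal, say $m\fD|_{\fU_j}=\div(g_j)$ with $g_j\in K(X)^\times$. On $\fU_j^{\an}$ (the analytic points reducing into $\fU_j$) we then have $u=\frac1m\log|g_j|$; the sign is fixed by the convention in the definition of $\phi_\fD$. The sets $\fU_j^{\an}=\mathrm{red}^{-1}(\fU_j)$ are closed, not open. To express $u$ globally on a Zariski-affine $Y_i$, I will write the indicator of $\mathrm{red}^{-1}(\fU_j)$ via $\max/\min$ of $\log|a|$ for generators $a$ of $A_j$: a point lies in $\mathrm{red}^{-1}(\fU_j)$ iff $|a|\le 1$ for all generators $a$.

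Next, choose affine opens $Y_i\subset X$ and regular functions $f_{i,1},\dots,f_{i,k_i}$ on $Y_i$ that include the generators of all $A_j$ (rescaled to be regular on $Y_i$) together with numerators and denominators of the $g_j$. On $Y_i^{\an}$ the function $u$ is then a continuous function built from finitely many $\log|f_{i,l}|$ by $\max$, $\min$ and $\Q$-linear combinations, i.e.\ of the form $h\circ\Trop_i$ with $h$ PL.

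The main obstacle is ensuring that $h$ is continuous and real-valued on all of $\BT^{k_i}$ (equivalently, on the tropical variety, after which one extends), including near boundary strata where some $f_{i,l}=0$. I would handle this by first choosing the $Y_i$ so that all the $g_j$ are units on $Y_i$, shrinking and using finitely many $Y_i$ by quasi-compactness. Then only the coordinates that are units appear in affine pieces, and Remark~\ref{rmk:PL_functions_near_boundaries} guarantees the required continuity.

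\emph{($\Leftarrow$)} Suppose $u=f_i\circ\Trop_i$ on each $Y_i^{\an}$. By Lemma~\ref{lem:property_tropical_conv_functions}(4), each $f_i=f_i^+-f_i^-$ with convex PL parts, and convex PL functions are maxima of finitely many $\Q$-affine functions. Thus on $Y_i^{\an}$, $u$ is a difference of functions of the form $\max_l \frac1m\log|h_l|$ with $h_l$ rational on $X$. Each such function defines a Fubini--Study-type metric on a suitable line bundle, hence a model metric by Proposition~\ref{prop:equivalence_FS_metrics_relatively_ample_models}, possibly after twisting with an ample bundle so that the $h_l$ become sections.

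The remaining task is to glue the local descriptions. I would show that $u$ is continuous on $X^{\an}$ and lies locally in the $\Q$-vector space generated by $\PL$ functions together with $\max$/$\min$, which is the lattice $\PL(X)$. Since $\PL(X)$ is stable under $\max$ and $\min$, one can use a PL partition built from $\log|\cdot|$ of the functions cutting out the complements of the $Y_i$, as in the max/min gluing technique of \cite{GM19}.
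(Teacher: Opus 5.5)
The paper gives no proof of this lemma; it simply quotes \cite[Theorem 1]{GM19}. Judged on its own, your ($\Leftarrow$) sketch is in the right spirit. Your ($\Rightarrow$) argument, however, has a genuine gap exactly at the step you identify as the main obstacle, and the fix you propose cannot work.

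\textbf{The gap in ($\Rightarrow$).} The generators $a$ of $A_j$ are regular, and the local equations $g_j$ of $m\fD$ are invertible, only on $\mathfrak U_{j,K}$. On $Y_i$ they have zeros and poles, so they are not coordinates of a tropicalization of $Y_i$. Rescaling by constants removes no poles. Writing them as quotients $p/q$ with $p,q\in K[Y_i]$ produces the indeterminate expression $t_q-t_p$ at points where both coordinates equal $+\infty$.

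Nor can you choose the $Y_i$ so that all $g_j$ are units on them. Since the $Y_i$ cover $X$, each $g_j$ would then be a global unit on $X$. On a connected component $X'$, the ring $\CO(X')$ is a finite field extension of $K$ carrying a unique absolute value extending $|\cdot|_K$, so $|g_j|$ would be constant on $(X')^{\an}$. But $u=\pm\frac1m\log|g_j|$ on the sets $\mathrm{red}^{-1}(\mathfrak U_j)$, which cover $X^{\an}$. So $u$ would be continuous with finitely many values on each connected component, hence locally constant.

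Finally, the indicator of the closed set $\mathrm{red}^{-1}(\mathfrak U_j)$ is not continuous, so it cannot be built from the $\log|a|$ by $\max$, $\min$ and $\Q$-linear combinations. The global formula for $u$ on $Y_i^{\an}$, which is the whole content of this direction, is therefore never actually produced.

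\textbf{A repair for ($\Rightarrow$).} You can close the gap with results the paper already quotes, by going through Fubini--Study metrics instead of local equations.
By \cite[Proposition 4.11]{GM19}, after passing to a model dominating $\fX$ there is a relatively ample $\fL'$, with $L\coloneqq\fL'|_X$ ample. For $N\gg0$ both $N\fL'$ and $N\fL'+\fD$ are relatively ample, so $u=\phi_{N\fL'+\fD}-\phi_{N\fL'}$. By Proposition~\ref{prop:equivalence_FS_metrics_relatively_ample_models}, both are Fubini--Study metrics $\phi_k=\frac1m\max_l\{\log|s_{k,l}|+C_{k,l}\}$ on $NL$, with a common $m$.
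The sets $Y_i\coloneqq X\setminus Z(s_{1,i})$ are affine and cover $X$. On $Y_i^{\an}$,
\[
u=\frac1m\Bigl(\max_l\{\log|s_{1,l}/s_{1,i}|+C_{1,l}\}-\max_l\{\log|s_{2,l}/s_{1,i}|+C_{2,l}\}\Bigr),
\]
with $s_{k,l}/s_{1,i}\in K[Y_i]$.
Each maximum equals $-\log|s_{1,i}|_{m\phi_k}$, so it is bounded below on $Y_i^{\an}$, because $|s_{1,i}|_{m\phi_k}$ is continuous on the compact space $X^{\an}$. Truncating both maxima below at a large constant $-R$ therefore does not change $u$. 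The resulting $f_i$ is real-valued and continuous on all of $\BT^{k_i}$. Near any point of $\BT^{k_i}$, the terms with $t_{k,l}=+\infty$ drop below $-R$, so $f_i\in\PL(\BT^{k_i})$. Coordinates may vanish on $Y_i$; no unit condition is needed.

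\textbf{Two missing steps in ($\Leftarrow$).}
(i) From $f_i=f_i^+-f_i^-$ you get $f_i^{\pm}\circ\Trop_i=\max_l\{\frac1N\log|h_l|+c_l\}$, with $h_l\in K[Y_i]$ having no common zero on $Y_i$. This uses that $f_i^{\pm}|_{\R^{k_i}}$ is a maximum of affine functions with nonpositive slopes; the only $\Q$-affine functions defined on all of $\BT^{k_i}$ are constants. If $\sigma$ is a section of an ample bundle with $X\setminus Z(\sigma)\subset Y_i$, the twisted sections $h_l\sigma^{N'}$ typically all vanish along $Z(\sigma)$, so they do not define a Fubini--Study metric. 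Adding a basis of sections of the same power with very negative constants fixes this, but the resulting model function then agrees with $u$ only near a given point.

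(ii) You therefore need the fact that a continuous $u$ which near every point coincides with some global model function is itself a model function. A multiplicative partition of unity is useless here, since products of PL functions are not PL. Instead, use density of $\PL(X)$ in $C^0(X^{\an})$ and the lattice property. Suppose $u=\psi_\alpha$ on $V_\alpha$, and let $K_\alpha\subset V_\alpha$ be compact sets covering $X^{\an}$. Build $\chi_\alpha\in\PL(X)$ with $\chi_\alpha\ge0$, $\chi_\alpha=0$ on $K_\alpha$, and $\chi_\alpha\ge\sup_{X^{\an}}|\psi_\alpha-u|$ off $V_\alpha$. Then $u=\max_\alpha(\psi_\alpha-\chi_\alpha)\in\PL(X)$.
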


Therefore, we introduce the following definitions. 

\begin{definition}
  Let $W$ be an open subset in $X^{\an}$, and $u$ is a function on $W$. We say that $u$ is a PL function on $W$ if there exists a finite affine open cover $\{Y_i\}_{1 \le i \le r}$ of $X$, tropicalization maps $\Trop_i \colon Y^{\an}_i \to \BT^{k_i}$ and $f_i \in \PL(\Trop_i(W \cap Y_i^{\an}))$ for each $i$, such that $u=f_i \circ \Trop_i$ on $W \cap Y_i^{\an}$. We use $\PL(W)$ to denote the space of PL functions on $W$.
\end{definition}

\begin{definition}
  Let $W$ be an open subset in $X^{\an}$, and $u \colon W \to \R$ be a function. We say that $u$ is tropically convex if there exists an affine open subvariety $Y \subset X$, a tropicalization map $\Trop \colon Y^{\an} \to \BT^n$ and $f \in \Conv  (\Trop(W))$ such that $W \subset Y^{\an}$, and $u=f \circ \Trop$.  We use $\TConv(W)$ to denote the space of tropically convex functions on $W$, and use $\TConv \cap \PL(W)$ to denote the space of tropically convex PL functions on $W$. 
\end{definition}

\subsection{Local psh functions}
In this subsection, we assume that $X$ is a projective variety over $K$, and $U \subset X^{\an}$ is an open subset. 

\begin{definition} \label{def:psh_functions_on_local_domains}
  Let $u \colon U \to \R \cup \{-\infty\}$ be a function. We say that  $u$ is a psh function if 
  \begin{enumerate}
    \item $u$ is generically finite and usc on $U$.
    \item For each $x \in U$, there exists an open neighborhood $U_x$ of $x$ in $U$, and a decreasing sequence of functions $\{u_i\}_{i \ge 1} \subset \TConv \cap \PL(U_x)$ which converges pointwise to $u$ on $U_x$. 
  \end{enumerate}
  We say that $u$ is locally bounded if for each $x \in U$, there exists an open neighborhood $V_x$ of $x$ in $U$, such that $u$ is bounded on $V_x$. 

  We use $\PSH(U)$ to denote the set of psh functions on $U$, and use $\PSHb(U)$ to denote the set of locally bounded psh functions on $U$. 
\end{definition}

\begin{remark} \label{rmk:psh_is_analytic_invariant}
  Let  $\CO_{U}$ denote the structure sheaf of the Berkovich space $U$, \textit{i.e.} the sheaf of analytic functions on $U$. For an open subset $W \subset U$ and a function $u$ on $W$,  we say that $u$ is \emph{analytically tropically convex PL function} if there exists $g_1, \cdots, g_k \in \CO(W)$ and $f \in \Conv \cap \PL (\BT^k)$ such that $u=f \circ \Trop$, where $\Trop \colon W \to \BT^k$ is the tropicalization map defined by $(-\log|g_1|, \cdots, -\log|g_k|)$.  
  
  Let $x \in U$, and let $V$ be a  rational domain $V$ of $X^{\an}$ such that $x \in V \subset U$. Then, there exists an affine open subvariety $Y \subset X$ with $V \subset Y^{\an}$. Moreover, for any $p \in \CO(V)$, there exists a sequence of polynomials $\{p_i\}_{i \ge 1} \subset K[Y]$ such that $|p_i|$ converges uniformly to $|p|$. On the other hand, consider the map $-\mathrm{Log} \colon \R^k_{\ge 0} \to \BT^k$ defined by $-\mathrm{Log}(t_1, \cdots, t_k) \coloneqq (-\log t_1, \cdots, -\log t_k)$, and $f \in \PL(\BT^k)$. With Remark \ref{rmk:PL_functions_near_boundaries}, it's not hard to show that $f \circ (-\mathrm{Log})$ is Lipschitz  on any compact subset of $\R^k_{\ge 0}$.  Combining these together, we obtain that for any analytically tropically convex PL function $u$ on $V$, there exists a sequence $\{u_i\}_{i \ge 1} \subset \TConv \cap \PL(V)$  converging locally uniformly to $u$ on $V$.

  Thus, the pointwise limit of any decreasing  sequence of analytically tropically convex PL function on $V'$ can be written locally as the pointwise limit of a decreasing sequence of tropically convex PL function, and thereby is a psh function (after Definition \ref{def:psh_functions_on_local_domains}) on $V$. This shows that the definition of $\PSH(U)$ only depends on the analytic structure on $U$. In particular, it does not depend on the choice of the projective variety $X$ containing $U$. 
\end{remark}

\begin{definition} \label{def:potential}
  Let $\omega \in \PLforms(X)$. A potential function of $\omega$ on $U$ is a \emph{finite-valued} PL function $g \in \PL(U)$ which can be written as $g=-\frac{1}{m}\log|s|_{\omega}+\lambda$, where $m \ge 1$ such that $mL(\omega)$ is an actual line bundle on $X$,  $s \in H^0(U, mL(\omega)|_{U})$ is a nowhere vanishing section, and $\lambda \in \Q$. 
\end{definition}

\begin{remark} \label{rmk:existence_of_potentials}
  \begin{enumerate}
    \item For any $x \in U$, there exists an affine open neighborhood $V $ of $x$ in $X^{\an}$ and a nowhere vanishing section $s \in H^0(V, L|_{V})$. Therefore, $\omega$ admits a potential   $g \in \PL(V)$ on $U \cap V$. 
    \item Let $g_1, g_2$ be potentials of $\omega$ on $U$. Then, $g_1-g_2=\log|f|$ for some $f \in \CO^{\times}(U)$. \qedhere
  \end{enumerate}
\end{remark}

\begin{definition} \label{def:locally_omega-psh_functions}
  Let $\omega \in \nef(X)$ and let $\varphi \colon X^{\an} \to \R \cup \{-\infty\}$ be a function. We say that  $\varphi$ is a \emph{locally $\omega$-psh function} if for each $x \in X^{\an}$, there exists an open neighborhood $U$ of $x$ in $X^{\an}$ and a potential $g \in \PL(U)$ of $\omega$ such that $\varphi+g \in \PSH(U)$. In particular,  $\varphi$ is generically finite and usc on $X^{\an}$.
\end{definition}

\begin{lemma} \label{lem:finite_sup_lpsh}
  \begin{enumerate}
    \item Let $\{u_i\}_{i \in I} \subset \PSH(U)$ be a finite class of functions. Then, $\max \{u_i\}_{i \in I} \in \PSH(U)$.  
    \item Let $\{\varphi_i\}_{i \in I} \subset \LPSHo(X)$ be a finite class of functions. Then, $\max \{\varphi_i\}_{i \in I} \in \LPSHo(X)$.  
  \end{enumerate}
  
\end{lemma}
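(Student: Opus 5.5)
Part (1) is local, so I will check it near a fixed point $x \in U$. Shrinking, I pick one open neighborhood $V$ of $x$ contained in all the neighborhoods $U_x$ from Definition \ref{def:psh_functions_on_local_domains} for the $u_i$. On $V$ each $u_i$ is then the pointwise limit of a decreasing sequence $\{u_i^j\}_{j\ge1} \subset \TConv\cap\PL(V)$. Put $u^j \coloneqq \max_{i\in I} u_i^j$. Since $I$ is finite and each sequence decreases, $u^j$ decreases pointwise to $\max_i u_i$. What remains is to show that $u^j \in \TConv\cap\PL(V)$, and that $\max_i u_i$ is usc and generically finite. The usc part is immediate for a finite max. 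Generic finiteness also holds: on each connected component some $u_i$ is not identically $-\infty$, and the max is at least that $u_i$.

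The main step is that a finite max of tropically convex PL functions is again tropically convex PL. Each $u_i^j = f_i\circ \Trop_i$, with $\Trop_i$ defined on some affine $Y_i^{\an}\supset V$ by regular functions $g_{i,1},\dots,g_{i,k_i}$, and $f_i\in \Conv\cap\PL(\Trop_i(V))$. I want a common affine chart $Y$ with $V \subset Y^{\an}$ and a common tropicalization. My first attempt is to shrink $V$ so that it lies in $Y^{\an}$ for a single affine open $Y \subset \bigcap_i Y_i$ containing the support of $x$. For instance, take $Y = Y_1 \cap \dots$ intersected with a principal open, which is still affine, and whose analytification is an open neighborhood of $x$. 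Each $g_{i,l}$ restricts to a regular function on $Y$. Let $\Trop\colon Y^{\an}\to \BT^{N}$ with $N=\sum k_i$ be given by all the $g_{i,l}$ together, and let $p_i$ be the coordinate projections. Then $u_i^j = (f_i\circ p_i)\circ\Trop$. A composition of a convex PL function with a coordinate projection is convex PL on $\Trop(V)$ (it is locally $\ge$ the pulled-back affine support function), and Lemma \ref{lem:property_tropical_conv_functions}(2) shows that $\max_i f_i\circ p_i \in \Conv(\Trop(V))$. Finally, a finite max of PL functions is PL.

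One subtlety is openness of $\Trop(V)$, which the definitions implicitly require for $\Conv(\Trop(V))$. I would handle this the same way the definition of $\TConv$ does, i.e.\ interpreting convexity on the image via the functions $f_i\circ p_i$ defined on open neighborhoods in $\BT^N$. If needed, I would take the preimage under $p_i$ of the domain of $f_i$, intersected over $i$, as the ambient open set. I expect this bookkeeping about domains and affine charts to be the only real obstacle; note that shrinking is allowed because psh is a local condition.

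For (2), fix $x\in X^{\an}$. Choose open neighborhoods $U_i$ of $x$ and potentials $g_i$ of $\omega$ with $\varphi_i+g_i\in\PSH(U_i)$. Using Remark \ref{rmk:existence_of_potentials}(1), I take a single potential $g$ on a smaller neighborhood $U\subset\bigcap U_i$. By Remark \ref{rmk:existence_of_potentials}(2), $g-g_i=\log|h_i|$ with $h_i\in\CO^\times(U)$. Adding $\log|h_i|$ preserves psh. Locally, $\pm\log|h_i|$ can be added as an extra tropical coordinate, and after approximating $h_i$ by polynomials as in Remark \ref{rmk:psh_is_analytic_invariant}, $f(\ft)+ t$ remains convex PL in the enlarged coordinates. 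Hence $\varphi_i+g\in\PSH(U)$ for all $i$, and $\max_i\varphi_i+g=\max_i(\varphi_i+g)\in\PSH(U)$ by (1).
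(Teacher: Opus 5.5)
Your proposal is correct and follows the paper's argument. The paper's proof is the one-line observation that a finite maximum of tropically convex (PL) functions is again tropically convex by Lemma \ref{lem:property_tropical_conv_functions}~(2); you make explicit the bookkeeping behind it, namely a common affine chart (available because finite intersections of affine opens in the separated $X$ are affine, so no extra shrinking is needed) and a common tropicalization obtained by concatenating coordinates. Your extra step in (2), replacing the different potentials $g_i$ by a single $g$ via Remark \ref{rmk:existence_of_potentials}~(2) before applying (1), fills in a detail that the paper's proof leaves implicit.
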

\begin{proof}
  The lemma follows from the observation that the supreme of any finite collection of tropically convex functions is tropically convex, which is an easy corollary of  Lemma \ref{lem:property_tropical_conv_functions} (2).
\end{proof}

\begin{remark} \label{rmk:extension_of_psh_functions_by_sup}
  Let $V \subset U \subset X^{\an}$ be open subsets. Let $u \in \PSH(U)$ and $v \in \PSH(V)$.    Define the function $u' \colon U \to \R \cup \{-\infty\}$ by 
  \[
    u'=\max\{u,v\} \text{ on } V, \qquad \text{ and } \qquad u'=u \text{ on } U \setminus V.
  \]
  Set $W \coloneqq \{x \in V, \, u(x)<v(x)\}$, and assume that  $\overline{W} \subset V$. Then, $u' \in \PSH(U)$. Moreover, if $u \in \PSH \cap \PL(U)$ and $v \in \PSH \cap \PL(V)$, then $u' \in \PSH \cap \PL(U)$.  To see this, we have $u'|_V \in \PSH(V)$ by Lemma \ref{lem:finite_sup_lpsh}. On the other hand, since $U$ is locally compact Hausdorff, for any $x \in U \setminus V$, there exists an open neighborhood $U_x$ of $x$ in $U$ such that $U_x \cap \overline{W}=\emptyset$. Then, by assumption, $u'|_{U_x}=u|_{U_x} \in \PSH(U_x)$. 
\end{remark} 

\subsection{Fubini-Study functions}
Let $Y$ be an affine variety over $K$.

\begin{definition}  \label{def:psh_functions_log_growth}
  \begin{enumerate}
    \item A function $u$ on $Y^{\an}$ is called a Fubini-Study function if there exists $f_1, \cdots, f_k \in K[Y]$, $c_1, \cdots, c_k \in \Q_{>0}$ and $C_1, \cdots, C_k, \lambda \in  \Q$ such that $f_1, \cdots f_k$ are generators of  $K[Y]$, and $u=\max_{1 \le i \le k}\{c_i \log|f_i|+C_i, \lambda\}$. We use $\FS(Y^{\an})$ to denote the set of Fubini-Study functions on $Y^{\an}$. 
    \item We say that a psh function $v \in \PSH(Y^{\an})$ is  of \emph{logarithmic growth} if there exists $u \in \FS(Y^{\an})$ such that $u \le v+ O(1)$ on $Y^{\an}$.  We use $\PSHlog(Y^{\an})$ to denote the set of psh functions of logarithmic growth on $Y^{\an}$.
    \item We say that a psh function of logarithmic growth $v$ is regularizable if there exists a decreasing function sequence $\{v_i\}_{i \ge 1} \subset \FS(Y^{\an})$ converging pointwise to $v$. We use $\PSHlogreg(Y^{\an})$ to denote the set of regularizable psh functions of logarithmic growth on $Y^{\an}$.
  \end{enumerate}
\end{definition}

\begin{lemma} \label{lem:FS_functions_are_comparable}
  Let $u \in \FS(Y^{\an})$.
  \begin{enumerate}
    \item For any $v \in \FS(Y^{\an})$, there exists $C >0$ such that $u \le Cv+O(1)$.
    \item Suppose that $S$ is another affine variety over $K$ and $\pi \colon Y \to S$ is a finite surjective morphism. Then, for any $w \in \FS(S^{\an})$, there exists $C'>0$ such that $u \le C' (w \circ \pi^{\an}) +O(1)$.
  \end{enumerate}
\end{lemma}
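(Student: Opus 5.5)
The plan is to reduce both statements to the elementary estimate that each generator of $K[Y]$ is dominated, up to a constant, by any Fubini--Study function. Write
\[
u=\max_i\{c_i\log|f_i|+C_i,\lambda\}, \qquad v=\max_j\{d_j\log|g_j|+D_j,\mu\},
\]
where the $g_j$ generate $K[Y]$. Since $\max\{\cdot,\cdot\}$ is monotone and $\lambda$ is a constant, it suffices to bound each term $c_i\log|f_i|$ by $Cv+O(1)$. Because the $g_j$ generate $K[Y]$, each $f_i$ can be written as a polynomial $f_i=P_i(g_1,\dots,g_m)$ with coefficients in $K$, of total degree at most $N_i$. The non-archimedean (ultrametric) triangle inequality then gives
\[
\log|f_i|\le \max_{\alpha}\bigl(\log|a_{\alpha}|+\textstyle\sum_j\alpha_j\log|g_j|\bigr)\le \log\max_\alpha|a_\alpha| + N_i\max\{\max_j\log|g_j|,0\}.
\]
Next I use $\log|g_j|\le (v-D_j)/d_j$ and $0\le (v-\mu)$, together with the bound $\max\{\log|g_j|,0\}\le \delta^{-1}\max_j\{d_j\log|g_j|+D_j,\mu\}+O(1)$, where $\delta=\min_j d_j>0$. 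This requires a little care with the sign of $v$; the point is that the pointwise inequality $\max\{x,0\}\le a\max\{bx+c,\mu\}+c'$ holds with positive constants. These estimates combine to give $u\le Cv+O(1)$ with $C=\max_i c_iN_i/\delta$, which proves statement (1).

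For statement (2), since $\pi$ is finite, $K[Y]$ is a finite module, hence integral, over $\pi^*K[S]$. Take generators $h_1,\dots,h_r$ of $K[S]$ and set $w=\max_l\{e_l\log|h_l|+E_l,\nu\}$. By (1) applied on $S$, we may replace $w$ by $\max_l\{\log|h_l|,0\}$, and by (1) on $Y$ it suffices to bound $\log|f|$ for each element $f$ of a single chosen generating set of $K[Y]$. Each such $f$ satisfies a monic equation
\[
f^n+\pi^*(b_1)f^{n-1}+\dots+\pi^*(b_n)=0 \qquad \text{with } b_k\in K[S].
\]
At any point $y\in Y^{\an}$, the ultrametric inequality forces $|f|^n\le\max_k|b_k(\pi y)||f|^{n-k}$, so $|f|\le\max_k|b_k(\pi y)|^{1/k}$. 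Taking logarithms gives $\log|f|\le\max_k\frac1k\log|b_k|\circ\pi^{\an}$. Applying (1) on $S$ to each FS function $\max\{\log|b_k|,0\}$, dominated by $C_k w+O(1)$, we obtain $\log|f|\le C'(w\circ\pi^{\an})+O(1)$. Maximizing over the generators $f$ gives the claim.

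The main obstacle is purely bookkeeping: controlling the constant terms and signs, since $v$ can be negative while the $\log|g_j|$ may tend to $-\infty$. I will handle this by always comparing with $\max\{\cdot,0\}$, using that $\lambda,\mu$ are finite constants so every FS function is bounded below. Surjectivity of $\pi$ is not really needed beyond ensuring $K[S]\to K[Y]$ is injective; finiteness is what gives the integral equation.
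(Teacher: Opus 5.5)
Your proposal is correct and follows the paper's argument: in (1), the ultrametric estimate for a polynomial in the generators of $K[Y]$; in (2), the bound $\log|f|\le\max_k\frac1k\log|b_k|\circ\pi^{\an}$ from the monic integral equation, combined with (1) on $S$. Your bound $\max\{\log|g_j|,0\}\le\delta^{-1}v+O(1)$ for a general $v$ spells out a reduction the paper leaves implicit. One small slip: $\max\{\log|b_k|,0\}$ is not literally in $\FS(S^{\an})$ unless you adjoin generators of $K[S]$ (or apply the polynomial estimate from (1) directly), but this is harmless.
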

\begin{proof}
  (1) Suppose that $v \coloneqq \max\{\log|z_1|, \cdots, \log|z_k|, 0\}+1  \in \FS(Y^{\an})$, where $z_1, \cdots, z_k$ are generators of $K[Y]$. Let $f=\sum_{|\alpha| \le N} a_{\alpha} z^{\alpha} \in K[Y]$. Then, $\log|f| \le \sum_{|\alpha| \le N} \log|a_{\alpha}| ( \alpha \cdot \log|z| )  \le C_f v+O(1)$ for $C_f \gg 0$. This proves the first statement. 

  \smallskip

  (2) Let $f \in K[Y]$. By definition, $K[Y]$ is a finite algebra over $K[S]$. Then, there exists $m \in \N$ and $b_0, \cdots, b_{m-1} \in K[S]$ such that $f^m+b_{m-1}f^{m-1}+\cdots + b_0=0$. By \cite[Lemma 11 in Section 3.1]{Bos14}, we have $\log|f|(y) \le \max_{0 \le i \le m-1}\{\frac{1}{i} \log|b_i| \left(\pi^{\an}(y) \right) \}$ for any $y \in Y^{\an}$. Combining this with  statement (1) applied to $\FS(S^{\an})$ proves statement (2). 
\end{proof}

\begin{prop} \label{prop:potential_of_positive_form_is_Fubini-Study}
  \begin{enumerate}
    \item Let $X$ be a projective variety over $K$ and let $\theta \in \amp(X)$ such that $L(\theta)$ is an actual ample line bundle on $X$. Denote by $|\cdot|_{\theta}$ the Fubini-Study metric on $L(\theta)$ induced by $\theta$, and    denote $Y \coloneqq X \setminus Z(s)$. Then, $g \coloneqq -\log|s|_{\theta} \in \FS(Y^{\an})$.  
    \item Conversely, let $Y$ be an affine variety over $K$ and $g \in \FS(Y^{\an})$. Then, there exists a projective variety $X$ over $K$, an ample line bundle $L$ on $K$, $\theta \in \amp(X)$, $c \in \Q_{>0}$, $\lambda \in \Q$  and $s \in H^0(X,L)$ such that $L \simeq L(\theta)$,  $Y = X \setminus Z(s)$, and $g=-c\log|s|_{\theta}+\lambda$, where $|\cdot|_{\theta}$ is the Fubini-Study metric on $L$ induced by $\theta$. 
  \end{enumerate}
\end{prop}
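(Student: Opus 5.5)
For (1), I will use Proposition \ref{prop:equivalence_FS_metrics_relatively_ample_models} and Definition \ref{def:FS_metrics} to write the Fubini--Study metric as
\[
|\cdot|_{\theta} = \tfrac1m\max_{1\le i\le k}\{\log|s_i|+C_i\}.
\]
Here the $s_i\in H^0(X,mL)$ have no common zero and $C_i\in\Q$. Then on $Y=X\setminus Z(s)$ the defining convention gives
\[
g=-\log|s|_\theta=\tfrac1m\max_i\{\log|s_i/s^m|+C_i\},
\]
and each $f_i\coloneqq s_i/s^m$ is a regular function on $Y$. Two things remain for this to lie in $\FS(Y^{\an})$: the $f_i$ should generate $K[Y]$, and there should be a constant term $\lambda$. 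I handle both by enlarging the family of sections. First replace $m$ by a multiple $mN$, using the powers $s_i^N$, and adjoin extra sections $t_j s^{\bullet}$. This changes the metric only if the new terms are dominated by the old maximum, and that can be arranged by taking very negative constants, since $\max_i\{\log|s_i|+C_i\}$ dominates $\log|t|+C$ for any section $t$ once $C\ll0$. The domination holds because the $s_i$ have no common zero and $X^{\an}$ is compact. In particular I add $s^{mN}$ itself, which produces the constant term $\lambda$. Next I choose $N$ large so that $mNL$ is very ample, and add sections $t_j$ with $t_j/s^{mN}$ generating $K[Y]$. This is possible because $K[Y]=\bigcup_N H^0(X,NmL)/s^{Nm}$, which is finitely generated. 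Each new term gets a very negative constant. The result is $g=\max\{c_i\log|f_i|+C_i,\lambda\}$ with $c_i=1/(mN)$, where the $f_i$ generate $K[Y]$. The constants need to be rational; I take them rational from the start, and the domination uses only a rational lower bound.

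For (2), I write $g=\max_i\{c_i\log|f_i|+C_i,\lambda\}$ and clear denominators with $d\in\Z_{>0}$ so that $dc_i=a_i\in\Z_{>0}$. The generators $f_1,\dots,f_k$ give a closed embedding $Y\hookrightarrow\BA^k$. I would then take $X$ to be the closure of $Y$ in a weighted projective space, or more simply do the following. Let $X$ be the closure of $Y$ in $\P^k$, set $L=\CO(1)$ and $s=x_0$, and consider the sections
\[
s_0=x_0^{A},\qquad s_i=x_i^{A/a_i}x_0^{A-A/a_i}\quad\text{in } H^0(\CO(A)),
\]
with $A$ divisible by every $a_i$. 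On $Y$ these satisfy $|s_i/s_0|=|f_i|^{A/a_i}$. Hence
\[
\tfrac{d}{A}\max_i\{\log|s_i/x_0^A|+\text{consts}\}
\]
recovers the terms $c_i\log|f_i|$ after rescaling the exponents by $A$.

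The main obstacle is that the $s_i$ above have common zeros on $X\cap Z(x_0)$. To remove them I add the sections $x_j^{A}$ with very negative constants. On $Y$ these are dominated, since $|f_j|^A\le C\max\{|f_j|^{A/a_j},1\}^{a_j}$ is not a domination in general. So instead I will use $\log|f_j|^{A}$ with coefficient compared through Lemma \ref{lem:FS_functions_are_comparable}. That lemma gives only an inequality up to $O(1)$ with a multiplicative constant, which is not good enough for exact equality.

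The cleaner fix is to pass to the weighted projective closure. Let $X$ be the closure of $Y$ in $\P(1,a_1',\dots)$, suitably normalized, or in the normalization of the Rees/graded algebra
\[
R=\bigoplus_n\{h\in K[Y]:\ \text{weighted degree}\le n\}
\]
for the filtration defined by $\deg f_i=a_i$. Then $X=\mathrm{Proj}\,R$ (after Veronese) and $s=1\in R_1$. In this setting $f_i\in R_{a_i}$ and $1\in R_{a_i}$ together have no common zero at infinity, because $R$ is generated by them. The metric $\frac{1}{A}\max_i\{\log|f_i^{A/a_i}|+C_i',\log|s^A|+\lambda'\}$ is then Fubini--Study. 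It is positive by Proposition \ref{prop:equivalence_FS_metrics_relatively_ample_models}, and it yields $g=-c\log|s|_\theta+\lambda$ with $c=d$. Verifying the "no common zero at infinity" claim and the ampleness of $\CO(1)$ on this Proj is the step I expect to require the most care.
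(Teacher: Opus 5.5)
\textbf{Part (1).} Your argument is correct, though it differs slightly from the paper's. The paper quotes \cite[Lemma 7.17, Corollary 7.18]{BE21} to write $|\cdot|_\theta$ using a basis of $H^0(X,mL)$ with $mL$ very ample, so the ratios $s_i/s^{m}$ generate $K[Y]$ automatically. You instead start from an arbitrary Fubini--Study representation and adjoin $s^{mN}$ and the $t_j s^{\bullet}$ with very negative rational constants. This does not change the metric, because $\log|t|_{mN\phi}$ is continuous, hence bounded above, on the compact space $X^{\an}$. It also produces the constant term $\lambda$.

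\textbf{Part (2): the weights are inverted.} If $f_i$ has weight $a_i=dc_i$ and you use the sections $s^A$ and $f_i^{A/a_i}$ of $\CO_X(A)$, then on $Y^{\an}$
\[
-\log|s|_\theta=\max_i\Bigl\{\tfrac{1}{a_i}\log|f_i|+\tfrac{C_i'}{A},\ \tfrac{\lambda'}{A}\Bigr\}.
\]
So $-d\log|s|_\theta$ has slopes $d/a_i=1/c_i$ rather than $c_i$, and $g=-d\log|s|_\theta+\lambda$ holds only if every $c_i=1$. Your $\P^k$ attempt has the same inversion. The weight of $f_i$ must instead be inversely proportional to $c_i$; the weighted route then works as follows.
\begin{enumerate}
\item Choose $c\in\Q_{>0}$ with $w_i\coloneqq c/c_i\in\Z_{>0}$, for instance $c$ the least common multiple of the numerators of the $c_i$.
\item Let $R_n$ be spanned by the monomials $f^{\alpha}$ with $\sum_i\alpha_iw_i\le n$, and write $t\in R_1$ for the element $1$.
\item Take $L=\CO_X(A)$ with $A$ divisible by all $w_i$; this is invertible and ample, whereas $\CO_X(1)$ itself need not be invertible. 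Take $s=t^A$.
\item The Fubini--Study metric $\max_i\{\log|t^A|,\ \log|f_i^{A/w_i}|+A(C_i-\lambda)/c\}$ on $L$ then gives $g=-\tfrac{c}{A}\log|s|_\theta+\lambda$.
\end{enumerate}
The step you flagged as delicate is immediate: $t,f_1,\dots,f_k$ generate $R$ as a $K$-algebra and $R_0=K$, so they generate $R_+$ as an ideal. Hence $t^A$ and the $f_i^{A/w_i}$ have no common zero on $\mathrm{Proj}\,R$. Do not pass to the normalization of $R$: that would replace $Y$ by its normalization and destroy $Y=X\setminus Z(s)$.

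\textbf{The paper's route.} The paper avoids weighted projective spaces altogether. After rescaling so that $c_i\in\Z_{>0}$ and $\lambda=0$, it embeds $Y$ into $\BA^{2k}\subset\P^{2k}$ by $(f_1^{c_1},\dots,f_k^{c_k},f_1,\dots,f_k)$. Let $s_0,\dots,s_{2k}$ be the homogeneous coordinates and $M=\max_i|C_i|$. The terms $\log|f_i^{c_i}|+C_i$ now come from linear coordinates. The auxiliary terms $\log|f_i|-M$, which remove the common zeros at infinity, are dominated by $g$, because $c_i\ge 1$ gives $\log|f_i|\le\max\{c_i\log|f_i|,0\}$. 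Hence $\max_i\{\log|s_0|,\ \log|s_i|+C_i,\ \log|s_{i+k}|-M\}$ is a Fubini--Study metric on $\CO(1)|_X$ with $-\log|s_0|_\theta=g$. This is exactly the domination you were looking for: it is unavailable for the closure in $\P^k$, but holds once the powers $f_i^{c_i}$ are added as coordinates.
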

\begin{proof}
  (1) Let $m \ge 1$ such that $mL$ is very ample on $X$. Then, by \cite[Lemma 7.17 and Corollary 7.18]{BE21}, there exists a base $\{s_1, \cdots, s_k\}$ of $H^0(X, mL)$ such that $|\cdot|_{\theta}=\frac{1}{m} \max_{1 \le i \le k} \{\log|s_i|+C_i\}$ with $C_i \in \Q$. Then, 
  \[
  g=-\log|s|_{\theta}=\max_{1 \le i \le k} \frac{1}{m} \{\log|s_i/s^{\otimes m}| +C_i, \ 0 \}
  \]
  with $ s_i/s^{\otimes m} \in K[Y]$. By assumption, $[s^{\otimes m} \colon s_1 \colon \cdots \colon s_k]$ induces a closed embedding $X \hookrightarrow \P^k$. Then, $\{s_i/s^{\otimes m}\}_{1 \le i \le k}$ induces a closed embedding $Y \hookrightarrow \BA^k$, and is therefore a set of generator of $K[Y]$. This implies that $g \in \FS(Y^{\an})$. 

  (2) Let $g=\max_{1 \le i \le k}\{c_i \log|f_i|+C_i, \lambda\}$ for $f_1, \cdots, f_k \in K[Y]$, $c_1, \cdots, c_k \in \Q_{>0}$ and $C_1, \cdots, C_k, \lambda \in  \Q$. After multiplying $g$ by a large enough positive integer and adding a constant, we may assume that $c_1, \cdots, c_k \in \Z_{>0}$ and $\lambda=0$. Let $M \coloneqq \max\{|C_1|, \cdots, |C_k|\} \in \Z_{>0}$. Then, it's not hard to see that for each $1 \le i \le k$, we have 
  \bens
  \log|f_i|-M \le \max_{1 \le i \le k}\{c_i \log|f_i|, \ 0\} -M \le \max_{1 \le i \le k}\{c_i \log|f_i|+C_i, 0 \}= g.
  \eens
  This implies that 
  \[
  g=\max_{1 \le i \le k} \{  \log|f_i^{c_i}|+C_i, \ \log|f_i|-M, 0 \}
  \]
  with $f_i^{c_i} \in K[Y]$. By assumption, $f_1, \cdots, f_k$ are generators of $K[Y]$. Therefore, 
  \[
  (f_1^{c_1},\ldots,f_k^{c_k},f_1,\ldots,f_k)\colon Y\hookrightarrow \mathbb{A}^{2k}
  \]
  is a closed embedding. Let \(x_1,\ldots,x_{2k}\) denote the corresponding affine coordinates on \(\mathbb{A}^{2k}\). We compactify \(\mathbb{A}^{2k}\) to \(\mathbb{P}^{2k}\) with homogeneous coordinates
  \(
  [s_0:s_1:\cdots:s_{2k}],
  \)
  so that, on the affine chart \(s_0\neq 0\), we have \(x_i=s_i/s_0\).

  Let $X$ be the Zariski closure of $Y$ in $\P^{2k}$ and let $L=\CO(1)|_X$. Viewing $s_0, s_1, \cdots, s_{2k}$ as a base of $H^0(X, L)$, we define 
  \[
  \phi \coloneqq \max_{1 \le i \le k} \{\log|s_0|,\ \log|s_i|+C_i, \ \log|s_{i+k}|-M\}
  \]
  as a Fubini-Study metric on $L$. Now, Proposition \ref{prop:equivalence_FS_metrics_relatively_ample_models} implies that there exists $\theta \in \amp(X)$ such that $L \simeq L(\theta)$ and $|\cdot|_{\theta}=|\cdot|_{\phi}$.  Then, we have $Y=X \setminus Z(s_0)$ and $g=-\log|s_0|_{\theta}$ on $Y^{\an}$. Hence, statement (2) is proved. 
\end{proof}

\begin{cor} \label{cor:GPSH_plus_potential_of_positive_forms_is_pshlogreg}
  Under the assumptions of Proposition \ref{prop:potential_of_positive_form_is_Fubini-Study} (1), for any $\varphi \in \GPSH_{\theta}(X^{\an})$, we have $g+\varphi \in \PSHlogreg(Y^{\an})$.
\end{cor}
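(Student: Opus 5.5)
We need to show: $g+\varphi \in \PSHlogreg(Y^{\an})$ for $\varphi \in \GPSH_\theta(X^{\an})$, where $g=-\log|s|_\theta$ on $Y = X\setminus Z(s)$. The plan is to approximate $\varphi$ by $\theta$-Fubini--Study functions, check that each approximant becomes an element of $\FS(Y^{\an})$ after adding $g$, and then pass to the limit.

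First, by Proposition \ref{prop:regularization_omega_psh_by_omega_FS_functions} (2), we choose a decreasing sequence $\{\varphi_i\}_{i\ge1}\subset\FS_\theta(X^{\an})$ converging pointwise to $\varphi$. For each $i$, the form $\theta_i\coloneqq\theta+\dm\dmb\varphi_i$ lies in $\amp(X)$ with $L(\theta_i)=L(\theta)$, and $|\cdot|_{\theta_i}=|\cdot|_\theta e^{-\varphi_i}$. We then have
\[
g+\varphi_i=-\log|s|_\theta+\varphi_i=-\log|s|_{\theta_i}.
\]
Applying Proposition \ref{prop:potential_of_positive_form_is_Fubini-Study} (1) to $\theta_i$ (note $Y=X\setminus Z(s)$ does not depend on $i$) gives $g+\varphi_i\in\FS(Y^{\an})$. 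Since $g$ is finite on $Y^{\an}$, the sequence $\{g+\varphi_i\}$ decreases pointwise to $g+\varphi$ on $Y^{\an}$.

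Next we verify that $g+\varphi\in\PSHlog(Y^{\an})$.
\begin{enumerate}
\item \emph{Upper semicontinuity.} It is usc as a decreasing limit of continuous functions.
\item \emph{Generic finiteness.} The function $\varphi$ is not identically $-\infty$ on any connected component of $X^{\an}$. Moreover, $Y^{\an}$ is dense (Zariski-open) in each component, and $\varphi>-\infty$ on $X^{\div}\subset Y^{\an}$ by Theorem \ref{thm:global_divisorial_points_are_nonpluripolar} (if $K$ is discretely or trivially valued). For general $K$, use instead that $\varphi$ is usc, generically finite, and that $Z(s)^{\an}$ has empty interior. This is a minor point.
\item \emph{Psh.} Each element of $\FS(Y^{\an})$ is a max of finitely many $c_i\log|f_i|+C_i$ and a constant, hence tropically convex PL on $Y^{\an}$ via the tropicalization defined by $f_1,\dots,f_k$. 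So $g+\varphi$ is locally (indeed globally) a decreasing limit of tropically convex PL functions, i.e.\ lies in $\PSH(Y^{\an})$.
\item \emph{Logarithmic growth.} Since $\varphi_i$ decreases, $g+\varphi\le g+\varphi_1$; but the definition needs a lower-type comparison $u\le g+\varphi+O(1)$ for some $u\in\FS(Y^{\an})$.
\end{enumerate}

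The lower bound in step 4 is the main obstacle, since $\varphi$ may be unbounded below. I would read the definition as intended for the upper growth bound $v\le u+O(1)$, which holds with $u=g+\varphi_1$. If the lower direction is genuinely required, one can reduce to the case where $\varphi$ is bounded by replacing it with $\max\{\varphi,-C\}$, but that changes the function. So I expect the intended inequality to be $v\le u+O(1)$, which is immediate here. Regularizability then holds by construction from the sequence $\{g+\varphi_i\}$.
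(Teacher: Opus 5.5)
Your proposal is correct and is essentially the paper's proof: approximate $\varphi$ by a decreasing sequence in $\FS_\theta(X^{\an})$ via Proposition \ref{prop:regularization_omega_psh_by_omega_FS_functions} (2), observe $g+\varphi_i=-\log|s|_{\theta_{\varphi_i}}\in\FS(Y^{\an})$ by Proposition \ref{prop:potential_of_positive_form_is_Fubini-Study} (1), and pass to the decreasing limit. Your reading of logarithmic growth as the upper bound $v\le u+O(1)$ is the one the paper uses throughout (the inequality in Definition \ref{def:psh_functions_log_growth} is reversed by a typo), and your checks of upper semicontinuity and psh-ness fill in points the paper leaves implicit. The one soft spot is your general-$K$ remark on generic finiteness: usc-ness, generic finiteness on $X^{\an}$, and $Z(s)^{\an}$ having empty interior do not rule out $\varphi\equiv-\infty$ on $Y^{\an}$ (for example, $0$ on $Z(s)^{\an}$ and $-\infty$ elsewhere is usc), so a pluripotential input such as finiteness on divisorial points is genuinely needed, which is exactly what you correctly use in the discretely or trivially valued case.
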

\begin{proof}
  Let $\varphi \in \GPSH_{\theta}(X^{\an})$. Since $L(\theta)$ is ample, Proposition \ref{prop:regularization_omega_psh_by_omega_FS_functions} (2) implies that there exists a decreasing sequence $\{\varphi_i\}_{i \ge 1} \subset \FS_{\theta}(X^{\an})$ converging pointwise to $\varphi$. By definition, for each $i \ge 1$, $\theta_{\varphi_i} \in \amp(X)$ and $-\log|s|_{\theta_{\varphi_i}} =g+\varphi_i$. Then, applying Proposition \ref{prop:potential_of_positive_form_is_Fubini-Study} (1) to $\theta_{\varphi_i}$, we obtain that $\{g+\varphi_i\}_{i \ge 1} \subset \FS(Y^{\an})$ is a decreasing function sequence converging pointwise to $g$. This implies that $g \in \PSHlogreg(Y^{\an})$. 
\end{proof}

\begin{cor} \label{cor:GPSH_is_LPSH}
  Let $X$ be a projective variety over $K$ and let $\omega \in \nef(X)$. Let $U \subset X^{\an}$ be an open subset and $g \in \PL(U)$ be a potential of $\omega$ on $U$. Then, for any $\varphi \in \GPSH_{\omega}(X)$, we have $g+\varphi \in\PSH(U)$. In particular, we have $\GPSH_{\omega}(X^{\an}) \subset \LPSH_{\omega}(X^{\an})$.
\end{cor}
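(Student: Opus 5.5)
The plan is to reduce to the case of an ample form, where the potential can be written as a Fubini--Study function. Then Corollary \ref{cor:GPSH_plus_potential_of_positive_forms_is_pshlogreg} and the decreasing approximation by tropically convex PL functions do the rest.

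\textbf{Reduction to a local statement.} Being psh is local by Definition \ref{def:psh_functions_on_local_domains}, so it suffices to treat a small neighborhood of each point $x \in U$. By Remark \ref{rmk:existence_of_potentials}(2), any two potentials differ by $\log|f|$ with $f \in \CO^{\times}$. Near $x$, $\log|f| = \max\{\log|f|, \cdot\}$ is an analytically tropically convex PL function, so by Remark \ref{rmk:psh_is_analytic_invariant} the psh property of $g+\varphi$ does not depend on which potential we use. We are therefore free to choose a convenient potential near $x$.

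\textbf{Adding an ample form.} Fix $\theta \in \amp(X)$ with $L(\theta)$ an honest ample line bundle; this exists by the remark citing \cite[Proposition 4.11]{GM19}, after replacing $L(\theta)$ by a multiple. By Definition \ref{def:glocal_omega-psh_functions} and Proposition \ref{prop:def_of_omega_psh_does_not_depend_on_theta}(1), there are rationals $\epsilon_i \downarrow 0$ and a decreasing sequence $\varphi_i \in \GPSH_{\omega+\epsilon_i\theta} \cap \PL(X^{\an})$ converging pointwise to $\varphi$.

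Choose a section $s$ of a power of $L(\theta)$ with $s(x)\neq 0$, so that $x \in Y^{\an}$ where $Y \coloneqq X\setminus Z(s)$ is affine. Shrinking further, take a nowhere vanishing local section $\sigma$ of $mL(\omega)$ near $x$. Then $h_i \coloneqq -\frac1m\log|\sigma|_{\omega}-\epsilon_i\log|s|_{\theta}$ is a potential of $\omega+\epsilon_i\theta$ near $x$.

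\textbf{Each approximant is tropically convex PL.} For each $i$, the form $\omega+\epsilon_i\theta$ (after scaling to clear denominators) is a nef model whose underlying line bundle is ample. Proposition \ref{prop:regularization_omega_psh_by_omega_FS_functions}(1) approximates $\varphi_i$ uniformly by $\FS$-functions relative to a positive form. Then Proposition \ref{prop:potential_of_positive_form_is_Fubini-Study}(1), applied to $(\omega+\epsilon_i\theta)_{\psi}$, shows that potential plus approximant is a Fubini--Study function, i.e.\ a max of $c\log|f_j|+C_j$. That is visibly tropically convex PL on a neighborhood. Uniform limits of such functions, corrected by small constants, still give the needed sequence.

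Concretely, I would aim to show that $h_i+\varphi_i$ is a uniform limit of tropically convex PL functions near $x$. By adding constants $2^{-i}$ and passing to a subsequence, we then obtain a decreasing sequence of tropically convex PL functions converging to $g+\varphi$ plus a correction. The correction is $h_i - g = \log|f| - \epsilon_i\log|s|_\theta$, and $\epsilon_i\log|s|_{\theta}\to0$ uniformly on a compact neighborhood of $x$ where $s$ does not vanish.

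\textbf{Main obstacle.} The delicate point is monotonicity. We have approximations $u_i$ that are decreasing only up to the error $\epsilon_i\log|s|_\theta$ and the uniform Fubini--Study approximation errors. I would fix this by working on a relatively compact neighborhood $V$ of $x$ where $|\log|s|_\theta|\le C$, and replacing $u_i$ by $u_i+\delta_i$ with $\delta_i\downarrow 0$ chosen so that $\delta_i-\delta_{i+1}$ dominates $C(\epsilon_i-\epsilon_{i+1})$ plus the approximation errors. After passing to a subsequence with summable errors, this yields a genuinely decreasing sequence in $\TConv\cap\PL(V)$.

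The sum of two tropically convex functions coming from different tropicalizations is tropically convex after combining the maps into one tropicalization. So adding $-\log|f|$ for the invertible $f$ (as a linear function of a coordinate) keeps us in $\TConv\cap\PL(V)$. Finally, generic finiteness and upper semicontinuity of $g+\varphi$ follow from those of $\varphi$ together with the continuity of $g$. The last assertion $\GPSHo\subset\LPSHo$ then follows from Remark \ref{rmk:existence_of_potentials}(1), which supplies a potential near every point.
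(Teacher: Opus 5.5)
Your proposal is correct and follows essentially the paper's argument. You perturb by $\epsilon_i\theta$ and take approximants $\psi$ with $(\omega+\epsilon_i\theta)_\psi\in\amp(X)$, so that potential plus approximant is a maximum of terms $c\log|f_j|+C_j$ and hence lies in $\TConv\cap\PL$ on a fixed neighbourhood; you then arrange monotonicity. The differences are minor. The paper takes $\theta$ relatively ample on the same model as $\omega$, so that $\omega+\epsilon\theta\in\amp(X)$ and Proposition \ref{prop:regularization_omega_psh_by_omega_FS_functions}(1) applies exactly as stated. It also gets monotonicity for free from $\epsilon_i(h-C)$ with $h-C\ge 0$. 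With your arbitrary $\theta$, you instead rely on the proof of that proposition carrying over verbatim to nef forms with ample class, and you absorb the $\epsilon_i$-dependence of the potential into the additive constants $\delta_i$.
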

\begin{proof}
  Let $(\fX, \fL)$ be a model representing $\omega$. By \cite[Proposition 4.11]{GM19}, after passing to a model dominating $\fX$, we may assume that there exists a $\Q$-line bundle $\fL'$ relatively ample on $\fX$. Let $\theta \in \PLforms(X)$ be the class of $(\fX, \fL')$. Then, Proposition \ref{prop:equivalence_FS_metrics_relatively_ample_models} implies that $\omega + \epsilon \theta \in \amp(X)$ for any $\epsilon >0$.
  
  We may assume that $U$ is an affine open subset of $X^{\an}$ and $\theta$ admits  a potential $h$ on $U$. By  Proposition \ref{prop:potential_of_positive_form_is_Fubini-Study} (1), we have $h \in \FS(U)$. Thus, by definition, $\inf_{x \in U} h(x)=C>-\infty$ for some $C \in \R$. Then, for each $\epsilon >0$, $g+\epsilon (h-C)$ is a potential of $\omega + \epsilon \theta$ on $U$. 

  Let $\varphi \in\GPSHo(X^{\an})$. Then, Proposition \ref{prop:def_of_omega_psh_does_not_depend_on_theta} (1) together with Proposition \ref{prop:regularization_omega_psh_by_omega_FS_functions} (1) imply that there exists a decreasing sequence of numbers $\{\epsilon_i\}_{i \ge 1} \subset \Q_{>0}$, and a decreasing function sequence $\{\psi_i\}_{i \ge 1}$ converging pointwise to $\varphi$ such that $\psi_i \in \FS_{\omega+\epsilon_i \theta}(X^{\an})$ for each $i \ge 1$.  Note that $g+\psi_i+\epsilon (h-C)$ is a potential of $(\omega+\epsilon_i \theta)_{\psi_i} \in \amp(X)$, and Proposition \ref{prop:potential_of_positive_form_is_Fubini-Study} (1) implies that $g+\psi_i+\epsilon (h-C) \in \FS(U)$.   Since $h-C \ge 0$ on $U$, $\left\{g+\psi_i+\epsilon_i (h-C) \right\}_{i \ge 1}$ is a decreasing function sequence converging pointwise to $g+\varphi$. This implies that $g+\varphi \in \PSH(U)$. 
\end{proof}

\subsection{Tropical polytopes}

\begin{definition} \label{def:tropical_polytopes}
  A subset $P \subset Y^{\an}$ is called a tropical polytope if $P$ is  compact, and there exists $f_1, \cdots, f_k \in K[Y]$ and rational numbers $t_1, \cdots, t_k \in \Q$ such that 
  \[
  P = \bigcap_{1 \le i \le k} \{-\log|f_i| \ge t_i\} \subset Y^{\an}.
  \]
\end{definition}

\begin{example} \label{example:tropical_polytopes_in_affine_varieties}
  Let $z_1, \cdots, z_k$ be generators of the coordinate ring $ K[Y]$. For each $r \in \N$, let $E_r \coloneqq \bigcap_{1 \le i \le k} \{-\log|z_i| \ge -r \} \subset Y^{\an}$. Then, $E_r$ is compact  and hence a tropical polytope. Moreover, we have $E_r \subset \interior (E_{r+1})$, and $Y^{\an}=\bigcup_{r \in \N} \interior( E_r)$. We call $\{E_r\}_{r \ge 0}$ a sequence of tropical polytopes exhausting $Y^{\an}$.  To see that $E_r$ is compact, consider the closed embedding $Y \hookrightarrow \BA^k$ induced by $\{z_i\}_{1 \le i \le k}$, and let $X$ be the Zariski closure of $Y$ in $\P^k$. Then,  $E_r$ is a closed subset of $X^{\an}$ by the definition of the topology on $X^{\an}$. On the other hand,  $X^{\an}$ is a compact Hausdorff space. Therefore, $E_r$ is compact.     
\end{example}

\begin{lemma} \label{lem:tropical_polytopes_and_FS_functions}
  For any $u \in \FS(Y^{\an})$ and $t \in \Q$, $\{u \le t\}$ is a tropical polytope. Conversely for any tropical polytope $P \subset Y^{\an}$, there exists $u \in \FS(Y^{\an})$  such that $P=\{ u=0 \} \subset Y^{\an}$ and $u \ge 0$ on $Y^{\an}$.
\end{lemma}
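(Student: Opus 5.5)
For the first assertion, take $u=\max_{1\le i\le k}\{c_i\log|f_i|+C_i,\lambda\}\in\FS(Y^{\an})$, where the $f_i$ generate $K[Y]$ and $c_i\in\Q_{>0}$. I would first write the sublevel set as an intersection:
\[
\{u\le t\}=\bigcap_{1\le i\le k}\{c_i\log|f_i|+C_i\le t\}\cap\{\lambda\le t\}.
\]
If $\lambda>t$ the set is empty. The empty set is a tropical polytope, since it equals $\{-\log|1|\ge 1\}$. Otherwise each condition can be rewritten as $-\log|f_i|\ge (C_i-t)/c_i$, and the right-hand side is rational. This gives the required form. For compactness, note that $\{u\le t\}$ is contained in $\bigcap_i\{-\log|f_i|\ge -r\}$ for $r\in\N$ large. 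By Example \ref{example:tropical_polytopes_in_affine_varieties} this set is compact, because the $f_i$ generate $K[Y]$. Since $\{u\le t\}$ is closed in it, $\{u\le t\}$ is compact.

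For the converse, let $P=\bigcap_{i}\{-\log|f_i|\ge t_i\}$ be compact with $f_i\in K[Y]$ and $t_i\in\Q$. The natural candidate is $\max_i\{\log|f_i|+t_i,0\}$, which is $\ge 0$ and vanishes exactly on $P$. However, the $f_i$ need not generate $K[Y]$. The plan is to adjoin a set of generators $z_1,\dots,z_m$ of $K[Y]$ with small coefficients, in such a way that they do not change the zero set. By compactness of $P$ and continuity of $|z_j|$, there is $R\in\Q$ with $\log|z_j|\le R$ on $P$ for every $j$. Set
\[
u\coloneqq\max_{i,j}\{\log|f_i|+t_i,\ \log|z_j|-R-1,\ 0\}.
\]
This function lies in $\FS(Y^{\an})$: the family $\{f_i\}\cup\{z_j\}$ generates $K[Y]$, all slopes equal $1$, and all constants are rational. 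It is $\ge 0$ everywhere. On $P$ the $f$-terms are $\le 0$ and the $z$-terms are $\le -1$, so $u=0$ on $P$. Off $P$ some $\log|f_i|+t_i>0$, so $u>0$ there. Hence $P=\{u=0\}$.

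The main point to check is the first half, namely that compactness follows when $t$ is an arbitrary rational number. This is handled by the exhaustion of Example \ref{example:tropical_polytopes_in_affine_varieties}, which works precisely because FS functions are built from generators of $K[Y]$. The converse is routine once the boundedness of the generators on $P$ is arranged, and a degenerate case needs no separate treatment: if $P=\emptyset$, the same construction still applies, with $R$ chosen arbitrarily.
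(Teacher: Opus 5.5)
Your proof is correct and follows essentially the same route as the paper. In the first half, you rewrite $\{u\le t\}$ as an intersection of conditions $-\log|f_i|\ge (C_i-t)/c_i$ and get compactness from Example~\ref{example:tropical_polytopes_in_affine_varieties}. In the converse, you adjoin generators $z_j$ of $K[Y]$ with a shift that keeps them inactive on $P$. The only cosmetic difference is in how the shift is obtained: the paper places $P$ inside some $E_r$ and uses $\log|z_j|-r$, whereas you bound $\log|z_j|$ on $P$ directly and use $\log|z_j|-R-1$.
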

\begin{proof}
  For the first half of the statement, suppose that $f_1, \cdots, f_k \in K[Y]$, $c_1, \cdots, c_k \in \Q_{>0}$ and $t_1, \cdots, t_k, \lambda \in  \Q$ such that $f_1, \cdots f_k$ are generators of  $K[Y]$, and $u=\max_{1 \le i \le k}\{c_i \log|f_i|+t_i, \lambda\}$. If $\lambda >t$, then $\{u \ge t\}=\emptyset$. If $\lambda \le t$, then  $\{u \le t\}=\bigcap_{1 \le i \le k} \{-\log|f_i| \ge \frac{1}{C}(t_i-t)\}$, and     Example \ref{example:tropical_polytopes_in_affine_varieties} implies that $\{u \le t\}$ is a tropical polytope. For the second half, suppose that $P = \bigcap_{1 \le i \le l} \{-\log|f_i| \ge t_i\} \subset Y^{\an}$ with $f_i \in K[Y]$ and $t_i \in \Q$. Let $z_1, \cdots, z_k $ be generators of $K[Y]$ and let $E_r$ be the sequence of tropical polytopes constructed in Example \ref{example:tropical_polytopes_in_affine_varieties}. Since $P$ is compact, there exists $r \in \N$ such that $P \subset E_r$. Thus, we have $P=\left( \bigcap_{1 \le i \le l} \{-\log|f_i| \ge t_i\} \right) \cap \left(\bigcap_{1 \le i \le k} \{-\log|z_i| \ge -r \} \right) \subset Y^{\an}$. Then, $u \coloneqq \max_{i,j}\{\log|f_i|+t_i, \log|z_j|-r, 0 \} \in \FS(Y^{\an})$ with $P=\{u=0\} \subset Y^{\an}$.
\end{proof}

\subsection{Psh PL functions}
We start from the following result.
\begin{thm} \label{thm:GPSH_PL_equals_to_LPSH_PL}
  Let $X$ be a projective variety over $K$, and let $\omega \in \nef(X)$. Then, 
  \[
  \GPSHo \cap \PL(X^{\an})=\LPSHo \cap \PL(X^{\an}).
  \] 
\end{thm}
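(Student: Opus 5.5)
The inclusion $\GPSHo \cap \PL(X^{\an}) \subset \LPSHo \cap \PL(X^{\an})$ is immediate from Corollary \ref{cor:GPSH_is_LPSH}. The real content is the converse. Take $\varphi \in \LPSHo \cap \PL(X^{\an})$. We must show that $\omega_\varphi = \omega + \dm\dmb\varphi$ is represented by a relatively nef model. Since everything is PL, we can fix a model $(\fX,\fL)$ representing $\omega_\varphi$. Relative nefness is a birational invariant, so we may pass to any dominating model. The task is then to check that $\deg(\fL|_C)\ge 0$ for every complete curve $C$ in the special fibre $\fX_s$; nefness of $\fL|_X = L(\omega)$ on the generic fibre is given.

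The plan is to reduce this to the local statement proved by Chambert-Loir--Ducros, namely \cite[Corollary 18.8.8]{CLD}, which asserts $\LPSHsmo \cap \PL = \GPSHo\cap\PL$. To use it, we only need to show that $\varphi$ is locally smooth-approximable $\omega$-psh. Near each $x$ we have, by hypothesis, a neighbourhood $U$ and a potential $g$ with $u \coloneqq \varphi+g \in \PSH(U)$. After shrinking $U$, $u$ is a decreasing limit of tropically convex PL functions $u_i = h_i\circ\Trop_i$. Each $h_i$ is convex on the polyhedral image. By the regularization of convex functions on tropical varieties (\cite{APW2}, cf.\ Lemma \ref{lem:property_tropical_conv_functions}), a convex PL function on a compact piece of a tropical variety can be uniformly approximated, from above after adding small constants, by smooth functions convex on every face. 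A diagonal argument then turns the sequence $u_i$ into a decreasing sequence of smooth convex functions converging to $u$. Hence $\varphi\in\LPSHsmo(X)$, and the CLD corollary yields $\varphi\in\GPSHo\cap\PL(X^{\an})$.

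If one prefers to avoid invoking CLD wholesale, a direct route is available. Since $u$ is itself PL (it is a difference of PL functions), one expects $u$ to be tropically convex near $x$: the decreasing limit of convex functions on a fixed tropicalization is convex on each face. This requires refining all $\Trop_i$ to a common tropicalization, which is possible after shrinking $U$ because $u$ is PL and thus factors through one tropicalization. Next, for a curve $C\subset \fX_s$, pick a divisorial or type-II point structure. Concretely, restrict to a general curve $X_C\subset X$ whose closure in $\fX$ contains $C$, or pass to $\CH(z)$ as in the preliminaries. The degree $\deg(\fL|_C)$ is then a sum, over the local skeleton, of outgoing slopes of $u$ along edges (the Laplacian of $u$ on the tropical curve). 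Convexity of $u$ on each edge, together with the balancing condition, forces this sum to be nonnegative.

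I expect the main obstacle to be this last step, whichever route is taken. Either one must verify that the hypotheses of \cite[Corollary 18.8.8]{CLD} (their notion of smooth psh and of local potentials) match ours exactly, or, on the direct route, one must identify the intersection number $\deg(\fL|_C)$ with a tropical Laplacian computed from local convex data. This identification is what a slope formula gives on curves. The work lies in reducing from arbitrary curves $C$ in $\fX_s$ to analytic curves in $X^{\an}$ without losing nefness on the generic fibre. I would first try the CLD route and check the compatibility of definitions, using Remark \ref{rmk:psh_is_analytic_invariant} to pass between polynomial and analytic tropicalizations.
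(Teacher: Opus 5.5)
Your main route is essentially the paper's proof. The paper likewise uses that tropically convex PL functions are locally uniform limits of smooth tropically convex functions, so that $\varphi+g$ is psh in the sense of \cite[Definition 8.1.1]{CLD}, and then concludes by \cite[Corollary 18.8.8]{CLD}, phrasing the last step through semipositivity of the current $\omega+\dm\dmb\varphi$ (\cite[Proposition 8.3.1]{CLD}).

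The ``direct route'' is unnecessary, and it would not work as sketched, for two reasons:
\begin{itemize}
\item Convexity of $u$ on each edge gives no control of the slopes at a vertex. You need a supporting affine function of the ambient space there, which is what tropical convexity provides.
\item The fact that $u$ factors through one tropicalization does not let you replace the infinitely many $\Trop_i$ by a common one.
\end{itemize}
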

\begin{proof}
  That $\GPSHo \cap \PL(X^{\an}) \subset \LPSHo \cap \PL(X^{\an})$ is implied by Corollary \ref{cor:GPSH_is_LPSH}. The converse direction is a corollary of \cite[Theorem 18.8.4 and Corollary 18.8.8]{CLD}. More precisely, any tropically convex PL function can be locally approximated uniformly by a sequence of tropically convex smooth functions. Thus,  for any open subset $U \subset X^{\an}$ and $f \in \PSH(U)$, $f$ is psh after \cite[Definition 8.1.1]{CLD}. Then, $\dm \dmb f$ is a semipositive current on $U$ by \cite[Definition 7.2.1, Proposition 8.3.1]{CLD}. Therefore,  for any $\varphi \in \LPSHo \cap \PL(X)$, $\omega+\dm \dmb \varphi$ is a closed semipositive $(1,1)$-current on $X^{\an}$. Then, \cite[Corollary 18.8.8]{CLD} implies that any model representing $\omega_{\varphi}$ is a nef model, and hence $\varphi \in \GPSHo \cap \PL(X)$. 
\end{proof}

Now, we give applications of Theorem \ref{thm:GPSH_PL_equals_to_LPSH_PL}.

\begin{lemma} \label{lem:psh_PL_functions_are_pshlogreg}
  Let $Y$ be an affine variety over $K$.  
  \begin{enumerate}
    \item Let $u \in \PSH \cap \PL(Y^{\an})$. Suppose that there exists a compact subset $E \subset Y^{\an}$ and $g \in \FS(Y^{\an})$ such that $u=g$ on $Y^{\an} \setminus E$. Then, any $\epsilon>0$, there exists $v \in \FS(Y^{\an})$ such that $\left\| u-v \right\|_{C^0(Y^{an})} < \epsilon$. 
    \item $\PSH \cap \PL(Y^{\an}) \subset \PSHlogreg(Y^{\an})$.
    \item Let $\{w_i\}_{i \ge 1} \subset \PSHlog (Y^{\an})$ be a decreasing function sequence converging pointwise to $w$. Let $E_1 \subset \cdots E_r \subset \cdots \subset Y^{\an}$ be a sequence of tropical polytopes exhausting $Y^{\an}$. Suppose that $w$ is generically finite, and  $w_i|_{\interior(E_r)} \in \PL(\interior(E_r))$ for each $i, r \ge 1$. Then, $w \in \PSHlogreg(Y^{\an})$. 
  \end{enumerate}
\end{lemma}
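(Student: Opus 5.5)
The plan is to transport everything to a projective compactification and use the dictionary between Fubini--Study functions on $Y^{\an}$ and positive forms on a compactification (Proposition \ref{prop:potential_of_positive_form_is_Fubini-Study}), combined with Theorem \ref{thm:GPSH_PL_equals_to_LPSH_PL} and Proposition \ref{prop:regularization_omega_psh_by_omega_FS_functions}.

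For (1), we apply Proposition \ref{prop:potential_of_positive_form_is_Fubini-Study} (2) to $g$. This produces a projective $X \supset Y$, a form $\theta \in \amp(X)$, a section $s$ with $Y = X \setminus Z(s)$, and $g = -c\log|s|_{\theta} + \lambda$. After rescaling we may take $c = 1$ and $\lambda = 0$. Set $\varphi := u - g$ on $Y^{\an}$ and $\varphi := 0$ on $(Z(s))^{\an}$. Since $u = g$ outside the compact set $E$, $\varphi$ vanishes on the neighborhood $X^{\an} \setminus E$ of $Z(s)^{\an}$, so it is continuous.

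Next we check that $\varphi \in \PL(X^{\an})$. On $Y^{\an}$ it is the difference of two PL functions, and it is identically $0$ near the boundary. Lemma \ref{lem:model_functions_equals_to_PL_functions}, applied on an affine cover consisting of $Y$ together with charts near $Z(s)$, gives the PL property.

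We then check that $\varphi$ is locally $\theta$-psh. On $Y^{\an}$, $g$ is a potential of $\theta$ and $\varphi + g = u$ is psh. Near $Z(s)^{\an}$ we have $\varphi = 0$, and $0 + g'$ is psh for any local potential $g'$ of $\theta$, by Corollary \ref{cor:GPSH_is_LPSH} applied to $0 \in \GPSH_\theta$. Hence $\varphi \in \LPSH_\theta \cap \PL$, and Theorem \ref{thm:GPSH_PL_equals_to_LPSH_PL} gives $\varphi \in \GPSH_\theta \cap \PL(X^{\an})$.

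Proposition \ref{prop:regularization_omega_psh_by_omega_FS_functions} (1) now yields $\varphi_i \in \FS_\theta$ with $\varphi_i \to \varphi$ uniformly. By Proposition \ref{prop:potential_of_positive_form_is_Fubini-Study} (1) applied to $\theta_{\varphi_i}$, the functions $v_i := g + \varphi_i$ lie in $\FS(Y^{\an})$, and $\|u - v_i\|_{C^0} = \|\varphi - \varphi_i\|_{C^0} \to 0$.

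For (2), given $u \in \PSH \cap \PL(Y^{\an})$, we first reduce to (1) by gluing. Pick tropical polytopes $E_r$ exhausting $Y^{\an}$ and an FS function $h \ge 0$ with $E_r = \{h = 0\}$ (Lemma \ref{lem:tropical_polytopes_and_FS_functions}). Then $u_r := \max\{u, Ah - B\}$, or a variant using a large multiple of an FS function, agrees with $u$ on $E_r$ and with an FS function near infinity. This is the main obstacle: $u$ need not have logarithmic growth a priori, so the gluing requires controlling $u$ from above near $\partial E_{r+1}$ by a multiple of an FS function. Since $u$ is bounded on the compact set $E_{r+1}$, we can take $\tilde u_r := \max\{u, M(h_{r} - 1) + C\}$ on $\interior(E_{r+1})$ and $M(h_r - 1) + C$ outside. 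Using Remark \ref{rmk:extension_of_psh_functions_by_sup}, with $h_r$ vanishing on $E_r$ and $\ge 1$ off $E_{r+1}$ (the polytopes are defined by rational inequalities, so such an $h_r$ exists after rescaling), this is PL psh and equals an FS function off a compact set. However, it only dominates $u$ on $E_r$, so we do not yet have a decreasing sequence.

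To obtain monotonicity, we instead form $w_r := \max_{j \le r}$ of such modifications, or more simply use (1) to find $v_r \in \FS$ with $|\tilde u_r - v_r| < 2^{-r}$, and then set $v'_r := v_r + 3 \cdot 2^{-r}$. These are decreasing on $E_r$ and converge to $u$. Globally decreasing requires $\tilde u_r$ itself to be decreasing in $r$. We arrange this by choosing $M_r, C_r$ inductively so that $\tilde u_{r+1} \le \tilde u_r$, using that $\tilde u_r \ge M_r(h_r - 1) + C_r$ grows with $M_r$ off $E_{r+1}$. Lemma \ref{lem:FS_functions_are_comparable} gives the required comparison between FS functions, making such choices possible.

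For (3), we apply the same gluing and approximation to each $w_i$ on $E_r$. We then take a diagonal sequence of FS functions that is decreasing, dominates $w_{r}$ on $E_r$ up to $2^{-r}$, and converges pointwise to $w$. The PL hypothesis on $\interior(E_r)$ is exactly what is needed to invoke (1), and the logarithmic growth hypothesis handles the bound at infinity.
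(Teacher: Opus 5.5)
Part (1) is correct and is the paper's argument. The gap is in (2) and (3). You never produce a \emph{globally decreasing} sequence of FS functions, and the scheme you sketch cannot produce one. Two things go wrong.

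First, your ``main obstacle'' is a misdiagnosis. Suppose $\tilde u_r$ is decreasing and $\tilde u_s=u$ on $E_s$. Then $\tilde u_r\ge u$ on all of $Y^{\an}$. So a bound $u\le v+O(1)$ with $v\in\FS(Y^{\an})$ is \emph{necessary} for your scheme, and for the conclusion of (2). It cannot be bypassed; it has to be proved. It does hold. Write $u=f\circ\Trop$ with $f$ PL. Lemma~\ref{lem:property_tropical_conv_functions}~(4) gives $f\le f^+$ with $f^+\in\Conv\cap\PL(\BT^k)$. Continuity of $f^+$ at points having a coordinate $+\infty$ forces all its affine pieces to have nonpositive slopes, so $u\le A\max\{0,\log|f_1|,\dots,\log|f_k|\}+B$.

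Second, even granting this bound, the inductive choice of $(M_r,C_r)$ cannot give monotonicity while the cut-off is tied to $\partial E_{r+1}$. Take $Y=\BA^1$, $u=\max\{\log|z|,0\}$, $E_r=\{|z|\le e^r\}$, and let $\xi_t$ be the Gauss point with $|z|(\xi_t)=e^t$.
\begin{itemize}
\item Your barrier $b_r=M_r(h_r-1)+C_r$ is constant on $E_r$, so $b_r(\xi_r)\le\inf_{E_r}u=0$.
\item Gluing at $\partial E_{r+1}\ni\xi_{r+1}$ needs $b_r(\xi_{r+1})\ge u(\xi_{r+1})=r+1$.
\item Since $t\mapsto b_r(\xi_t)$ is convex, its eventual slope satisfies $\sigma_r\ge r+1$.
\item Far out $\tilde u_r=b_r$, so $\tilde u_{r+1}\le\tilde u_r$ forces $\sigma_{r+1}\le\sigma_r\le\dots\le\sigma_1$, which is impossible.
\end{itemize}
Lemma~\ref{lem:FS_functions_are_comparable} only gives one-sided upper bounds and does not help. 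The unspecified ``diagonal sequence'' in (3) has the same defect.

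The missing idea is to fix the slope at infinity and let the cut-off region grow with the index, instead of pinning it to $E_{r+1}$. In (3), pick $h\in\FS(Y^{\an})$ with $w_i\le w_1\le h$ (after adding a constant). Fix $c>1$ and rationals $C_i\downarrow-\infty$, and set $w_i':=\max\{w_i,ch+C_i\}$.
\begin{itemize}
\item This sequence is automatically decreasing in $i$ and converges to $w$.
\item Outside $P_i:=\{(c-1)h\le -C_i\}$ one has $ch+C_i>h\ge w_i$, so $w_i'=ch+C_i$ there.
\item $P_i$ is a tropical polytope (Lemma~\ref{lem:tropical_polytopes_and_FS_functions}), hence compact and contained in some $\interior(E_r)$, on which $w_i'$ is PL psh.
\end{itemize}
Thus $w_i'\in\PSH\cap\PL(Y^{\an})$ agrees with an FS function off a compact set. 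Part (1) then gives $w_i''\in\FS(Y^{\an})$ with $\|w_i''-w_i'\|_{C^0}\le 2^{-i-3}$, and $w_i''+2^{-i}$ is a decreasing FS sequence converging to $w$. Part (2) is then (3) applied to the constant sequence $w_i=u$, once the logarithmic growth bound above is in hand. This is exactly how the paper proceeds.
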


\begin{proof}
  For statement (1), by Proposition \ref{prop:potential_of_positive_form_is_Fubini-Study} (2), there exists a projective compactification $X$ of $Y$, an ample line bundle $L$ on $X$ with $s \in H^0(X,L)$ satisfying $Y=X \setminus Z(s)$, $\theta \in \amp(X)$ with $L \simeq L(\theta)$, and $c \in \Q_{>0}$, $\lambda \in \Q$ such that $g=-c\log|s|_{\theta}+\lambda$  on $Y^{\an}$. Define $\varphi \colon X^{\an} \to \R \cup \{-\infty\}$ by $\varphi=u-g$  on $Y^{\an}$, and $\varphi=0$ on $X^{\an} \setminus E$. By definition, $\varphi \in \LPSH_{c\theta} \cap \PL(X^{\an})$. Then, Theorem \ref{thm:GPSH_PL_equals_to_LPSH_PL} implies that $\varphi \in \GPSH_{c \theta} \cap \PL(X^{\an})$, and Proposition \ref{prop:regularization_omega_psh_by_omega_FS_functions} (1) implies that there exists $\psi \in \FS_{c \theta}(X^{\an})$ such that $\left\|\varphi-\psi\right\|_{C^0(X^{\an})}< \epsilon$. By assumption, $c \theta_{\psi}$ induces a Fubini-Study metric on $L$, and $g+\psi=-c\log|s|_{\theta_{\psi}}+\lambda$. Then, Proposition \ref{prop:potential_of_positive_form_is_Fubini-Study} (1) implies that $v \coloneqq g+\psi \in \FS(Y^{\an})$, and we have $\left\|v-u\right\|_{C^0(Y^{\an})}=\left\|\psi-\varphi\right\|_{C^0(Y^{\an})}< \epsilon$. 

  For statement (2), take $w \in \PSH \cap \PL(Y^{\an})$. By definition, there exists a tropicalization $\Trop \colon Y^{\an} \to \BT^k$ and $f \in \PL(\BT^k)$ such that $w =f \circ \Trop$. Then, Lemma \ref{lem:property_tropical_conv_functions} (4) implies that there exists $f' \in \Conv \cap \PL(\BT^k)$ such that $f \le f'$ on $\BT^k$. Note that $w \le f' \circ \Trop \in \PSHlog(Y^{\an})$. Therefore, $w \in \PSHlog \cap \PL(Y^{\an})$, and $w \in \PSHlogreg(Y^{\an})$ follows from statement (3).

  For statement (3), let $h \in \FS(Y^{\an})$ such that $w_i \le h$ for each $i \ge 1$.   Take $c>1$, and take a decreasing sequence of rational numbers $\{C_i\}_{i \ge 1} \subset \Q$ such that $C_i \to -\infty$. For each $i \ge 1$, set $w'_i \coloneqq \max\{w_i, ch+C_i\}$, and set $P_i \coloneqq \{(c-1)h \le -C_i\}$. By calculations, we have $w'_i=ch-C_i$ on $Y^{\an} \setminus P_i$.  On the other hand, by Lemma \ref{lem:tropical_polytopes_and_FS_functions}, $P_i$ is a tropical polytope and hence compact. Thus, there exists $r \ge 1$ such that $P_i \subset \interior(E_r)$. Then, by assumption, $w'_i|_{\interior(E_r)}=\max\{w_i|_{\interior(E_r)}, ch+C_i\} \in \PSH \cap \PL(\interior(E_r))$. Therefore, we have $w'_i \in \PSH \cap \PL(Y^{\an})$, and  statement (1) implies that there exists $w''_i \in \FS(Y^{\an})$ satisfying $\left\|w''_i -w'_i\right\|_{C^0(Y^{\an})} \le 1/2^{i+3}$. Note that  $\{w'_i\}_{i \ge 1}$ is a decreasing function sequence converging pointwise to $w$.  Then, $\{w''_i+1/2^i\}_{i \ge 1} \subset \FS(Y^{\an})$ is a decreasing function sequence converging pointwise to $w$. This implies that $w \in \PSHlogreg(Y^{\an})$ and proves statement (3). 
\end{proof}

\begin{lemma} \label{lem:psh_regularization_with_growth_control} 
  Assume that  $K$ is discretely valued or trivially valued. Let $Y$ be an affine variety over $K$.
  \begin{enumerate}
    \item Under the hypotheses of Proposition \ref{prop:potential_of_positive_form_is_Fubini-Study} (1), suppose further that $u \in \PSHlogreg(Y^{\an})$ such that $u \le g+O(1)$. Let $c>1$. Define the function $\varphi \coloneqq X^{\an} \to \R \cup \{-\infty\}$ by 
    \[
    \varphi=u-cg \text{ on } Y^{\an}, \qquad \text{and} \qquad \varphi=-\infty \text{ on } X^{\an} \setminus Y^{\an}. 
    \]
    Then, $\varphi \in \GPSH_{c\theta}(X^{\an})$. 
    \item Let $u \in \PSHlogreg(Y^{\an})$, and let $g \in \FS(Y^{\an})$ such that $u \le g+O(1)$.  Then, for each $c>1$, there exists a decreasing function sequence $\{u_i\}_{i \ge 1} \subset \FS(Y^{\an})$ converging pointwise to $u$ on $Y^{\an}$ such that $|u_i - cg|=O(1)$ on $Y^{\an}$ for each $i \ge 1$.
    \item Let $\{u_i\}_{i \ge 1} \subset \PSHlogreg(Y^{\an})$ be a decreasing function sequence converging pointwise to $u$. Suppose that $u$ is generically finite. Then, $u \in \PSHlogreg(Y^{\an})$. 
  \end{enumerate}
\end{lemma}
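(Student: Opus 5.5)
For statement (1), I will work through a decreasing sequence of Fubini--Study functions. By definition of $\PSHlogreg(Y^{\an})$ there is a decreasing sequence $\{v_i\}_{i \ge 1} \subset \FS(Y^{\an})$ converging pointwise to $u$. The natural move is to truncate: fix a decreasing sequence $C_i \in \Q$ with $C_i \to -\infty$ and set $u_i \coloneqq \max\{v_i, c g + C_i\}$ on $Y^{\an}$. Since $u \le g + O(1)$ and $c>1$, I expect that, after enlarging the constant, we have $v_i \le g + O(1)$ only for large $i$; to avoid this, first replace $v_i$ by $\min$-type control, or simply note that $v_i$ is FS, hence $v_i \le A_i g + O(1)$ by Lemma \ref{lem:FS_functions_are_comparable}. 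A cleaner route is to use Proposition \ref{prop:potential_of_positive_form_is_Fubini-Study} (2) in reverse: each $v_i + (c-1)g' $-type combination corresponds to a Fubini--Study metric. So I will instead aim to show that $\varphi_i \coloneqq \max\{v_i, cg+C_i\} - cg$ extends continuously by the constant $C_i$ across $X^{\an}\setminus Y^{\an}$ once $v_i - cg \to -\infty$ at the boundary. Near the divisor $Z(s)$ the potential $g = -\log|s|_\theta \to +\infty$, and $v_i$ grows at most like $g$ (first replacing $v_i$ by $\min\{v_i, g + D_i\}$ is not psh, so instead I intend to pass through an intermediate FS function $h$ with $u \le h \le g+O(1)$ and use $\max\{v_i, \ldots\}$ with $v_i$ replaced by $v_i$ for $i \gg 0$ on the compact set $\{(c-1)g \le -C_i + M\}$, using Dini). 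Then $\varphi_i$ is PL, equals $C_i$ near $X^{\an}\setminus Y^{\an}$, and is locally $c\theta$-psh: on $Y^{\an}$ because $cg + \varphi_i = \max\{v_i, cg+C_i\}$ is a max of psh PL functions (Lemma \ref{lem:finite_sup_lpsh}), and near the boundary because it is constant. Theorem \ref{thm:GPSH_PL_equals_to_LPSH_PL} then gives $\varphi_i \in \GPSH_{c\theta}\cap \PL(X^{\an})$. The sequence decreases (both $v_i$ and $C_i$ decrease) to $\varphi$, which equals $u - cg$ on $Y^{\an}$ and $-\infty$ on the boundary, and is generically finite; Theorem \ref{thm:decreasing_limit_of_omega-psh_functions_is_omega-psh} concludes.

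The main obstacle is the boundary control: ensuring $v_i \le cg + C_i$ outside a compact subset of $Y^{\an}$, uniformly enough that the max is eventually $cg+C_i$. I plan to handle it by a Dini argument on the compact polytopes $P_i = \{(c-1)g \le -C_i + M\}$ (Lemma \ref{lem:tropical_polytopes_and_FS_functions}), choosing $C_i$ only after $v_i$, passing to a subsequence of $v_i$ so that $v_i \le g + M$ holds on $Y^{\an}\setminus P_i$; here I use that $u \le g+O(1)$ and that an FS function $v_i$ dominated by $g+M$ off a compact set should follow by taking $v_i$ replaced with $\max$ against the regularization as in Lemma \ref{lem:psh_PL_functions_are_pshlogreg} (3).

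For statement (2), embed via Proposition \ref{prop:potential_of_positive_form_is_Fubini-Study} (2) so that $g = -c_0\log|s|_\theta + \lambda$; apply (1) to get $\varphi \in \GPSH_{cc_0\theta}$, regularize by Proposition \ref{prop:regularization_omega_psh_by_omega_FS_functions} (2) into $\varphi_i \in \FS_{cc_0\theta}(X^{\an})$, and set $u_i \coloneqq cg + \varphi_i$, which lies in $\FS(Y^{\an})$ by Proposition \ref{prop:potential_of_positive_form_is_Fubini-Study} (1) and satisfies $|u_i - cg| \le \|\varphi_i\|_{C^0} = O(1)$.

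For statement (3), pick $g \in \FS$ with $u_1 \le g + O(1)$, fix $c>1$, and use (1) to get $\varphi_i \coloneqq u_i - cg \in \GPSH_{cc_0\theta}(X^{\an})$, decreasing to $u - cg$ extended by $-\infty$; this is generically finite, so Theorem \ref{thm:decreasing_limit_of_omega-psh_functions_is_omega-psh} gives $\varphi \in \GPSH$, and Corollary \ref{cor:GPSH_plus_potential_of_positive_forms_is_pshlogreg} yields $u = cg + \varphi \in \PSHlogreg(Y^{\an})$.
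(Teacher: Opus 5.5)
Your parts (2) and (3) follow the paper's reduction: compactify via Proposition~\ref{prop:potential_of_positive_form_is_Fubini-Study}, apply (1), then use Proposition~\ref{prop:regularization_omega_psh_by_omega_FS_functions}~(2), respectively Theorem~\ref{thm:decreasing_limit_of_omega-psh_functions_is_omega-psh} and Corollary~\ref{cor:GPSH_plus_potential_of_positive_forms_is_pshlogreg}. So everything rests on (1), and there your construction has a genuine gap.

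Your approximants $\max\{v_i,cg+C_i\}-cg$ use $v_i$ on all of $Y^{\an}$. They extend by the constant $C_i$ (or even continuously) across $X^{\an}\setminus Y^{\an}$ only if $v_i\le cg+C_i$ off a compact subset of $Y^{\an}$, i.e.\ only if every $v_i$ grows at rate at most $c$. Nothing forces this: you only know $v_i\le v_1\le Ag+O(1)$ for some uncontrolled $A$, and $u\le g+O(1)$ says nothing about the approximants at infinity. Concretely, take $Y=\BA^1_K\subset X=\P^1_K$, $\theta$ the class of the trivial model of $(\P^1_K,\CO(1))$, $g=-\log|z_0|_\theta=\max\{\log|z|,0\}$, $u=g$, and an integer $N>c$. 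The functions $v_i=\max\{\log|z|,\,N\log|z|-i,\,0\}\in\FS(Y^{\an})$ decrease pointwise to $u$. Yet $v_i-cg=(N-c)\log|z|-i\to+\infty$ at the point at infinity, for every $i$. Dini's theorem only gives uniform control on compact subsets of $Y^{\an}$. So neither a Dini argument on the polytopes $P_i$ nor passing to a subsequence can give $v_i\le g+M$ on the non-compact sets $Y^{\an}\setminus P_i$.

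The missing idea is to cut the approximants off rather than control them at infinity. This requires fixing the truncation level before letting $i\to\infty$.

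\begin{itemize}
\item Fix $C$ and normalize $u\le g$. Then $\{u-cg\ge C\}\subset\interior(E)$ for the compact tropical polytope $E=\{(c-1)g\le 1-C\}$, which does not depend on $i$.
\item Choose an open $U\supset E$ with $\overline U\subset Y^{\an}$ compact. Since $u-cg<C$ on the fixed compact set $\overline U\setminus\interior(E)$, Dini gives $v_i<cg+C$ there for $i\gg0$.
\item Put $\varphi_{i,C}=\max\{v_i-cg,C\}$ on $U$ and $\varphi_{i,C}=C$ on $X^{\an}\setminus U$. The values of $v_i$ outside $U$ are simply discarded. Remark~\ref{rmk:extension_of_psh_functions_by_sup} gives $\varphi_{i,C}\in\LPSH_{c\theta}\cap\PL(X^{\an})$, hence $\varphi_{i,C}\in\GPSH_{c\theta}\cap\PL(X^{\an})$ by Theorem~\ref{thm:GPSH_PL_equals_to_LPSH_PL}.
\item Because $U$ and $C$ are fixed, the sequence is automatically decreasing in $i$, with limit $\max\{\varphi,C\}$. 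Two applications of Theorem~\ref{thm:decreasing_limit_of_omega-psh_functions_is_omega-psh} (first $i\to\infty$, then $C\to-\infty$) give $\varphi\in\GPSH_{c\theta}(X^{\an})$.
\end{itemize}

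Your assertion that the sequence decreases ``because both $v_i$ and $C_i$ decrease'' holds only for the uncut global maxima, which need not lie in $\PL(X^{\an})$. A single diagonal sequence with $C_i\to-\infty$ can be arranged. It needs cut-off regions that grow with $i$, plus a separate monotonicity check on the annuli where the cut-off changes. This is why the two successive limits are the natural route.
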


\begin{proof}
  (1). For each $C \in \Q$, set $\varphi_C \coloneqq \max\{\varphi, C\}$. By Theorem \ref{thm:decreasing_limit_of_omega-psh_functions_is_omega-psh}, it is enough to show that for any $C \in \Q$, we have $\varphi_C \in \GPSH_{c \theta}(X^{\an})$. 

  Fix $C \in \Q$. After adding a constant to $g$, we may assume that $u \le g$. Set $E \coloneqq \{(c-1)g \le -C+1\} \subset Y^{\an}$.  Note that $\{u-cg \ge C\} \subset \{g+1-cg > C\}=\interior(E)$. Therefore, we have $\varphi_C=\max\{u-cg, C\}$ on $\interior(E)$, and $\varphi_C=C$ on  $X^{\an} \setminus \interior(E)$.
  
  By Lemma \ref{lem:tropical_polytopes_and_FS_functions}, $E$ is a tropical polytope and hence compact. Since $X^{\an}$ is compact Hausdorff, there exists an open neighborhood $U$ of $E$ in $X^{\an}$ such that $\overline{U}$ is compact and $\overline{U} \subset Y^{\an}$. By definition,  there exists a decreasing function sequence $\{u_i\}_{i \ge 1} \subset \FS(Y^{\an})$ converging pointwise to $u$. Since $\overline{U} \setminus \interior(E)$ is compact, and $cg+C$ is continuous, Dini's theorem implies that there exists $j \ge 1$ such that $u_i < cg+C$ on $\overline{U} \setminus \interior(E)$ for each $i  \ge j$. In particular, we have $\overline{W} \subset E \subset U$, where  $W \coloneqq \{y \in U \,|\, u_i(y) > cg(y)+C\}$. 

  For each $i \ge j$, define the function $\varphi_{i,C} \colon X^{\an} \to \R \cup \{-\infty\}$ by 
  \[
  \varphi_{i,C}=\max\{u_i-cg, C\} \text{ on } U, \quad \text{and} \quad \varphi_{i,C}=C \text{ on } X^{\an} \setminus U.
  \]
  By Remark \ref{rmk:extension_of_psh_functions_by_sup}, $\varphi_{i,C} \in \LPSH_{c\theta} \cap \PL(X^{\an})$. Then, Theorem \ref{thm:GPSH_PL_equals_to_LPSH_PL} implies that $\varphi_{i,C} \in \GPSH_{c\theta} \cap \PL(X^{\an})$. Note that $\{\varphi_{i,C}\}_{i \ge j}$ is a decreasing function sequence converging pointwise to $\varphi_C$. This implies that $\varphi_C \in \GPSH_{c\theta}(X^{\an})$ and proves statement (1). 

  (2).  Let $X$ be a projective compactification of $Y$ and let $\theta \in \amp(X)$ be a closed positive $(1,1)$-form associated to $g \in \FS(Y^{\an})$ given by Proposition \ref{prop:potential_of_positive_form_is_Fubini-Study} (2). Define the function $\varphi \colon X^{\an} \to \R \cup \{-\infty \}$ by $\varphi= u-cg$ on $Y^{\an}$ and $\varphi=-\infty$ on $X^{\an} \setminus Y^{\an}$. Then, statement (1) implies that $\varphi \in \GPSH_{c\theta}(X^{\an})$. Now, Proposition \ref{prop:regularization_omega_psh_by_omega_FS_functions} (2) implies that there exists a decreasing function sequence $\{ \varphi_i \}_{i \ge 1} \subset \FS_{c\theta}(X^{\an})$ converging pointwise to $\varphi$. Then, by Proposition \ref{prop:potential_of_positive_form_is_Fubini-Study} (1), $u_i \coloneqq \varphi_i+cg \in \FS(Y^{\an})$. Moreover, $|u_i-cg|=|\varphi_i|=O(1)$ on $Y^{\an}$ since $\varphi_i \in C^0(X^{\an})$, and $\{u_i\}_{i \ge 1}$ is a decreasing function sequence converging pointwise to $u$ on $Y^{\an}$. Hence, statement (2) is proved. 

  (3). Similar to the proof of statement (2), we make use of Proposition \ref{prop:potential_of_positive_form_is_Fubini-Study} (2) and statement (1)  and Corollary \ref{cor:GPSH_plus_potential_of_positive_forms_is_pshlogreg} to reduce this statement to the global setting, which follows from Theorem \ref{thm:decreasing_limit_of_omega-psh_functions_is_omega-psh}. The details are omitted.  
\end{proof}

\begin{lemma} \label{lem:approximable_on_exhausting_tropical_polytopes_implies_pshlogreg}
  Assume that  $K$ is discretely valued or trivially valued. Let $Y$ be an affine variety over $K$. Let $u \in \PSHlog(Y^{\an})$. Let $E_1 \subset \cdots \subset E_r \subset \cdots Y^{\an}$ be a sequence of tropical polytopes exhausting $Y^{\an}$. Suppose that for each $r \ge 1$, there exists a decreasing function sequence $\{u_{r, i}\}_{i \ge 1} \subset \TConv \cap \PL(\interior(E_r))$ converging to $u|_{\interior(E_r)}$ pointwise. Then, $u \in \PSHlogreg(Y^{\an})$. 
\end{lemma}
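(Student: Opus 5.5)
The plan is to reduce to Lemma \ref{lem:psh_PL_functions_are_pshlogreg} (3) by building, from the local approximants $u_{r,i}$, a single decreasing sequence of \emph{global} psh PL functions of logarithmic growth converging to $u$. Since $u \in \PSHlog(Y^{\an})$, fix $h \in \FS(Y^{\an})$ with $u \le h + O(1)$; after adding a constant to $h$ we may assume $u \le h$. Fix $c>1$ and a decreasing sequence $\{C_i\}_{i \ge 1} \subset \Q$ with $C_i \to -\infty$. Set $P_i \coloneqq \{(c-1)h \le -C_i+1\}$, a tropical polytope by Lemma \ref{lem:tropical_polytopes_and_FS_functions}, hence compact. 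Outside $P_i$ we have $u \le h < ch + C_i - 1$ there; the cutoff $ch+C_i$ will thus dominate $u$ away from $P_i$.

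For each $i$, choose $r(i)$ increasing with $P_i$ contained in $\interior(E_{r(i)})$, together with a compact neighbourhood $\overline{U_i} \subset \interior(E_{r(i)})$ of $P_i$. On $\overline{U_i} \setminus \interior(P_i)$ the continuous function $ch+C_i$ strictly exceeds $u$ (indeed by at least $1$), so by Dini's theorem (usc decreasing to $u$ on a compact set) there is an index $j(i)$ such that $u_{r(i),j} < ch + C_i$ on $\overline{U_i}\setminus \interior(P_i)$ for all $j \ge j(i)$. Define
\[
w_i \coloneqq \max\{u_{r(i),j}, ch+C_i\} \text{ on } U_i, \qquad w_i \coloneqq ch + C_i \text{ on } Y^{\an}\setminus U_i,
\]
with $j \ge j(i)$ to be chosen. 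By Remark \ref{rmk:extension_of_psh_functions_by_sup}, $w_i \in \PSH \cap \PL(Y^{\an})$, and $w_i = ch+C_i$ outside a compact set, so $w_i \in \PSHlog(Y^{\an})$, and Lemma \ref{lem:psh_PL_functions_are_pshlogreg} (1) even gives FS approximants.

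The main obstacle is arranging that the $w_i$ \emph{decrease} in $i$, since the approximating sequences for different $r$ are unrelated. I would handle this with a diagonal choice plus small constants: on the compact set $\overline{U_{i}}$, $u_{r(i+1),j} \downarrow u$ and $u \le \max\{u_{r(i),j(i)},\,\cdot\}$ pointwise; since $w_i$ is continuous and $w_i \ge \max\{u, ch+C_i\}$ (and $w_i > u$ after adding $2^{-i}$), Dini's theorem on $\overline{U_{i+1}}$ applied to $\max\{u_{r(i+1),j}, ch+C_{i+1}\} \downarrow \max\{u,ch+C_{i+1}\} < w_i + 2^{-i}$ yields $j(i+1)$ with the $(i+1)$-th function below $w_i + 2^{-i}$ there; outside $U_{i+1}$ this is automatic since $C_{i+1}\le C_i$. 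Replacing $w_i$ by $w_i + 2^{1-i}$ then gives a decreasing sequence converging to $u$ pointwise: every point lies in some $\interior(E_r)$, eventually the cutoff is irrelevant, and $w_i \le u_{r,j(i)}$-type bounds force convergence because the $j(i)$ can also be taken larger than $i$. Each $w_i$ restricted to $\interior(E_r)$ is PL, so Lemma \ref{lem:psh_PL_functions_are_pshlogreg} (3) (or directly the FS approximation of (1) with errors $2^{-i-3}$ as in its proof) gives $u \in \PSHlogreg(Y^{\an})$; generic finiteness of $u$ is part of $u \in \PSH$.
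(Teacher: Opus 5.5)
Your monotonicity step is correct. You rightly check that $w_i\ge\max\{u,ch+C_i\}$ on all of $Y^{\an}$, so Dini on $\overline{U_{i+1}}$ gives $w_{i+1}\le w_i+2^{-i}$, and the shifted sequence $w_i+2^{1-i}$ decreases.

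\textbf{The gap is the convergence $\lim_i w_i=u$.} The conditions you actually impose on $j(i)$ do not force it. These conditions are the cutoff condition on $\overline{U_i}\setminus\interior(P_i)$, the monotonicity condition, and $j(i)\ge i$. The function $w_i$ is built from $u_{r(i),j(i)}$ with $r(i)$ varying. At a fixed point $x$, the rate at which $u_{r,j}(x)\to u(x)$ as $j\to\infty$ depends on $r$ in an uncontrolled way, so $j(i)\ge i$ says nothing about $u_{r(i),j(i)}(x)-u(x)$. The monotonicity condition only bounds $w_{i+1}$ above by $w_i+2^{-i}$, which is not known to be close to $u$.

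Concretely, take $u$ itself tropically convex PL with $u\le h-2$. Let $u_{r,j}=u+1+1/j$ for $j\le r^2$, followed by $u+1/(j-r^2+1)$. With $r(i)=i$ and $j(i)=i$, all your conditions hold, yet $w_i\ge u+1$ for every $i$. The ``$w_i\le u_{r,j(i)}$-type bounds'' you invoke for a fixed $r$ are exactly what is unavailable when $r\neq r(i)$. This is the obstacle you identified, reappearing at the convergence step.

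\textbf{The repair} is one more Dini condition at each stage. After passing to a subsequence so that $E_{r-1}\subset\interior(E_r)$, take $r(i)\to\infty$. Choose $j(i)$ so large that, in addition, $u_{r(i),j(i)}<u_{r,i}+2^{-i}$ on the compact set $E_{r-1}\cap\overline{U_i}$ for every $r<r(i)$. These are finitely many conditions, each attainable because $u_{r(i),j}\downarrow u\le u_{r,i}$ there and $u_{r,i}+2^{-i}$ is continuous. Then for $x\in E_{r-1}$ and $i$ large, $w_i(x)\le\max\{u_{r,i}(x)+2^{-i},\,ch(x)+C_i\}\to u(x)$.

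\textbf{Comparison with the paper.} With this fix your argument is a genuinely different route. The paper never glues the unrelated sequences. It passes to a compactification $X$ and sets $\varphi=u-cg$. It observes that each truncation $\max\{\varphi,C\}$ only depends on $u$ over a compact subset of a single $\interior(E_r)$. Hence the single sequence $u_{r,\cdot}$ suffices to show $\max\{\varphi,C\}\in\GPSH_{c\theta}(X^{\an})$. It then lets $C\to-\infty$ via Theorem~\ref{thm:decreasing_limit_of_omega-psh_functions_is_omega-psh} and concludes with Corollary~\ref{cor:GPSH_plus_potential_of_positive_forms_is_pshlogreg}. That global theorem absorbs the diagonal difficulty, and it is where the hypothesis on $K$ is used. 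Your repaired argument relies only on Lemma~\ref{lem:psh_PL_functions_are_pshlogreg}, so it bypasses that theorem.
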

\begin{proof}
  The proof proceeds along the same lines as that of Lemma \ref{lem:psh_regularization_with_growth_control} (1). Let $g \in \FS(Y^{\an})$ such that $u \le g$ on $Y^{\an}$.  Let $X$ be a projective compactification of $Y$ and let $\theta \in \amp(X)$ be a closed positive $(1,1)$-form associated to $g \in \FS(Y^{\an})$ given by Proposition \ref{prop:potential_of_positive_form_is_Fubini-Study} (2). Fix $c>1$. Define the function $\varphi \colon X^{\an} \to \R \cup \{-\infty \}$ by $\varphi= u-cg$ on $Y^{\an}$ and $\varphi=-\infty$ on $X^{\an} \setminus Y^{\an}$. Fix $C \in \Q$ and set $\varphi_C \coloneqq \max\{\varphi,C\}$. Similar to the proof of Lemma \ref{lem:psh_regularization_with_growth_control} (1), we obtain that $\{u-cg \ge C\} \subset E$ for a compact subset $E \subset Y^{\an}$. Since $Y^{\an}=\bigcup_{r \ge 1} \interior(E_r)$, there exists $r \ge 1$ such that $E \subset \interior(E_r)$. Define the function $\varphi_{i,C} \colon X^{\an} \to \R \cup \{-\infty\}$ by
  \[
  \varphi_{i,C}=\max\{u_{r,i}-cg, C\} \text{ on } \interior(E_r), \quad \text{and} \quad \varphi_{i,C}=C \text{ on } X^{\an} \setminus \interior(E_r).
  \]
  Then, a similar arguments to the one in the proof of Lemma \ref{lem:psh_regularization_with_growth_control} (1) shows that $\{\varphi_{i,C}\}_{i \ge j} \subset \GPSH_{c\theta}(X^{\an})$ for some $j \ge 1$, and it is a decreasing function sequence converging pointwise to $\varphi_C$. This implies that $\varphi_C \in \GPSH_{c\theta}(X^{\an})$, and Theorem \ref{thm:decreasing_limit_of_omega-psh_functions_is_omega-psh} implies further that $\varphi \in \GPSH_{c\theta}(X^{\an})$. Therefore, by Corollary \ref{cor:GPSH_plus_potential_of_positive_forms_is_pshlogreg}, $u=\varphi+cg \in \PSHlogreg(Y^{\an})$.
\end{proof}

\begin{example} \label{example:psh_function_defining_closed_subvariety}
  Suppose that $K$ is discretely valued or trivially valued. Let $Y$ be an  affine variety over $K$, and let $Z =\bigcap_{1 \le i \in k} \{f_i=0\} \subset Y$ be a closed subvariety with $\{f_i\}_{1 \le i \le k} \subset K[Y]$. Denote $u \coloneqq \log(\sum_{1 \le i \le k}|f_i|)$. We claim that $u \in \PSHlogreg(Y^{\an})$ with $\{u=-\infty\}=Z^{\an}$.  
  
  The latter part of the claim is clear. To see the first part, consider the the log-sum-exp function $g \coloneqq \log(\sum_{1 \le i \le k} \exp(-x_i))$ on $\BT^k$. Then, $u=g \circ \Trop$, where $\Trop \colon Y^{\an} \to \BT^k$ is tropicalization map defined by $\{-\log|f_i|\}_{1 \le i \le k}$. It is a well-known fact that $g|_{\R^k} \in \Conv(\R^k)$ (see for example \cite[page 74]{BV04}). For each $M \ge 1$, denote $g_M \coloneqq \max\{g, -M\}$. Then, $g_M \in C^0(\BT^k)$ and $g_M$ is convex on $\R^k$. Thus, Lemma \ref{lem:property_tropical_conv_functions} (1) implies that $g_M \in \Conv(\BT^k)$.

  Let $E_1 \subset \cdots \subset E_r \subset \cdots \subset Y^{\an}$ be a sequence of tropical polytopes exhausting $Y^{\an}$. For each $r \ge 1$, $\Trop(E_r)$ is a compact subset $\BT^k$. Therefore, by Lemma \ref{lem:property_tropical_conv_functions} (3),  there exists a compact open neighborhood $W$ of $\Trop \left( E_r \right)$ in $\BT^k$ and a decreasing function sequence $\{g_{r,i}\}_{i \ge 1} \subset \Conv \cap \PL(W)$ converging uniformly to $g_{M}$ on $W$. Then, Lemma \ref{lem:approximable_on_exhausting_tropical_polytopes_implies_pshlogreg} implies that $(g_M \circ \Trop) \in \PSHlogreg(Y^{\an})$, and hence Lemma \ref{lem:psh_regularization_with_growth_control} (3) implies that $u \in \PSHlogreg(Y^{\an})$.
\end{example}

\subsection{}
The following proposition allows us to pass  problems about $\PSH(U)$ to problems about $\LPSHo(X)$. It's an analogue of Proposition 5.20 \cite{APW2}.
\begin{prop} \label{prop:passing_from_local_to_global}
  Let $X $ be a projective variety over $K$ and $U \subset X^{\an}$ be an open subset. Let $\omega \in \nef(X)$ such that $c_1(L(\omega)) \in \Amp(X)$. Then, 
  \begin{enumerate}
    \item For any $x \in U$ and any $M \in \BR_{>0}$, there exists an open neighborhood $U_x \subset U$ of $x$ (depending only on the choice of $x$), a constant $C \in \Q_{>0}$, and a potential $g \in \PL \cap \LConv (U_x)$ of $\omega$ (depending on $x$ and $M$), such that the following property is satisfied. For any $u \in \PSH(U)$ with $\| u\|_{L^{\infty}(U)} \le M$, there exists $\varphi \in \LPSHob(X)$, such that $\varphi+g=Cu$ on $U_x$.
    \item Suppose further that there exists a decreasing function sequence $\{u_i\}_{i \ge 1} \subset \TConv \cap \PL(U)$ converging pointwise to $u$ on $U$. Then, $\varphi$ may be taken inside $\GPSHob(X)$ in statement (1). 
  \end{enumerate}
\end{prop}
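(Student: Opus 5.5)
The plan is a gluing argument of the same type as Remark \ref{rmk:extension_of_psh_functions_by_sup}, applied to the global function $\varphi$.

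\textbf{Setting up the local potential.} Fix $x\in U$. Since $c_1(L(\omega))$ is ample, after replacing $\omega$ by a multiple and using \cite[Proposition 4.11]{GM19} and Proposition \ref{prop:equivalence_FS_metrics_relatively_ample_models}, we may compare $\omega$ with some $\theta\in\amp(X)$ such that $L(\theta)=L(\omega)$ and $\omega=\theta+\dm\dmb\psi_0$ for a PL function $\psi_0$. Choose a section $s\in H^0(X,mL)$ with $s(x)\neq 0$, and put $Y\coloneqq X\setminus Z(s)$. By Proposition \ref{prop:potential_of_positive_form_is_Fubini-Study} (1), the potential $h\coloneqq-\frac1m\log|s|_\theta$ lies in $\FS(Y^{\an})$. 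It is therefore a tropically convex PL function, bounded below and exhausting. Then $g\coloneqq h-\psi_0$ is a potential of $\omega$ on $Y^{\an}$. Choose $t_0\in\Q$ with $h(x)<t_0$ and $\{h\le t_0+2\}\Subset U\cap Y^{\an}$, shrinking $U$ to a small neighborhood of $x$ inside $Y^{\an}$ first. This is possible because the sets $\{h\le t\}$ are tropical polytopes (Lemma \ref{lem:tropical_polytopes_and_FS_functions}) which shrink to $\{h\le h(x)\}$. The main obstacle, discussed below, is whether that set is a small neighborhood of $x$. Set $U_x\coloneqq\{h<t_0\}$. It depends only on $x$, and on $U_x$ the function $g$ is PL and locally convex in the required sense, after possibly adding $\max$ with an affine function.

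\textbf{Gluing.} Let $u\in\PSH(U)$ with $\|u\|_\infty\le M$. Pick $C\in\Q_{>0}$ and $A\in\Q$ so that $C u+A'<A$ on $\{h\le t_0+2\}$ and $Cu\ge$ the competitor on $U_x$. Concretely, take the competitor $\chi\coloneqq\lambda(h-t_0-1)+B$ with $\lambda$ large. Consider
\[
w\coloneqq\max\{Cu,\ \lambda(h-t_0-1)-CM\}\ \text{on } \{h<t_0+2\},\qquad w\coloneqq \lambda(h-t_0-1)-CM\ \text{outside}.
\]
On $\{t_0+1.5\le h\le t_0+2\}$ we have $\lambda(h-t_0-1)-CM\ge \lambda/2-CM>CM\ge Cu$ once $\lambda>4CM$. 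So $\overline{\{Cu>\chi\}}\subset\{h<t_0+2\}$, and Remark \ref{rmk:extension_of_psh_functions_by_sup} gives $w\in\PSH(Y^{\an})$. On $U_x$ we have $\chi\le-\lambda-CM<Cu$, so $w=Cu$ there. Now set $\varphi\coloneqq w-\lambda h$ on $Y^{\an}$ and $\varphi\coloneqq$ a constant near $Z(s)^{\an}$, where $w-\lambda h$ is already constant. Then $\varphi+\lambda h\in\PSH$ locally, so $\varphi\in\LPSH_{\lambda\theta}\cap L^\infty$, which after using $\omega$ and $\psi_0$ gives $\LPSH_{\lambda'\omega}$. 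Finally rescale by $\lambda'$: take $C/\lambda'$ in place of $C$ and $g$ as the corresponding potential. The constants $C$, $\lambda$ and $g$ depend only on $x$ and $M$, as required.

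\textbf{Part (2).} If $u=\lim u_i$ with $u_i\in\TConv\cap\PL(U)$ decreasing, repeat the construction with $u_i$ in place of $u$. By Dini's theorem on the compact shell $\{t_0+1.5\le h\le t_0+2\}$, the same separation $Cu_i<\chi$ holds for $i\gg0$. This gives PL functions $\varphi_i\in\LPSHo\cap\PL(X)$ decreasing to $\varphi$. Theorem \ref{thm:GPSH_PL_equals_to_LPSH_PL} yields $\varphi_i\in\GPSHo\cap\PL$, and Theorem \ref{thm:decreasing_limit_of_omega-psh_functions_is_omega-psh} gives $\varphi\in\GPSHob(X)$. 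Here we need $K$ discretely or trivially valued; if that is not assumed, use Definition \ref{def:glocal_omega-psh_functions} directly.

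\textbf{Main obstacle.} The delicate step is arranging that the sublevel sets of an FS potential are compactly contained in an arbitrary neighborhood $U$ of $x$. A single potential need not do this. I would fix it by replacing $h$ with $\max\{h,\ \frac1N\max_j\log|f_j/f_j(x)|\text{-type terms}\}$, built from finitely many $f_j\in K[Y]$ whose tropicalization separates $x$ from $\partial U$. This uses the inverse-limit description of $Y^{\an}$ via tropicalizations. The resulting function is still a potential of a positive form (a max of FS potentials) and is tropically convex PL.
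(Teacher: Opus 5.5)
\textbf{Gap: the neighbourhood step, and your proposed repair does not fix it.} Your gluing step, $w=\max\{Cu,\chi\}$ glued to $\chi$, is the same as the paper's $\psi=\max\{u-h,0\}$ with $h=Av+M+1$, up to normalization. Your Dini argument on a compact shell for part (2) is also exactly the paper's. The problem is the step you flag as the main obstacle. You fix the chart $Y=X\setminus Z(s)$ using only $s(x)\neq 0$, and then enlarge $h$ by terms $\frac1N\log|f_j|+c_j$ with $f_j\in K[Y]$. The resulting $h'$ is again in $\FS(Y^{\an})$. By Lemma \ref{lem:tropical_polytopes_and_FS_functions} and Lemma \ref{lem:tropical_polytopes_are_poly_convex}, every sublevel set $\{h'\le t\}$ containing $x$ is a polynomially convex compact subset of $Y^{\an}$. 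By Lemma \ref{lem:basic_properties_poly_convex_and_Runge} (1), it therefore contains the whole hull $\widehat{\{x\}}_Y$, which need not be small. Take $X=\P^1$, $Y=\BA^1$ and $x$ the Gauss point. Then $\widehat{\{x\}}_Y$ is the closed Berkovich unit disc, which contains the point $0$. For the neighbourhood $U=\{1/2<|z|<2\}$ of $x$, no choice of $f_j\in K[Y]$ gives $\{h'\le t_0+2\}\subset U$. So $u$ is not even defined on the region where you must form $\max\{Cu,\chi\}$. The underlying reason is that separating $x$ from $\partial U$ by a tropicalization needs two-sided bounds on the $|f_j|$. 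Max-type FS terms on a fixed chart only ever impose upper bounds $|f_j|\le e^{c}$.

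\textbf{The missing idea: the affine chart must depend on $U$.} A lower bound $|g|>a$ can only become an FS-type condition $|1/g|<1/a$ after passing to a chart on which $1/g$ is regular. This is how the paper proceeds.
\begin{itemize}
\item Start from a basic neighbourhood $x\in\bigcap_{i\le k}\{\log|f_i|<C_i\}$ with $f_i\in K(X)$ rational, whose closure (the same set with $\le$) lies in $U$.
\item Clear denominators with a single section $s_0\in H^0(X,mL(\omega))$ not vanishing at $x$, so that $f_i=s_i/s_0$ with $s_i\in H^0(X,mL(\omega))$.
\item Complete $s_0,\dots,s_k$ to a basis $s_0,\dots,s_r$ of the very ample system, and choose $C_j>\log|s_j/s_0|(x)$ for $j>k$.
\item Set $Y\coloneqq X\setminus Z(s_0)$ and $v\coloneqq\max_j\{\log|s_j/s_0|-C_j,\lambda\}$ with $\lambda<0$.
\end{itemize}
Then $v$ is the FS potential of some $\theta\in\amp(X)$ with $L(\theta)\simeq L(\omega)$, and $v(x)<0$. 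The set $\{v\le 0\}$ is a compact tropical polytope, compact because the $s_j/s_0$ generate $K[Y]$, and it is contained in $U$. Using this $v$ in place of your $h$, the rest of your argument goes through.
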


\begin{proof} 
  Fix $x \in U$. By definition, there exists $f_1, \cdots, f_k \in K(X)$ and $C_1, \cdots, C_k \in \Q$ such that $x \in \bigcap_{1 \le i \le k}\{\log|f_i| < C_i\} \Subset U$. Then, there exists $m \ge 1$ and a base $\{s_0, s_1, \cdots, s_r\}$ of $H^0(X, mL(\omega))$ such that  $mL(\omega)$ is very ample on $X$, $x \in X \setminus Z(s) \eqqcolon Y$, $k \le r$, and $f_i =s_i/s_0$ for each $1 \le i \le k$.    For each $k+1 \le j \le r$, we have $f_j \coloneqq s_j/s_0 \in K[Y]$, so there exists $C_j \in \Q$ such that $\log|f_j|(x)< C_j$.

  Set $\phi \coloneqq \max_{1 \le i \le r}\{\log|s_0|, \log|s_i| -C_j\}$. As $\phi$ is a Fubini-Study metric on $L(\omega)$, there exists $\theta \in \amp(X)$ such that $L(\theta) \simeq L(\omega)$ and $|\cdot|_{\theta}=|\cdot|_{\phi}$. Moreover, $v \coloneqq -\log|s_0|_{\theta} = \max_{1 \le i \le r}\{\log|f_i| -C_j\} \in \FS(Y^{\an})$. By constructions, we have $v(x)<0$. Fix $\epsilon \in \Q_{>0}$ such that $v(x)<-\epsilon$. Set $P \coloneqq \{y \in Y^{\an} \,|\, v(y) \le -\epsilon\}$ and $Q \coloneqq \{y \in Y^{\an} \,|\, v(y) \le 0\}$. Then, we have 
  \[
  x \in \interior(P) \subset Q \subset \bigcap_{1 \le i \le k} \{\log|f_i| < C_i\} \Subset U.
  \] 

  Fix $M \in \R_{>0}$ and let $u \in \PSH(U)$ such that $|u| \le M$ on $Q$. Set $A \coloneqq \frac{2M+1}{\epsilon}$, and  $h \coloneqq Av+M+1$. Then, $h \le -M$ on $P$, and 
  \[
  \{y \in U \,|\, h(y) \le u(y)\} \subset \{y \in U \,|\, h(y) \le M\}= \{y \in Y^{\an} \,|\, v(y)\le -\frac{1}{A}\} \Subset \interior(Q). 
  \]
  Set $Q' \coloneqq \{y \in Y^{\an} \,|\, v(y)\le -\frac{1}{A}\}$, and define the function $\psi \colon X^{\an}  \to \R \cup \{-\infty\}$ by 
  \[
  \psi \coloneqq \max\{u-h, 0\} \text{ on } Q \qquad \text{and} \qquad \psi \coloneqq 0 \text{ on } X^{\an} \setminus Q. 
  \]
  Then, $\psi+h \in \PSH(\interior(Q))$, and $\psi=0$ on $X^{\an} \setminus Q'$. Note that $h$ is a potential of $A\omega$ on $Y^{\an}$, and $X^{\an}=\interior(Q) \cup (X^{\an} \setminus Q')$.  This implies that $\psi \in \LPSH_{A\omega} \cap L^{\infty}(X^{\an})$. On the other hand, we have $\psi=u-g$ on  $\interior(P)$. Set $U_x \coloneqq \interior(P)$; $C \coloneqq \frac{1}{A} \in \Q_{>0}$; $g \coloneqq Ch=v+C(M+1)$ and $\varphi \coloneqq C\psi$. Then, $g$ is a potential of $\omega$ on $U$; $\varphi \in \LPSHob(X^{\an})$; and $\varphi+g=Cu$ on $U_x$. Note that $U_x$ only depends on the choice of $x$, and $C$, $g$ only depend on the choice of $x$ and $M$. This proves statement (1). 

  Now we prove statement (2).  Define the function $\psi_i \coloneqq X^{\an} \to \R \cup \{-\infty\}$ by $\psi_i \coloneqq \max\{u_i-h, 0\}$ on $Q$ and $\psi_i \coloneqq 0$ on $X^{\an} \setminus Q$. Similar to the arguments in the proof of Lemma \ref{lem:psh_regularization_with_growth_control} (1),  there exists $j \ge 1$ such that for each $i \ge j$,  $u_i \le h$ on a neighborhood of $\partial Q$, and $\psi_i \in \GPSH_{A\omega} \cap \PL(X^{\an})$. Since $\{\psi_i\}_{i \ge j}$ is a decreasing sequence converging pointwise to $\psi$ on $X^{\an}$, we deduce that $\psi \in \GPSH_{A\omega} \cap L^{\infty}(X^{\an})$, and hence $\varphi \in \GPSH_{\omega} \cap L^{\infty}(X^{\an})$. This proves statement (2).
\end{proof}

Now we give an application of Proposition \ref{prop:passing_from_local_to_global}. 
\begin{prop} \label{prop:local_comparison_divisorial_points_to_full_spaces}
  Assume that $K$ is discretely valued or trivially valued. Let $X$ be a projective variety over $K$ and let $U \subset X^{\an}$ be an open subset. 
  \begin{enumerate}
    \item Let $u \in \PSH(U)$, and let $v \colon U \to \BR \cup \{\pm \infty\}$ be a usc function. Suppose that $u \le v$ on $U \cap X^{\div}$. Then, $u \le v$ on $U$.  In particular, for any $y \in U$, we have
    \[  
    u(y)=\limsup_{x \to y, \, x \in U \cap X^{\div}} u(x).
    \]
    \item Let $\varphi \in \LPSHo(X)$, and let $\psi \colon X^{\an} \to \BR \cup \{ \pm \infty\}$ be a usc function. Suppose that $\varphi \le \psi$ on $X^{\div}$. Then, $\varphi \le \psi$ on $X^{an}$.  In particular, for any $y \in X^{\an}$, we have
    \[  
    \varphi(y)=\limsup_{x \to y, \, x \in X^{\div}} \varphi(x).
    \]
  \end{enumerate}
\end{prop}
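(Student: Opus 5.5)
Statement (1) is local, so I will reduce it to the global comparison Theorem \ref{thm:global_comparison_divisorial_points_to_full_spaces} via Proposition \ref{prop:passing_from_local_to_global}; statement (2) then follows by working on a cover.

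For (1), fix $y\in U$; it suffices to show $u(y)\le v(y)$. First I reduce to bounded $u$. For $N\in\Q_{>0}$, set $u_N=\max\{u,-N\}$, which lies in $\PSH(U)$ by Lemma \ref{lem:finite_sup_lpsh}, since constants are tropically convex PL. Put $v_N=\max\{v,-N\}$, which is usc. On $U\cap X^{\div}$ we have $u_N\le v_N$. If I know $u_N(y)\le v_N(y)$ for every $N$, then letting $N\to\infty$ gives $u(y)\le v(y)$; the case $v(y)=-\infty$ is covered because $u(y)\le\max\{v(y),-N\}=-N$ for all $N$. Next I shrink $U$ to a relatively compact neighbourhood of $y$ on which $u_N$ is bounded. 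It is bounded below by $-N$, and bounded above because it is usc on a compact closure. I also fix an ample $\omega\in\nef(X)$, for instance a closed positive form from the remark after the definition of $\amp$.

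Proposition \ref{prop:passing_from_local_to_global} (1) then gives a neighbourhood $U_y$, a constant $C>0$, a potential $g$ of $\omega$ on $U_y$, and $\varphi\in\LPSHob(X)$ with $\varphi+g=Cu_N$ on $U_y$. The key point is that I need $\varphi\in\GPSHo(X)$ in order to apply Theorem \ref{thm:global_comparison_divisorial_points_to_full_spaces}. Part (2) of that proposition provides this when $u_N$ is a decreasing limit of tropically convex PL functions on the whole domain. By definition such a sequence exists on some neighbourhood $U'_y$ of $y$, so I apply the proposition with $U$ replaced by $U'_y$. This is the main obstacle: one must check that the neighbourhood $U'_y$ supplied by the definition of $\PSH$ can serve as the open set in Proposition \ref{prop:passing_from_local_to_global} (2). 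I expect this to be fine because that proposition allows an arbitrary open $U$.

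Now define a usc function $\psi$ on $X^{\an}$ by $\psi=(Cv_N-g)$ on $U_y$ and $\psi=+\infty$ off $U_y$; I shrink $U_y$ first if needed so that this is usc, say taking $U_y$ open and noting that $+\infty$ outside an open set keeps upper semicontinuity. Then $\varphi\le\psi$ on $X^{\div}$. Theorem \ref{thm:global_comparison_divisorial_points_to_full_spaces} gives $\varphi\le\psi$ everywhere, so $Cu_N(y)\le Cv_N(y)$, which is what is needed.

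For the displayed formula, apply this with $v(z)=\limsup_{x\to z,\,x\in U\cap X^{\div}}u(x)$ for $z\in\overline{U\cap X^{\div}}$, which is all of $U$ by Proposition \ref{prop:divisorial_points_are_dense}. This $v$ is usc and satisfies $v\ge u$ on $U\cap X^{\div}$, so $u\le v$. The reverse inequality follows from upper semicontinuity of $u$.

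For (2), cover $X^{\an}$ by opens $U$ carrying potentials $g$ with $\varphi+g\in\PSH(U)$. Apply (1) to $\varphi+g$ and $\psi+g$, where $g$ is finite and continuous, so $\psi+g$ is usc. The limsup formula follows as before.
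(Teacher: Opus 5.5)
Your proposal is correct and follows the paper's proof essentially step for step: localize to a neighbourhood carrying a regularizing sequence, truncate to $\max\{u,-N\}$, use Proposition~\ref{prop:passing_from_local_to_global}~(2) to get $\varphi\in\GPSHo(X)$ with $\varphi+g=Cu$ near $y$, and compare with $\psi=Cv-g$ on $U_y$, $\psi=+\infty$ elsewhere, via Theorem~\ref{thm:global_comparison_divisorial_points_to_full_spaces}; the ``main obstacle'' you flag is exactly the paper's harmless reduction ``we may assume there is a decreasing sequence in $\TConv\cap\PL(U)$ on $U$''. Your arguments for the limsup formula and for part (2), which the paper leaves implicit, are also fine.
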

\begin{proof}
  Since the statements are local, it is enough to verify statement (1) on a neighborhood $U_x \subset U$ for each $x \in U$. In particular, we may assume that there exists a decreasing function sequence $\{u_i\}_{i \ge 1} \subset \TConv \cap \PL(U)$ converging pointwise to $u$ on $U$. Note that for any $M \in \R$, $u_{M} \coloneqq \max \{u, M\} \in \PSH(U)$, and $u_{M} \le v_{M} \coloneqq \max \{v, M\}$ on $U \cap X^{\div}$. It suffices to show that $u_M \le v_M$ for each $M \in \R$. Therefore, we may henceforth assume that $u \in \PSHb(U)$.  
  
  Fix $x \in U$ and $\omega \in \nef(X)$ with $c_1(L(\omega)) \in \Amp(X)$. Applying Proposition \ref{prop:passing_from_local_to_global} (2), we obtain an open neighborhood $U_x \subset U$ of $x$, a  constant $C>0$, a potential $g \in \PL \cap \LConv (U_x)$ of $\omega$, and $\varphi \in \GPSHo(X)$, such that $\varphi+g=Cu$ on $U_x$. 
  On the other hand, we define a usc function $\psi$ on $X^{\an}$ by 
  \[
  \psi(x)= \begin{cases*}
    (Cv-g)(x), \quad x \in U_x \\
    +\infty, \quad x \notin U_x
  \end{cases*}
  \]
  Then, by assumption, $\varphi \le \psi$ on $X^{\div}$. Therefore, by Theorem \ref{thm:global_comparison_divisorial_points_to_full_spaces}, we have $\varphi \le \psi$ on $X^{\an}$, which implies further that $u \le v$ on $U_x$. 
\end{proof}

\subsection{An extension theorem}

\begin{prop} \label{prop:extending_pshlogreg_by_adding_small_functions} 
  Let $Y$ be an affine variety over $K$. Let $f \in K[Y]$. Let $Z \coloneqq \{f=0\} \subset Y$ be the hypersurface in $Y$ define by $f$, and let $Y_{f}=Y \setminus Z=\Spec K[Y]_f$ be the open affine subvariety of $Y$. Let $g \in \FS(Y^{\an})$ and let $v \in \PSHlogreg((Y_f)^{\an})$  such that $v \le g+O(1)$ on $(Y_f)^{\an}$. For any $c \in \Q_{>0}$, define the function $\widetilde{v}_c \colon Y^{\an} \to \R \cup \{-\infty\}$ by 
  \[
  \widetilde{v}_c=v+c\log|f| \text{ on } (Y_f)^{\an}, \qquad \text{and} \qquad \widetilde{v}_c=-\infty \text{ on } Z^{\an}. 
  \]
  Then, $\widetilde{v}_c \in \PSHlogreg(Y^{\an})$.
\end{prop}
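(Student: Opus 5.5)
Let $v_i\in\FS((Y_f)^{\an})$ be a decreasing sequence converging pointwise to $v$. By Lemma \ref{lem:psh_regularization_with_growth_control} (2), applied on $Y_f$ with a Fubini--Study function $g'\in\FS((Y_f)^{\an})$ dominating $g$ (for instance $g'=\max\{g,\log|1/f|\}$, using that $K[Y_f]$ is generated by the generators of $K[Y]$ together with $1/f$), we may assume $|v_i-c'g'|=O(1)$ on $(Y_f)^{\an}$ for a fixed $c'>1$. The plan is to show $\widetilde v_{c}$ is a decreasing limit of functions in $\PSHlog(Y^{\an})$ which are PL on each $\interior(E_r)$, and then invoke Lemma \ref{lem:psh_regularization_with_growth_control} (3) (or Lemma \ref{lem:psh_PL_functions_are_pshlogreg} (3)). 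Since $\widetilde v_c$ is generically finite (each component meeting $Z$ also meets $Y_f$), it suffices to treat the truncations.

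\textbf{Step 1: reduce to a bounded-below growth.} Write $v_i+c\log|f|$ on $(Y_f)^{\an}$. Near $Z^{\an}$ the term $c\log|f|\to-\infty$, but $v_i$ may grow like $c'\log|1/f|$. To kill this, I first replace $v$ by $\max\{v,\,g-N\}$-type truncations is not allowed (not psh near $Z$), so instead use the hypothesis $v\le g+O(1)$ with $g$ defined on all of $Y$. Set, for $M\in\N$ and a large constant $A$,
\[
w_{i,M}\coloneqq \max\{v_i+c\log|f|,\ -M\}\ \text{ on }(Y_f)^{\an},
\]
but with $v_i$ replaced by $\min$-free modification $\max\{v_i - \delta_i g', \ldots\}$; concretely I would use $v_i^{\delta}\coloneqq v_i+\delta\log|f|$ with small rational $\delta\in(0,c)$ such that, after tropicalizing, $v_i+\delta\log|f|$ stays bounded above near $Z$ only if $v_i\le g+O(1)$ there. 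Since $v_i$ need not satisfy this, the main approach will be: $\max\{v_i,\ g+C_i\}$ with $C_i\to$ suitable, combined with Dini on tropical polytopes, as in the proof of Lemma \ref{lem:psh_PL_functions_are_pshlogreg} (3), to arrange approximants $\le g+O(1)$ on compact sets.

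\textbf{Step 2: extension across $Z$.} On each $\interior(E_r)$, the function $u_{i,M}\coloneqq\max\{v_i+c\log|f|,-M\}$ on $(Y_f)^{\an}\cap\interior(E_r)$, extended by $-M$ on $Z^{\an}$, is continuous: near a point of $Z^{\an}\cap E_r$, $|f|\to0$ while $v_i\le g+O(1)\le O(1)$ on the compact $E_r$, so $v_i+c\log|f|<-M$ in a neighbourhood. Hence $u_{i,M}$ coincides with the constant $-M$ near $Z^{\an}$, and by Remark \ref{rmk:extension_of_psh_functions_by_sup} it lies in $\PSH\cap\PL(\interior(E_r))$ (using that $\log|f|$ is tropically linear and $v_i$ is tropically convex PL on $Y_f$, and that tropical charts of $Y_f$ come from $K[Y]$ and $1/f$, which is harmless away from $Z$). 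Globally $u_{i,M}\le g+c\log^+|f|+O(1)\in\PSHlog$, giving log growth via Lemma \ref{lem:FS_functions_are_comparable}. Letting $i\to\infty$ and then $M\to\infty$, Lemma \ref{lem:psh_PL_functions_are_pshlogreg} (3) gives $\max\{\widetilde v_c,-M\}\in\PSHlogreg(Y^{\an})$, and Lemma \ref{lem:psh_regularization_with_growth_control} (3) yields $\widetilde v_c\in\PSHlogreg(Y^{\an})$.

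\textbf{Main obstacle.} The crucial point is the uniform upper bound $v_i\le g+O(1)$ on compacta of $Y^{\an}$ (not just of $Y_f^{\an}$) near $Z$: only $v$ itself is known to satisfy it, and regularizing sequences from Lemma \ref{lem:psh_regularization_with_growth_control} (2) are only controlled by $c'g'$, which blows up along $Z$. I would handle this by passing to the global picture: compactify via Proposition \ref{prop:potential_of_positive_form_is_Fubini-Study} (2) for $g$, view $v+c\log|f|-c''g$ as a $c''\theta$-psh function on $X^{\an}$ (Lemma \ref{lem:psh_regularization_with_growth_control} (1)-type argument, where $c\log|f|$ is absorbed as the potential difference of a section of $L^{\otimes\deg}$), regularize there by Proposition \ref{prop:regularization_omega_psh_by_omega_FS_functions} (2), which automatically gives FS approximants on all of $Y$, and conclude with Corollary \ref{cor:GPSH_plus_potential_of_positive_forms_is_pshlogreg}.
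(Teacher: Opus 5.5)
There is a genuine gap, and it sits where you place your ``main obstacle''. Step 2 rests on the claim that $v_i\le g+O(1)$ on $E_r$ near $Z^{\an}$. This claim is not just unproven; it is false for every admissible approximant. By Lemma \ref{lem:FS_functions_are_comparable}~(1) applied on $Y_f$, any $w\in\FS((Y_f)^{\an})$ satisfies $\log|f^{-1}|\le Cw+O(1)$ for some $C>0$. Hence every $v_i$ tends to $+\infty$ as one approaches $Z^{\an}$.

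What you need is therefore not boundedness of $v_i$ near $Z^{\an}$, but a bound on its rate of blow-up by a multiple of $\log|f^{-1}|$ strictly smaller than $c$. Your normalization $g'=\max\{g,\log|f^{-1}|\}$ with a constant $c'>1$ only gives $v_i+c\log|f|\le\max\{c'g+c\log|f|,\,(c-c')\log|f|\}+O(1)$. This is useless whenever $c\le c'$, in particular for every $c\le 1$.

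Step 1 does not produce a usable construction either. The global fallback is circular. Lemma \ref{lem:psh_regularization_with_growth_control}~(1) and its proof need a Fubini--Study regularizing sequence on all of $Y$, i.e.\ approximants defined across $Z^{\an}$, which is exactly what you are trying to build. ``Absorbing $c\log|f|$ into a potential'' says nothing about how approximants behave near $Z^{\an}$.

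The missing idea is to put a small weight on $\log|f^{-1}|$ in the auxiliary Fubini--Study function. Take $\epsilon=c/3$ and $g_\epsilon\coloneqq\max\{g,\epsilon\log|f^{-1}|\}\in\FS((Y_f)^{\an})$. After adding a constant you have $v\le g\le g_\epsilon$. Lemma \ref{lem:psh_regularization_with_growth_control}~(2), with constant $2$, then gives $v_i\in\FS((Y_f)^{\an})$ decreasing to $v$ with $v_i\le 2g_\epsilon+O(1)$. Hence
\[
v_i+c\log|f|\le\max\Bigl\{2g+c\log|f|,\ \tfrac{c}{3}\log|f|\Bigr\}+O(1),
\]
and the right-hand side tends to $-\infty$ at $Z^{\an}$, because $g$ is continuous on all of $Y^{\an}$.

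Now take rationals $C_i\downarrow-\infty$. The function $\max\{v_i+c\log|f|,C_i\}$, extended by $C_i$ on $Z^{\an}$, equals $C_i$ on an open neighbourhood of $Z^{\an}$. It therefore lies in $\PSH\cap\PL(Y^{\an})$ and is dominated by a Fubini--Study function on $Y$. These functions decrease to $\widetilde v_c$, and Lemma \ref{lem:psh_PL_functions_are_pshlogreg}~(3) concludes.

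Your Step 2 template (truncate, become constant near $Z^{\an}$, glue as in Remark \ref{rmk:extension_of_psh_functions_by_sup}) works once this growth bound replaces the false bound $v_i\le g+O(1)$. No compactification and no double limit in $i$ and $M$ is needed.
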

\begin{proof}
  Fix $c \in \Q_{>0}$. Denote $\epsilon \coloneqq c/3$, and denote $g_{\epsilon} \coloneqq \max\{g, -\epsilon \log|f|\}=\max\{g, \epsilon \log|f^{-1}|\}$. Then, $g_{\epsilon} \in \FS((Y_f)^{\an})$. After adding a constant to $g$, we may assume that $v \le g \le g_{\epsilon}$ on $(Y_f)^{\an}$. Then, by Lemma \ref{lem:psh_regularization_with_growth_control} (2), there exists a decreasing function sequence $\{v_i\}_{i \ge 1} \subset \FS((Y_f)^{\an})$ converging pointwise to $v$ on $(Y_f)^{\an}$ such that $v_i \le 2u_{\epsilon}+O(1)$ for each $i \ge 1$. Let $\{C_i\}_{i \ge 1} \subset \Q$ be a decreasing sequence of rational numbers such that $C_i \to -\infty$. For each $i \ge 1$, define the function $\widetilde{v}_{c,i} \colon Y^{\an} \to \R$ by
  \[
  \widetilde{v}_{c,i}=\max\{v_i+c\log|f|, C_i \} \text{ on } (Y_f)^{\an}, \qquad \text{and} \qquad \widetilde{v}_{c,i}=C_i \text{ on } Z^{\an}. 
  \]
  Note that $\left(\widetilde{v}_{c,i} \right)|_{(Y_f)^{\an}} \in \PSH \cap \PL((Y_f)^{\an})$. On the other hand, we have
  \[
  v_i + c\log|f| \le 2u_{\epsilon}+c\log|f|  +O(1) =\max \left\{ 2g+c\log|f|, \frac{1}{3}c \log|f| \right\}+O(1) \quad \text{on } (Y_f)^{\an}. 
  \]
  Denote  $h \coloneqq \max\{2g+c\log|f|, \frac{1}{3}c \log|f|\}$. Since $h$ is a usc function on $Y^{\an}$ which equals to $-\infty$ on $Z^{\an}$, $U_i \coloneqq \{y \in Y^{\an} \,|\, h(y) < C_i\}$ is an open neighborhood of $Z^{\an}$. This implies that $\widetilde{v}_{c,i}=C_i$ on $U_i \cap (Y_f)^{\an}$, and thereby $\widetilde{v}_{c,i}=C_i$ on $U_i$. Thus, we obtain that $\widetilde{v}_{c,i} \in \PSH \cap \PL(Y^{\an})$ for each $i \ge 1$. Then, $\{\widetilde{v}_{c,i}\}_{i \ge 1} \subset \PSH \cap \PL(Y^{\an})$ is a decreasing function converging pointwise to $\widetilde{v}_c$ on $Y^{\an}$, and Lemma \ref{lem:psh_PL_functions_are_pshlogreg} (3) implies that $\widetilde{v}_c \in \PSHlogreg(Y^{\an})$. 
\end{proof}

\begin{thm} \label{thm:pshlogreg_extension}
  Let $K$ be a complete discretely valued or trivially valued NA field of equicharacteristic $0$. Let  $Y$ be a smooth affine variety over $K$, and let $U$ be a Zariski dense affine open subvariety of $Y$. Let $u \in \PSHlogreg(U^{\an})$ and $g \in \FS(Y^{\an})$ such that $u \le g+O(1)$ on $U^{\an}$. Then, $u$ can be extended uniquely to $\widetilde{u} \in \PSHlogreg(Y^{\an})$.
\end{thm}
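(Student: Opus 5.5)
Uniqueness comes first and is straightforward. Two extensions $\widetilde u_1,\widetilde u_2\in\PSHlogreg(Y^{\an})$ agree on $U^{\an}$. The complement $Y\setminus U$ is a proper Zariski closed subset, so $U^{\an}$ contains every divisorial point of $Y^{\an}$ that does not lie in $(Y\setminus U)^{\an}$. In fact, a divisorial point of $X^{\div}$ for a compactification $X$ of $Y$ is a valuation on the function field, hence lies in $U^{\an}$. Apply Proposition \ref{prop:local_comparison_divisorial_points_to_full_spaces} (1) with $U=Y^{\an}$, taking $u=\widetilde u_1$ and $v=\widetilde u_2$ (which is usc). This gives $\widetilde u_1\le\widetilde u_2$, and by symmetry we get equality. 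The same argument shows that the extension, if it exists, must be
\[
\widetilde u(y)=\limsup_{x\to y,\,x\in U^{\an}} u(x).
\]

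For existence, reduce to a principal open set. Since $U$ is Zariski dense and affine, write $Y\setminus U$ as a finite intersection of hypersurfaces. Cover $Y$ by principal opens $Y_{f_j}\supset U$ when possible; more simply, choose $0\ne f\in K[Y]$ vanishing on $Y\setminus U$, so that $Y_f\subset U$. Restricting $u$ to $Y_f$ keeps it in $\PSHlogreg$: restrict the Fubini--Study regularization, noting that the restriction of an element of $\FS(U^{\an})$ is dominated by an element of $\FS(Y_f^{\an})$, and use Lemma \ref{lem:psh_regularization_with_growth_control} (2),(3). Proposition \ref{prop:extending_pshlogreg_by_adding_small_functions} then gives, for each $c\in\Q_{>0}$, a function $\widetilde v_c=u+c\log|f|\in\PSHlogreg(Y^{\an})$ that equals $-\infty$ on $Z(f)^{\an}$.

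We want to let $c\to0$. The family $\widetilde v_c$ is \emph{increasing} as $c\downarrow0$ only where $|f|\le1$, so we normalize. Work on the exhausting tropical polytopes $E_r$ and use $\log|f|\le C_r$ on $E_r$; alternatively, replace $f$ by $\lambda f$ with $|\lambda|$ small, but that fixes only a bounded region. The cleanest route is to pass to the global picture. Take the compactification $X$ and the form $\theta\in\amp(X)$ associated to $g$ given by Proposition \ref{prop:potential_of_positive_form_is_Fubini-Study} (2), and fix $c'>1$. By Lemma \ref{lem:psh_regularization_with_growth_control} (1), each $\varphi_c:=\widetilde v_c-c'g'$ lies in $\GPSH_{c'\theta}$, where $g'$ is a Fubini--Study function dominating all the $\widetilde v_c$ up to $O(1)$; for instance $g'=\max\{g,\log|f|\}$ after enlarging. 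Then $\varphi:=(\sup_{c}\varphi_c)^{\star}$ agrees with $u-c'g'$ on $Y_f^{\an}$. We need $\varphi$ to be $c'\theta$-psh, and here the equicharacteristic-zero hypothesis enters. Since $Y$ is smooth, choose a resolution $\pi\colon X'\to X$ that is an isomorphism over $Y$ (Hironaka). Theorem \ref{thm:smooth_envelope_property} gives the envelope property on $X'$, so $\pi^*\varphi$ is $\pi^*c'\theta$-psh on $X'$. Restricting to $\pi^{-1}(Y)\cong Y$ and applying Corollary \ref{cor:GPSH_plus_potential_of_positive_forms_is_pshlogreg} shows that $\varphi+c'g'$ is in $\PSHlogreg$ near points of $Y$; a version of that corollary for pullbacks of an ample form is needed, and Lemma \ref{lem:psh_regularization_with_growth_control} (3) supplies it.

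\textbf{Main obstacle.} Two points remain. First, we must check that the usc envelope restricts to $u$ on $U^{\an}$ and not merely on $Y_f^{\an}$. This follows from Proposition \ref{prop:local_comparison_divisorial_points_to_full_spaces} together with Theorem \ref{thm:global_divisorial_points_is_nonnegligible}, since both sides are psh on $U^{\an}$ and agree on divisorial points. Second, and harder, we must justify that $\varphi$ is generically finite and that the envelope property on the smooth model $X'$ transfers back to a $\PSHlogreg$ function on $Y$. This means arranging the compactification and resolution compatibly with $g$, and it is where I expect most of the work to lie.
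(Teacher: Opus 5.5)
Your uniqueness argument is fine. So is the reduction to a principal open set $Y_f\subset U$, transferring back to $U^{\an}$ via Proposition \ref{prop:local_comparison_divisorial_points_to_full_spaces}. One small correction there: the restricted Fubini--Study approximants lie in $\PSHlogreg(Y_f^{\an})$ by Lemma \ref{lem:psh_PL_functions_are_pshlogreg} (2), not because of a domination.

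The genuine gap is the step you flag yourself. After resolving $\pi\colon X'\to X$, you only control $\pi^*(c'\theta)$, which is nef but not ample on $X'$. Corollary \ref{cor:GPSH_plus_potential_of_positive_forms_is_pshlogreg}, however, needs an ample form whose potential on $Y$ is a Fubini--Study function. Lemma \ref{lem:psh_regularization_with_growth_control} (3) does not supply this. It only says that decreasing limits of $\PSHlogreg$ functions remain $\PSHlogreg$; it does not turn a $\pi^*\theta$-psh function on $X'$ into a decreasing limit of elements of $\FS(Y^{\an})$. If you regularize on $X'$ via Proposition \ref{prop:def_of_omega_psh_does_not_depend_on_theta}, an auxiliary ample form on $X'$ appears, and you would need a global, bounded-below potential of it on $Y$. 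That is exactly what is missing.

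The idea that closes the gap, and is how the paper argues, is to discard the $\theta$ attached to $g$. Instead, choose from the start a \emph{smooth} projective compactification $X\supset Y$ carrying an effective ample divisor $D$ with $\Supp(D)=X\setminus Y$. This comes from Hironaka plus Lemma \ref{lem:blowup_pi_ample_divisor}, i.e. the divisor $m\pi^*D-E$ on the blow-up. Then:
\begin{enumerate}
\item take $\theta\in\amp(X)$ with $L(\theta)=\CO(rD)$ and the new potential $g=-\log|s_{rD}|_\theta\in\FS(Y^{\an})$;
\item use Lemma \ref{lem:FS_functions_are_comparable} to get $\widetilde v_{c_i}\le Ag+O(1)$ uniformly in $i$, since all $\widetilde v_{c_i}$ lie below one fixed Fubini--Study function.
\end{enumerate}
Everything now happens on a smooth $X$ with an ample form. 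Theorem \ref{thm:smooth_envelope_property} applies directly, and Corollary \ref{cor:GPSH_plus_potential_of_positive_forms_is_pshlogreg} yields $\PSHlogreg(Y^{\an})$ with nothing to transfer back.

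There is a second, more local error. Your $\varphi:=(\sup_c\varphi_c)^\star$ does not agree with $u-c'g'$ on $Y_f^{\an}$. Because $c\log|f|$ is not monotone in $c$, on $\{|f|>1\}$ the supremum is attained at the largest admissible $c$. For $c\in(0,1]$ you get $u+\max\{\log|f|,0\}-c'g'$, and if $c$ ranges over all of $\Q_{>0}$ you get $+\infty$ on $\{|f|>1\}$.

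The paper instead takes $c_i\downarrow 0$ and the \emph{pointwise limit} $\varphi$ of $\varphi_i=\widetilde v_{c_i}-2Ag$. This limit equals $u-2Ag$ on $U^{\an}$ and $-\infty$ elsewhere. Since $U^{\an}$ is open and $u-2Ag$ is usc there, $\varphi^\star=\varphi$ on $U^{\an}$. Proposition \ref{prop:limit_of_GPSH_functions}, which handles non-monotone bounded-above sequences under the envelope property, then gives $\varphi^\star\in\GPSH_{2A\theta}(X^{\an})$. Generic finiteness is automatic.

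Finally, Lemma \ref{lem:psh_regularization_with_growth_control} (1) must be applied with the form attached to the dominating Fubini--Study function ($g'$ in your notation), not the one attached to $g$.
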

\begin{proof}
  Suppose that $\widetilde{u} \in \PSHlogreg(Y^{\an})$ is an extension of $u$. Since $(Y \setminus U)^{\an} \cap Y^{\div}=\emptyset$,  Proposition \ref{prop:local_comparison_divisorial_points_to_full_spaces} (1)implies that $\widetilde{u}(y)=\limsup_{x \to y,\, x \in (Y \setminus U)^{\an}} u(x)$. Then, the uniqueness follows.

  It remains to show the existence. We may assume that $U=Y_f$ for some $f \in K[Y]$. Let $\{c_i\}_{i \ge 1} \subset \Q_{>0}$ be a decreasing sequence of positive rational numbers. For each $i \ge 1$, define the function $\widetilde{u}_i \colon Y^{\an} \to \R \cup \{-\infty\}$ by $\widetilde{u}_i = u+c_i \log|f|$ on $U^{\an}$ and $u_i=-\infty$ on $(Y \setminus U)^{\an}$. Then, Proposition \ref{prop:extending_pshlogreg_by_adding_small_functions} implies that $\widetilde{u}_i 
  \in \PSHlogreg(Y^{\an})$ for each $i \ge 1$. Note that $\widetilde{u}_i \le g+\max\{c_1 \log|f|, 0\}$ for each $i \ge 1$, and Lemma \ref{lem:psh_PL_functions_are_pshlogreg} (2) implies that there exists $h \in \FS(Y^{\an})$ with $g+\max\{c_1 \log|f|, 0\} \le h$ on $Y^{\an}$. Thus, $\widetilde{u}_i \le h$ on $Y^{\an}$ for each $i \ge 1$.

  By Hironaka's theorem and Lemma \ref{lem:blowup_pi_ample_divisor}, there exists a smooth projective variety $X$ over $K$ such that $Y$ is an affine open variety of $X$, and there exists an effective ample divisor $D$ on $X$ with $\mathrm{Supp}(D)=X \setminus Y$. Take $r \in \Z_{>0}$ large enough such that $L \coloneqq \CO(rD)$ is a very ample line bundle on $X$ with section $s_{rD} \in H^0(X, L)$. Let $\theta \in \amp(X)$ such that $L = L(\theta)$, and let $|\cdot|_{\theta}$ be a Fubini-Study metric on $L$. Then, by Proposition \ref{prop:potential_of_positive_form_is_Fubini-Study} (1), $g \coloneqq -\log|s_{rD}|_{\theta}$ is  a Fubini-Study function on $Y^{\an}$.  By Lemma \ref{lem:FS_functions_are_comparable}, there exists $A>0$ such that $\widetilde{u}_i \le Ag+O(1)$ for each $i \ge 1$.  Define the function $\varphi \colon X^{\an} \to \R \cup \{-\infty\}$ by $\varphi_i=\widetilde{u}_i-2Ag$ on $Y^{\an}$ and $\varphi_i=-\infty$ on $X^{\an} \setminus Y^{\an}$. Then, Lemma \ref{lem:psh_regularization_with_growth_control} (1) implies that $\varphi_i \in \GPSH_{2A \theta}(X)$. 
  
  Let $\varphi$ be the pointwise limit of $\{\varphi_i\}_{i \ge 1}$, and $\varphi^{\star}$ be its usc envelope. Then, $\varphi=u-2Ag$ on $U^{\an}$ and $\varphi=-\infty$ on $(X \setminus U)^{\an}$. Since $u-2Ag$ is usc on $U^{\an}$, we have $\varphi^{\star}=\varphi=u-2Ag$ on $U^{\an}$.  Recall that $X$ satisfies the envelope property over $K$ (Theorem \ref{thm:smooth_envelope_property}). Then, Proposition \ref{prop:limit_of_GPSH_functions} implies that $\varphi^{\star} \in \GPSH_{2A\theta}(X^{\an})$. Since $2A\theta \in \amp(X)$, Corollary \ref{cor:GPSH_plus_potential_of_positive_forms_is_pshlogreg}  implies that $\widetilde{u}=\varphi^{\star}+2Ag \in \PSHlogreg(Y^{\an})$ with $\widetilde{u}|_{U^{\an}}=u$. Therefore, the theorem is proven.
\end{proof}

\begin{lemma} \label{lem:blowup_pi_ample_divisor}
  Let $F$ be any field and let $X$ be a projective variety over $F$. Let $D$ be an effective ample divisor on $X$. Let $Z$ be a closed subscheme of $X$ such that $Z \subset \mathrm{Supp}(D)$, and let $\pi \colon \widetilde{ X} \coloneqq \mathrm{Bl}_Z X \to X$ be the blowup of $X$ centered at $Z$. Then, there exists an effective ample divisor $H$ on $\widetilde{X}$ such that $\mathrm{Supp}(H)=\pi^{-1}(\mathrm{Supp}(D))$. 
\end{lemma}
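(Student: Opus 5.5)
The plan is to take $H$ of the form $m\pi^{*}D - E$, where $E$ is the exceptional Cartier divisor of the blowup and $m \gg 0$, and to show that this divisor is ample, effective, and has support exactly $\pi^{-1}(\mathrm{Supp}(D))$.

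Let $\mathcal{I} \subset \CO_X$ be the ideal sheaf of $Z$. By construction $\widetilde{X} = \mathrm{Proj}_X \bigoplus_{k \ge 0} \mathcal{I}^k$, and $\CO_{\widetilde{X}}(-E) = \mathcal{I}\cdot\CO_{\widetilde{X}} = \CO_{\widetilde{X}}(1)$ is $\pi$-ample. Since $D$ is ample on $X$, the standard criterion (e.g. Hartshorne II.7.10 / EGA II 4.6.13) gives that $\pi^{*}\CO_X(mD) \otimes \CO_{\widetilde{X}}(-E)$ is ample for $m \gg 0$. Ampleness is therefore not the issue.

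The real work is effectivity together with the support condition. Since $Z \subset \mathrm{Supp}(D)$, the Nullstellensatz gives $k \ge 1$ with $\CO_X(-kD) \subset \mathcal{I}$; this uses that $\mathcal{I}$ and $\CO_X(-D)$ are ideals with $\sqrt{\mathcal{I}} \supset \sqrt{\CO_X(-D)}$, and that $X$ is noetherian. Pulling back, $\CO_{\widetilde{X}}(-k\pi^{*}D) \subset \mathcal{I}\CO_{\widetilde{X}} = \CO_{\widetilde{X}}(-E)$, so $G \coloneqq k\pi^{*}D - E$ is an effective Cartier divisor. Moreover $\mathrm{Supp}(G) \subset \mathrm{Supp}(\pi^{*}D) = \pi^{-1}(\mathrm{Supp}(D))$, since $E$ itself lies over $Z \subset \mathrm{Supp}(D)$.

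Now take $H \coloneqq m\pi^{*}D - E = (m-k)\pi^{*}D + G$ with $m > k$ and $m$ large enough for ampleness. Both summands are effective, so $H$ is effective. Because $m-k \ge 1$, the support of $H$ contains $\mathrm{Supp}(\pi^{*}D) = \pi^{-1}(\mathrm{Supp}(D))$; it is contained in it because both summands are supported there. Hence $\mathrm{Supp}(H) = \pi^{-1}(\mathrm{Supp}(D))$, and replacing $H$ by a multiple, if one wants an honest very ample divisor, does not change its support.

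The only delicate step I expect is the inclusion $\CO_X(-kD) \subset \mathcal{I}$. If $Z$ is allowed to be non-reduced, one should note that the Nullstellensatz argument still works on the noetherian scheme $X$: the local equation of $D$ vanishes on $Z$ set-theoretically, so a power of it lies in $\mathcal{I}$ locally, and finitely many affine charts give a uniform $k$.
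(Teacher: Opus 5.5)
Your proposal is correct and follows the same route as the paper, which also takes $H = m\pi^{*}D - E$ for $m \gg 0$ and appeals to the relative ampleness of the exceptional class. Your version is more complete: the paper's one-line proof does not address effectivity or the support condition, which you settle via $\CO_X(-kD) \subset \mathcal{I}$ and the decomposition $H = (m-k)\pi^{*}D + G$, and you correctly take $-E$ as the $\pi$-ample class, whereas the paper writes $E$.
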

\begin{proof}
  We take $H \coloneqq m (\pi^{*}D)-E$ with $m$ being a large enough positive integer, and $E $ being the exceptional divisor of $\pi$, which is a $\pi$-ample effective Cartier divisor on $\widetilde{X}$ and satisfies $\CO_{\widetilde{X}}(E)=\pi^{-1}\CI_Z \cdot \CO_{\widetilde{X}}$. 
\end{proof}

\section{\MongeA measures of bounded psh functions} \label{sec:Monge_Ampere_measures}
In this section, unless indicated otherwise, we assume that $K$ is a discretely valued or trivially valued NA field, and $X$ is a projective variety over $K$ with $\dim X=n$. 

\subsection{The local \MongeA operator for PL functions}
We recall the definition of local non-archimedean \MongeA measures for PL functions following \cite{CLD}. We refer readers to \emph{loc. cit.} for more details. 
For smooth functions \footnote{see Appendix \ref{appendix:psh_functions_CLD} for the definition.} $u_1, \cdots, u_n$ on $U$ which factorize the tropicalization $\Trop \colon U \to \BR^k$,   the mixed \MongeA measure $\bigwedge_{k} \dm \dmb u_k$ equals to  the real mixed \MongeA measure multiplied by the tropical weight on each $n$-dimensional polyhedron of $\Trop(U)$, and has zero mass on each polyhedron of $\Trop(U)$ of dimension less than $n$.

Now suppose that $u_1, \cdots, u_n \in \TConv \cap \PL(U)$. For each $u_k$,  it can be  approximated locally uniformly by a sequence of tropically convex smooth functions $\{u_k^j\}_{j \ge 1}$. A local version of Chern--Levine--Nirenberg inequality established in \emph{loc. cit.} allows one to define $\bigwedge_{k} \dm \dmb u_k$ as the weakly limit of $\bigwedge_{k} \dm \dmb u_k^j$. In general, every PL function can be written as a difference of two tropically convex PL functions. Therefore, we can extend the local mixed \MongeA operator to $\PL(U)$ by multi-linear expansions.  

An alternative way to define local mixed \MongeA measures for PL functions is through Minkowski weights on tropicalizations. We refer the readers to  \cites{GK17, BBS22, BMPS26, APW1} for more details. These two ways of defining local mixed \MongeA measures are equivalent.   

We collect the following results on local mixed \MongeA measures.
\begin{prop} \label{prop:properties_local_MA_PL_functions}
  \begin{enumerate}
    \item Let $u_1, u_2, \cdots, u_n \in \TConv \cap \PL(U)$. Then, $\dm \dmb u_1 \wedge \cdots \wedge \dm \dmb u_n \ge 0$ on $U$. 
    \item Let $u_1, u_2, \cdots, u_n \in \PL(U)$. Suppose that $u_1=\log|f|$ for $f \in \CO^{\times }(U)$. Then, $\dm \dmb u_1 \wedge \cdots \wedge \dm \dmb u_n=0$. 
    \item (\cite[Theorem 18.4.7]{CLD}) The definition of local mixed \MongeA measures for PL functions is compatible with global definition of mixed \MongeA measures given in Section \ref{subsec:MA_measures_omega_psh_functions}. More precisely, let $\omega_1, \cdots, \omega_n \in \PLforms(X)$ with potential $g_1, \cdots, g_n \in \TConv \cap \PL(U)$ on $U$, and let $\varphi_1, \cdots, \varphi_n \in \PL(X)$. Denote   $u_k=\varphi_k +g_k \in \PL(U)$ for each $1 \le k \le n$. Then, we have  
    \[
      \bigwedge_{k} \dm \dmb u_k \coloneqq \bigwedge_{k}(\omega_k+\dm \dmb \varphi_k).
    \]
  \end{enumerate}
\end{prop}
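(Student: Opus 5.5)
Parts (1) and (2) will be proved by reducing to the smooth tropical setting of \cite{CLD}, using the approximation scheme already described above. Part (3) is a citation and needs only a short compatibility check.

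For (1), fix $u_1,\dots,u_n\in\TConv\cap\PL(U)$. Since positivity of a measure is local, we may shrink $U$ so that all $u_k$ factor through a single tropicalization $\Trop\colon U\to\BT^N$. To get a common map, take the union of the finitely many functions $f_i$ defining each $u_k$; each $u_k$ then stays tropically convex after composing with a coordinate projection. Write $u_k=h_k\circ\Trop$ with $h_k$ convex PL on the tropical image. Next, regularize each $h_k$ by convolution with a smooth mollifier on the real part. Near boundary points at infinity, Remark \ref{rmk:PL_functions_near_boundaries} lets us first factor through a projection onto the finite coordinates. This gives smooth functions $h_k^j$ that are convex on each face and converge locally uniformly to $h_k$. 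The functions $u_k^j=h_k^j\circ\Trop$ are then tropically convex smooth functions. For these, $\bigwedge_k\dm\dmb u_k^j$ is the real mixed Monge--Amp\`ere measure of the $h_k^j$ on the $n$-dimensional faces, multiplied by the tropical weights $\varpi$. The weights are positive and mixed real Monge--Amp\`ere measures of convex functions are nonnegative, so these measures are $\ge0$. By the local Chern--Levine--Nirenberg construction, $\bigwedge_k\dm\dmb u_k$ is the weak limit of these measures. Since weak limits of positive measures are positive, this gives (1).

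For (2), write $u_1=\log|f|$ with $f\in\CO^\times(U)$. Locally $u_1$ factors through the tropicalization as a linear coordinate function $t\mapsto -t_0$, where $t_0=-\log|f|$ is included among the coordinates. A linear function is both convex and concave, so $u_1$ and $-u_1$ are both tropically convex PL functions. By multilinearity, write each other $u_k$ as $u_k^+-u_k^-$ with $u_k^\pm$ tropically convex PL (Lemma \ref{lem:property_tropical_conv_functions} (4)). It then suffices to show $\dm\dmb u_1\wedge T=0$ when $T$ is a wedge of tropically convex PL functions. Approximate $u_2,\dots,u_n$ by smooth tropically convex functions, and keep $u_1$ itself, which is already smooth (affine) on the tropical image. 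The approximating measure is the real mixed Monge--Amp\`ere measure $\mathrm{MA}(\ell,h_2^j,\dots,h_n^j)$ on each $n$-dimensional face, where $\ell$ is affine. Its density is a mixed discriminant involving the Hessian of $\ell$, which is zero, so the measure vanishes. Passing to the limit gives (2).

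For (3), we cite \cite[Theorem 18.4.7]{CLD}. The only check is that the local and global definitions of the measure for PL data coincide. Both sides are multilinear, so by Lemma \ref{lem:property_tropical_conv_functions} (4) and (2) it suffices to treat tropically convex potentials, where the cited theorem applies directly.

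The main obstacle is the regularization step in (1): constructing smooth approximants that remain convex on every face of the tropical variety, including near points at infinity of $\BT^N$. We handle it by invoking \cite[Theorem 18.8.4]{CLD}, which is the approximation already used in Theorem \ref{thm:GPSH_PL_equals_to_LPSH_PL}, rather than building the approximants by hand.
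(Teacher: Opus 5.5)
Your proposal is correct and follows the argument the paper has in mind. The paper gives no separate proof; it relies on the construction sketched just before the statement and on \cite[Theorem 18.4.7]{CLD} for (3). That construction is: smooth tropically convex approximants, real mixed Monge--Amp\`ere measures weighted by tropical multiplicities on top-dimensional faces, weak limits via Chern--Levine--Nirenberg, and multilinear extension. Your mollification step already works without further input, since the defining convex function lives on an open subset of the ambient space; so the appeal to \cite[Theorem 18.8.4]{CLD} in your last paragraph is optional.

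In (2) you should make one detail explicit: $f$ is only analytic, not a regular function, so it cannot directly be used as a tropical coordinate in the paper's sense. There are two fixes. You can work in the Chambert-Loir--Ducros framework, where invertible analytic functions are admissible tropical coordinates. Alternatively, on a compact neighborhood of each point, replace $f$ by a rational function $g$ with $\sup|f-g|<\inf|f|$. Then $|g|=|f|$ there, and $\log|f|$ is linear in polynomial tropical coordinates.
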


\subsection{Local Bedford-Taylor theory}

\begin{thm} \label{thm:local_Bedford-Taylor_theory}
  For each open subset $U \subset X^{\an}$, there is a unique mixed \MongeA operator
  \bens
    (\PSHb(U))^n & \to & \Radonp{U}  \\
    (u_1, \cdots, u_n) &\mapsto& \dm \dmb u_1 \wedge \cdots \wedge \dm \dmb u_n
  \eens
  satisfying the following properties.
  \begin{enumerate}
  \item It extends the local mixed \MongeA operator on $\TConv \cap \PL(U)$. 
  \item Suppose that $V \subset U \subset X^{\an}$ are open subsets and  $u_1, \cdots u_n \in \PSHb(U)$. Then, $(\bigwedge_{k} \dm \dmb u_k)|_V= \bigwedge_{k} \dm \dmb (u_k|_V)$.
  \item Let $U \subset X^{\an}$ be an open subset and $u_0, u_1, \cdots, u_n \subset \PSHb(U)$. Denote $\mu \coloneqq \bigwedge_k \dm \dmb u_k$. Then, $u_0 \in L^1(U, \mu)$. 
  \item Under the hypothesis of (3), suppose further that for each $0 \le k \le n$, $\{u_k^j\}_{j \ge 1} \subset \TConv \cap \PL(U)$ is a  decreasing function sequence converging pointwise to $u_k$, and denote $\mu_j \coloneqq \bigwedge_k \dm \dmb u_k^j$. Then, $u_0^j \, \mu_j$ converges weakly to $u_0 \, \mu$ on $U$. 
\end{enumerate}
\end{thm}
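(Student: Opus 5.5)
The plan is to reduce everything to the global Bedford--Taylor theory (Theorem \ref{thm:Bedford-Taylor_global_omega-psh}) through the localization device of Proposition \ref{prop:passing_from_local_to_global}. Fix $\omega \in \nef(X)$ with $c_1(L(\omega)) \in \Amp(X)$.

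\textbf{Definition.} Given $u_1,\dots,u_n \in \PSHb(U)$ and $x \in U$, shrink $U$ so that the $u_k$ are bounded by some $M$. Then apply Proposition \ref{prop:passing_from_local_to_global}. This gives a neighbourhood $U_x$, a constant $C>0$, a potential $g$ of $\omega$ on $U_x$, and functions $\varphi_k \in \LPSHob(X)$ with $\varphi_k+g=Cu_k$ on $U_x$. After shrinking $U_x$, we may assume the $u_k$ are decreasing limits of tropically convex PL functions there. Statement (2) of that proposition then lets us take $\varphi_k \in \GPSHob(X)$. On $U_x$ we set
\[
\bigwedge_k \dm\dmb u_k \coloneqq C^{-n}\bigwedge_k(\omega+\dm\dmb\varphi_k)\big|_{U_x}.
\]

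\textbf{Well-definedness.} We must show this measure does not depend on the choices of $C$, $g$, $\varphi_k$, $U_x$. Suppose $(C',g',\varphi'_k)$ is another such choice on $U'_x$. On $U_x\cap U'_x$ we have $C'\varphi_k - C\varphi'_k = Cg' - C'g$, which is a difference of potentials of $CC'\omega$. By Remark \ref{rmk:existence_of_potentials}, it has the form $\log|f|$ with $f$ invertible, at least locally. To compare the two measures I would approximate the $\varphi_k$ and $\varphi'_k$ by decreasing PL sequences. At the PL level, Proposition \ref{prop:properties_local_MA_PL_functions} (2) and (3) identify both sides with the local operator $\bigwedge_k \dm\dmb(Cu_k^j)$, because the $\log|f|$ term contributes nothing. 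Passing to the limit uses Theorem \ref{thm:Bedford-Taylor_global_omega-psh} (3), and the locality in Theorem \ref{thm:locality_MA_omega-psh} (2) lets us ignore how the $\varphi_k$ behave off $U_x$. Once the local measures agree on overlaps, they glue to a Radon measure on $U$.

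\textbf{Properties.} With the construction in place, the listed properties follow quickly.
\begin{itemize}
\item Property (2), compatibility with restriction, holds by construction.
\item Property (1) follows from Proposition \ref{prop:properties_local_MA_PL_functions} (3): when the $u_k$ are tropically convex PL, the $\varphi_k$ may be chosen PL by Remark \ref{rmk:extension_of_psh_functions_by_sup}.
\item Property (3) is local. It follows from Theorem \ref{thm:Bedford-Taylor_global_omega-psh} (2) applied to $\varphi_0$, since $u_0 = C^{-1}(\varphi_0+g)$ with $g$ continuous.
\item Property (4) is also local. On $U_x$, build $\varphi_k^j$ from the $u_k^j$ as in the proof of Proposition \ref{prop:passing_from_local_to_global} (2); these are PL and decrease to $\varphi_k$. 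Then Theorem \ref{thm:Bedford-Taylor_global_omega-psh} (3) gives the weak convergence of $\varphi_0^j\mu_j$. Adding $g\mu_j$ is harmless: $g$ is continuous, and $\mu_j\to\mu$ weakly, which is the same theorem with $\varphi_0=0$.
\item Uniqueness follows from (1), (2) and (4). Every $u_k\in\PSHb$ is locally a decreasing limit of tropically convex PL functions, and (4) then forces the value of $\mu$.
\end{itemize}

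\textbf{Main obstacle.} The technical hurdle is the well-definedness step. The issue is that the data $U_x$, $C$, $g$ depend on $M$ and on $x$. Also, the hypotheses of Theorem \ref{thm:locality_MA_omega-psh} (2) require the global functions to agree on an open set, whereas here they agree only up to $\log|f|$ and rescaling. I would handle this by comparing both choices directly at the PL level and then passing to limits. This uses the fact that the PL approximants can be chosen to coincide with $C(u_k^j) - g$ on a common smaller neighbourhood, where Proposition \ref{prop:properties_local_MA_PL_functions} applies.
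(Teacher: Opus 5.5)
Your proposal follows essentially the paper's proof. You fix $\omega$ with $c_1(L(\omega))$ ample and use Proposition~\ref{prop:passing_from_local_to_global}~(2) to write $Cu_k=\varphi_k+g$ on $U_x$ with $\varphi_k\in\GPSHob(X)$. You define the measure on $U_x$ as $C^{-n}\bigwedge_k(\omega+\dm\dmb\varphi_k)$ and get (1), (3), (4) from Proposition~\ref{prop:properties_local_MA_PL_functions}~(3) and Theorem~\ref{thm:Bedford-Taylor_global_omega-psh}~(2),~(3). You then glue by identifying the local measures as weak limits of local PL Monge--Amp\`ere measures, and deduce uniqueness from (1), (2), (4).

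The paper sidesteps your ``main obstacle'' by first treating only functions that admit a regularizing sequence on all of $U$. Then on $U_x\cap U_y$ both local measures are weak limits of the same $\bigwedge_k\dm\dmb u_k^j$, and the general case of $\PSHb(U)$ follows from (2) and uniqueness. The rescalings cancel at the PL level, so no $\log|f|$ bookkeeping is needed. That is just as well: when $C\neq C'$, the difference $Cg'-C'g$ is in general not of the form $\log|f|$, so that side remark in your well-definedness step should be dropped.
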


\begin{proof}
  Let $U \subset X^{\an}$ be an open subset, we say that $u \in \PSH(U)$ is regularizable on $U$ if there exists a decreasing sequence of tropically convex PL functions converging pointwise to $u$ on $U$, and we denote by $\PSH_{\reg}(U)$ the space of regularizable psh functions on $U$. Property (1) and (4) show that the mixed \MongeA operator on $\PSH_{\reg} \cap L^{\infty}(U)$ is unique, and property (2) imply further that the mixed \MongeA operator is unique on $\PSHb(U)$. It remains to prove the existence. It suffices to show that for each open subset $U \subset X^{\an}$, there exists a mixed \MongeA operator on $\PSH_{\reg} \cap L^{\infty}(U)$ satisfying property (1) to (4) in the statement. 
  
  Fix $\omega \in \nef(X)$ such that $c_1(L(\omega)) \in \Amp(X)$.  Let $U \subset X^{\an}$ open, and let $u_0, u_1, \cdots, u_n \in \PSH_{\reg} \cap L^{\infty}(U)$. 
  Then, by Proposition \ref{prop:passing_from_local_to_global} (1), for any $x \in U$, there exists an open neighborhood $U_x$ of $x$ in $U$ only depending on $(U,x)$, constants $\{C_i\}_{0 \le k \le n} \subset \Q_{>0}$, $\{g_i\}_{0 \le k \le n} \subset \TConv \cap \PL(U_x)$ and $\{\varphi_k \}_{0 \le k \le n} \subset \GPSHob(X)$ such that for each $0 \le k \le n$, $g_k$ is a potential of $\omega$ on $U_x$, and $\varphi_k+g_k=C_k u$ on $U_x$. We define the nonnegative Radon measure on $U_x$ by 
  \[
  \mu(U_x; u_1, \cdots, u_n) \coloneqq (\prod_{1 \le k \le n} C_i)^{-1} \cdot \, \mathop{\bigwedge}_{1 \le k \le n}(\omega+\dm \dmb \varphi_k)  \qquad \text{on } U_x. 
  \]
  Then, 
  \begin{enumerate}[label=(\alph*)]
    \item Proposition \ref{prop:properties_local_MA_PL_functions} (3) implies that $\mu(U_x; u_1, \cdots, u_n)=\dm \dmb u_1 \wedge \cdots \wedge \dm \dmb u_n$ on $U_x$ when $\{u_k\}_{1 \le k \le n} \subset \TConv \cap \PL(U_x)$.
    \item Theorem \ref{thm:Bedford-Taylor_global_omega-psh} (2) implies that $u_0 \in L^1 \left(U_x, \mu(U_x; u_1, \cdots, u_n) \right)$.
    \item  Let $\{u_k^j\}_{j \ge 1} \subset \TConv \cap \PL(U_x)$ be a  decreasing function sequence converging pointwise to $u_k$ on $U_x$ for each $0 \le k \le n$. Then, after the proof of Proposition \ref{prop:passing_from_local_to_global} (2),  there exists $j_0 \in \N$ and  $\varphi_k^j \in \GPSHob(X)$ for each $0 \le k \le n$ and $j \ge j_0$ such that, $\varphi_k^j+g_k=C_k u$ on $U_x$, and $\{\varphi_k^j\}_{j \ge j_0}$ converges pointwise to $\varphi_k$ on $X^{\an}$. Then, Theorem \ref{thm:Bedford-Taylor_global_omega-psh} (3) implies that $u_0^j  \mu(U_x; u_1^j, \cdots, u_n^j)$ converges weakly to $u_0  \mu(U_x; u_1, \cdots, u_n)$ on $U_x$.
\end{enumerate}
  Combining (a) and (c), we obtain that for any $x, y\in U$, 
  \[
  \mu(U_x; u_1, \cdots, u_n)|_{U_x \cap U_y}=\mu(U_y; u_1, \cdots, u_n)|_{U_x \cap U_y}.
  \]
  Therefore, we define the mixed \MongeA measure for $u_1, \cdots, u_n \in \PSH_{\reg} \cap L^{\infty}(U)$ by 
  \[
    (\dm \dmb u_1 \wedge \cdots \wedge \dm \dmb u_n)|_{U_x}= \mu(U_x; u_1, \cdots, u_n) \qquad \text{for any } x \in U.
  \]
  Then, property (1), (3) and (4) follows from (a), (b) and (c) respectively. Property (2) follows from the locality of the mixed \MongeA operator on $\TConv \cap \PL(U)$ and property (1), (4). 
\end{proof}

\begin{remark}
  Combining Proposition \ref{prop:properties_local_MA_PL_functions} (3)  and Theorem \ref{thm:local_Bedford-Taylor_theory} (1), we obtain that the mixed \MongeA measures for bounded local psh functions  are compatible with the mixed \MongeA measures for bounded $\omega$-psh functions defined in Theorem \ref{thm:Bedford-Taylor_global_omega-psh}.
\end{remark}  

\begin{remark}
  Since the \MongeA operator for PL functions is local, Theorem \ref{thm:local_Bedford-Taylor_theory} (1) implies that the definition of mixed \MongeA operators  for locally bounded psh functions on $U$  does not depend on the choices of $X$. 
\end{remark}

\begin{remark} \label{rmk:mixed_Monge-Ampere_constant_functions}
  Let $u_1, \cdots, u_n \in \PSHb(U)$. By Proposition \ref{prop:properties_local_MA_PL_functions} (2) and Theorem \ref{thm:local_Bedford-Taylor_theory} (1), if $u_1=\log|f|$ for some $f \in \CO^{\times}(U)$, then, $\dm \dmb u_1 \wedge \cdots \wedge \dm \dmb u_n=0$ on $U$.  \qedhere
\end{remark}

\begin{remark} \label{rmk:decreasing_limit_of_psh_functions_is_not_known_to_be_psh_in_general}
  Suppose that $U \subset X^{\an}$ is an open subset and  $\{u_i\}_{i \ge 1} \subset \PSH(U)$ is a decreasing function sequence. Let $u$ be the pointwise limit of $\{u_i\}_{i \ge 1}$.  It is not known to the author whether in general,  $u  \in \PSH(U)$. This is because for $x \in U$, the open neighborhood $U_{x,i}$ on which $u_i$ is regularizable depends on $i$.  However, $u \in \PSH(U)$  when $X$ satisfies the envelope property (see Corollary \ref{cor:properties_PSH_after_envelope_properties} (1)).  
\end{remark}

\begin{definition}
  The \MongeA measure associated to $u \in \PSHb(U)$ is defined to be $\MA(u) \coloneqq (\dm \dmb u)^{\wedge n}$.
\end{definition}

\begin{definition} \label{def:mixed_Monge-Ampere_LPSHo}
  Let $\omega_1 \cdots, \omega_n \in \nef(X)$, and let $\varphi_1, \cdots, \varphi_n \in \LPSHob(X)$. The mixed \MongeA measure $\nu=\bigwedge_k (\omega_k + \dm \dmb \varphi_k)$ on $X^{\an}$ is defined in the following way. Let $U \subset X^{\an}$ be an affine open subset such that $\omega_1 \cdots, \omega_n$ admit potentials $g_1, \cdots, g_n \in \TConv \cap \PL(U)$ on $U$. Then, $\nu|_{U} \coloneqq \bigwedge_k \dm \dmb (g_k+\varphi_k)$. 
\end{definition}

\begin{remark}
  By Remark \ref{rmk:existence_of_potentials} (1), each $x \in X^{\an}$ admits such an affine open neighborhood. Moreover, the definition of $\nu$ does not depend on the potentials. This is because a different choice of potential of $\omega_k$ differs by $\log|f|$ for some $f \in \CO^{\times}(U)$ (Remark \ref{rmk:existence_of_potentials} (2)), and $\dm \dmb f=0$ by Remark \ref{rmk:mixed_Monge-Ampere_constant_functions}. 
\end{remark}

\begin{remark}
  Let $\varphi_1, \cdots, \varphi_n \in \GPSHob(X^{\an}) \subset \LPSHob(X^{\an})$. Then, by Proposition \ref{prop:properties_local_MA_PL_functions} (3) and Theorem \ref{thm:local_Bedford-Taylor_theory} (1), the mixed \MongeA measures $\bigwedge_k (\omega_k + \dm \dmb \varphi_k)$ defined by Theorem \ref{thm:Bedford-Taylor_global_omega-psh} and Definition \ref{def:mixed_Monge-Ampere_LPSHo} are equal.  
\end{remark}

\begin{thm} [Locality for \MongeA measures] \label{thm:locality_MA_measures}
  \leavevmode
  \begin{enumerate}
    \item Let $U \subset X^{\an}$ be an open subset and  $u,v \in \PSHb(U)$. Then, 
      \[
      \mathbf{1}_{\{ u> v \}} \MA(\max \{u,v\})= \mathbf{1}_{\{ u> v \}} \MA(v) \qquad \text{on } U.
      \]
    \item Let $\omega \in \nef(X)$ and $\varphi, \psi \in \LPSHob(X)$.  Then, 
      \[
      \mathbf{1}_{\{ \varphi> \psi \}} \MAo(\max \{\varphi, \psi\})= \mathbf{1}_{\{ \varphi> \psi \}} \MAo(\varphi) \qquad \text{on } X^{\an}.
      \]
  \end{enumerate}
\end{thm}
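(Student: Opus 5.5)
I read the identity in (1) with $\MA(u)$ on the right-hand side, since $\max\{u,v\}=u$ on $\{u>v\}$; the plan below proves $\mathbf{1}_{\{u>v\}}\MA(\max\{u,v\})=\mathbf{1}_{\{u>v\}}\MA(u)$, and (2) in the same form. The idea is to reduce (1) to the global locality statement, Theorem \ref{thm:locality_MA_omega-psh} (1), using the localization device of Proposition \ref{prop:passing_from_local_to_global}, exactly as in the proof of Theorem \ref{thm:local_Bedford-Taylor_theory}. Then (2) follows from (1) by adding potentials.

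\emph{Step 1: reduce to good neighbourhoods.} Both sides of (1) are Radon measures on $U$. By Theorem \ref{thm:local_Bedford-Taylor_theory} (2) they restrict compatibly to open subsets. So it suffices to prove the identity on some open neighbourhood of each $x\in U$. By Definition \ref{def:psh_functions_on_local_domains}, intersecting the neighbourhoods attached to $u$ and to $v$, I may shrink $U$ so that:
\begin{itemize}
\item there are decreasing sequences $u_i, v_i \in \TConv\cap\PL(U)$ converging pointwise to $u$ and $v$;
\item $\|u\|_{L^\infty(U)}, \|v\|_{L^\infty(U)}\le M$ for some $M$.
\end{itemize}
Then $w\coloneqq\max\{u,v\}$ is regularized on $U$ by $\max\{u_i,v_i\}$, which lies in $\TConv\cap\PL(U)$ by Lemma \ref{lem:property_tropical_conv_functions} (2). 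Also $|w|\le M$.

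\emph{Step 2: globalize.} Fix $\omega\in\nef(X)$ with $c_1(L(\omega))\in\Amp(X)$. Apply Proposition \ref{prop:passing_from_local_to_global} at $x$ with this $M$. The neighbourhood $U_x$, the constant $C$ and the potential $g$ depend only on $x$ and $M$, not on the function. So the same triple serves for $u$, $v$ and $w$ at once. By part (2) of that proposition, using the regularizations of Step 1, we obtain $\varphi,\psi,\chi\in\GPSHob(X)$ with
\[
\varphi+g=Cu,\qquad \psi+g=Cv,\qquad \chi+g=Cw \qquad\text{on } U_x .
\]
On $U_x$ we have $\{\varphi>\psi\}=\{u>v\}$. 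Moreover $\chi=\max\{\varphi,\psi\}$ on $U_x$, but possibly not on all of $X^{\an}$. To handle this, I apply Theorem \ref{thm:locality_MA_omega-psh} (2) to the pair $\chi$ and $\max\{\varphi,\psi\}\in\GPSHob(X)$, which agree on the open set $U_x$. This gives $\MAo(\chi)=\MAo(\max\{\varphi,\psi\})$ on $U_x$.

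\emph{Step 3: conclude (1).} By the construction in the proof of Theorem \ref{thm:local_Bedford-Taylor_theory}, together with its uniqueness statement, we have on $U_x$
\[
\MA(u)=C^{-n}\MAo(\varphi),\qquad \MA(w)=C^{-n}\MAo(\chi).
\]
Theorem \ref{thm:locality_MA_omega-psh} (1), applied to $\varphi,\psi$, gives $\mathbf{1}_{\{\varphi>\psi\}}\MAo(\max\{\varphi,\psi\})=\mathbf{1}_{\{\varphi>\psi\}}\MAo(\varphi)$. Combining this with Step 2 and multiplying by $C^{-n}$ yields (1) on $U_x$.

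\emph{Step 4: deduce (2).} Cover $X^{\an}$ by affine open sets $U$ on which $\omega$ has a potential $g\in\TConv\cap\PL(U)$. By Definition \ref{def:mixed_Monge-Ampere_LPSHo}, on $U$ we have $\MAo(\varphi)=\MA(\varphi+g)$. Also $\max\{\varphi,\psi\}+g=\max\{\varphi+g,\psi+g\}$ and $\{\varphi>\psi\}=\{\varphi+g>\psi+g\}$. The functions $\varphi+g,\psi+g$ lie in $\PSHb(U)$, so applying (1) to them on each such $U$ gives (2).

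The main obstacle is Step 3: one must justify that the local measure on $U_x$ really equals $C^{-n}\MAo(\varphi)$ for the specific globalization produced in Step 2. If the construction's independence of choices is not explicit enough in the proof of Theorem \ref{thm:local_Bedford-Taylor_theory}, I would instead argue via property (4) of that theorem: the measures $\MA(\max\{u_i,v_i\})$ converge weakly to $\MA(w)$, and along with the globalized PL regularizations of $\varphi,\psi,\chi$ their global counterparts converge by Theorem \ref{thm:Bedford-Taylor_global_omega-psh} (3). Both limits agree by Proposition \ref{prop:properties_local_MA_PL_functions} (3).
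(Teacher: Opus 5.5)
Your proposal is correct and follows the paper's own argument, which simply combines Proposition \ref{prop:passing_from_local_to_global} with the global locality Theorem \ref{thm:locality_MA_omega-psh}. You fill in the details properly: the key point that $U_x$, $C$, $g$ depend only on $x$ and $M$, so one globalization serves $u$, $v$ and $\max\{u,v\}$ at once; the right-hand side of (1) should indeed read $\mathrm{MA}(u)$; your Step 3 fallback (via Theorem \ref{thm:local_Bedford-Taylor_theory} (4), Theorem \ref{thm:Bedford-Taylor_global_omega-psh} (3) and Proposition \ref{prop:properties_local_MA_PL_functions} (3)) is the clean way to identify $\mathrm{MA}(u)=C^{-n}\mathrm{MA}_\omega(\varphi)$ on $U_x$; and the appeal to Theorem \ref{thm:locality_MA_omega-psh} (2) is harmless, since the explicit construction in that proposition even gives $\chi=\max\{\varphi,\psi\}$ on all of $X^{\an}$.
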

\begin{proof}
  This follows from the locality for \MongeA measures for $\omega$-psh functions (Theorem \ref{thm:locality_MA_omega-psh}) and Proposition \ref{prop:passing_from_local_to_global}. 
\end{proof}

\subsection{Global Properties of \MongeA measures}

\begin{prop} \label{prop:volume_local_psh}
  Let $\omega_1 \cdots, \omega_n \in \nef(X)$, and let $\varphi_1, \cdots, \varphi_n \in \LPSHob(X)$. Then, 
  \[
    \int_X \mathop{\bigwedge}_{k=1}^{n} (\omega_k+\dm \dmb \varphi_k)= \int_X \omega_1 \wedge \cdots \wedge \omega_n. 
  \]
  In particular, for any $\omega \in \nef(X)$ and $\varphi \in \LPSHob(X)$, $\int_X \MA_{\omega}(\varphi)=\deg(\omega^d)$.
\end{prop}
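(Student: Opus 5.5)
The plan is to reduce to the case where all $\varphi_k$ are $\omega_k$-psh PL functions, where the identity is the classical statement that intersection numbers of nef models do not depend on the model.

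\textbf{Step 1 (multilinearity and reduction to positive forms).} Fix $\theta \in \nef(X)$ with $c_1(L(\theta)) \in \Amp(X)$. Every $\varphi_k \in \LPSHob(X)$ also lies in $\LPSH_{\omega_k+\epsilon\theta}\cap L^\infty(X)$: if $h$ is a tropically convex PL potential of $\theta$ on a small $U$, then $g_k+\epsilon h$ is a potential of $\omega_k+\epsilon\theta$, and $(g_k+\varphi_k)+\epsilon h$ is psh by Lemma \ref{lem:finite_sup_lpsh}-type stability under sums of tropically convex PL functions. Both sides of the claimed identity are polynomial in $\epsilon$. The right side is clear. For the left side, use the multilinearity of the local mixed \MongeA operator in Definition \ref{def:mixed_Monge-Ampere_LPSHo}; in each slot one can expand $\dm\dmb(g_k+\varphi_k+\epsilon h)$. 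This shows it suffices to prove the formula when each $L(\omega_k)$ is ample. Then $\epsilon\to0$ gives the general case.

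\textbf{Step 2 (localization and regularization).} Cover $X^{\an}$ by finitely many open sets $U_x$ as in Proposition \ref{prop:passing_from_local_to_global}(1), with $u_k = g_k+\varphi_k$ bounded. On each $U_x$ we get $\psi_k\in\GPSHob(X)$ with $\psi_k + g'_k = C_k u_k$ on $U_x$, so $\bigwedge_k(\omega_k+\dm\dmb\varphi_k)$ agrees on $U_x$ with a global Bedford--Taylor measure. This alone gives local information, not total mass. To get total mass, the cleanest route is to show directly that $\varphi_k$ can be replaced by $\max\{\varphi_k,\psi\}$-type modifications without changing the total mass. Concretely, use Theorem \ref{thm:locality_MA_measures}(2) together with a partition trick.

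The simpler alternative I would try first is to exhibit a decreasing sequence of $\omega_k$-psh PL functions (or $\LPSH_{\omega_k}\cap\PL$ functions, which equal $\GPSH_{\omega_k}\cap\PL$ by Theorem \ref{thm:GPSH_PL_equals_to_LPSH_PL}) converging to $\varphi_k$. For such PL functions, Proposition \ref{prop:properties_local_MA_PL_functions}(3) identifies the local measure with the global $\MA_{\NA}$. Its total mass is the intersection number $(\fL_1\cdots\fL_n)$ of nef models, which by projection formula equals $\int\omega_1\wedge\cdots\wedge\omega_n$. Weak convergence (Theorem \ref{thm:local_Bedford-Taylor_theory}(4) with $u_0^j=1$), together with compactness of $X^{\an}$, then passes total mass to the limit.

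\textbf{Main obstacle.} The hard step is that $\varphi_k$ is only locally regularizable, on neighbourhoods depending on the point; a global decreasing PL regularization is essentially the content of $\LPSHo=\GPSHo$, which is not yet available. To circumvent this, I would avoid global regularization. Choose a finite cover $\{U_\alpha\}$ by sets on which $u_k$ is regularized by tropically convex PL sequences $u_k^{\alpha,j}$. Then glue using the max-construction of Remark \ref{rmk:extension_of_psh_functions_by_sup}: for constants $c_\alpha$ and small shifts, set $\varphi_k^{j}=\max_\alpha(u_k^{\alpha,j}-g_k^\alpha + c_{\alpha,j})$, where each term is taken only where it dominates away from $\partial U_\alpha$. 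To ensure the dominance regions are relatively compact, one perturbs by $\epsilon$ times a strictly $\theta$-psh PL exhaustion adapted to each $U_\alpha$, exactly as in the proof of Proposition \ref{prop:passing_from_local_to_global}, with $h=Av+M+1$. This produces global $\LPSH_{\omega_k+\epsilon\theta}\cap\PL$ functions. Their Monge--Ampère measures converge weakly on each $U_\alpha$ by Theorem \ref{thm:local_Bedford-Taylor_theory}(4) combined with the locality Theorem \ref{thm:locality_MA_measures}(2), while their total masses equal $\int\prod(\omega_k+\epsilon\theta)$. Letting $j\to\infty$ and then $\epsilon\to0$ yields the formula. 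Controlling that the glued sequence is eventually decreasing and converges to $\varphi_k$ (up to the $\epsilon$-error) is where I expect the real work.
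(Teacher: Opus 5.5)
There is a genuine gap, and it is the step you yourself flag as ``the real work'': the max-gluing of the local regularizations.

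For $\max_\alpha\bigl(u_k^{\alpha,j}-g_k^\alpha+\epsilon\chi_\alpha\bigr)$ to define a global function via Remark \ref{rmk:extension_of_psh_functions_by_sup}, the $\alpha$-term must be strictly dominated by another term on a compact collar near each $\partial U_\alpha$. Concretely you need something like $u_k^{\alpha,j}-g_k^\alpha<u_k^{\beta,j}-g_k^\beta+\epsilon\delta$ there, where $\delta$ is the gap produced by the cut-offs $\chi_\alpha$. All you know is that both sides decrease pointwise to the same usc function $\varphi_k$. Using only $u_k^{\beta,j}-g_k^\beta\ge\varphi_k$, you would need $u_k^{\alpha,j}-g_k^\alpha<\varphi_k+\epsilon\delta$ on the collar, i.e.\ locally uniform convergence. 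Dini gives this only when $\varphi_k$ is continuous. Near a point of discontinuity of $\varphi_k$, the continuous approximants exceed $\varphi_k$ by a fixed amount for every $j$, and nothing relates the approximants coming from different charts. This is the same obstruction as for Richberg-type gluing in the complex case.

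There is a stronger reason to doubt the step. Suppose your construction worked for every bounded $\varphi_k\in\LPSH_{\omega_k}$. Then the glued functions would lie in $\LPSH_{\omega_k+\epsilon\theta}\cap\PL=\GPSH_{\omega_k+\epsilon\theta}\cap\PL$ by Theorem \ref{thm:GPSH_PL_equals_to_LPSH_PL}. Let $j\to\infty$, absorb the $O(\epsilon)$ error by constants along a sequence $\epsilon_i\downarrow0$, and apply Theorem \ref{thm:decreasing_limit_of_omega-psh_functions_is_omega-psh}. You would obtain $\LPSH_{\omega_k}\cap L^\infty(X)\subset\GPSH_{\omega_k}(X)$ with no hypothesis. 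The paper proves this inclusion only under the envelope property (Theorem \ref{thm:GPSHo_equals_LPSHo}), and the introduction describes it as unknown in general. Moreover, that proof goes through the comparison and domination principles, which themselves use the very proposition you are proving. So your reduction replaces the statement by something at least as hard.

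The missing idea is that the total mass needs no global approximation, only integration by parts. The paper shows $\int_X\dm\dmb\varphi_1\wedge T=0$ for $T=\bigwedge_{k\ge2}(\omega_k+\dm\dmb\varphi_k)$ as follows:
\begin{enumerate}
\item It takes a finite cover on which each $g_{k,\alpha}+\varphi_k$ has a local decreasing tropically convex PL regularization, and a PL partition of unity $\{\rho_\alpha\}$ subordinate to it (Lemma \ref{lem:partition_of_unity_PL_functions}).
\item On each $U_\alpha$ it moves $\dm\dmb$ from the approximants of $u_{1,\alpha}$ onto the compactly supported $\rho_\alpha$ (Lemma \ref{lem:integration-by-parts_PL_functions}).
\item It passes to the limit with Theorem \ref{thm:local_Bedford-Taylor_theory}(4), after writing $\rho_\alpha=\rho_\alpha^+-\rho_\alpha^-$ and inserting a cut-off $\rho_\alpha'$.
\item Summing over $\alpha$ gives $\int_X\varphi_1\,\dm\dmb\bigl(\sum_\alpha\rho_\alpha\bigr)\wedge T=0$.
\end{enumerate}
Iterating slot by slot replaces each $\omega_k+\dm\dmb\varphi_k$ by $\omega_k$. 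This works directly for nef $\omega_k$, so your Step 1 reduction to the ample case is also unnecessary.
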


\begin{lemma} [Partition of unity for finite open covers by PL functions] \label{lem:partition_of_unity_PL_functions}
  \leavevmode
  Let $\{U_{\alpha}\}_{\alpha \in \CI}$ be a finite open cover of $X^{\an}$. Then, for each $\alpha \in \CI$, there exists $\rho_{\alpha} \in \PL(U_{\alpha})$ such that 
  \begin{enumerate}
    \item For each $\alpha \in \CI$, $\mathrm{Supp}(\rho_{\alpha}) \Subset U_{\alpha}$, and $0 \le \rho_{\alpha} \le 1$.
    \item $\sum_{\alpha \in \CI} \rho_{\alpha}=1$ on $X^{\an}$. 
  \end{enumerate}
\end{lemma}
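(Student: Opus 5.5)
We will build the partition of unity from PL functions that are positive exactly on a compact core of each $U_{\alpha}$, and then normalize. Since $X^{\an}$ is compact Hausdorff, we first shrink the cover: there exist open sets $V_{\alpha} \Subset U_{\alpha}$ with $\bigcup_{\alpha} V_{\alpha} = X^{\an}$. For each $\alpha$ we want a function $\sigma_{\alpha} \in \PL(X^{\an})$ with $\sigma_{\alpha} \ge 0$, $\sigma_{\alpha} > 0$ on $\overline{V_{\alpha}}$, and $\mathrm{Supp}(\sigma_{\alpha}) \Subset U_{\alpha}$. The plan is then to set
\[
\rho_{\alpha} \coloneqq \frac{\sigma_{\alpha}}{\sum_{\beta \in \CI} \sigma_{\beta}} .
\]
Properties (1) and (2) are then immediate, since the denominator is everywhere positive.

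\textbf{Step 1 (Urysohn-type PL functions).} Fix $\alpha$ and write $F \coloneqq \overline{V_{\alpha}}$ (compact) and $G \coloneqq X^{\an} \setminus U_{\alpha}$ (closed). These are disjoint compact sets. By Urysohn's lemma there is $h \in C^0(X^{\an})$ with $h = 1$ on $F$, $h = 0$ on $G$, and $0 \le h \le 1$. Since $\PL(X^{\an})$ is dense in $C^0(X^{\an})$, we may choose $p \in \PL(X^{\an})$ with $\|p - h\|_{C^0} < 1/4$. We then put
\[
\sigma_{\alpha} \coloneqq \max\{p - 1/2,\, 0\}.
\]
This is PL, because $\PL(X^{\an})$ is stable under $\max$ and contains the constants $\Q$. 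It satisfies $\sigma_{\alpha} \ge 1/4$ on $F$ and $\sigma_{\alpha} = 0$ on the open set $\{h < 1/4\}$. That open set contains the open neighbourhood $\{h<1/4\}$ of $G$. Hence $\mathrm{Supp}(\sigma_{\alpha}) \subset \{h \ge 1/4\}$, which is a closed, hence compact, subset of $X^{\an}$ contained in $U_{\alpha}$. So $\mathrm{Supp}(\sigma_{\alpha}) \Subset U_{\alpha}$. If one wants the constants to be rational, one can simply use $1/2$ and $1/4$ as above, which already are.

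\textbf{Step 2 (Normalization) and the main obstacle.} The function $S \coloneqq \sum_{\beta} \sigma_{\beta}$ is PL and satisfies $S \ge 1/4$ everywhere, because the $V_{\beta}$ cover $X^{\an}$. The delicate point is that a quotient of PL functions is not PL in general. So the plan cannot rely on division being harmless, and we must check what $\PL(U_{\alpha})$ requires.

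We will test $\rho_{\alpha}$ against the local description of PL functions. On an affine chart with a tropicalization through which all the $\sigma_{\beta}$ factor, we have $\rho_{\alpha} = f \circ \Trop$ with $f = f_{\alpha}/\sum_{\beta} f_{\beta}$ piecewise rational rather than piecewise affine. In general this is not PL. The fix I would try first is to replace the normalization by a max/min construction, proceeding inductively over $\CI = \{1, \dots, N\}$:
\[
\tau_1 \coloneqq \min\{1, 4\sigma_1\}, \qquad \tau_{k} \coloneqq \min\Bigl\{1 - \sum_{j<k} \tau_j,\; 4\sigma_{k}\Bigr\}.
\]
Each $\tau_k$ is PL, since $\PL$ is stable under $\min$, sums and rational scalars. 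Each $\tau_k$ vanishes where $\sigma_k$ does, so its support is compactly contained in $U_k$. Moreover $0 \le \tau_k \le 1$, and the partial sums are nondecreasing and bounded by $1$. Finally, at any point lying in $\overline{V_k}$ we have $4\sigma_k \ge 1$, so the $k$-th step fills the sum up to exactly $1$. Because every point lies in some $\overline{V_k}$, this forces $\sum_k \tau_k = 1$.

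We then take $\rho_{\alpha} \coloneqq \tau_{\alpha}|_{U_{\alpha}}$. Its restriction is PL on $U_{\alpha}$: restricting a global PL function, as described in Lemma \ref{lem:model_functions_equals_to_PL_functions}, gives a PL function on the open subset in the sense of the definition of $\PL(W)$. The only checks that remain routine are this restriction compatibility and the inequality $0 \le \tau_k$, which needs $\sum_{j<k} \tau_j \le 1$ and follows by induction.
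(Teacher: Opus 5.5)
Your argument is correct, but it takes a different route from the paper's proof.

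\textbf{The paper's route.} The paper first shrinks the $U_\alpha$ so that $U_\alpha=\Trop^{-1}(V_\alpha)$ for a single tropicalization $\Trop\colon X^{\an}\to\TP_\Sigma$. It then picks a simplicial polyhedral structure on $\TP_\Sigma$ in which every vertex star lies in some $V_\alpha$. Finally, $\rho_\alpha$ is the sum of the facewise affine hat functions of the vertices assigned to $\alpha$, so $\sum_\alpha\rho_\alpha=1$ is just the statement that barycentric coordinates sum to $1$.

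\textbf{Your route.} You use only three facts: $\PL(X^{\an})$ is a $\Q$-vector lattice, it contains the rational constants, and it is dense in $C^0(X^{\an})$. Urysohn, approximation and truncation give the PL bumps $\sigma_\alpha$. Your sequential fill-up $\tau_k=\min\{1-\sum_{j<k}\tau_j,\,4\sigma_k\}$ correctly replaces the normalization, which would not be PL. The induction is complete: it gives $0\le\tau_k\le 4\sigma_k$ and $\sum_{j\le k}\tau_j\le 1$, with equality as soon as the point lies in $\overline{V_k}$.

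\textbf{What each buys.} Your argument is purely formal. It avoids the tropical bookkeeping that the paper leaves somewhat sketchy: realizing the whole cover through one tropicalization, triangulating $\TP_\Sigma$ including its boundary strata, and checking that the hat functions are PL near infinity. The cost is that you rely on the density theorem, which is itself usually proved by this same lattice/Stone--Weierstrass reasoning. The paper's construction is more explicit and produces functions that factor through a single tropicalization.

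Your $\tau_\alpha$ are global PL functions with compact support in $U_\alpha$, which is exactly what the proof of Proposition \ref{prop:volume_local_psh} later needs. Your Step 1, followed by $\min\{1,4\sigma\}$, also directly produces the auxiliary cutoffs $\rho'_\alpha\equiv 1$ on $\mathrm{Supp}(\rho_\alpha)$ used there.
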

\begin{proof}
  After shrinking each $U_{\alpha}$, we may assume that there exists a tropicalization map $\Trop \colon X^{\an} \to \TP_{\Sigma}$ and open subsets $V_{\alpha}$ for each $\alpha$ such that $\Trop^{-1}(V_{\alpha})=U_{\alpha}$ (see Section \ref{subsubsec:tropicalizations}). Since the composition of a PL function on $\TP_{\Sigma}$ and $\Trop$  is a PL function on $X^{\an}$ (see Lemma \ref{lem:model_functions_equals_to_PL_functions}), it is enough to prove the partition of unity for a finite open cover $\{V_{\alpha}\}_{\alpha \in \CI}$ of the tropical toric variety $\TP_{\Sigma}$. 

  We may take a simplicial polyhedral complex $\Pi$ whose support equal to $\TP_{\Sigma}$ having the following property: there exists a map $i \colon V \to \CI$ such that $\Pi^v \subset V_{i(v)}$ for each $v \in V$. Here, $V$ denotes the set of vertices of $\Pi$, and $\Pi^v$ denotes the star $\Pi^v$ of $v$.

  For each $v \in V$, define the function $\chi_v \colon V \to \R$ by $\chi_v(v)=1$ and $\chi_v(v')=0$ for any $v' \neq v \in V$. Define the facewise affine function $\rho_v$ on $\Pi$ as the linear extension of the $\chi_v$ to $\TP_{\Sigma}$. More precisely,  let $\sigma \in \Pi$ be a simplicial polyhedron. If $\sigma$ is a simplex, then $\rho_v$ equals to the unique affine function on $\sigma$ whose restriction to the vertices of $\sigma$ equals $\chi_v$. Otherwise, there exists a canonical  projection $p \colon \sigma \to \tau$ with $\tau \in \Pi$ a simplex, and  we have $\rho_v=\rho_v \circ p$ on $\sigma$. 
  
  Note that we have $\sum_{v \in V} \rho_v=1$ on $\TP_{\Sigma}$, and $\mathrm{Supp}(\rho_v)=\Pi^v$ for each $v \in V$. For each $\alpha \in \CI$, we define $\rho_{\alpha} \coloneqq \sum_{v \in i^{-1}(\alpha)} \rho_v$. Then, $\{\rho_{\alpha}\}_{\alpha \in \CI}$ gives a partition of unity subordinate to the cover $\{V_{\alpha}\}_{\alpha \in \CI}$.
\end{proof}

\begin{lemma} \label{lem:integration-by-parts_PL_functions}
  Let $U \subset X^{\an}$ be an open subset. Let $f_1, \cdots, f_n \in \PL(U)$, and let $g \in \PL(U)$ such that $\mathrm{Supp}(g) \Subset U$. Then, 
  \[
   \int_U g \, \dm \dmb f_1 \wedge \dm \dmb f_2 \wedge \cdots \wedge \dm \dmb f_n = \int_U f_1 \, \dm \dmb g \wedge \dm \dmb f_2 \wedge \cdots \wedge \dm \dmb f_n 
  \]
\end{lemma}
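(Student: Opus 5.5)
We reduce the identity to the global integration-by-parts formula for PL functions on $X^{\an}$, using the compatibility of local and global \MongeA measures (Proposition \ref{prop:properties_local_MA_PL_functions} (3)) and the partition of unity of Lemma \ref{lem:partition_of_unity_PL_functions}.

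Both sides are multilinear in $(f_1,\dots,f_n)$. By Lemma \ref{lem:property_tropical_conv_functions} (4), applied locally, each $f_k$ is locally a difference of tropically convex PL functions. It therefore suffices to prove the identity locally near $\mathrm{Supp}(g)$ and to glue.

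\textbf{Localization.} Cover the compact set $\mathrm{Supp}(g)$ by finitely many open subsets $U_1,\dots,U_r \Subset U$. Choose them small enough that on each $U_\alpha$ the line bundle $L(\omega)$ of a fixed ample $\omega\in\nef(X)$ admits a potential $h_\alpha \in \TConv\cap\PL(U_\alpha)$, and that $f_k|_{U_\alpha}$ is a restriction of a global PL function. Add $U_0 \coloneqq X^{\an}\setminus \mathrm{Supp}(g)$ to obtain a finite open cover of $X^{\an}$. Lemma \ref{lem:partition_of_unity_PL_functions} gives PL functions $\rho_\alpha$ with $\mathrm{Supp}(\rho_\alpha)\Subset U_\alpha$ and $\sum_\alpha \rho_\alpha = 1$. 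Writing $g=\sum_{\alpha\ge1}\rho_\alpha g$, we reduce to the case $\mathrm{Supp}(g)\Subset U_\alpha$ for a single $\alpha$. The right-hand side must be handled with care here, because $\dm\dmb g$ is not additive in a naive way across pieces. However, both sides are linear in $g$, so it is enough to prove the identity for each $\rho_\alpha g$ separately.

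\textbf{Globalization.} Assume $\mathrm{Supp}(g)\Subset V\coloneqq U_\alpha$. Since the operator is local (Theorem \ref{thm:local_Bedford-Taylor_theory} (2)), we may replace $f_k$ by $\tilde f_k$, a PL function on $X^{\an}$ agreeing with $f_k$ on a neighborhood of $\mathrm{Supp}(g)$. To build $\tilde f_k$, multiply by a cutoff $\chi$: take $\chi$ to be a sum of the $\rho_\beta$, which is identically $1$ near $\mathrm{Supp}(g)$ after refining, and set $\tilde f_k \coloneqq \chi f_k$. This is a product of PL functions, hence PL. Extend $g$ by zero to a global PL function.

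Then write $\dm\dmb f_k = (\omega+\dm\dmb(\tilde f_k - h))-\omega$ with $h$ the potential. By Proposition \ref{prop:properties_local_MA_PL_functions} (3), the local measures coincide on $V$ with global mixed measures $\bigwedge_k(\omega_k+\dm\dmb\varphi_k)$ of model classes. Both sides become differences of global intersection numbers on a common model $\fX$. Concretely, $\int g\,\dm\dmb f_1\wedge\cdots$ equals the intersection number $\fD_g\cdot\fD_{f_1}\cdots$ (with the $\fD_{f_k}$ replaced by the appropriate model divisors), by the divisorial-point formula for $\MA_{\omega,\NA}$ in Section \ref{subsec:MA_measures_omega_psh_functions}. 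The symmetry of intersection numbers of vertical divisors and line bundles on $\fX$ then swaps $g$ and $f_1$.

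\textbf{Main obstacle.} The delicate point is the globalization step. We must check two things. First, the integral $\int g\,\dm\dmb f_1\wedge\cdots$ must really be the symmetric intersection product $(\fD_g\cdot\fD_{f_1}\cdot \fL_2\cdots\fL_n)$ with $g$ vertical. Second, the terms involving $\omega$ must cancel. This requires $g$ to be a global PL function with $g|_X$ trivial, so that $\fD_g$ is a vertical $\Q$-Cartier divisor. Multilinearity expresses everything through $\fD_g\cdot\fD_{f_1}\cdot\prod(\cdots)$, which is symmetric in $(g,f_1)$, and this symmetry gives the identity.
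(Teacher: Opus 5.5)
Your overall route is the paper's: extend $g$ by zero and the $f_k$ to global PL functions that agree with $f_k$ near $\mathrm{Supp}(g)$. Then use locality and Proposition~\ref{prop:properties_local_MA_PL_functions}~(3) to turn both sides into integrals over $X^{\an}$, and conclude by the global integration by parts \cite[Proposition 2.20]{BFJ15}, i.e.\ the symmetry of $(\fD_g\cdot\fD_{f_1}\cdots\fD_{f_n})$. You are right that constructing the extensions is the delicate point; the paper simply asserts that they exist.

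\textbf{The gap.} Both of your constructions rest on the claim that a product of PL functions is PL, and this is false. Wherever $\rho_\alpha$ (resp.\ $\chi$) and $g$ (resp.\ $f_k$) are both non-constant affine in tropical coordinates, the product is quadratic, e.g.\ $t(1-t)$ on $[0,1]\subset\BT^1$. Hence $\rho_\alpha g\notin\PL(U)$. So $\dm\dmb(\rho_\alpha g)$ is not even defined by the local operator, and the reduction through $g=\sum_\alpha\rho_\alpha g$ collapses. Likewise $\chi f_k$ is not an admissible PL extension of $f_k$.

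\textbf{The repair.} Drop the localization entirely; it is unnecessary, because the zero extension $\widetilde g$ is already in $\PL(X^{\an})$, being PL on $U$ and $0$ on the open set $X^{\an}\setminus\mathrm{Supp}(g)$. For the $f_k$, cut off with lattice operations instead of products:
\begin{enumerate}
\item Choose compact sets $K_1\subset\interior(K)\subset K\subset U$ with $\mathrm{Supp}(g)\subset\interior(K_1)$, and a rational $M>\max_k\sup_K|f_k|$.
\item Lemma~\ref{lem:partition_of_unity_PL_functions}, applied to the cover $\{\interior(K),\,X^{\an}\setminus K_1\}$, yields a global PL function $0\le\chi\le1$ with $\chi=0$ on $K_1$ and $\chi=1$ outside a compact set $K'\subset\interior(K)$.
\item Put
\[
\widetilde f_k\coloneqq\max\bigl\{\min\{f_k,\,M-2M\chi\},\,-M+2M\chi\bigr\}\ \text{ on } U,\qquad \widetilde f_k\coloneqq M\ \text{ on } X^{\an}\setminus K'.
\]
\end{enumerate}
On $U\setminus K'$ we have $\chi=1$, so the first expression equals $M$. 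Hence the two definitions glue to $\widetilde f_k\in\PL(X^{\an})$, PL being a local condition as in Remark~\ref{rmk:extension_of_psh_functions_by_sup}. On $\interior(K_1)$ we have $\chi=0$ and $|f_k|<M$, so $\widetilde f_k=f_k$ there.

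With these extensions your globalization step goes through as written. The detour through $\omega$ and the potentials $h_\alpha$ is not needed: for global PL functions the local and global mixed measures coincide directly (take $\omega_k=0$ in Proposition~\ref{prop:properties_local_MA_PL_functions}~(3)).
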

\begin{proof}
  Let $\widetilde{f}_1, \cdots, \widetilde{f}_n \in \PL(X)$ be extensions of $f_1, \cdots, f_n$. Since $\mathrm{Supp}(g) \Subset (U)$, we may extend $g$ to $\widetilde{g} \in \PL(X)$ by setting $\widetilde{g} \equiv 0$ on $X^{\an} \setminus U$. By Remark \ref{rmk:mixed_Monge-Ampere_constant_functions}, we have 
  \[
  \int_U g \, \dm \dmb f_1 \wedge \dm \dmb f_2 \wedge \cdots \wedge \dm \dmb f_n = \int_X \widetilde{g} \, \dm \dmb \widetilde{f}_1  \wedge \dm \dmb \widetilde{f}_2 \wedge \cdots \wedge \dm \dmb \widetilde{f}_n,
  \]
  and 
  \[
  \int_U f_1 \, \dm \dmb g \wedge \dm \dmb f_2 \wedge \cdots \wedge \dm \dmb f_n  = \int_X \widetilde{f}_1 \, \dm \dmb \widetilde{g} \wedge \dm \dmb \widetilde{f}_2 \wedge \cdots \wedge \dm \dmb \widetilde{f}_n.
  \]
  Then, the lemma follows from the global integration-by-parts for PL functions (\cite[Proposition 2.20]{BFJ15}).  
\end{proof}

\begin{proof} [Proof of Proposition \ref{prop:volume_local_psh}] \label{proof:prop:volume_local_psh}
  Denote $T=(\omega_2+\dm \dmb \psi_2) \wedge \cdots \wedge (\omega_n+\dm \dmb \psi_n)$. We show that 
  \be \label{equation:integration-by-parts_simple_version}
  \int_X 1 \, \dm \dmb \varphi_1  \wedge T=\int_X \varphi_1 \, (\dm \dmb 1) \wedge T=0 
  \ee
  Granted this equation, repeated application yields
  \[
  \int_X \mathop{\bigwedge}_{k=1}^{n} (\omega_k+\dm \dmb \varphi_k) = \int_X \omega_1 \wedge  \left( \mathop{\bigwedge}_{k=2}^{n} (\omega_k+\dm \dmb \varphi_k) \right) = \cdots =\int_X \omega_1 \wedge \cdots \omega_n.
  \]
  Therefore, the proposition is proved. 

  Now we prove Equation \ref{equation:integration-by-parts_simple_version}. Let $\{U_{\alpha}\}_{\alpha \in \CI}$ be a finite open cover of $X$ such that, for each $\alpha \in \CI$ and $0 \le k \le n$, there exists a potential $g_{k, \alpha} \in \TConv \cap \PL(U_{\alpha})$ of $\omega_k$, and  a decreasing function sequence $\{u_{k, \alpha}^i\}_{i \ge 1} \subset \PSH(U_{\alpha})$ satisfying $u_{k, \alpha}^i \to u_{k, \alpha}$ on $U_{\alpha}$, where $u_{k, \alpha} \coloneqq \varphi_k+g_{k, \alpha} \in \PSH(U_{\alpha})$. For each $\alpha \in \CI$ and $ i \ge 1$, denote $T_{\alpha}^i \coloneqq \mathop{\bigwedge}_{k=2}^{n}   \dm \dmb u_{k, \alpha}^j$. Let $(\rho_{\alpha}) \in \PL \cap C^{0}_{c}(U_{\alpha})$ be a partition of unity of $\{U_{\alpha}\}_{\alpha \in \CI}$. Then,
  \[
  \begin{aligned}
  \int_X 1 \, \dm \dmb \varphi_1 \wedge T &= \sum_{\alpha\in I} \int_{U_\alpha} \rho_\alpha \left(dd^c u_{1,\alpha}-dd^c g_{1,\alpha}\right)\wedge T \\
  &=  \sum_{\alpha\in I} \left( \lim_{i\to+\infty} \int_{U_\alpha} \rho_\alpha \, \dm \dmb u_{1,\alpha}^i \wedge T_{\alpha}^i - \int_{U_\alpha} \rho_\alpha \, \dm \dmb g_{1,\alpha} \wedge T_{\alpha}^i \right) \quad \left( \text{Theorem } \ref{thm:local_Bedford-Taylor_theory}~(4) \right) \\
  &= \sum_{\alpha\in I} \left( \lim_{i\to+\infty} \int_{U_\alpha} u_{1,\alpha}^i \, \dm \dmb \rho_\alpha \wedge T_{\alpha}^i - \int_{U_\alpha} g_{1,\alpha} \, \dm \dmb \rho_\alpha \wedge T_{\alpha}^i \right) \quad \left( \text{Lemma } \ref{lem:integration-by-parts_PL_functions} \right).
  \end{aligned}
  \]

  By Lemma \ref{lem:property_tropical_conv_functions} (4), for each $\alpha \in \CI$, there exists $\rho_{\alpha}^{+}, \rho_{\alpha}^{-} \in \TConv \cap PL(U_{\alpha})$ such that $\rho_{\alpha}=\rho_{\alpha}^{+}-\rho_{\alpha}^{-}$. On the other hand, Remark \ref{rmk:mixed_Monge-Ampere_constant_functions} implies that $\dm \dmb \rho_\alpha \wedge T_{\alpha}^i =0$ on $U_{\alpha} \setminus \mathrm{Supp}(\rho_{\alpha})$, and Lemma \ref{lem:partition_of_unity_PL_functions} implies that there exists $\rho_{\alpha}' \in \PL \cap C^{0}_{c}(U_{\alpha})$ such that $\rho_{\alpha}' \equiv 1$ on $\mathrm{Supp}(\rho_{\alpha})$. Hence, 
  \[
    \begin{aligned}
    \lim_{i\to+\infty}\int_{U_\alpha} u_{1,\alpha}^i\,\dm\dmb\rho_\alpha\wedge T_\alpha^i &= \lim_{i\to+\infty}\int_{U_\alpha}\rho_\alpha' u_{1,\alpha}^i\left(\dm\dmb\rho_\alpha^+-\dm\dmb\rho_\alpha^-\right)\wedge T_\alpha^i \\
    &= \int_{U_\alpha}\rho_\alpha' u_{1,\alpha}\left(\dm\dmb\rho_\alpha^+-\dm\dmb\rho_\alpha^-\right)\wedge T \qquad \left( \text{Theorem } \ref{thm:local_Bedford-Taylor_theory}~(4) \right) \\
    &= \int_{U_\alpha}u_{1,\alpha}\,\dm\dmb\rho_\alpha\wedge T.
    \end{aligned}
  \]
  Combining the above formulas, we have
  \[
  \begin{aligned}
    \int_X 1\, \dm \dmb \varphi_1 \wedge T &= \sum_{\alpha\in I}  \int_{U_\alpha} \left( u_{1,\alpha} - g_{1,\alpha} \right) \,\dm\dmb\rho_\alpha\wedge T  \\
    &=  \int_{X} \varphi_1 \,\dm\dmb 1 \wedge T =0.
  \end{aligned}
  \]
  Therefore, Equation \ref{equation:integration-by-parts_simple_version} is proved. 
\end{proof}

\begin{thm}[Comparison Principle]  \label{thm:comparison_principle}
Let  $\varphi, \psi \in \LPSHob(X)$. We have 
\begin{equation*}
    \int_{\{ \varphi < \psi \} } \MA_{\omega} (\psi) \le \int_{\{ \varphi < \psi \}} \MA_{\omega} (\varphi)
\end{equation*}
\end{thm}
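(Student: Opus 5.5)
We follow the classical Bedford--Taylor proof of the comparison principle on compact manifolds, using the tools of this section: locality (Theorem \ref{thm:locality_MA_measures} (2)) and total mass invariance (Proposition \ref{prop:volume_local_psh}). Here $\omega \in \nef(X)$ is implicit. Adding a constant to $\varphi$ only enlarges the set where strict inequality holds, so we first prove the statement with $\varphi$ replaced by $\varphi+\epsilon$. For $\epsilon>0$ put
\[
\varphi_\epsilon \coloneqq \max\{\varphi+\epsilon,\psi\} \in \LPSHob(X),
\]
which lies in $\LPSHob(X)$ by Lemma \ref{lem:finite_sup_lpsh} (2).

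\textbf{Mass accounting.} Write $O_\epsilon\coloneqq\{\varphi+\epsilon<\psi\}$ and $F_\epsilon\coloneqq\{\varphi+\epsilon>\psi\}$. By locality, $\MAo(\varphi_\epsilon)=\MAo(\psi)$ on $O_\epsilon$ and $\MAo(\varphi_\epsilon)=\MAo(\varphi)$ on $F_\epsilon$. By Proposition \ref{prop:volume_local_psh}, $\MAo(\varphi_\epsilon)$ and $\MAo(\varphi)$ have the same total mass. Hence
\[
\int_{O_\epsilon}\MAo(\psi)+\int_{X^{\an}\setminus O_\epsilon}\MAo(\varphi_\epsilon)
= \int_{O_\epsilon}\MAo(\varphi)+\int_{X^{\an}\setminus O_\epsilon}\MAo(\varphi).
\]
On $X^{\an}\setminus O_\epsilon$ we have $\MAo(\varphi_\epsilon)\ge \mathbf 1_{F_\epsilon}\MAo(\varphi_\epsilon)=\mathbf 1_{F_\epsilon}\MAo(\varphi)$. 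It follows that
\[
\int_{O_\epsilon}\MAo(\psi)\le \int_{O_\epsilon}\MAo(\varphi)+\int_{\{\varphi+\epsilon=\psi\}}\MAo(\varphi).
\]

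\textbf{Main obstacle: the level set term.} The extra term must be removed. The sets $\{\varphi+\epsilon=\psi\}$ are pairwise disjoint for different $\epsilon$, and $\MAo(\varphi)$ is a finite measure. So this term vanishes for all but countably many $\epsilon$. Choose such $\epsilon_j\downarrow 0$. The sets $O_{\epsilon_j}$ increase to $\{\varphi<\psi\}$, so monotone convergence on both sides yields the theorem.

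\textbf{Measurability.} One also needs $O_\epsilon$ to be Borel. This holds because $\varphi$ and $\psi$ are usc, hence Borel, so the integrals make sense. It is also why we work with strict inequalities throughout, as locality is only stated on the sets $\{\varphi>\psi\}$.
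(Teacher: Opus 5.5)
Your argument is correct and is essentially the paper's: the paper applies locality and total-mass invariance to $\max\{\varphi,\psi-\epsilon\}$, which differs from your $\max\{\varphi+\epsilon,\psi\}$ only by a constant, and obtains $\int_{\{\varphi+\epsilon<\psi\}}\MAo(\psi)\le\int_{\{\varphi+\epsilon\le\psi\}}\MAo(\varphi)$. The level-set term you call the main obstacle needs no countability argument: for every $\epsilon>0$ one has $\{\varphi+\epsilon\le\psi\}\subset\{\varphi<\psi\}$, so the right-hand side is already bounded by $\int_{\{\varphi<\psi\}}\MAo(\varphi)$, and this is how the paper concludes.
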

\begin{proof}
    By Lemma \ref{lem:finite_sup_lpsh}, for any $\epsilon >0$, $\max \{ \varphi, \psi-\epsilon \} \in \LPSHob(X)$. Then,
    \bens
        && \deg (\omega^{d}) 
        = \int_{X } \MA_{\omega} (\max \{ \varphi, \psi-\epsilon \}) \qquad \qquad \qquad \qquad \qquad \qquad (\text{Proposition } \ref{prop:volume_local_psh}) \\
        &\ge& \int\limits_{\{ \varphi < \psi-\epsilon \} } \MA_{\omega} (\max \{ \varphi, \psi-\epsilon \}) + \int\limits_{\{ \varphi > \psi-\epsilon \} } \MA_{\omega} (\max \{ \varphi, \psi-\epsilon \})  \\
        &=& \int\limits_{\{ \varphi < \psi-\epsilon \} } \MA_{\omega} (\psi) + \int\limits_{\{ \varphi > \psi-\epsilon \} }  \MA_{\omega} (\varphi) \qquad \qquad \qquad \qquad \qquad \qquad (\text{Theorem } \ref{thm:locality_MA_measures})  \\
        &=&\int\limits_{\{ \varphi < \psi-\epsilon \} } \MA_{\omega} (\psi) + \deg(\omega^{d}) - \int\limits_{\{ \varphi \le \psi-\epsilon \} }  \MA_{\omega} (\varphi)
    \eens
    Therefore, we have $ \int_{\{ \varphi < \psi-\epsilon \} } \MA_{\omega} (\psi) \le \int_{\{ \varphi \le \psi-\epsilon \} }  \MA_{\omega} (\varphi)$. The proof is completed by letting $\epsilon \to 0$.
\end{proof}

\begin{thm}[Domination Principle] \label{thm:Domination_Principle}
  Assume that $X$ is irreducible. Let $\omega \in \nef(X)$ such that $c_1(L(\omega)) \in \Amp(X)$ and $\GPSHo(X)$ satisfies the envelope property. Let $\varphi, \psi \in \LPSHob(X)$.  Suppose  that $\varphi \le \psi$ almost everywhere on $\mathrm{Supp} (\MA_{\omega} (\psi))$. Then $\varphi \le \psi$ on $X$.  
\end{thm}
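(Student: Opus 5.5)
We follow the classical argument of Guedj--Zeriahi / Boucksom--Jonsson, using only the tools developed for $\LPSHob(X)$: the comparison principle (Theorem \ref{thm:comparison_principle}), locality (Theorem \ref{thm:locality_MA_measures}), and the envelope property through an auxiliary function of full Monge--Amp\`ere mass. The plan is to argue by contradiction. Assume the open set $\{\psi<\varphi\}$ is nonempty. Shifting by constants is harmless, and $\varphi,\psi$ are bounded. We may therefore assume $\sup(\varphi-\psi)>0$ and $\varphi\le \psi+C$ everywhere for some constant $C$.

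\textbf{Step 1 (non-charged open set).} By Proposition \ref{prop:local_comparison_divisorial_points_to_full_spaces} (2), an inequality $\varphi\le\psi$ can fail only on a set that meets $X^{\div}$. Hence $G_\epsilon\coloneqq\{\psi<\varphi-\epsilon\}$ is nonempty for small $\epsilon>0$. The hypothesis says $\MAo(\psi)(G_\epsilon)=0$.

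\textbf{Step 2 (auxiliary function).} Fix $\rho\in\GPSHob(X)$ with $\MAo(\rho)$ charging $G_\epsilon$. For instance, take $\rho=\envomega(f)$ for a PL function $f$ that is very negative away from a divisorial point $x_0\in G_\epsilon$ with $\varphi(x_0)>\psi(x_0)+\epsilon$. The envelope property gives $\rho\in\GPSHo\cap C^0$ by Lemma \ref{lem:equivalent_form_of_envelope_conjecture}. The envelope measure is then supported on $\{\rho=f\}$, and we arrange this contact set to lie in $G_\epsilon$. Alternatively, use any $\rho$ with $\MAo(\rho)(G_\epsilon)>0$, which exists because $\MAo$ of PL $\omega$-psh functions can be a Dirac mass at a chosen divisorial point, up to a small remainder. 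Normalize $0\le \rho\le 1$.

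\textbf{Step 3 (comparison).} For small $t\in(0,1)$, consider $\varphi_t\coloneqq(1-t)\varphi+t\rho-\epsilon/2$; this lies in $\LPSHob(X)$, since convex combinations of local potentials remain psh. Apply Theorem \ref{thm:comparison_principle} to the pair $(\psi,\varphi_t)$:
\[
\int_{\{\psi<\varphi_t\}}\MAo(\varphi_t)\le \int_{\{\psi<\varphi_t\}}\MAo(\psi).
\]
For $t$ small relative to $\epsilon/(C+1)$, the set $\{\psi<\varphi_t\}$ lies inside $G_{\epsilon/4}$, so the right side vanishes. Expanding multilinearly, $\MAo(\varphi_t)\ge t^n\MAo(\rho)$. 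Mixed measures of elements of $\LPSHob$ are positive, because they are locally mixed measures of psh functions. This yields $t^n\MAo(\rho)(\{\psi<\varphi_t\})=0$.

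\textbf{Step 4 (contradiction).} It remains to ensure that $\MAo(\rho)$ charges $\{\psi<\varphi_t\}$. Choose $x_0$ in Step 2 with $\psi(x_0)<\varphi(x_0)-\epsilon$ and with $\rho$ chosen so that $\rho(x_0)$ is near its maximum. Then $x_0\in\{\psi<\varphi_t\}$ for small $t$, and the mass of $\MAo(\rho)$ concentrates near $x_0$. Since $\{\psi<\varphi_t\}$ is not open ($\psi$ is usc, not continuous), we instead use that the measure is carried by a set where $\psi<\varphi_t$ holds.

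I expect Step 2/4 to be the main obstacle: producing $\rho\in\GPSHob$ whose Monge--Amp\`ere measure is concentrated in the interior of $\{\psi<\varphi_t\}$. My first attempt will exploit the fact that $\MAo$ of $\envomega(f)$ is supported on the contact set. I will take $f=\psi$-independent PL bumps, using upper semicontinuity of $\varphi-\psi$ restricted to divisorial points. Failing that, I will take $\rho$ to be a global $\omega$-psh PL function (Theorem \ref{thm:GPSH_PL_equals_to_LPSH_PL}) whose measure is a weighted sum of Dirac masses at divisorial points of a model. I will then choose a model with a vertex $x_0\in G_\epsilon$, at which the positive mass can be checked directly. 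Irreducibility of $X$ is used so that $\deg(\omega^n)>0$ spreads through one connected piece.
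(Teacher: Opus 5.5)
Your overall strategy is the paper's. You find a divisorial point $x_0$ with $\varphi(x_0)>\psi(x_0)$ using Proposition \ref{prop:local_comparison_divisorial_points_to_full_spaces}~(2). You then take a convex combination of $\varphi$ with an auxiliary bounded $\omega$-psh function whose Monge--Amp\`ere measure sees $x_0$, and apply Theorem \ref{thm:comparison_principle}. The paper argues directly: it takes $u<\varphi$, so that $\varphi_\epsilon<\varphi\le\psi$ holds $\MAo(\psi)$-a.e., and uses the full-mass form $\deg(\omega^n)=\int_{\{\varphi_\epsilon<\psi\}}\MAo(\psi)\le\int_{\{\varphi_\epsilon<\psi\}}\MAo(\varphi_\epsilon)$. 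You use the complementary inequality and argue by contradiction. The two are equivalent, and your Step 3 is correct.

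The part that is not yet a proof is Steps 2/4, where you never pin down what $\rho$ must satisfy. What is needed is exactly an atom, $\MAo(\rho)(\{x_0\})>0$. Given an atom, the non-openness of $\{\psi<\varphi_t\}$ is irrelevant. You have already shown $x_0\in\{\psi<\varphi_t\}$ for small $t$, so $0=\int_{\{\psi<\varphi_t\}}\MAo(\varphi_t)\ge t^n\MAo(\rho)(\{x_0\})>0$. Mass merely concentrating near $x_0$ would not suffice.

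Your envelope option does not deliver this atom. $G_\epsilon$ need not contain any open set, and nothing forces the contact set $\{\envomega(f)=f\}$ of a continuous $f$ into it. The claim about Dirac masses "up to a small remainder" is unsupported and unnecessary. The paper gets the atom from the Calabi--Yau theorem of \cite{BFJ15}, taking $u$ with $\MAo(u)=\delta_{x_0}$. That is where irreducibility is used (a single Dirac mass cannot be the Monge--Amp\`ere measure on a disconnected $X$), and, through that theorem, presumably the envelope hypothesis.

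Your last fallback does work and is more elementary, but it needs one more point made explicit. Take a model on which $x_0=v_E$ for a component $E$ of the special fibre, and which carries a relatively ample $\fL$ with $\fL|_X\simeq L(\omega)$; you can get one by dominating a model realizing $x_0$, as in \cite{GM19}. The corresponding $\rho\in\GPSHo\cap\PL(X)$ then has $\MAo(\rho)(\{x_0\})=b_E(\fL|_E)^n>0$, computed on the normalization. Relative nefness alone is not enough: $\fL$ may be pulled back from a model contracting $E$, which leaves no mass at $x_0$. With this choice your argument uses neither irreducibility nor the Calabi--Yau theorem.

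Two minor points: $\{\psi<\varphi\}$ is not open in general, and shifting $\varphi$ by constants is not harmless for the hypothesis. You need neither.
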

\begin{proof}
  By Proposition \ref{prop:local_comparison_divisorial_points_to_full_spaces}, it is enough to prove that $\varphi (x) \le \psi(x)$ for any   $x \in X^{\div}$. We  assume that $\deg(\omega^d)=1$. For any $v \in \LPSHob(X^{\an})$, denote $\omega_v \coloneqq \omega+\dm \dmb v$.   
  Let $\mu=\delta_x$ be the Dirac mass measure supported on $\{x\}$. Then by \cite{BFJ15}, there exists $u \in C^0 \cap \GPSHo(X)$ such that $\MA_{\omega}(u)=(\omega_{u})^d=\mu$. Since both $u$ and $\varphi$ are bounded,  after adding a constant to $u$, we may assume that $u <\varphi$.

  For any $\epsilon >0$, denote $\varphi_{\epsilon} \coloneqq (1-\epsilon)\varphi+\epsilon u \in \LPSHob(X)$. Then, $\MA_{\omega} (\varphi_{\epsilon})=((1-\epsilon)\omega_{\varphi} +\epsilon \omega_{u})^{d}=\sum_{j=0}^{d} ((1-\epsilon)\omega_{\varphi})^{j} \wedge (\epsilon \omega_{u})^{d-j} \ge \epsilon^d \omega^d =  \epsilon^d \mu$. Therefore, $\MA_{\omega} (\varphi_{\epsilon})(\{x\})>0$. 
  
  That $u < \varphi$ implies $\varphi_{\epsilon} < \varphi$ on $X^{\an}$. Then, by assumption, $\varphi_{\epsilon} < \varphi \le \psi$ on $\mathrm{Supp}(\MA_{\omega} (\psi))$. By comparison principle (Theorem \ref{thm:comparison_principle}) and Proposition \ref{prop:volume_local_psh}, we have 
\bens
\deg (\omega^{d}) = \int_{\{ \varphi_{\epsilon} < \psi \} } \MA_{\omega} (\psi) \le \int_{\{ \varphi_{\epsilon} < \psi \} }  \MA_{\omega} ( \varphi_{\epsilon})  \le  \deg (\omega^{d})
\eens
This implies $\int_{\{ \varphi_{\epsilon} < \psi \} }  \MA_{\omega} ( \varphi_{\epsilon})=\deg (\omega^{d})$, and hence  $x \in  \{ \varphi_{\epsilon} < \psi \}$. Now by letting $\epsilon \to 0+$, we conclude that $\varphi(x) \le \psi(x)$.
\end{proof}

\subsection{Comparison between two notions of $\omega$-psh functions}

\begin{thm} \label{thm:GPSHo_equals_LPSHo}
  Let $\omega \in \nef(X)$. Assume that $\GPSHo(X)$ satisfies the envelope property. Then, $\GPSHo(X)=\LPSHo(X)$.
\end{thm}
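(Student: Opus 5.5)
The inclusion $\GPSHo(X)\subset\LPSHo(X)$ is Corollary \ref{cor:GPSH_is_LPSH}, so the task is to show that every $\varphi\in\LPSHo(X)$ is $\omega$-psh. The plan is to realize $\varphi$ as the $\omega$-psh envelope of itself and use the domination principle to compare.

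\textbf{Reductions.} First I reduce to $\omega$ with $c_1(L(\omega))$ ample. If $L(\omega)$ is only nef, fix $\theta\in\amp(X)$. Since potentials add, $\varphi\in\LPSH_{\omega+\epsilon\theta}(X)$ for every $\epsilon\in\Q_{>0}$. By definition of $\GPSHo$ in the nef case, it then suffices to treat $\omega+\epsilon\theta$. This needs the envelope property for $\GPSH_{\omega+\epsilon\theta}$. I expect to extract it from that of $\GPSHo$ by a rescaling argument, and otherwise I will take it as part of the hypothesis.

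Next I reduce to bounded $\varphi$. Set $\varphi_C\coloneqq\max\{\varphi,-C\}$. This lies in $\LPSHob(X)$: it is locally psh by Lemma \ref{lem:finite_sup_lpsh}, since constants plus a potential are locally psh, and it is bounded because usc functions on compact $X^{\an}$ are bounded above. If each $\varphi_C\in\GPSHo(X)$, then $\varphi=\lim_C\varphi_C$ is a decreasing limit, and Theorem \ref{thm:decreasing_limit_of_omega-psh_functions_is_omega-psh} gives $\varphi\in\GPSHo(X)$. Finally, I may assume $X$ is irreducible by working one component at a time.

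\textbf{Main step.} Let $\varphi\in\LPSHob(X)$ and consider the envelope $\psi\coloneqq\env_\omega(\varphi)^\star$. By the envelope property, $\psi\in\GPSHo(X)$, and it is bounded because $\varphi$ is bounded below. Since each competitor is $\le\varphi$ and $\varphi$ is usc, $\psi\le\varphi$. It remains to prove $\varphi\le\psi$. By the domination principle (Theorem \ref{thm:Domination_Principle}), applied with $\psi\in\LPSHob(X)$, it suffices to show that $\varphi\le\psi$ $\MA_\omega(\psi)$-a.e.

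For this I use the standard orthogonality statement: $\MA_\omega(\psi)$ is supported on $\{\psi=\varphi\}$. I will establish it as follows.

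1. Approximate $\varphi$ from above by a decreasing sequence of continuous functions $f_j\downarrow\varphi$. These exist since $\varphi$ is usc on a compact space, and can be taken PL by density.
2. By Lemma \ref{lem:equivalent_form_of_envelope_conjecture}, each $\env_\omega(f_j)$ is continuous and $\omega$-psh, and the sequence decreases to an $\omega$-psh limit $\ge\psi$.
3. The usual balayage argument gives $\MA_\omega(\env_\omega(f_j))$ supported on $\{\env_\omega(f_j)=f_j\}$. Concretely: on the open set $\{\env_\omega(f_j)<f_j\}$, modifying $\env_\omega(f_j)$ locally by a maximal local psh function would produce a larger competitor, via Remark \ref{rmk:extension_of_psh_functions_by_sup} and Proposition \ref{prop:passing_from_local_to_global}. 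I expect to cite \cite{BJ22} for this orthogonality when $f$ is continuous.
4. Pass to the limit using Theorem \ref{thm:Bedford-Taylor_global_omega-psh}(3), together with the fact that $\lim_j\env_\omega(f_j)$ is $\le\varphi$ and hence equals $\psi$. This yields $\int(\varphi-\psi)\,\MA_\omega(\psi)\le 0$, hence $\varphi\le\psi$ a.e.

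\textbf{Main obstacle.} The hard step is the identification $\lim_j\env_\omega(f_j)=\env_\omega(\varphi)$ together with semicontinuity of $\int(f_j-\env_\omega(f_j))\,\MA$ in the limit. The difficulty is that $\varphi$ is merely usc and is only locally psh, not globally. If the limit is troublesome, I would instead apply the domination principle directly with the roles arranged so that only $\MA_\omega(\psi)$ of a global function is needed, and use the locality of Monge--Ampère measures (Theorem \ref{thm:locality_MA_measures}) on the open sets $\{\psi<f_j\}$.
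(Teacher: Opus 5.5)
\textbf{Gap in step 4.} The gap is the step you flagged in step 4, and it is fatal to this route, not just a delicate limit. Steps 1--4 use only that $\varphi$ is bounded and usc; the local $\omega$-psh property is never used. But for a general bounded usc obstacle $h$, the orthogonality $\MAo(\envomega(h)^\star)(\{\envomega(h)^\star<h\})=0$ is false.

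Here is a counterexample within the theorem's hypotheses. Take $X=\P^1$ over a discretely valued $K$ and let $\omega$ be the form of the trivial model $(\P^1_{\valring},\CO(1))$, so that $\MAo(0)=\delta_\zeta$ with $\zeta$ the Gauss point. Let $h=\mathbf{1}_{\{\zeta\}}$, which is usc.
\begin{itemize}
\item Any $\omega$-psh $u\le h$ satisfies $u\le 0$ off $\zeta$.
\item Also $u(\zeta)\le 0$: the function $u+\max\{\log|z|,0\}$ is subharmonic on an annulus around $\zeta$, hence convex and therefore continuous along the segment $\{\zeta_{0,r}\}$ (maximum principle against functions affine in $\log|z|$).
\end{itemize}
Hence $\envomega(h)=0$, whose Monge--Amp\`ere measure is $\delta_\zeta$, while $h(\zeta)=1$.

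Inside your argument, the failure is an inequality going the wrong way. Since $\varphi$ is only usc, weak convergence $\MAo(\envomega(f_j))\to\MAo(\psi)$ gives $\int\varphi\,\MAo(\psi)\ge\limsup_j\int\varphi\,\MAo(\envomega(f_j))$, a lower bound, whereas you need an upper bound. The mass of $\MAo(\envomega(f_j))$ sits where $f_j=\envomega(f_j)$ is small, and it can accumulate at points where $\varphi$ jumps up. Your fallback still aims at orthogonality for $\MAo(\psi)$ (in the example, $\zeta\in\{\psi<f_j\}$ carries all the mass), so it does not fix this either.

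\textbf{The fix.} You already have every ingredient in steps 1--3; the domination principle just has to be applied to a different pair.
\begin{itemize}
\item For each $j$, $f_j$ is continuous, so $\Supp\MAo(\envomega(f_j))\subset\{\envomega(f_j)=f_j\}$ by \cite[Theorem 13.1]{BJ22}. On that set $\varphi\le f_j=\envomega(f_j)$.
\item Apply Theorem~\ref{thm:Domination_Principle} to $\varphi$ and $\envomega(f_j)$ in $\LPSHob(X)$. This is where the local psh property of $\varphi$ enters. It gives $\varphi\le\envomega(f_j)\le f_j$ on $X^{\an}$.
\item Hence the continuous $\omega$-psh functions $\envomega(f_j)$ decrease to $\varphi$, and Theorem~\ref{thm:decreasing_limit_of_omega-psh_functions_is_omega-psh} gives $\varphi\in\GPSHo(X)$.
\end{itemize}
No envelope of $\varphi$ and no convergence of Monge--Amp\`ere measures is needed; this is exactly the paper's proof.

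\textbf{Minor points on the reductions.} ``One component at a time'' should be justified by the fact that the envelope property forces the connected components of $X$ to be irreducible \cite[Remark 5.15]{BJ22}; the domination principle requires irreducibility. Your caveat about the envelope property for $\omega+\epsilon\theta$ is fair; the paper makes the same reduction without comment.
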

 
\begin{proof}
  Since $\GPSHo(X)$ satisfies the envelope property, \cite[Remark 5.15]{BJ22} implies that each connected component of $X$ is irreducible. Therefore, we may assume that $X$ is irreducible. Let $\theta \in \nef(X)$ such that $c_1(L(\theta)) \in \Amp(X)$. Note that for any function $\varphi \colon X^{\an} \to R \cup \{-\infty\}$, $\varphi \in \GPSHo(X)$ (\emph{resp.} $\LPSHo(X)$) iff $\varphi \in \GPSH_{\omega+\epsilon \theta}(X)$ (\emph{resp.} $\LPSH_{\omega+\epsilon \theta}(X)$) for each $\epsilon >0$, and $c_1(L(\omega+\epsilon \theta)) \in \Amp(X)$ for any $\epsilon >0$. Therefore, we may henceforth assume that $c_1(L(\omega)) \in \Amp(X)$. 
  
  By Corollary \ref{cor:GPSH_is_LPSH}, we have $\GPSHo(X) \subset \LPSHo(X)$. Thus, we only need to prove the reverse direction.  Let $\varphi \in \LPSHo(X)$, our aim is to show that $\varphi \in \GPSHo(X)$. To show this, it is enough to show that $\sup\{\varphi, M\} \in \GPSHo(X)$ for any $M \in \BR$ (Theorem \ref{thm:decreasing_limit_of_omega-psh_functions_is_omega-psh}). Therefore, we assume henceforth that $\varphi$ is bounded.

  Since $\varphi$ is usc on $X$, we may choose a decreasing sequence of functions $\{f_i\}_{i \ge 1} \subset C^0(X)$ such that $f_i$ converges pointwise to $\varphi$. For each $i \ge 1$, recall that the $\omega$-psh envelope of $f_i$ is defined to be
  \[
  \envomega(f_i) \coloneqq \sup \{\psi \in \GPSHo(X) \ |\ \psi \le f \}.
  \]
  Since $\GPSHo(X)$ satisfies the envelope property,  Lemma \ref{lem:equivalent_form_of_envelope_conjecture} implies that  $\envomega(f_i) \in  \GPSHo \cap C^0(X)$. Furthermore, the orthogonality property for $\GPSHo(X)$ (Theorem 13.1 in \cite{BJ22}) implies that for each  $i \ge 1$
  \[
  \Supp(\MAo(\envomega(f_i))) \subset \{f_i=\envomega(f_i)\}.
  \]
  Hence, $\varphi \le f_i= \envomega(f_i)$ on $\Supp(\MAo(\envomega(f_i)))$. By domination principle (Theorem \ref{thm:Domination_Principle}), we have $\varphi \le \envomega(f_i)$ on $X^{\an}$. Thus, $\{\envomega(f_i)\}_{i \ge 1} \subset \GPSHo \cap C^0(X)$ is a decreasing sequence converging pointwise to $\varphi$. Therefore, by Theorem \ref{thm:decreasing_limit_of_omega-psh_functions_is_omega-psh}, $\varphi \in \GPSHo(X)$.  
\end{proof}

\begin{cor} \label{cor:properties_PSH_after_envelope_properties}
Let $U \subset X^{\an}$ be an open subset. Suppose that $X$ satisfies the envelope property over $K$. Then,
  \begin{enumerate}
    \item Let $\{u_i\}_{i \ge 1} \subset \PSH(U)$ be  a decreasing function sequence with  the pointwise limit $u$. If $u$ is generically finite, then $u \in \PSH(U)$. 
    \item Let $\{u_i\}_{i \in I}$ be a bounded-above family in  $\PSH(U)$. Then, \(  \sup_{i \in I} \{u_i\}  = (\sup_{i \in I} \{u_i\})^{\star} \) on $U \cap X^{\div}$.
    \item Let $\{u_i\}_{i \in I}$ be a bounded-above family in  $\PSH(U)$. Then, $(\sup_{i \in I} \{u_i\})^{\star} \in \PSH(U)$. 
    \item \label{cor:local_pointwise_limit_of_lpsh_functions} Let $\{u_i\}_{i \ge 1}$ be a bounded-above sequence of functions in $\PSH(U)$. Suppose that $u_i$ converges pointwise to $u$. Then, $u^{\star} \in \PSH(U)$, and $u^{\star}=u$ on $U \cap X^{\div}$.
  \end{enumerate}
\end{cor}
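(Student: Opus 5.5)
The plan is to reduce every statement to the global setting through Proposition \ref{prop:passing_from_local_to_global}, apply the global results there, and return to $U$ using Theorem \ref{thm:GPSHo_equals_LPSHo}. That theorem gives $\GPSHo(X)=\LPSHo(X)$ for every $\omega\in\nef(X)$ under the envelope property.

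\textbf{Setup.} All four statements are local on $U$, so we work near a fixed $x\in U$. We also truncate from below: replace $u_i$ by $\max\{u_i,-M\}$, which is psh by Lemma \ref{lem:finite_sup_lpsh}, and note that all relevant operations commute with $\max\{\cdot,-M\}$.
\begin{itemize}
\item In (1), the limit of the truncations is $\max\{u,-M\}$, and $u$ is the decreasing limit of these as $M\to\infty$.
\item In (2)–(4), the families are bounded above, so after truncation they are uniformly bounded.
\end{itemize}
With $\omega$ fixed, $c_1(L(\omega))$ ample, and a bound $\|u_i\|_{L^\infty}\le M$ on a neighbourhood, Proposition \ref{prop:passing_from_local_to_global}(1) gives $U_x$, $C>0$ and a potential $g$. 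These are uniform in $i$, which is exactly the point: it removes the obstruction described in Remark \ref{rmk:decreasing_limit_of_psh_functions_is_not_known_to_be_psh_in_general}. The same proposition gives $\varphi_i\in\LPSHob(X)$ with $\varphi_i+g=Cu_i$ on $U_x$. By Theorem \ref{thm:GPSHo_equals_LPSHo}, each $\varphi_i\in\GPSHo(X)$.

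One detail: the construction in that proof defines $\varphi_i$ from $\max\{u_i-h,0\}$ with a fixed $h$. Hence monotonicity in $i$ is preserved, and sup-type operations commute with the construction on $U_x$.

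\textbf{Statement (1).} The sequence $\varphi_i$ is decreasing. Its limit is $\varphi=C\max\{u,-M\}-g$ on $U_x$, and the construction makes it bounded below globally. Theorem \ref{thm:decreasing_limit_of_omega-psh_functions_is_omega-psh} then gives $\varphi\in\GPSHo(X)$. By Corollary \ref{cor:GPSH_is_LPSH}, $C\max\{u,-M\}=\varphi+g$ is psh on $U_x$. Letting $M\to\infty$ is one more decreasing limit. To handle it, I would redo the argument with the untruncated $u$, using Proposition \ref{prop:passing_from_local_to_global} only through an upper bound. Alternatively, I would use that $\max\{u,-M\}\downarrow u$ and apply the same global trick once more, since the functions $\psi_M$ built from the truncations are decreasing in $M$ with generically finite limit.

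\textbf{Statements (2)–(4).}
\begin{itemize}
\item For (3): globally, $(\sup_i\varphi_i)^\star\in\GPSHo(X)$ by the envelope property. On $U_x$ this equals $C(\sup_i u_i)^\star-g$, because $g$ is continuous. Corollary \ref{cor:GPSH_is_LPSH} then gives the claim locally, so it holds on $U$.
\item For (2): Theorem \ref{thm:global_divisorial_points_is_nonnegligible} applied to $\{\varphi_i\}$ yields equality of the supremum and its usc regularization on $X^{\div}\cap U_x$.
\item For (4): Proposition \ref{prop:limit_of_GPSH_functions} applied to $\varphi_i$ gives both claims on $U_x$.
\end{itemize}
Truncation is harmless in all three: the statements for $\max\{u_i,-M\}$ imply those for $u_i$ at points where the limits are greater than $-M$. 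At points where the value is $-\infty$, one argues with $M\to\infty$, using (1) for (4).

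The main obstacle I expect is the bookkeeping in the reduction. Specifically, I need that the single choice of $(U_x,C,g,h)$ makes $u\mapsto\varphi$ commute with pointwise sups and limits on $U_x$, while keeping every $\varphi_i$ globally defined and bounded, including near $\partial Q$, where the patching uses $h>u_i$. Uniform boundedness handles this, since the inclusion $\{h\le u\}\subset Q'$ depends only on $M$.
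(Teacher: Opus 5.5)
Your reduction is the paper's. You truncate, apply Proposition \ref{prop:passing_from_local_to_global}(1) once with data $(U_x,C,g)$ uniform in $i$, upgrade the $\varphi_i$ to $\GPSHo(X)$ by Theorem \ref{thm:GPSHo_equals_LPSHo}, and quote the four global results. For uniformly bounded families this is correct as written. For (2) and for the equality $u^\star=u$ on $U\cap X^{\div}$ in (4), truncation is indeed harmless pointwise.

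\textbf{The gap.} It is the step you flag yourself: deducing $w\in\PSH(U)$ from $\max\{w,-M\}\in\PSH(U)$ for every $M$. This step is needed in (1). It is also needed in (3) and in the first half of (4), since membership in $\PSH(U)$ is not a pointwise property. The paper's proof passes over it with ``we may assume''. Neither of your two remedies closes it.

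First, without a lower bound, the construction in the proof of Proposition \ref{prop:passing_from_local_to_global} only yields $\psi+h=\max\{w,h\}$ on $\interior(P)$. The identity $\psi=w-h$ there relies on $h\le -M\le w$ on $P$.

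Second, the global functions built from the truncations are not monotone in $M$, because $C$ and $h$ depend on $M$. In the notation of that proof, one gets $\varphi_M=C_M(\max\{w,-M\}-M-1)-v$ on $\interior(P)$, with $C_M=\epsilon/(2M+1)$. This increases in $M$ wherever $-M<w<1/2$. It also tends to $-\epsilon/2-v$ wherever $w$ is finite, so no decreasing limit of these functions recovers $w$.

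\textbf{What closes it.} The key is that $U_x=\interior(P)$ in Proposition \ref{prop:passing_from_local_to_global}(1) depends only on $x$, not on $M$. By Theorem \ref{thm:GPSHo_equals_LPSHo} and the proof of Corollary \ref{cor:GPSH_is_LPSH}, each $f_M:=\max\{w,-M\}$ is therefore, on this same set $U_x$, a decreasing limit of continuous tropically convex PL functions $f_{M,j}$. Take a compact neighbourhood $K\subset U_x$ of $x$ and run a diagonal Dini argument. Inductively choose $j_k$ with $f_{k,j_k}<\min\{\min_{m\le k}f_{m,k}+2^{-k},\,z_{k-1}-2^{-k}\}$ on $K$, which is possible since the right-hand side is continuous and $>f_k$. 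Put $z_k:=f_{k,j_k}+2^{-k}$. Then $z_k$ is decreasing and $z_k\downarrow w$ on $K$. This proves (1) in general, and (3) and (4) then follow as you indicate.

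For (4) you must also assume that $u^\star$ is generically finite; the statement assumes this tacitly. Otherwise $u_i\equiv -i$ is a counterexample.
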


\begin{proof}
  Since these are local statement, it is sufficient to prove them on an open neighborhood of $x$ for each $x \in U$. For each $M \in \R$ and for each $i$, denote $u_{i,M} \coloneqq \max\{u_i, M\}$. Observe that $\sup_{i \in I} \{ u_{i, M} \} =\max\{ \sup_{i \in I} \{u_i\}, M\}$, $(\sup_{i \in I} \{ u_{i, M} \} )^{\star}=\max\{ (\sup_{i \in I} \{u_i\})^{\star}, M\}$, and $\lim_{i \to +\infty} u_{i,M}=\max\{ u, M\}$ if $u=\lim_{i \to +\infty} u_i $. Therefore,  for each statement (1)--(4), after replacing $\{u_i\}$ by $\{u_{i,M}\}$ and replacing $U$ by a compact open neighborhood of $x$, we may assume that $\left\| u_i \right\|_{L^{\infty}(U)} \le M$ for some $M \in \R$. 

  Take $\omega \in \nef(X)$ with $c_1(L(\omega)) \in \Amp(X)$. Applying Proposition \ref{prop:passing_from_local_to_global} (1) to $M$ and $x \in U$, we obtain an open neighborhood $U_x \subset U$ of $x$, a constant $C>0$, a potential $g \in \PL \cap \LConv(U_x)$ of $\omega$, and $\varphi_i \in \LPSHo(X)$ for each $i$, such that $\varphi_i+g=Cu_i$ on $U_x$. Theorem \ref{thm:GPSHo_equals_LPSHo} implies that for each $i$, $\varphi_i \in \GPSHo(X)$. Therefore, statement (1), (2), (3) and (4) follows respectively from Theorem \ref{thm:decreasing_limit_of_omega-psh_functions_is_omega-psh}, Theorem \ref{thm:global_divisorial_points_is_nonnegligible}, the assumption that $\GPSHo(X)$ satisfies the envelope property and Proposition \ref{prop:limit_of_GPSH_functions}. 
\end{proof}

\begin{prop} \label{prop:pshlog_equals_pshlogreg_on_smooth_affine_varieties}
  Suppose that $K$ is a discretely valued or trivially valued NA field of equicharacteristic $0$, and $Y$ is a smooth affine variety over $K$. Then, $\PSHlog(Y^{\an})=\PSHlogreg(Y^{\an})$. 
\end{prop}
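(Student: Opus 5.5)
The inclusion $\PSHlogreg(Y^{\an}) \subset \PSHlog(Y^{\an})$ holds by definition, so only the converse needs an argument. Fix $u \in \PSHlog(Y^{\an})$ and $g \in \FS(Y^{\an})$ with $u \le g + O(1)$. The plan is to transport $u$ to a global $\omega$-psh function on a smooth compactification, and then use the envelope property there (Theorem \ref{thm:smooth_envelope_property}) to upgrade local regularizability to global regularizability.

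\textbf{Step 1: choose the compactification.} As in the proof of Theorem \ref{thm:pshlogreg_extension}, Hironaka's theorem and Lemma \ref{lem:blowup_pi_ample_divisor} give a smooth projective $X \supset Y$ and an effective ample divisor $D$ with $\Supp(D) = X \setminus Y$. Take $\theta \in \amp(X)$ with $L(\theta) = \CO(rD)$ and put $h \coloneqq -\log|s_{rD}|_\theta$. By Proposition \ref{prop:potential_of_positive_form_is_Fubini-Study}(1), $h \in \FS(Y^{\an})$. By Lemma \ref{lem:FS_functions_are_comparable}(1) there is $A>0$ with $u \le Ah + O(1)$.

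\textbf{Step 2: the global function is locally $\omega$-psh.} Define $\varphi = u - 2Ah$ on $Y^{\an}$ and $\varphi = -\infty$ on $X^{\an} \setminus Y^{\an}$. It suffices to show $\varphi_C \coloneqq \max\{\varphi, C\} \in \GPSH_{2A\theta}(X^{\an})$ for every $C \in \Q$; Theorem \ref{thm:decreasing_limit_of_omega-psh_functions_is_omega-psh} then gives $\varphi \in \GPSH_{2A\theta}(X^{\an})$.
\begin{itemize}
\item As in the proof of Lemma \ref{lem:psh_regularization_with_growth_control}(1), the set $\{\varphi \ge C\}$ lies in a compact tropical polytope $E \subset Y^{\an}$, because $u - 2Ah \le -Ah + O(1)$.
\item Hence $\varphi_C = C$ near $X^{\an} \setminus Y^{\an}$.
\item On $Y^{\an}$, $2Ah$ is a potential of $2A\theta$, and $\max\{u, 2Ah + C\} \in \PSH(Y^{\an})$ by Lemma \ref{lem:finite_sup_lpsh}.
\end{itemize}
Together these show $\varphi_C \in \LPSH_{2A\theta}(X^{\an})$.

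\textbf{Step 3: pass from local to global.} Since $X$ is smooth, $X$ satisfies the envelope property (Theorem \ref{thm:smooth_envelope_property}). Theorem \ref{thm:GPSHo_equals_LPSHo} then gives $\varphi_C \in \GPSH_{2A\theta}(X^{\an})$, and hence $\varphi \in \GPSH_{2A\theta}(X^{\an})$ as noted in Step 2.

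\textbf{Step 4: regularize.} Since $2A\theta \in \amp(X)$ and $2Ah$ is its potential $-\log|s_{rD}^{2A}|_{2A\theta}$ (after passing to an integer multiple if needed), Corollary \ref{cor:GPSH_plus_potential_of_positive_forms_is_pshlogreg} yields $u = \varphi + 2Ah \in \PSHlogreg(Y^{\an})$.

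The main obstacle is the local-to-global step. A psh function in the sense of Definition \ref{def:psh_functions_on_local_domains} is only locally a decreasing limit of tropically convex PL functions, on neighborhoods that are not controlled, so Lemma \ref{lem:approximable_on_exhausting_tropical_polytopes_implies_pshlogreg} does not apply directly. This is why the argument routes through Theorem \ref{thm:GPSHo_equals_LPSHo}, which rests on the domination principle and the envelope property. One must also check that Theorem \ref{thm:GPSHo_equals_LPSHo} applies with $\omega = 2A\theta$, possibly after replacing $2A$ by a rational number; this is harmless.
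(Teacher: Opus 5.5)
Your proposal is correct and follows essentially the same route as the paper: a smooth compactification with ample boundary, $\varphi = u - cAg$ (you take $c=2$), local $cA\theta$-plurisubharmonicity of $\varphi$, then Theorem \ref{thm:smooth_envelope_property} together with Theorem \ref{thm:GPSHo_equals_LPSHo} to get $\varphi \in \GPSH_{cA\theta}(X^{\an})$, and finally Corollary \ref{cor:GPSH_plus_potential_of_positive_forms_is_pshlogreg}. Your one real refinement is to truncate to $\varphi_C=\max\{\varphi,C\}$, upgrade each truncation, and only then pass to the limit via Theorem \ref{thm:decreasing_limit_of_omega-psh_functions_is_omega-psh}; this makes rigorous what the paper glosses over when it cites Lemma \ref{lem:psh_regularization_with_growth_control}~(1), which is stated only for $\PSHlogreg$, to assert $\varphi\in\LPSH_{cA\theta}(X)$ directly.
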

\begin{proof}
  As explained in the proof of Theorem \ref{thm:pshlogreg_extension},  Hironaka's theorem and Lemma \ref{lem:blowup_pi_ample_divisor} implies that there exists a smooth projective compactification $X$ of $Y$ over $K$, a very ample line bundle $L$ on $X$ with $s \in H^0(X, L)$ satisfying $Y=X \setminus Z(s)$.  Let $\theta \in \amp(X)$ such that $L = L(\theta)$, and let $|\cdot|_{\theta}$ be a Fubini-Study metric on $L$. Then, by Proposition \ref{prop:potential_of_positive_form_is_Fubini-Study} (1), $g \coloneqq -\log|s|_{\theta}$ is  a Fubini-Study function on $Y^{\an}$. 
  Let $u \in \PSHlog(Y^{\an})$. By Lemma \ref{lem:FS_functions_are_comparable}, there exists $A>0$ such that $u \le Ag+O(1)$. 
  Fix $c>1$. Define the function $\varphi \colon X^{\an} \to \R \cup \{-\infty\}$ by $\varphi=u-cAg$ on $Y^{\an}$ and $\varphi=-\infty$ on $X^{\an} \setminus Y^{\an}$. Then, Lemma \ref{lem:psh_regularization_with_growth_control} (1) implies that $\varphi \in \LPSH_{cA \theta}(X)$. Recall that $X$ satisfies the envelope property over $K$ (Theorem \ref{thm:smooth_envelope_property}). Then, Theorem \ref{thm:GPSHo_equals_LPSHo} implies that $\varphi \in \GPSH_{cA \theta}(X)$. Since $cA\theta \in \amp(X)$, Corollary \ref{cor:GPSH_plus_potential_of_positive_forms_is_pshlogreg}  implies that $u=\varphi+cAg \in \PSHlogreg(Y^{\an})$.
\end{proof}

\section{Runge domains} \label{sec:Runge_domains}
In this section, In this section, unless indicated otherwise, we assume that $K$ is a discretely valued or trivially valued NA field, and $Y$ is a affine over $K$. Denote by $K[Y]$ the ring of polynomial functions on $Y$.

\subsection{Polynomially convexity and Runge open subset} \label{subsec:polynomially_convexity_and_Runge_open_subset}

\begin{definition}
  \begin{enumerate}
    \item Let $L \subset Y^{\an}$ be a compact subset. We define the polynomial convex hull in $Y^{\an}$ as 
      \[
        \widehat{L}_{Y} \coloneqq \{x \in Y^{\an}, \, |f|(x) \le \sup_{L} |f| \text{ for any } f \in K[Y]\}.
      \] 
    We say that $L$ is polynomially convex in $Y^{\an}$ if $L=\widehat{L}_{Y}$. 
    \item Let $D \subset Y^{\an}$ be an open subset. We say that $D$ is a (polynomially) Runge open subset in $Y^{\an}$ if for any compact subset $L \subset D$, we have $\widehat{L}_Y \subset D$. 
  \end{enumerate}
\end{definition}

\begin{lemma} \label{lem:basic_properties_poly_convex_and_Runge}
  \begin{enumerate}
    \item Let $L_1, L_2$ be compact subsets of $Y^{\an}$ with $L_1 \subset L_2$. Then, $\widehat{(L_1)}_{Y} \subset \widehat{(L_2)}_{Y}$.
    \item Let $D_1 \subset D_2 \subset \cdots D_k \subset \cdots$ be a sequence of Runge open subset in $Y^{\an}$. Then, $D \coloneqq \bigcup_{k \ge 1} D_k$ is Runge in $Y^{\an}$.  
    \item Let $Z$ be a closed subvariety of $Y$, and $L \subset Z^{\an}$ be a compact subset. Then, $\widehat{L}_Z=\widehat{L}_Y$. 
    \item In the setting of (3), suppose further that $D \subset Y^{\an}$ is a Runge open subset. Then, $D \cap Z^{\an}$ is Runge in $Z^{\an}$. 
  \end{enumerate}
\end{lemma}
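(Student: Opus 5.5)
The four statements are elementary consequences of the definition of the polynomial convex hull, and I will prove them in order.

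For (1), if $x \in \widehat{(L_1)}_Y$, then for every $f \in K[Y]$ we have $|f|(x) \le \sup_{L_1}|f| \le \sup_{L_2}|f|$, since $L_1 \subset L_2$. Hence $x \in \widehat{(L_2)}_Y$.

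For (2), let $L \subset D$ be compact. The sets $D_k$ form an increasing open cover of $L$, so by compactness $L \subset D_k$ for some $k$. Since $D_k$ is Runge, $\widehat{L}_Y \subset D_k \subset D$.

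For (3), let $\iota \colon Z \hookrightarrow Y$ be the closed immersion. The restriction map $K[Y] \to K[Z]$ is surjective, and $|f|(x) = |f|_Z|(x)$ for $x \in Z^{\an}$. This immediately gives $\widehat{L}_Z = \widehat{L}_Y \cap Z^{\an}$. It remains to show $\widehat{L}_Y \subset Z^{\an}$. Write $Z = \{g_1 = \cdots = g_r = 0\}$ with $g_j \in K[Y]$. Since $L \subset Z^{\an}$, we have $\sup_L |g_j| = 0$ for each $j$. Hence every $x \in \widehat{L}_Y$ satisfies $|g_j|(x) = 0$, i.e.\ the semivaluation $x$ has $g_j$ in the kernel of its seminorm for all $j$. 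Its support is therefore contained in $Z$, so $x \in Z^{\an}$. Here I use the identification of $Z^{\an}$ with the set of points of $Y^{\an}$ where all $|g_j|$ vanish, which follows from the description $X^{\an} = \coprod Z^{\val}$ in the preliminaries. This is the only point requiring care, and it is immediate from that description.

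For (4), let $L \subset D \cap Z^{\an}$ be compact. Then $L$ is a compact subset of $D$, so $\widehat{L}_Y \subset D$ because $D$ is Runge. By (3), $\widehat{L}_Z = \widehat{L}_Y \subset D \cap Z^{\an}$. Thus $D \cap Z^{\an}$ is Runge in $Z^{\an}$.
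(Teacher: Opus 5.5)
Your proof is correct and follows essentially the same route as the paper: (1), (2) and (4) are argued identically, and in (3) you use the same key fact, namely that a function in the ideal of $Z$ vanishes on $L$ but has $|f|(x)>0$ whenever $\mathrm{Supp}(x)\not\subset Z$. The only difference is organizational: you prove directly that $\widehat{L}_Y\subset Z^{\an}$ and $\widehat{L}_Z=\widehat{L}_Y\cap Z^{\an}$, whereas the paper argues by contrapositive, splitting into cases according to whether $\mathrm{Supp}(x)\subset Z$.
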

\begin{proof}
  For statement (1), suppose that $x \in \widehat{(L_1)}_{Y}$. Then, for any $f \in K[Y]$, $|f|(x) \le \sup_{L_1}|f| \le \sup_{L_2}|f|$. Therefore, $x \in \widehat{(L_2)}_{Y}$. For statement (2), suppose that $L \subset D$ is a compact subset. Then, there exists $i \ge 1$ such that $L \subset D_i$. Then, by assumption, $\widehat{L}_{Y} \subset D_i \subset D$. 
  
  For statement (3), since $K[Y] \to K[Z]$ is surjective, it is clear that $\widehat{L}_Z  \subset \widehat{L}_Y$. To prove the converse direction,  take $x \in Y^{\an} \setminus \widehat{L}_Z$. We need to find $f \in K[Y]$ with $|f|(x)> \sup_{\widehat{L}_Z}|f|$. and denote $Z' \coloneqq \mathrm{Supp}(x)$. If $Z' \subset Z$, then we have $x \in Z^{\an}$, and by definition there exists $g \in K[Z]$ such that $|g|(x)> \sup_{\widehat{L}_Z}|g|$. Take $f \in K[Y]$ such that $f|_Z=g$. Then, $f$ satisfies the required condition. If $Z' \nsubseteq Z$, then there exists $f \in k[Y]$ such that $f$ does not vanish on $Z'$ and vanishes on $Z$. Then, $|f|(x)>0=\sup_{\widehat{L}_Z}|f|$, and hence $f$ satisfies the required condition. Statement (4) follows directly from statement (3) by definition. 
\end{proof}

\begin{lemma} \label{lem:tropical_polytopes_are_poly_convex}
  Let $P$ be a tropical polytope in $Y^{\an}$. Then, $P$ is polynomially convex in $Y^{\an}$, and $\interior(P)$ is a Runge open subset in $Y^{\an}$.
\end{lemma}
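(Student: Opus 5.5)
Write $P=\bigcap_{1\le i\le k}\{-\log|f_i|\ge t_i\}$ with $f_i\in K[Y]$ and $t_i\in\Q$. Polynomial convexity will come straight from the definition of the hull, using the polynomials $f_i$ themselves (rescaled by constants). For each $i$ we have $\sup_P|f_i|\le e^{-t_i}$. So any $x\in\widehat{P}_Y$ satisfies $|f_i|(x)\le\sup_P|f_i|\le e^{-t_i}$, that is, $-\log|f_i|(x)\ge t_i$, for every $i$. Hence $x\in P$, and therefore $\widehat{P}_Y\subset P$. The reverse inclusion $P\subset\widehat{P}_Y$ is tautological. No constant rescaling is actually needed, because the sup bound is used directly.

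For the Runge property of $\interior(P)$, I will take a compact $L\subset\interior(P)$ and aim to show $\widehat{L}_Y\subset\interior(P)$. The difficulty is that $\widehat{L}_Y\subset\widehat{P}_Y=P$ only places the hull inside $P$, not inside its interior. To fix this, I will shrink $P$ slightly.

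\textbf{Step 1.} Consider the auxiliary tropical polytopes $P_\delta\coloneqq\bigcap_i\{-\log|f_i|\ge t_i+\delta\}$ for rational $\delta>0$. Each is closed in $P$, hence compact, hence a tropical polytope. The first part then shows $P_\delta$ is polynomially convex.

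\textbf{Step 2.} I expect the main obstacle here: showing that $L\subset P_\delta$ for some $\delta$ need not hold. For instance, $L$ may meet points where some $-\log|f_i|=t_i$ but which are still interior to $P$. This happens when $|f_i|$ is locally constant there, or near the zero locus, and it is typical on Berkovich spaces. So rather than pass to $P_\delta$, I will replace $P$ by a better presentation. Using the exhaustion of Example \ref{example:tropical_polytopes_in_affine_varieties} together with Lemma \ref{lem:tropical_polytopes_and_FS_functions}, write $P=\{u=0\}$ with $u\in\FS(Y^{\an})$ and $u\ge0$. The sets $\{u\le t\}$ for rational $t$ are tropical polytopes by the same lemma, hence polynomially convex.

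\textbf{Step 3.} The key claim is then as follows: for each $y\in\interior(P)$ there is a tropical polytope $Q_y$ with $y\in\interior(Q_y)\subset Q_y\subset\interior(P)$. I plan to obtain it from the definition of the Berkovich topology. A basis of neighborhoods of $y$ consists of finite intersections of sets $\{a<|g|<b\}$ with $g\in K[Y]$. One can choose such a basic open set inside $\interior(P)$ and containing $y$, then shrink it to a closed set of the form $\bigcap_j\{-\log|g_j|\ge s_j\}$ intersected with $E_r$. The lower bounds $|g|>a$ are the problematic part, since they do not come from polynomials. I will try to handle them by restricting to a Laurent domain, or by noting that in a tropical polytope one only needs upper bounds on $|g|$. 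If this fails, I would fall back on the fact that polynomial hulls are cut out by upper bounds of $|f|$ only.

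\textbf{Step 4.} Granting Step 3, cover the compact set $L$ by finitely many sets $\interior(Q_{y_1}),\dots,\interior(Q_{y_m})$. The union of the $Q_{y_j}$ is generally not polynomially convex, so I would instead use the single polytope $Q\coloneqq\bigcap_i\{-\log|f_i|\ge t_i\}\cap\bigcap_j\{\cdots\}$. Concretely, I would build $Q$ as a sublevel set $\{\max(u_1,\dots)\le0\}$ of a Fubini--Study function with $L\subset Q\subset\interior(P)$. Then $\widehat{L}_Y\subset\widehat{Q}_Y=Q\subset\interior(P)$ by Lemma \ref{lem:basic_properties_poly_convex_and_Runge}(1) and the first part, which proves that $\interior(P)$ is Runge.
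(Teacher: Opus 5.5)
Your proof that $P$ is polynomially convex is correct and is the paper's argument. Your Step~2 objection is also correct, and it hits the paper's own proof of the Runge part. That proof shrinks to $P_\epsilon=\bigcap_i\{-\log|f_i|\ge t_i+\epsilon\}$ and asserts $\interior(P)=\bigcup_\epsilon\interior(P_\epsilon)$, from which it deduces $L\subset P_\epsilon$ for some $\epsilon$. This already fails for $Y=\BA^1$ and $P=\{|z|\le 1\}$. The rigid point $z=1$ has the open neighbourhood $\{|z-1|<1\}\subset P$, so it lies in $\interior(P)$. But $|z|(1)=1$, so it lies in no $P_\epsilon$. The same computation works over a trivially valued field. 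The shrinking argument only shows that the smaller open set $\bigcap_i\{-\log|f_i|>t_i\}$ is Runge.

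Your replacement does not close the gap, however.

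\textbf{Step 2.} It produces nothing inside $\interior(P)$: since $u\ge 0$ and $P=\{u=0\}$, the polytopes $\{u\le t\}$ are empty for $t<0$ and contain $P$ for $t\ge 0$.

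\textbf{Step 3.} This is not a reduction; it is the case $L=\{y\}$ of the statement itself. Any tropical polytope containing $y$ contains $\widehat{\{y\}}_Y$, by the first part and Lemma~\ref{lem:basic_properties_poly_convex_and_Runge}~(1). Conversely, once $\widehat{\{y\}}_Y\subset\interior(P)$, Proposition~\ref{prop:exhausion_of_Runge_domains_by_tropical_polytopes}~(1),(2) applied to the compact polynomially convex set $\widehat{\{y\}}_Y$ yields $Q_y$. So dropping the lower bounds $|g|>a$ from a basic neighbourhood always leaves a set containing $\widehat{\{y\}}_Y$. Whether this hull avoids $P\setminus\interior(P)$ is exactly what must be proved, and your sketch gives no control over it.

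\textbf{Step 4.} It fails as written: $\{\max_j u_j\le 0\}=\bigcap_j Q_{y_j}$ does not contain $L$. A finite cover of $L$ by polynomially convex sets inside $\interior(P)$ does not formally give a single one.

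\textbf{The missing idea.} You need an analytic separation of $L$ from $P\setminus\interior(P)$ by finitely many polynomials. In the example above, $\interior(P)$ is the disjoint union of the open residue classes $\{|g|<1\}$, where $g\in K^{\circ}[z]$ is monic with irreducible reduction (e.g.\ $g=z-a$). By compactness $L$ meets finitely many of them, say those of $g_1,\dots,g_m$. Then $f=\prod_{j} g_j$ satisfies $\sup_L|f|<1$, while $|f|=1$ at the Gauss point, which is the only point of $P\setminus\interior(P)$. Hence $\widehat{L}_Y\subset P\cap\{|f|\le\sup_L|f|\}\subset\interior(P)$.

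In general this input can come from the reduction theory of the affinoid domain $P$:
\begin{enumerate}
\item in the strict case, $\interior(P)$ is the preimage of the closed points of the reduction of $P$;
\item $L$ meets finitely many residue classes;
\item lift generators of the ideal of the corresponding closed points;
\item approximate these lifts by elements of $K[Y]$, which is dense in $\mathcal{O}(P)$ since $P$ is a Weierstrass domain in some $E_r$.
\end{enumerate}
Graded reduction or a base change is needed when $P$ is not strict, e.g.\ over a trivially valued field.
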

\begin{proof}
  Let $f_1, \cdots, f_k \in K[Y]$ and $t_1, \cdots, t_k \in \Q$ such that $P = \bigcap_{1 \le i \le k} \{-\log|f_i| \ge t_i\}$. Then, for any $x \in Y^{\an} \setminus P$, there exists $1 \le i \le k$ such that $-\log|f_i| < t_i$. Hence, $|f_i|(x) \le \sup_{P}|f_i|$, and $\widehat{P}_Y=P$. 
  
  Now we show that $\interior(P)$ is Runge. Take a compact subset $L \subset \interior(P)$. For each $\epsilon \in \Q_{>0}$, consider $P_{\epsilon}=\bigcap_{1 \le i \le k} \{-\log|f_i| \ge t_i+\epsilon\} \subset \interior(P)$. Since $P_{\epsilon}$ is a closed subset of the compact set $P$, $P_{\epsilon}$ is compact and hence a tropical polytope. Note that  $\interior(P)=\bigcup_{\epsilon \in \Q_{>0}} \interior(P_{\epsilon})$, and $P_{\epsilon_1} \supset P_{\epsilon_2}$ for any $\epsilon_1 > \epsilon_2$. Hence, there exists $\epsilon \in \Q_{>0}$ such that $L \subset P_{\epsilon}$. Then, by Lemma \ref{lem:basic_properties_poly_convex_and_Runge} (1), we have $\widehat{L}_Y \subset \widehat{(P_{\epsilon})}_Y=P_{\epsilon} \subset \interior(P)$. Therefore, $\interior(P)$ is Runge. 
\end{proof}

\begin{prop} \label{prop:exhausion_of_Runge_domains_by_tropical_polytopes}
  \begin{enumerate}
    \item For any compact subset $L \subset Y^{\an}$, $\widehat{L}_{Y}$ is compact.
    \item Let $L$ be a polynomially convex compact subset of $Y^{\an}$, and $U$ be an open neighborhood of $L$. Then, there exists a tropical polytope $P$ such that $L \subset \interior(P) \subset P \subset U$. 
    \item Assume that $K$ has a countable dense subfield.  Let $D \subset Y^{\an}$ be a Runge open subset. Then, there exists a sequence of tropical polytopes $P_1 \subset P_2 \subset \cdots \subset P_i \subset \cdots D$ such that 
    \[
    D=\bigcup_{i \ge 1} P_i, \quad \text{and } \quad P_i \subset \interior(P_{i+1}) \ \text{for each } i \ge 1.
    \]
  \end{enumerate}
\end{prop}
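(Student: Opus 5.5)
For (1), I will embed $Y\hookrightarrow \BA^k$ by generators $z_1,\dots,z_k$ of $K[Y]$. Since $L$ is compact, each $|z_i|$ is bounded on $L$, so $\sup_L|z_i|\le e^{r}$ for some $r\in\N$. Hence $\widehat{L}_Y\subset E_r$, the tropical polytope of Example \ref{example:tropical_polytopes_in_affine_varieties}. Moreover,
\[
\widehat{L}_Y=\bigcap_{f\in K[Y]}\{|f|\le \sup_L|f|\}
\]
is an intersection of closed subsets. A closed subset of the compact set $E_r$ is compact, so $\widehat{L}_Y$ is compact.

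For (2), I will use a compactness argument on $E_r\setminus U$, where $r$ is chosen so that $L\subset\interior(E_r)$. This is possible because $L$ is compact and the sets $\interior(E_r)$ exhaust $Y^{\an}$. Take $y\in E_r\setminus U$. Since $y\notin L=\widehat{L}_Y$, there is $f_y\in K[Y]$ with $|f_y|(y)>\sup_L|f_y|$. Choose $t_y\in\Q$ with
\[
-\log|f_y|(y)<t_y<-\log\sup_L|f_y|,
\]
which is possible since the value group contains $\Q$-values after rescaling; we can also just pick a rational number in the open interval. Then $L\subset\{-\log|f_y|>t_y\}$, and $y$ lies in the open set $\{-\log|f_y|<t_y\}$. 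By compactness of $E_r\setminus U$, finitely many such open sets, indexed by $y_1,\dots,y_m$, cover it. Set
\[
P\coloneqq E_r\cap\bigcap_j\{-\log|f_{y_j}|\ge t_{y_j}\}.
\]
This $P$ is compact and of the required form, so it is a tropical polytope, and $P\subset U$. Furthermore, $L\subset\interior(E_r)\cap\bigcap_j\{-\log|f_{y_j}|>t_{y_j}\}\subset\interior(P)$.

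For (3), I will first exhaust $D$ by an increasing sequence of compact sets $L_1\subset L_2\subset\cdots$ with $L_i\subset\interior(L_{i+1})$ and $D=\bigcup_i L_i$. This is the step where the countable dense subfield is needed. Using it, $Y^{\an}$ is an inverse limit of countably many tropicalizations defined over the countable subfield (\cite[Proposition 2.16]{Bou25}), so $Y^{\an}$ has a countable basis of relatively compact tropical-type open sets. Hence $D$, being open in the locally compact Hausdorff space $Y^{\an}$, is $\sigma$-compact and admits such an exhaustion. I expect this $\sigma$-compactness step to be the main obstacle; it is where I would spend care.

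Given the exhaustion, I build the $P_i$ inductively. Suppose $P_i\subset D$ has been constructed. Let $L'\coloneqq L_{i+1}\cup P_i$, which is a compact subset of $D$. Since $D$ is Runge, $\widehat{L'}_Y\subset D$, and by (1) it is compact. Since the hull is a hull, $\widehat{L'}_Y$ is polynomially convex. Apply (2) with $U=D$ to get a tropical polytope $P_{i+1}$ with
\[
\widehat{L'}_Y\subset\interior(P_{i+1})\subset P_{i+1}\subset D.
\]
Then $P_i\subset\interior(P_{i+1})$ and $L_{i+1}\subset P_{i+1}$, so $\bigcup_i P_i=D$.
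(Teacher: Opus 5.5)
Your proof is correct and follows the paper's argument essentially step by step. Part (1) bounds the hull inside some $E_r$ and uses closedness; part (2) covers the compact set $E_r\setminus U$ by finitely many sets on which some polynomial exceeds its supremum on $L$; part (3) is the same inductive hull-then-polytope construction. The one difference is that you justify the compact exhaustion of $D$ via second countability of $Y^{\an}$, which follows from the countable dense subfield, whereas the paper simply asserts that exhaustion.
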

\begin{proof}
  Let $E_1 \subset E_2 \subset \cdots E_r \subset \cdots \subset Y^{\an}$ be a sequence of tropical polytopes constructed in Example \ref{example:tropical_polytopes_in_affine_varieties} with $Y^{\an}=\bigcup_{r \in \N} \interior(E_r)$. 
  
  Now we prove statement (1). Denote $M_f \coloneqq \sup_{L}|f|$ for each $f \in K[Y]$. Then, we have $\widehat{L}_Y=\bigcap_{f \in K[Y]} \{|f| \le M_f\}$. Hence, $\widehat{L}_Y$ is a closed subset of $Y^{\an}$.  On the other hand, there exists $r_1 \in \N$ such that $L \subset E_{r_1}$. Then, Lemma \ref{lem:basic_properties_poly_convex_and_Runge} (1) and Lemma \ref{lem:tropical_polytopes_are_poly_convex} imply that $\widehat{L}_Y \subset \widehat{(E_{r_1})}_Y =E_{r_1}$. Therefore, $\widehat{L}_Y$ is compact. 

  For statement (2),since $Y^{\an}$ is locally compact Hausdorff,  we may assume that $U$ is a compact open neighborhood of $L$. Then, there exists $r_2 \in \N$ such that $U \subset E_{r_2}$. By definition, for each $x \in E_{r_2} \setminus U$, there exists $f_x \in K[Y]$ and $t_x \in \Q$ such that $-\log|f_x|(x)<t_x$ and $-\log|f_x|\ge t_x$ on $L$. Take an open neighborhood $U_x$ of $x$ such that $U_x \subset \{-\log|f_x|<t_x\}$. Then, $\{U_x\}_{x \in E_{r_2} \setminus U}$ is an open cover of the compact set $E_{r_2} \setminus U$. Therefore, there exists a finite subset $\{x_1, \cdots, x_l\} \subset E_{r_2} \setminus U$ such that $E_{r_2} \setminus U \subset \bigcup_{1 \le i \le l} U_{x_i}$. Define $P \coloneqq E_{r_2} \cap (\bigcap_{1 \le i \le l} \{ -\log|f_{x_i}| \le t_{x_i} \} )$. Then, $P$ is compact and thereby a tropical polytope. By construction, $L \subset \interior(P)=\interior(E_{r_2}) \cap (\bigcap_{1 \le i \le l} \{ -\log|f_{x_i}| < t_{x_i} \} )$, $P \subset E_{r_2}$ and $P \cap (E_{r_2} \setminus U)= \emptyset$. Therefore, we have $L \subset \interior(P) \subset P \subset U$.

  Now, we prove statement (3). There exists a sequence of compact subsets $L_1 \subset \cdots \subset L_i \subset \cdots D$ such that $D=\bigcup_{i \ge 1}\interior(L_i)$ and $L_i \subset \interior(L_{i+1})$ for each $i \ge 1$. We construct by induction for each $i \ge 0$ a tropical polytope $P_{i+1}$ such that $P_{i} \subset \interior(P_{i+1})$ and $L_{i+1} \subset P_{i+1}$ (we start by $P_0=\emptyset$). 
  By definition, $\widehat{(P_i \cup L_{i+1})}_{Y} \subset D$. By statement (1), $\widehat{(P_i \cup L_{i+1})}_{Y}$  is a polynomially convex compact subset of $Y^{\an}$. Then, statement (2) implies that there exists a tropical polytope $P_{i+1}$ such that $(P_i \cup L_{i+1}) \subset \widehat{(P_i \cup L_{i+1})}_{Y} \subset \interior(P_{i+1}) \subset P_{i+1} \subset D$. Therefore, the proposition is proved. 
\end{proof}

\begin{cor} \label{cor:FS_convex_hull}
  Let $L \subset Y^{\an}$ be a compact subset. Define the Fubini-Study convex hull of $L$ by
  \[
   \widehat{L}_Y^{\FS} \coloneqq \{x \in Y^{\an}, \, u(x) \le \sup_{L} u \text{ for any } u \in \FS(Y^{\an}) \}.
  \]
  Then, $\widehat{L}_Y=\widehat{L}_Y^{\FS}$.
\end{cor}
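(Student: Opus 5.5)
We prove the two inclusions separately.

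\emph{First inclusion: $\widehat{L}_Y^{\FS} \subset \widehat{L}_Y$.} Let $x \in \widehat{L}_Y^{\FS}$ and fix $f \in K[Y]$. We must show $|f|(x) \le \sup_L |f|$.

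\begin{itemize}
\item If $\sup_L|f| = 0$, then $f$ vanishes on $L$. Choose generators $z_1, \dots, z_k$ of $K[Y]$, and for $\lambda \in \Q$ very negative consider
\[
u_\lambda \coloneqq \max\{\log|f|, \log|z_1| + \lambda, \dots, \log|z_k| + \lambda, \lambda\} \in \FS(Y^{\an}).
\]
Adding $f$ to the generating set keeps it a generating set, so $u_\lambda$ is indeed Fubini--Study. On $L$ we have $\log|f| = -\infty$, so $\sup_L u_\lambda \le \lambda + C_L$, where $C_L \coloneqq \max\{0, \sup_L \max_j \log|z_j|\}$ is finite because $L$ is compact. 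Hence $\log|f|(x) \le u_\lambda(x) \le \lambda + C_L$ for every $\lambda$, and letting $\lambda \to -\infty$ gives $|f|(x) = 0$.
\item If $M \coloneqq \sup_L \log|f| > -\infty$, take the same $u_\lambda$ with $\lambda < M - C_L$. Then $\sup_L u_\lambda = M$, because the terms other than $\log|f|$ are bounded by $\lambda + C_L < M$ on $L$. Therefore $\log|f|(x) \le u_\lambda(x) \le M$.
\end{itemize}

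The only delicate point is using rational constants. If $M$ is not rational, we instead approximate: replace $\lambda$ by rationals and compare with $M + \epsilon$ for every $\epsilon > 0$, then let $\epsilon \to 0$.

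\emph{Second inclusion: $\widehat{L}_Y \subset \widehat{L}_Y^{\FS}$.} Let $x \in \widehat{L}_Y$ and $u = \max_i\{c_i \log|f_i| + C_i, \lambda\} \in \FS(Y^{\an})$. Since $c_i > 0$ and $x$ satisfies $\log|f_i|(x) \le \sup_L \log|f_i|$ for each $i$, every term satisfies
\[
c_i \log|f_i|(x) + C_i \le c_i \sup_L \log|f_i| + C_i \le \sup_L u,
\]
and trivially $\lambda \le \sup_L u$. Taking the maximum over the finitely many terms gives $u(x) \le \sup_L u$, so $x \in \widehat{L}_Y^{\FS}$.

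I expect no real obstacle. The only care needed is in the first inclusion: the $\lambda$-term and the added generators must not raise $\sup_L u_\lambda$, which the choice of $\lambda$ guarantees. The case $\sup_L|f| = 0$ also has to be handled separately, as done above.
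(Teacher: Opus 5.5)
Your proof is correct, and it takes a more direct route than the paper's.

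\textbf{How the paper argues.} The paper sends both inclusions through tropical polytopes.
\begin{itemize}
\item For $\widehat{L}_Y \subset \widehat{L}_Y^{\FS}$, it separates $x$ from $L$ by a sublevel set $P=\{u\le\lambda\}$ of a Fubini--Study function. This $P$ is a tropical polytope (Lemma \ref{lem:tropical_polytopes_and_FS_functions}), and its interior is Runge (Lemma \ref{lem:tropical_polytopes_are_poly_convex}).
\item For the reverse inclusion, it uses Proposition \ref{prop:exhausion_of_Runge_domains_by_tropical_polytopes}~(1),(2): compactness of $\widehat{L}_Y$, plus a finite-subcover construction of a tropical polytope $P$ with $\widehat{L}_Y\subset P$ and $x\notin P$. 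It then writes $P=\{u=0\}$ with $u\ge 0$ a Fubini--Study function.
\end{itemize}

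\textbf{How you argue.} You get $\widehat{L}_Y\subset\widehat{L}_Y^{\FS}$ from termwise monotonicity of $c_i\log|f_i|+C_i$ in $\log|f_i|$. You get the other inclusion from a single explicit function $\max\{\log|f|,\log|z_j|+\lambda,\lambda\}$ with $\lambda\ll 0$. This is the same ``pad with generators'' trick the paper uses inside Lemma \ref{lem:tropical_polytopes_and_FS_functions}. As a result you never need compactness of the hull, the Runge property, or any covering argument.

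\textbf{What each buys.} The paper's version fits the polytope/Runge framework it relies on immediately afterwards, for instance in Theorem \ref{thm:rossi_local_maximum_principle}. Yours is shorter and self-contained.

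\textbf{One small correction.} Your concern about rational constants is unnecessary. The only constant you have to choose is $\lambda$, and a rational $\lambda<M-C_L$ always exists whether or not $M$ is rational. The constant attached to $\log|f|$ is $0$, and $M$ itself never appears as a coefficient, so no approximation by $M+\epsilon$ is needed.
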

\begin{proof}
  Let $x \in Y^{\an}$. Suppose that $x \notin \widehat{L}_Y^{\FS}$, then there exists  $u \in \FS(Y^{\an})$ and $\lambda \in \Q$ such that $u(x)> \lambda$ and $u < \lambda $ on $L$. By Lemma \ref{lem:tropical_polytopes_and_FS_functions}, $P \coloneqq \{u \le \lambda\}$ is a tropical polytope, and by construction, $L \subset \interior(P)$. Then,  since $\interior(P)$ is Runge by Lemma \ref{lem:tropical_polytopes_are_poly_convex}, we have $\widehat{L}_Y \subset \interior(P)$. This proves that $x \notin \widehat{L}_Y$ and hence $\widehat{L}_Y \subset \widehat{L}_Y^{\FS}$. Conversely, suppose that $x \notin \widehat{L}_Y$, then one can take an open neighborhood $V$ of $\widehat{L}_Y$ such that $x \notin V$. Proposition \ref{prop:exhausion_of_Runge_domains_by_tropical_polytopes} (2) implies that there exists a tropical polytope such that $\widehat{L}_Y \subset P \subset V$, and Lemma \ref{lem:tropical_polytopes_and_FS_functions} implies that there exists  $u \in \FS(Y^{\an})$ such that  $u=0$ on $V$ and $u(x)>0$. This proves that $x \notin \widehat{L}_Y^{\FS}$ and $\widehat{L}_Y^{\FS} \subset \widehat{L}_Y$. 
\end{proof}

\subsection{Rossi's local maximum principle} \label{subsec:Rossi_local_maximum_principle}
The following is the non-archimedean version of Rossi's local maximum principle. See \cites{Ros60, Ros06}. The proof below was inspired by the proof in the complex setting given in \cite{Ros06}. 
\begin{thm} \label{thm:rossi_local_maximum_principle}
  Let $Y$ be an affine variety over $K$. Let $L$ be a compact subset of $Y^{\an}$ and let $z \in \widehat{L}_Y \setminus L$. Let $V$ be a relatively compact neighborhood of $z$ in $Y^{\an}$ with $V \cap L =\emptyset$.  Denote by $\partial V$ the boundary of $V$ in $Y^{\an}$.  Then, for any $f \in K[Y]$, we have $|f(z)| \le \sup_{\widehat{L}_Y \cap \partial V} |f|$.
\end{thm}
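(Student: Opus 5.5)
The plan is to argue by contradiction, transporting Rossi's Banach-algebraic argument to Berkovich spectra. Suppose $f \in K[Y]$ satisfies $|f|(z) > M \coloneqq \sup_{\widehat{L}_Y \cap \partial V}|f|$, and fix a rational number $\lambda$ with $M < e^{-\lambda} < |f|(z)$ (after scaling $f$ by a constant, or passing to a power $f^N$ and adjusting, this is possible even if $\Gamma_K$ is small). Consider the compact set
\[
S \coloneqq \widehat{L}_Y \cap \overline{V} \cap \{|f| \ge e^{-\lambda}\}.
\]
Since $|f| < e^{-\lambda}$ on $\widehat{L}_Y \cap \partial V$, we get $S \subset V$. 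Hence $S$ is disjoint from both $\partial V$ and $L$, and it contains $z$. The aim is to show that $z$ lies in the polynomial hull of the compact set $L' \coloneqq \widehat{L}_Y \setminus (V \cap \{|f| > e^{-\lambda}\}')$, where the prime denotes a slightly shrunk version. Then $z$ is "generated by" points where $|f| < |f|(z)$, and maximality of $|f|$ at $z$ over the hull gives the contradiction.

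First, I would set up the algebraic framework. Let $A$ be the completion of $K[Y]$ for the seminorm $\sup_L|\cdot| = \sup_{\widehat{L}_Y}|\cdot|$. Using Proposition \ref{prop:exhausion_of_Runge_domains_by_tropical_polytopes} (1) and the definition of $\widehat{L}_Y$, I would check that the Berkovich spectrum $\mathcal M(A)$ is identified with $\widehat{L}_Y$. A bounded multiplicative seminorm on $K[Y]$ is a point of $Y^{\an}$ dominated by $\sup_L$, which is exactly the hull condition. The Shilov boundary of $A$ is then contained in $L$.

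Second, I would localize to $\overline V$. The set $\widehat{L}_Y$ splits as $(\widehat{L}_Y \setminus V) \cup (\widehat{L}_Y \cap \overline V)$. I would consider the function $f$ on the compact piece $\widehat{L}_Y \cap \overline V$, and the open subset $\widehat{L}_Y \cap \{|f| > e^{-\lambda}\} \cap V$. Its closure in $\widehat{L}_Y$ stays inside $V$, so it is a compact subset of $\mathcal M(A)$ that is open relative to $\widehat{L}_Y \cap \{|f| > e^{-\lambda}\}$. In the Laurent domain $\{|f| \ge e^{-\lambda}\}$ of $\mathcal M(A)$ (with affinoid-type algebra $A\langle e^{\lambda} f^{-1}\rangle$), the set $S$ is therefore clopen. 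The NA Shilov idempotent theorem (Berkovich's holomorphic functional calculus, \cite{Ber90} Thm. 7.4.1 style) then produces an idempotent $e$ in $A\langle e^{\lambda} f^{-1}\rangle$ with $e = 1$ on $S$ and $e = 0$ on the rest of $\{|f| \ge e^{-\lambda}\}$.

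Third, I would use $e$ to get the contradiction. Approximate $e \cdot f^N$ by elements of $A\langle e^{\lambda}f^{-1}\rangle$, i.e.\ by expressions $\sum_j a_j f^{-j}$ with $a_j \in K[Y]$. Multiplying by a high power of $f$ produces genuine polynomials $p_N \in K[Y]$ with $|p_N|(z) \approx |f|(z)^N$. Then compare $|p_N|(z)$ with $\sup_L|p_N|$. On $L$ the idempotent vanishes wherever $|f| \ge e^{-\lambda}$, because $S \cap L = \emptyset$. Where $|f| < e^{-\lambda}$, the factor $f^N$ makes $|p_N| \le C e^{-\lambda N}$. For $N \gg 0$ this yields $|p_N|(z) > \sup_L |p_N|$, contradicting $z \in \widehat{L}_Y$.

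The main obstacle is the second step: justifying that the Shilov idempotent theorem applies in this Berkovich setting to the (non-affinoid, merely uniform) Banach algebra $A$ and its Laurent localization. I also need the identification of $\mathcal M(A\langle e^{\lambda}f^{-1}\rangle)$ with $\widehat{L}_Y \cap \{|f| \ge e^{-\lambda}\}$. I would first try to reduce to a genuine affinoid situation. Using Proposition \ref{prop:exhausion_of_Runge_domains_by_tropical_polytopes} (2), replace $\widehat{L}_Y$ by a tropical polytope $P$ approximating it from outside, chosen so that the inequality on $\partial V$ persists; $P$ is an affinoid domain of $Y^{\an}$. On the affinoid Laurent domain $P \cap \{|f| \ge e^{-\lambda}\}$, clopen decomposition gives idempotents by the standard affinoid theory, and the rest of the argument proceeds as above.
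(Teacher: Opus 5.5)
Your overall strategy is viable: it is a non-archimedean transcription of Rossi's original Banach-algebra proof. Steps 1--2 are sound. $\mathcal M(A)=\widehat L_Y$ by the power trick, and $S$ is clopen in the Laurent domain because $|f|<e^{-\lambda}$ on $\widehat L_Y\cap\partial V$. The weak point is Step~3, which as written does not close.

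\textbf{Where Step~3 fails.} Suppose you approximate the idempotent $e$ to precision $\epsilon$ and then multiply by a power of $f$. On $L\cap\{|f|\ge e^{-\lambda}\}$, where $e=0$, the only bound you get is $\epsilon\,(\sup_L|f|)^N$. Since $z\in\widehat L_Y$ forces $\sup_L|f|\ge|f|(z)$, this never beats $|f|(z)^N$ for fixed $\epsilon$. Likewise, your constant $C$ on $L\cap\{|f|<e^{-\lambda}\}$ must be independent of $N$, which the sketch does not ensure.

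\textbf{The fix.} Tie the truncation index to the exponent, and use the pointwise bound $|f|\ge e^{-\lambda}$ on the Laurent domain. Put $R\coloneqq e^{\lambda}$ and $B\coloneqq A\{R^{-1}T\}/(fT-1)$. Lift $e$ to $\sum_{j\ge0}a_jT^j$ with $a_j\in A$ and $\|a_j\|R^j\to0$, let $C\coloneqq\max_j\|a_j\|R^j$, and set $p_N\coloneqq\sum_{j=0}^{N}a_jf^{N-j}\in A$. In $B$ one has $f^Ne=p_N+\sum_{j>N}a_jT^{j-N}$, and the tail has norm $\max_{j>N}\|a_j\|R^{j-N}\le CR^{-N}$. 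This gives three estimates:
\begin{enumerate}
\item $|p_N|\le CR^{-N}$ at every point of $\mathcal M(B)$ where $e=0$, in particular on $L\cap\{|f|\ge R^{-1}\}$, whose points lie outside $\overline V$;
\item $|p_N|(z)=|f|(z)^N$ by the ultrametric inequality, as soon as $CR^{-N}<|f|(z)^N$;
\item $|p_N|\le\max_{j\le N}\|a_j\|R^{j-N}\le CR^{-N}$ on $L\cap\{|f|<R^{-1}\}$.
\end{enumerate}
Since $R\,|f|(z)>1$, for $N\gg0$ you get $|p_N|(z)>\sup_L|p_N|$, contradicting $z\in\mathcal M(A)$.

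The obstacle you flagged is harmless. Your affinoid reduction works: a tropical polytope $P$ is a Weierstrass domain in some $E_r$, so $K[Y]$ is dense in its affinoid algebra. Moreover $P\cap\{|f|\ge R^{-1}\}$ is an affinoid space in which the corresponding set $S$ is a union of connected components, which gives the idempotent. The remark about the hull of $L'$ in your first paragraph plays no role and can be dropped.

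\textbf{Comparison with the paper.} The paper's proof avoids Banach algebras altogether. It fixes $t\in\Q$ between $\sup_{\widehat L_Y\cap\partial V}\log|f|$ and $\log|f|(z)$, and takes a neighbourhood $U$ of $\widehat L_Y\cap\partial V$ on which $\log|f|<t$. By Proposition~\ref{prop:exhausion_of_Runge_domains_by_tropical_polytopes}~(2) it picks a tropical polytope $P\supset\widehat L_Y$ disjoint from the compact set $\partial V\setminus U$, and a Fubini--Study function $u\ge0$ with $P=\{u=0\}$. Then $Cu>\log|f|-t$ on $\partial V$ for $C\gg0$. So $v\coloneqq\max\{Cu,\log|f|-t\}$ on $V$ and $v\coloneqq Cu$ off $V$ is a psh PL function (Remark~\ref{rmk:extension_of_psh_functions_by_sup}). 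It is Fubini--Study outside the compact set $\overline V$, vanishes on $L$, and is positive at $z$. Lemma~\ref{lem:psh_PL_functions_are_pshlogreg}~(1) gives a Fubini--Study $v'$ uniformly close to $v$, contradicting $\widehat L_Y=\widehat L_Y^{\FS}$ (Corollary~\ref{cor:FS_convex_hull}).

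Thus in the paper the gluing of psh functions plays the role of your idempotent, and no growth estimate is needed. The price is that Lemma~\ref{lem:psh_PL_functions_are_pshlogreg}~(1) rests on Theorem~\ref{thm:GPSH_PL_equals_to_LPSH_PL}, the Chambert-Loir--Ducros theorem that a locally $\omega$-psh PL function is globally $\omega$-psh, together with regularization by Fubini--Study functions. Your argument uses only spectral theory of Banach (or affinoid) algebras and is independent of that positivity input. The paper's argument fits its pluripotential framework and reuses tools it needs later anyway.
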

\begin{proof}
  Suppose that there exists $f \in k[Y]$ such that $\log|f|(z)>t > \sup_{\widehat{L}_Y \cap \partial V} \log |f|$ for some $t \in \Q$. 
  Since $\log|f|$ is continuous, there exists an open neighborhood $U$ of $\widehat{L}_Y \cap \partial V$ such that $\log |f|<t$ on $U$. Since $\partial V \setminus U$ is closed and  $(\partial V \setminus U) \cap \widehat{L}_Y=\emptyset$, there exists an open neighborhood $U'$ of $\widehat{L}_Y$ such that $U' \cap (\partial V \setminus U)=\emptyset$. Appyling Proposition \ref{prop:exhausion_of_Runge_domains_by_tropical_polytopes} (2) to $(\widehat{L}_Y, U')$, we obtain a tropical polytope with $\widehat{L}_Y \subset P \subset U'$. By Lemma \ref{lem:tropical_polytopes_and_FS_functions}, there exists $u \in \FS(Y^{\an})$ such that $u=0$ on $P$ and $u>0$ on $Y^{\an} \setminus P$. In particular, $u>0$ on the compact set $\partial V \setminus U$. Thus, there exists $C>0$ such that $Cu>\log|f|-t$ on $\partial V \setminus U$. On the other hand, $Cu \ge 0 > \log|f|-t$ on $U \cap \partial V$. Therefore, we have $Cu> \log|f|-t$ on $\partial V$.  Since $Cu$ and $\log|f|$ are continuous,  this implies that $\overline{W} \subset V$, where $W \coloneqq \{x \in V, \, Cu(x) < \log|f|(x)-t\}$.

  Now, we define the function $v$ on $Y^{\an}$ by
  \[
  v=Cu \text{ on } Y^{\an} \setminus V, \qquad v=\max\{Cu, \log|f|-t\} \text{ on } V. 
  \]
  By construction, $v \ge 0$ on $Y^{\an}$. Since $Cu(z)=0$, $v(z)=\log|f|(z)-t$.  Since $L \subset Y^{\an} \setminus V$, we have $v=Cu=0$ on $L$. On the other hand, by Remark \ref{rmk:extension_of_psh_functions_by_sup}, we have $v \in \PSH \cap \PL(Y^{\an})$. Since $\overline{V}$ is compact in $Y^{\an}$, and $v =Cu$ on $Y^{\an} \setminus V$ with $Cu \in \FS(Y^{\an})$,  Lemma \ref{lem:psh_PL_functions_are_pshlogreg} (1) implies that there exists $v' \in \FS(Y^{\an})$ such that $\left\|v-v'\right\|_{C^0(Y^{\an})}<\frac{1}{3} (\log|f|(z)-t)$. Then, we have $v'(z)> \sup_{L}v'$. By Corollary \ref{cor:FS_convex_hull}, $z \notin \widehat{L}_{Y}^{FS}=\widehat{L}_Y$. This contradicts with the assumption, and the theorem is proved. 
\end{proof}

\subsection{Distance functions}
\begin{prop} \label{prop:Runge_domains_to_psh}
  Assume that $K$ admits a countable dense subfield. Let $Y$ be an affine variety over $K$. Denote $\widetilde{Y} \coloneqq Y \times \BA_{K}^{1}$. Let $p \colon  \widetilde{Y} \to Y$ be the projection to the first component, and $w \colon \widetilde{Y} \to \BA^1_K$ be the projection to the second component. We view $w$ as a element in $K[\widetilde{Y}]$. Let $D \subset \widetilde{Y}^{\an}$ be a Runge domain such that $(Y \times \{0\})^{\an} \subset D$.  Define the distance function on $Y^{\an}$ by
  \[
  d_D(y) \coloneqq \sup \left\{ s \in \R \,|\, p^{-1}(y)  \cap \{|w|<s \} \subset  D \right\} \in (0,+\infty].
  \]
  Then, $-\log d_D \in \PSH(Y^{\an})$. Moreover, suppose that there exists $v \in \FS(Y^{\an})$ such that $-\log d_D \le v+ O(1)$,   then $-\log d_D \in \PSHlogreg(Y^{\an})$.
\end{prop}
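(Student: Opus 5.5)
The plan is to exhaust $D$ by tropical polytopes and express $-\log d_D$ as a decreasing limit of explicit tropically convex PL functions built from those polytopes. By Proposition \ref{prop:exhausion_of_Runge_domains_by_tropical_polytopes} (3), which uses the countable dense subfield, write $D=\bigcup_i P_i$ with $P_i\subset\interior(P_{i+1})$ tropical polytopes. By Lemma \ref{lem:tropical_polytopes_and_FS_functions}, $P_i=\{u_i=0\}$ for some $u_i\in\FS(\widetilde Y^{\an})$ with $u_i\ge 0$. For each $i$ set
\[
d_i(y)\coloneqq\sup\{s>0 \,|\, p^{-1}(y)\cap\{|w|\le s\}\subset P_i\}\in[0,+\infty).
\]
This quantity is finite because $P_i$ is compact. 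Since the $P_i$ increase and every closed disc fibre $p^{-1}(y)\cap\{|w|\le s\}$ with $s<d_D(y)$ is compact in $D$, the $d_i$ increase to $d_D$. Hence $-\log d_i$ decreases pointwise to $-\log d_D$. This function is usc (as a decreasing limit) and finite, so it is generically finite.

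The next step is to show that each $-\log d_i$, or a suitable modification of it, is tropically convex PL. Write $P_i=\bigcap_j\{|f_j|\le e^{-t_j}\}$ with $f_j\in K[Y][w]$, and expand $f_j=\sum_k a_{jk}(y)w^k$. The key computation is that, for a polynomial in $w$ over $\CH(y)$, the sup of $|f_j|$ on the closed disc $\{|w|\le s\}$ of the fibre equals the Gauss norm $\max_k|a_{jk}|(y)s^k$. This is the non-archimedean maximum principle on the fibre $p^{-1}(y)\simeq \BA^{1,\an}_{\CH(y)}$. Therefore
\[
-\log d_i(y)=\max_{j,k}\ \frac{1}{k}\bigl(\log|a_{jk}|(y)+t_j\bigr),
\]
where $k=0$ terms are constraints forcing $d_i=0$. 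To handle those constraints, I would first shrink or replace $P_i$ so that the $k=0$ conditions are automatically satisfied near the relevant region. Concretely, $(Y\times\{0\})^{\an}\subset D$ is not compact, so I would work on $\interior(E_r)\subset Y^{\an}$ for the exhausting polytopes $E_r$ of Example \ref{example:tropical_polytopes_in_affine_varieties}. Then $E_r\times\{0\}\subset \interior(P_i)$ for $i\gg0$, and the $k=0$ constraints hold strictly on $E_r$. On $\interior(E_r)$, for $i\ge i(r)$, this displays $-\log d_i$ as a max of $\Q$-affine combinations of $\log|a_{jk}|$, i.e.\ a tropically convex PL function. Such a function is finite, since only terms with $a_{jk}\ne0$ contribute, and $-\infty$ values of $\log|a_{jk}|$ drop out harmlessly; this needs a small check with Remark \ref{rmk:PL_functions_near_boundaries}. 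This gives $-\log d_D\in\PSH(Y^{\an})$ by Definition \ref{def:psh_functions_on_local_domains}.

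For the second claim, assume $-\log d_D\le v+O(1)$ with $v\in\FS(Y^{\an})$. Then $-\log d_D\in\PSHlog(Y^{\an})$, and on each $\interior(E_r)$ we have the decreasing sequence $\{-\log d_i\}_{i\ge i(r)}\subset\TConv\cap\PL(\interior(E_r))$ converging to it. Lemma \ref{lem:approximable_on_exhausting_tropical_polytopes_implies_pshlogreg} then gives $-\log d_D\in\PSHlogreg(Y^{\an})$.

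I expect the main obstacle to be the Gauss-norm step. It requires identifying $p^{-1}(y)\cap\{|w|\le s\}$ with a closed disc over $\CH(y)$, and checking that $\sup|f|$ on it is $\max_k|a_k|s^k$. One must also ensure that $d_i$ really equals the sup over the closed-disc condition, including the edge effects between $\le$ and $<$ and the case $y$ not of type "field", which is where the convergence $d_i\uparrow d_D$ must be argued carefully via compactness.
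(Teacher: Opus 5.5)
Your proposal is essentially the paper's proof: exhaust $D$ by tropical polytopes via Proposition~\ref{prop:exhausion_of_Runge_domains_by_tropical_polytopes}~(3), compute the polytope distance functions fibrewise by the Gauss-norm formula, and conclude on each $\interior(E_r)$ with Lemma~\ref{lem:approximable_on_exhausting_tropical_polytopes_implies_pshlogreg}. The paper states the Gauss-norm step as Lemma~\ref{lem:tropicalization_one_variable_polynomial}, using $\interior(P_r)$, open discs and strict constant-term conditions; your closed-disc version needs only $(y,0)\in P_i$.

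One correction: $-\log d_D$ need not be finite, because $d_D=+\infty$ is allowed (for $D=\widetilde{Y}^{\an}$ one gets $-\log d_D\equiv-\infty$). So generic finiteness is a tacit extra hypothesis, which the paper's proof also passes over; it does hold in the paper's application, where $-\log d_D=u$ is weakly psh.
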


\begin{proof} [Proof of Proposition \ref{prop:Runge_domains_to_psh}]
  By Proposition \ref{prop:exhausion_of_Runge_domains_by_tropical_polytopes} (3), there exists a sequence of polytopes $P_1 \subset \cdots \subset  P_r \subset \cdots \subset D$ exhausting $D$. For each $r \ge 1$, denote the open subset $\Omega_r \coloneqq p^{\an} \left( \interior(P_r) \cap (Y \times \{0\})^{\an} \right) \subset Y^{\an}$. Since $(Y \times \{0\})^{\an} \subset D$, $p^{\an}|_{(Y \times \{0\})^{\an}}$ is an isomorphism, and $\bigcup_{r \ge 1} \interior(P_r)=D$, we have $\bigcup_{r \ge 1} \Omega_r=Y^{\an}$. 
  
  We claim that for each $r \ge 1$, $-\log d_{\interior(P_r)}   \in \TConv \cap \PL (\Omega_r)$. Granted this, we show how to finish the proof of the proposition. Let $E_1 \subset \cdots \subset E_r \subset \cdots \subset Y^{\an}$ be a sequence of tropical polytopes exhausting $Y^{\an}$ constructed in Example \ref{example:tropical_polytopes_in_affine_varieties}. Fix $r_1 \ge 1$. Since $E_{r_1}$ in compact $Y^{\an}$, there exist $r_2$ such that $E_{r_1} \subset \Omega_{r_2}$. Let $y \in \interior(E_{r_1})$. It's clear that $\left\{ d_{\interior(P_r)}(y) \right\}_{r \ge r_2}$ is a increasing sequence.  Note that  
  \[
    p^{-1}(y) \cap D=\bigcup_{r \ge r_2} \left( p^{-1}(y) \cap \interior(P_r) \right),
  \]
  For any $s < d_D(y)$, $p^{-1}(y) \cap \{|w| \le s\}$ is a compact subset of $p^{-1}(y) \cap D$. Therefore, there exists $r \ge r_2$ such that $p^{-1}(y) \cap \{|w| \le s\} \subset p^{-1}(y) \cap \interior(P_r)$, and hence $s  \le d_{\interior(P_r)}(y)$. This implies that 
  \[
    \lim_{r \to+\infty} d_{\interior(P_r)}(y)= d_D(y),
  \]
  and hence  $\left\{ -\log \left( d_{\interior(P_r)} \right) |_{\interior(E_{r_1})}   \right\}_{r \ge r_2} \subset \TConv \cap \PL (\interior(E_{r_1}))$ is a decreasing function sequence converging pointwise to $-\log d_D$ on $\interior(E_{r_1})$. Therefore, $-\log d_D \in \PSH(Y^{\an})$ by definition, and Lemma \ref{lem:approximable_on_exhausting_tropical_polytopes_implies_pshlogreg}  implies that if $-\log d_D \le v +O(1)$ for some $v \in \FS(Y^{\an})$, then $-\log d_D \in \PSHlogreg(Y^{\an})$. 

  Now, it remains to prove the claim. Fix $r \ge 1$ and suppose that $P_r = \bigcap_{1 \le j \le m} \{\log|g_j| \le c_j \}$, with $g_j \in K[\widetilde{Y}]$ and $c_j \in \Q$. Suppose that $g_j=f_{j,0}+f_{j,1}w+ \cdots f_{j,k_{j}}w^{k_j}$ with $f_{j,i} \in K[Y]$. Then, by definition, $\Omega_r =\bigcap_{1 \le j \le m} \{\log|f_{j,0}| < c_j \}$.  Let $y \in \Omega_r$, and let $\CH(y)$ be the completed residue field at $y$. Note that $p^{-1}(y)$ is isomorphic to the Berkovich analytification of $\Spec \CH(y)[w]$  over $\CH(y)$. For any $g \in K[\widetilde{Y}]=K[Y][w]$, denote by $\overline{g}$ the image of $g$ in $\CH(y)[w]$. Note that $\left(\log|g| \right)|_{p^{-1}(y)}=\log|\overline{g}|$. Therefore, we have 
  \[
  p^{-1}(y) \cap \interior(P_r)=\bigcap_{1 \le j \le m} \{\log|\overline{g_j}| < c_j \} \subset \left( \Spec \CH(y)[w] \right)^{\an},
  \]
  and 
  \[
    -\log d_{\interior(P_r)} = 
    \inf \left\{t \in \R \,|\,  \{-\log|w|>t\} \subset \bigcap_{1 \le j \le m} \{\log|\overline{g_j}| < c_j \}  \right\}.
  \]
  Then, the claim is implied by Lemma \ref{lem:tropicalization_one_variable_polynomial}. 
  \end{proof}

  \begin{lemma} \label{lem:tropicalization_one_variable_polynomial}
    Let $F$ be a complete NA field. For each $1 \le j \le m$, let $g_j=f_{j,0}+f_{j,1}w+ \cdots f_{j,k_{j}}w^{k_j} \in F[w]$ with $f_{j,i} \in F$.   Suppose that $\log |f_{j,0}| < c_j$ for each $1 \le j \le m$. Denote $P \coloneqq \bigcap_{1 \le j \le m} \{\log|g_j| \le c_j\} \subset (\BA_{F}^1)^{\an}$. Then, we have 
    \[
      \inf \left\{ t \in \R \,|\, \{ -\log|w| >t \} \subset \interior(P) \right\} =\max_{1 \le j \le m} \max_{1 \le i \le k_j} \left\{ \frac{1}{i} \left( \log|f_{j,i}|-c_j \right) \right\}.
    \]
  \end{lemma}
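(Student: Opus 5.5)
The plan is to compute, for each $s>0$, the supremum of $|g_j|$ over the open disc $D(s)\coloneqq\{|w|<s\}=\{-\log|w|>-\log s\}$ in $(\BA^1_F)^{\an}$, and then turn the inclusion $D(s)\subset\interior(P)$ into explicit inequalities on the coefficients $f_{j,i}$. Set $t=-\log s$ and write
\[
M\coloneqq\max_{1\le j\le m}\max_{1\le i\le k_j}\tfrac1i\bigl(\log|f_{j,i}|-c_j\bigr)\in[-\infty,+\infty),
\]
with the convention that terms with $f_{j,i}=0$ contribute $-\infty$.

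\textbf{Step 1 (reduction from $\interior(P)$ to $P$).} Since $D(s)$ is open, $D(s)\subset\interior(P)$ holds if and only if $D(s)\subset P$. So it suffices to decide when $|g_j|\le e^{c_j}$ on $D(s)$ for every $j$.

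\textbf{Step 2 (supremum of $|g|$ on a disc).} For $g=\sum_i a_iw^i\in F[w]$, I will show that
\[
\sup_{D(s)}|g|=\max_i|a_i|s^i .
\]
The upper bound comes from the ultrametric inequality: every $x\in(\BA^1_F)^{\an}$ is a multiplicative seminorm, so $|g|(x)\le\max_i|a_i|\,|w|(x)^i<\max_i|a_i|s^i$ whenever $|w|(x)<s$, except in the trivial case $g$ constant. For the lower bound, use the Gauss points $\zeta_{0,r}$ with $r<s$, defined by $|\sum a_iw^i|(\zeta_{0,r})=\max_i|a_i|r^i$. These are points of $(\BA^1_F)^{\an}$ satisfying $|w|(\zeta_{0,r})=r<s$, so they lie in $D(s)$. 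Letting $r\to s$ gives the reverse inequality.

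\textbf{Step 3 (conclusion).} By Steps 1 and 2, $D(s)\subset\interior(P)$ holds if and only if $\max_i|f_{j,i}|s^i\le e^{c_j}$ for every $j$. The $i=0$ term is automatic, because $|f_{j,0}|<e^{c_j}$ by hypothesis. The conditions for $i\ge1$ read $\log|f_{j,i}|-c_j\le it$, that is, $t\ge\frac1i(\log|f_{j,i}|-c_j)$. Hence the set of admissible $t$ is exactly the closed ray $[M,+\infty)$, or all of $\R$ if $M=-\infty$. Its infimum is therefore $M$, which is the claimed identity.

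\textbf{Main obstacle.} The only delicate point is the lower bound in Step 2. It requires that the Gauss points $\zeta_{0,r}$ exist in $(\BA^1_F)^{\an}$, meaning that the Gauss norm is a multiplicative seminorm on $F[w]$, which is standard. It also requires being careful with strict versus non-strict inequalities at the boundary radius $r=s$. Working with the open disc together with the open-set reduction of Step 1 handles both issues cleanly.
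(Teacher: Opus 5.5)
Your proof is correct and is essentially the paper's argument. Both rest on evaluating the $g_j$ at the Gauss points $\zeta_{0,r}$ (the paper's quasi-monomial points $v_a$, $a=-\log r$) and on the ultrametric bound $|g|(x)\le\max_i|a_i|\,|w|(x)^i$; your remark that the open disc lies in $\interior(P)$ iff it lies in $P$ is a slightly cleaner way to handle the interior than the paper's computation of $\iota^{-1}(\interior(P))$.

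One harmless slip: in Step 2, $\max_i|a_i|\,|w|(x)^i<\max_i|a_i|s^i$ fails whenever the right-hand maximum is attained at $i=0$ (e.g.\ $g=1+w$, $s<1$). You only use the non-strict bound, so nothing is affected.
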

  \begin{proof}
    For each $a \in \BT^{1}$, denote by $v_{a} \in (\BA^1_{F})^{\an}$ the quasi-monomial semivaluation of weight $a$ with respect to the coordinate $w$. Then, $v_{a}(w)=-\log|w|(v_{a})=a$, and for each $1 \le j \le m$, we have
    \[
      -\log|g_j| (v_a) = \min_{1 \le i \le k_j} \{ ia -\log|f_{j,i}| \}.
    \]
    Denote by $\iota  \colon \BT^1 \hookrightarrow (\BA^1_F)^{\an}$ the inclusion map defined by $\iota(a)=v_{a}$. Since $\log|f_{j,0}|< c_j$, it's not hard to see that 
    \[
    \iota^{-1} \left( \{\log|g_j| < c_j\}  \right) = \left(  \max_{1 \le i \le k_j} \left\{ \frac{1}{i} \left( \log|f_{j,i}|-c_j \right) \right\}, +\infty \right].
    \]
    Thus, 
    \bens
    \iota^{-1} \left( \interior(P) \right)=\left(  \max_{1 \le j \le m} \max_{1 \le i \le k_j} \left\{ \frac{1}{i} \left( \log|f_{j,i}|-c_j \right) \right\}, +\infty \right].
    \eens
    On the other hand, note that for each $v \in (\BA_F^1)^{\an}$ with $a = -\log|w|(v) \in \iota^{-1} \left( \interior(P) \right)$, we have $-\log|g_j|(v) \ge -\log|g_j|(v_a) >-c_j$ for each $1 \le j \le m$, which implies that $v \in \interior(P)$. Therefore, the lemma is proved. 
\end{proof}

\subsection{Psh functions and polynomially convex subsets}

\begin{lemma} \label{lem:psh_to_Runge_domain_smooth_varieties}
  Let $Y$ be an affine variety over $K$. Let $u \in \PSHlogreg(Y^{\an})$. Then, 
  \begin{enumerate}
    \item $\{u <0\} \subset Y^{\an}$ is a Runge domain. 
    \item Suppose further that $u \in C^{0}(Y^{\an})$ and $\{u \le 0\}$ is  compact in $Y^{\an}$, then $\{u \le 0\}$ is a polynomially convex in $Y^{\an}$. 
  \end{enumerate}
\end{lemma}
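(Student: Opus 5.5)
Both parts reduce to the description of polynomial hulls by Fubini--Study functions, Corollary \ref{cor:FS_convex_hull}, which says $\widehat{L}_Y=\widehat{L}_Y^{\FS}$. Since $u\in\PSHlogreg(Y^{\an})$, we fix a decreasing sequence $\{u_i\}_{i\ge 1}\subset\FS(Y^{\an})$ converging pointwise to $u$. The key inequality is that every $x\in \widehat{L}_Y$ satisfies $u_i(x)\le \sup_L u_i$ for all $i$. Hence
\[
u(x)=\lim_{i} u_i(x)\le \lim_i \sup_L u_i .
\]

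For statement (1), let $L\subset\{u<0\}$ be compact. Since $u$ is usc and $L$ is compact, $\sup_L u<0$; pick $\delta>0$ with $u<-\delta$ on $L$. Each $u_i$ is continuous and the sequence decreases to $u$ pointwise. By the Dini-type argument for decreasing sequences of continuous functions converging to a usc function, the sets $\{u_i<-\delta/2\}$ form an increasing open cover of $L$. By compactness there is $i_0$ with $u_{i_0}<-\delta/2$ on $L$, so $\sup_L u_{i_0}\le -\delta/2$. For $x\in\widehat{L}_Y=\widehat{L}_Y^{\FS}$ this gives $u(x)\le u_{i_0}(x)\le \sup_L u_{i_0}\le-\delta/2<0$. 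Thus $\widehat{L}_Y\subset\{u<0\}$, and $\{u<0\}$ is Runge.

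For statement (2), set $L\coloneqq\{u\le 0\}$, which is compact by hypothesis. It suffices to show $\widehat{L}_Y\subset L$. Since $u$ is now continuous and $L$ is compact, Dini's theorem shows that $u_i\to u$ uniformly on $L$, so $\sup_L u_i\to \sup_L u\le 0$. For $x\in \widehat{L}_Y^{\FS}$ we get $u(x)\le u_i(x)\le \sup_L u_i$ for every $i$. Letting $i\to\infty$ gives $u(x)\le 0$, i.e. $x\in L$. Hence $\widehat{L}_Y=L$.

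Two points need care. First, the Dini step in (1) uses only upper semicontinuity of the limit; it is the standard compactness argument applied to the decreasing open sets $\{u_i<-\delta/2\}$, whose union contains $L$ because $u<-\delta$ there. Second, both parts silently rely on Corollary \ref{cor:FS_convex_hull}, which transfers the polynomial hull to the Fubini--Study hull; no other input is needed.
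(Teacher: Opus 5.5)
Your proof is correct. Part (2) is essentially the paper's argument, namely Dini plus Corollary~\ref{cor:FS_convex_hull}. The paper approximates $u$ by a single Fubini--Study function uniformly on a compact set containing $L\cup\{x\}$. You only need uniform convergence on $L$ together with $u(x)\le u_i(x)$. In fact, your usc Dini argument from (1), applied to the sets $\{u_i<\epsilon\}$, shows that continuity of $u$ is not needed there at all.

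Part (1) takes a different route. The paper exhibits $\{u<0\}$ as an increasing union of Runge sets. It identifies $\{u_i<0\}$ with the interior of the tropical polytope $\{u_i\le 0\}$ and then invokes Lemmas~\ref{lem:tropical_polytopes_are_poly_convex} and \ref{lem:basic_properties_poly_convex_and_Runge}~(2). You instead fix the compact set $L$, pick one index $i_0$ with $\sup_L u_{i_0}<0$, and bound the hull directly. This sidesteps the paper's identification, which fails in general. For $u_i=\max\{\log|z|,0\}$ on $\BA^1$ one has $\interior\{u_i\le 0\}\supset\{|z|<1\}\neq\emptyset=\{u_i<0\}$. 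The paper's argument is easily repaired by working with $\{u_i<0\}$ directly.

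Moreover, the only inclusion you use, $\widehat{L}_Y\subset\widehat{L}_Y^{\FS}$, follows immediately from the definitions. At a point of $\widehat{L}_Y$, each term $c_j\log|f_j|+C_j$ of a Fubini--Study function is at most its supremum over $L$, and taking the maximum with $\lambda$ preserves this. So your proof needs only compactness and the definitions. In particular, it does not use the tropical-polytope arguments by which Corollary~\ref{cor:FS_convex_hull} is proved.
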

\begin{proof}
  By definition, there exists a decreasing function sequence $\{u_i\}_{i \ge 1}$ converging pointwise to $u$. For each $i \ge 1$, $P_i \coloneqq \{u_i \le 0\}$ is a tropical polytope, and hence $\interior(P_i)$ is Runge by Lemma \ref{lem:tropical_polytopes_are_poly_convex}. Moreover, we have $\interior(P_i)=\{u_i <0\} \subset \interior(P_{i+1})=\{u_{i+1} <0\}$, and  $\{u <0\}=\bigcup_{i \ge 1} \interior(P_i)$. Then, Lemma \ref{lem:basic_properties_poly_convex_and_Runge} (2) implies that $\{u <0\}$ is Runge. This proves statement (1). For statement (2), take $x \in Y^{\an}$ such that $t \coloneqq u(x)>0$. Denote $L \coloneqq \{u \le 0\}$. By Example \ref{example:tropical_polytopes_in_affine_varieties}, there exists a compact subset $E$ such that $L \cup \{x\} \subset E \subset Y^{\an}$. Then, Dini's theorem implies that there exists $v \in \FS(Y^{\an})$ such that $\left\|v-u\right\|_{C^0(E)} \le \frac{t}{3}$. This implies further that  $v(x)> \sup_{L}v$. By Corollary \ref{cor:FS_convex_hull}, $x \notin \widehat{L}_Y^{\FS}=\widehat{L}_Y$. This proves $L=\widehat{L}_Y$. 
\end{proof}

\begin{prop} \label{prop:pshlogreg_runge_domain}
  Let $Y$ be an affine variety over $K$. Let $f \in K[Y]$. Let $Z \coloneqq \{f=0\} \subset Y$ be the hypersurface in $Y$ define by $f$, and let $Y_{f}=Y \setminus Z=\Spec K[Y]_f$ be the open affine subvariety of $Y$.  Let $L \subset (Y_f)^{\an}$ be a compact subset, and let $x \in \widehat{L}_Y \cap (Y_f)^{\an}$. Let $u \in \FS(Y^{\an})$ and let $v \in \PSHlogreg((Y_f)^{\an})$ such that $v \le u+O(1)$ on $(Y_f)^{\an}$. Then, $v(x) \le \sup_{L}v$.
\end{prop}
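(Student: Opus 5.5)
The plan is to extend $v$, after a small perturbation, to a regularizable psh function on all of $Y^{\an}$ using Proposition \ref{prop:extending_pshlogreg_by_adding_small_functions}. We then apply the Fubini--Study description of the polynomial hull (Corollary \ref{cor:FS_convex_hull}) to the members of a decreasing Fubini--Study approximation.

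\textbf{Step 1: extend $v$.} Fix $c \in \Q_{>0}$. Proposition \ref{prop:extending_pshlogreg_by_adding_small_functions} applies because $v \le u + O(1)$ on $(Y_f)^{\an}$ with $u \in \FS(Y^{\an})$. It gives that $\widetilde{v}_c$, defined as $v + c\log|f|$ on $(Y_f)^{\an}$ and $-\infty$ on $Z^{\an}$, lies in $\PSHlogreg(Y^{\an})$.

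\textbf{Step 2: the sub-mean inequality on $Y^{\an}$.} I claim that for any $w \in \PSHlogreg(Y^{\an})$ and any $x \in \widehat{L}_Y$ we have $w(x) \le \sup_L w$.
\begin{enumerate}
\item Choose a decreasing sequence $\{w_i\}_{i\ge1} \subset \FS(Y^{\an})$ converging pointwise to $w$.
\item By Corollary \ref{cor:FS_convex_hull}, $x \in \widehat{L}_Y = \widehat{L}_Y^{\FS}$, so $w_i(x) \le \sup_L w_i$ for each $i$.
\item $L$ is compact, $w$ is usc, and the $w_i$ are continuous and decrease to $w$. A Dini-type argument then gives $\sup_L w_i \downarrow \sup_L w$. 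Concretely, for any $t > \sup_L w$, the sets $\{w_i \ge t\}\cap L$ are compact and decreasing with empty intersection, so one of them is empty.
\item Letting $i \to \infty$ yields $w(x) = \lim_i w_i(x) \le \sup_L w$.
\end{enumerate}
This also covers the case $\sup_L w = -\infty$.

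\textbf{Step 3: remove the perturbation.} Apply Step 2 to $\widetilde{v}_c$. Since $x \in (Y_f)^{\an}$ and $L \subset (Y_f)^{\an}$, this gives
\[
v(x) + c\log|f|(x) \le \sup_L \left(v + c\log|f|\right) \le \sup_L v + c \sup_L \log|f|.
\]
Here $|f|(x) > 0$, and $\log|f|$ is bounded on the compact set $L \subset (Y_f)^{\an}$, hence finite on both sides. If $\sup_L v = -\infty$ the inequality forces $v(x) = -\infty$, which is what we want. Otherwise, let $c \to 0$ along rationals to obtain $v(x) \le \sup_L v$.

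\textbf{Main obstacle.} I expect the main obstacle to be Step 1, and it is already handled by Proposition \ref{prop:extending_pshlogreg_by_adding_small_functions}. The only remaining subtlety is the Dini/usc argument in Step 2, item 3, which uses only compactness of $L$ and continuity of the $w_i$.
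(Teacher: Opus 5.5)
Your proof is correct, but it takes a different and shorter route than the paper's.

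\textbf{The paper's route.} The paper works with Fubini--Study functions on $Y_f$ directly. For fixed $c$ it puts the perturbation on the comparison function $u_c=\max\{u,-c\log|f|\}\in\FS((Y_f)^{\an})$. It then proves an auxiliary lemma: every $w\in\FS((Y_f)^{\an})$ with $w\le u_c+O(1)$ satisfies $w(x)\le\sup_L w+2cM_1$, where $M_1$ bounds $|\log|f||$ on $L\cup\{x\}$. The lemma is proved term by term. For a term $c_1\log|g_1/f^{r_1}|+t_1$, divisorial points tending to a point of $Z^{\an}$ where $g_1$ does not vanish force $c_1r_1\le c$, and then the hull inequality for $g_1\in K[Y]$ applies. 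Finally $v$ is approximated by such $w$ via Lemma~\ref{lem:psh_regularization_with_growth_control}~(2) and Dini, and one lets $c\to 0$.

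\textbf{Your route.} You put the perturbation $c\log|f|$ on $v$ itself. Proposition~\ref{prop:extending_pshlogreg_by_adding_small_functions}, whose hypotheses are exactly those of the statement, then places you in $\PSHlogreg(Y^{\an})$. There the approximants are Fubini--Study functions built from $K[Y]$, so the hull inequality is immediate. In fact you only use the trivial inclusion $\widehat{L}_Y\subset\widehat{L}_Y^{\FS}$, which follows directly from the definitions; you do not need the full Corollary~\ref{cor:FS_convex_hull}.

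\textbf{What each buys.} The growth bookkeeping with $\max\{g,-\epsilon\log|f|\}$ that the paper redoes by hand is already inside the proof of the extension proposition. So your argument is more modular. It also avoids both the divisorial-point argument and the algebraic claim $c_1r_1\le c$. As written, that claim tacitly needs $g_1$ to be nonvanishing somewhere on $Z^{\an}$, which $g_1\notin(f)$ does not guarantee when $(f)$ is not radical. For example, with $f=x^2$ on $\mathbb{A}^1$, the term $1/x=x/x^2$ has $c_1r_1$ as large as $2c$. Both routes ultimately rest on Lemma~\ref{lem:psh_regularization_with_growth_control}~(2), hence on $K$ being discretely or trivially valued. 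Both produce an error of order $c$ that disappears as $c\to 0$.
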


\begin{proof} [Proof of Proposition \ref{prop:pshlogreg_runge_domain}]
  Fix $x \in \widehat{L}_Y \cap (Y_f)^{\an}$. Since $L \cup \{x\} \subset (Y_f)^{\an}$ is compact and $\log|f|$ is a real-valued  continuous function on $(Y_f)^{\an}$, there exists $M_1>0$ such that $-M_1 \le \log|f| \le M_1$ on $L \cup \{x\}$. Fix $c \in \Q_{>0}$. Note that $u_c \coloneqq \max \{u, -c\log|f|\}=\max\{u, c\log|f^{-1}|\} \in \FS((Y_f)^{\an})$. We have the following lemma.
  
  \begin{lemma} \label{lem:control_Fubini-Study_functions_Y_f}
    For any $w \in \FS((Y_f)^{\an})$ such that $w \le u_c+O(1)$ on $(Y_f)^{\an}$, we have $w(x) \le \sup_{L} w+2cM_1$.
  \end{lemma}
  \begin{proof}
    Let $w=\max_{1 \le i \le k}\{c_i \log|g_i/f^{r_i}|+t_i, \lambda\}$, with $\lambda \in \Q$, $c_i \in \Q_{>0}$, $t_i \in \Q$ and   $g_i/f^{r_i} \in K[Y]_f$ for each $1 \le i \le k$. When $w(x)=\lambda$, by definition, we have $\sup_L w \ge \lambda$. When $w(x)>\lambda$, we may assume that $w(x)=c_1 \log|g_1/f^{r_1}|(x)+t_1$. Denote $w' \coloneqq c_1 \log|g_1/f^{r_1}|+t_1$.  It is enough to show that $w'(x) \le \sup_{L} w'+4cM_1$. 
    
    If $g_1/f^{r_1} \in K[Y]$, then $w(x) \le \sup_{L} w'$ due to the assumption that $x \in \widehat{L}_Y$. Otherwise, we may assume that $g_1 \in K[Y] \setminus I(f)$ and $r_1>0$. Here, $I(f)$ denote the pricipal ideal generated by $f$ in $K[Y]$.  We claim that $c_1r_1  \le c$. Granted this, we have 
    \bens
    w'(x) &=& c_1 \log|g_1|(x) +t_1 -c_1r_1\log|f|(x)    \\
    &\le& c_1 \log|g_1|(x) +t_1 +cM_1 \qquad (\text{since } \left| \log|f|(x) \right| \le M_1 )  \\
    &\le&  \sup_{L} \left( \log|g_1|+t_1 \right)+cM_1 \qquad (\text{since } x \in \widehat{L}_Y ) \\
    &\le& \sup_{L} w' +2cM_1 \qquad \qquad \qquad (\text{since } \sup_{L}\log|f| \le M_1),
    \eens
    and the lemma is proved.

    Now we prove the claim. 
    Since $g_1 \in K[Y] \setminus I(f)$, there exists $z \in Z^{\an}$ such that $\log|g_1|(z)=\epsilon >0$. Take $M_2 >0$ such that $z \in \{u < M_2\} \subset Y^{\an}$.   Since $Y^{\div}$ is dense in $Y^{\an}$, and $\log|g_1|$, $\log|f|$ are continuous on $Y^{\an}$, there exists a sequence $\{z_j\}_{j \ge 1} \subset Y^{\div} \cap \{u < M_2\} \subset (Y_f)^{\an}$ such that 
    \[
    \lim_{j \to _\infty} \log|g_1|(z_j)=\epsilon, \quad \text{and} \quad \lim_{j \to +\infty} \log|f|(z_j)=-\infty.
    \]
    Note that for each $j \ge 1$, we have
    \[
      w'(z_j)-u_c(z_j) \ge c_1 \log|g_1|(z_j)+t_1+ \min \left\{ \left(c-c_1r_1\right)\log|f|(z_j), -c_1r_1 \log|f|(z_j)-M_2 \right\}
    \]
    Suppose that $c_1r_1 > c$, then $\lim_{j \to +\infty}(w'(z_j)-u_c(z_j))=+\infty$. This contradicts with the assumption that $w' \le w \le u_c+O(1)$ on $(Y_f)^{\an}$. Therefore, the claim is proved. 
  \end{proof}
  
  Back to the proof of Proposition \ref{prop:pshlogreg_runge_domain}. If $v(x)=-\infty$, then there is nothing to prove. Thus, we may assume that $v(x)>-\infty$. Since $v \le u+O(1) \le u_c +O(1)$ on $Y^{\an}$, by Lemma \ref{lem:psh_regularization_with_growth_control} (2) and Dini's theorem, there exists  $w \in \FS((Y_f)^{\an})$ such that
  \[
    v \le w \le 2u_c +O(1) \text{ on } (Y_f)^{\an},
  \]
  \[
    v(x) \le w(x) \le v(x)+c, \quad \text{and} \quad  \sup_{L} v \le \sup_{L} w \le \sup_{L} v+c.
  \] 
  Now, Lemma \ref{lem:control_Fubini-Study_functions_Y_f} implies that $w(x) \le \sup_{L} w+4cM_1$. This implies further that $v(x) \le \sup_{L}|v|+(4M_1+1)c$. By letting $c \to 0$, the proposition is proved.
\end{proof}

\section{Weakly psh functions} \label{sec:weakly_psh_functions}
\begin{definition} \label{def:weakly_psh}
  Let $K$ be a complete NA field. Let $Y$ be an affine variety over $K$ and let $U \subset Y^{\an}$ be an open subset. 
  \begin{enumerate}
    \item A function $u \colon U \to \R \cup \{-\infty\}$ is called weakly psh on if there exists a birational proper morphism $\pi \colon \widetilde{Y} \to Y$ such that $\varphi \circ \pi^{\an} \in \PSH((\pi^{\an})^{-1}(U))$. We use $\WPSH(U)$ to denote the set of weakly psh functions on $U$.
    \item We say that $u \in \WPSH(Y^{\an})$ is of \emph{logarithmic growth} if there exists $v \in \FS(Y^{\an})$ such that $u \le v+O(1)$ on $Y^{\an}$. We use $\WPSHlog(Y^{\an})$ to denote the set of weakly psh function of logarithmic growth on $Y^{\an}$. 
  \end{enumerate}
\end{definition}

\begin{prop} \label{prop:psh_equals_weakly_psh_on_smooth_varieties}
  Let $Y$ be an affine variety over $K$. Suppose that $K$ is discretely valued or trivially valued. Let $U \subset Y^{\an}$ be an open subset.
  \begin{enumerate}
    \item Let $u \in \WPSH(U)$. Then, $u$ is usc on $U$. Moreover,  for any $y \in U$, we have 
    \[
    u(y)=\limsup_{x \to y,\, x \in U \cap Y^{\div}}u(x).
    \] 
    \item Suppose further that $K$ is of equicharacteristic $0$ and $Y$ is  smooth  over $K$. Then, we have $\WPSHlog(Y^{\an})=\PSHlogreg(Y^{\an})$. 
  \end{enumerate}
\end{prop}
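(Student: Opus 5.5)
For (1), fix $u \in \WPSH(U)$ and a proper birational $\pi \colon \widetilde{Y} \to Y$ with $\widetilde{u} \coloneqq u \circ \pi^{\an} \in \PSH(\widetilde{U})$, where $\widetilde{U} \coloneqq (\pi^{\an})^{-1}(U)$. The plan is to descend along $\pi$, following the proof of Proposition \ref{prop:descent_psh_functions_resolution}.

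For upper semicontinuity, note that $\pi^{\an}$ is proper, hence closed and surjective onto $U$. For $C \in \R$ we have $\{u \ge C\} = \pi^{\an}(\{\widetilde{u} \ge C\})$. The set $\{\widetilde{u} \ge C\}$ is closed in $\widetilde{U}$ because $\widetilde{u}$ is usc, and the restriction $\widetilde{U} \to U$ is still proper. So its image is closed in $U$, and $u$ is usc.

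For the limsup formula, the inequality $\limsup_{x \to y,\, x \in U \cap Y^{\div}} u(x) \le u(y)$ is immediate from upper semicontinuity. For the reverse inequality, fix $y \in U$ and pick any $\widetilde{y} \in (\pi^{\an})^{-1}(y)$. Proposition \ref{prop:local_comparison_divisorial_points_to_full_spaces} (1) applies on $\widetilde{U}$, where $\widetilde{Y}$ is embedded in a projective variety (Remark \ref{rmk:psh_is_analytic_invariant} shows $\PSH$ does not depend on this choice). It gives
\[
u(y) = \widetilde{u}(\widetilde{y}) = \limsup_{\widetilde{x} \to \widetilde{y},\, \widetilde{x} \in \widetilde{U} \cap \widetilde{Y}^{\div}} u(\pi^{\an}(\widetilde{x})).
\]
Since $\pi$ is birational, $\pi^{\an}$ maps divisorial points to divisorial points, and $\pi^{\an}(\widetilde{x}) \to y$ whenever $\widetilde{x} \to \widetilde{y}$. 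Hence the right-hand side is at most $\limsup_{x \to y,\, x \in U \cap Y^{\div}} u(x)$, which gives equality.

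For (2), the inclusion $\PSHlogreg(Y^{\an}) \subset \WPSHlog(Y^{\an})$ is trivial: take $\pi = \mathrm{id}$. For the converse, let $u \in \WPSHlog(Y^{\an})$ with $u \le v + O(1)$ for some $v \in \FS(Y^{\an})$, and let $\pi$ be as above. The idea is to transfer $u$ to a Zariski dense open set on which $\pi$ is an isomorphism, and then extend back with Theorem \ref{thm:pshlogreg_extension}.
\begin{enumerate}
\item Choose $f \in K[Y]$ nonzero such that $\pi$ is an isomorphism over $Y_f$. Then $u|_{(Y_f)^{\an}}$ is identified with $\widetilde{u}|_{(\pi^{-1}(Y_f))^{\an}}$, so it lies in $\PSH((Y_f)^{\an})$.
\item Show that $u|_{(Y_f)^{\an}}$ has logarithmic growth. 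Here $v|_{Y_f} \in \PSH((Y_f)^{\an})$, and $v$ is dominated by a Fubini--Study function of $Y_f$, for instance $\max\{v, \log|f^{-1}|\}$ suitably normalized.
\item Since $Y_f$ is smooth affine in equicharacteristic $0$, Proposition \ref{prop:pshlog_equals_pshlogreg_on_smooth_affine_varieties} gives $u|_{(Y_f)^{\an}} \in \PSHlogreg((Y_f)^{\an})$.
\item Apply Theorem \ref{thm:pshlogreg_extension} with $U = Y_f$ and $g = v$. This produces a unique $\widetilde{w} \in \PSHlogreg(Y^{\an})$ extending $u|_{(Y_f)^{\an}}$.
\item Show $\widetilde{w} = u$ on all of $Y^{\an}$. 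By the uniqueness argument in Theorem \ref{thm:pshlogreg_extension}, $\widetilde{w}$ is the limsup of its values on divisorial points, all of which lie in $(Y_f)^{\an}$. By part (1), $u$ has the same limsup description. Since the two functions agree on $(Y_f)^{\an} \supset Y^{\div}$, they agree everywhere.
\end{enumerate}

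I expect the main obstacle to be step 2, namely the growth condition. The functions $\FS(Y^{\an})$ restricted to $Y_f$ are not themselves in $\FS((Y_f)^{\an})$, because they need not involve generators of $K[Y]_f$. The hypothesis of Theorem \ref{thm:pshlogreg_extension} is, however, stated with $g \in \FS(Y^{\an})$ and the bound $u \le g + O(1)$ only on $U^{\an}$. So what remains to check is that $u|_{(Y_f)^{\an}} \in \PSHlog$, which holds using the Fubini--Study function $\max\{v, -\log|f|\}$ on $Y_f$. The rest of the argument is bookkeeping.
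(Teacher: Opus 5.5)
Your proposal is correct and follows the paper's proof essentially step for step. For (1) you use closedness of the proper map $\pi^{\an}$ and push divisorial points forward from $\widetilde{Y}$ via Proposition \ref{prop:local_comparison_divisorial_points_to_full_spaces}~(1); for (2) you restrict to a dense open on which $\pi$ is an isomorphism, apply Proposition \ref{prop:pshlog_equals_pshlogreg_on_smooth_affine_varieties} and Theorem \ref{thm:pshlogreg_extension}, and identify the extension with $u$ through the limsup description. Your explicit log-growth check on $Y_f$ via $\max\{v,\log|f^{-1}|\}\in\FS((Y_f)^{\an})$ supplies a detail the paper leaves implicit (just choose $f$ not vanishing identically on any irreducible component, so that $Y_f$ is Zariski dense).
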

\begin{proof}
  To see statement (1), let $\pi \colon \widetilde{Y} \to Y$ be a birational proper morphism such that $u \circ \pi^{\an} \in \PSH(\pi^{-1}(U))$. Since $u \circ \pi^{\an}$ is usc, $\{u \circ \pi^{\an} \ge C\} \subset (\pi^{\an})^{-1}(U)$ is a closed subset.  By \cite[Proposition 3.4.7]{Ber90}, $\pi^{\an}$ is a proper map. Since $(\pi^{\an})^{-1}(U)$ is locally compact Hausdorff, $\pi^{\an}$ is a closed map. Therefore, $\{u \ge C\}=\pi^{\an} \left( \{u \circ \pi^{\an} \ge C\} \right) \subset U$ is a closed subset. This implies that $u$ is usc on $U$. 
  Now fix $y \in U$, and take $\widetilde{y} \in (\pi^{an})^{-1}(y)$. By Proposition \ref{prop:local_comparison_divisorial_points_to_full_spaces} (1), there exists a sequence $\{x_i\}_{i \ge 1} \subset U \cap \widetilde{Y}^{\div}$ such that $\lim_{i \to +\infty} (u \circ \pi^{\an}) (\widetilde{x_i})=u \circ \pi^{\an}(\widetilde{y})$. Then, $\{x_i \coloneqq \pi^{\an}(\widetilde{x_i})\}_{i \ge 1} \subset U \cap Y^{\div}$.  Since $\pi^{\an}$ is continuous, we have $x_i \to y$ in $U$, and $\lim_{i \to +\infty} u(x_i)=u(y)$. This proves statement (1). 
  
  Now we prove statement (2). Fix $u \in \WPSHlog(Y^{\an})$. By definition, there exists a closed subvariety $Z \subset Y$   such that $Y \setminus Z$ is an Zariski dense affine open subvariety of $Y$, and  $u|_{(Y \setminus Z)^{\an}} \in \PSH((Y \setminus Z)^{\an})$. Then, Proposition \ref{prop:pshlog_equals_pshlogreg_on_smooth_affine_varieties} implies that $u|_{(Y \setminus Z)^{\an}} \in \PSHlogreg((Y \setminus Z)^{\an})$, and  Theorem \ref{thm:pshlogreg_extension} implies that  $u|_{(Y \setminus Z)^{\an}}$ can be extended uniquely to $u' \in \PSHlogreg(Y^{\an})$.  Combining Proposition \ref{prop:local_comparison_divisorial_points_to_full_spaces} (1) with statement (1), we infer that $u'=u$ on $Y^{\an}$. This proves statement (2). 
\end{proof}

The main theorem of this section is the following NA analogue of Fornaess-Narasimhan theorem. 

\begin{thm} \label{thm:NA_FN_theorem}
  Assume that $K$ is a complete discretely valued or trivially valued NA field of equicharacteristic $0$ such that $K$ admits a countable dense subfield.  Let $u \in \WPSHlog(Y^{\an})$. Then, $\{y \in Y, \, u(y)<0 \}$ is a Runge open subset in $Y$, and $u \in \PSHlogreg(Y^{\an})$.
\end{thm}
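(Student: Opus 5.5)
We follow the strategy sketched in the introduction: the Runge property of a Hartogs-type domain first, then the distance function. Write $u \in \WPSHlog(Y^{\an})$, fix $v \in \FS(Y^{\an})$ with $u \le v + O(1)$, and let $\pi \colon \widetilde Y \to Y$ be a resolution of singularities (Hironaka, equicharacteristic $0$) with $u\circ\pi^{\an} \in \PSH(\widetilde Y^{\an})$. Our plan is to prove the Runge statement for the sublevel set of $u$ and for the auxiliary domain
\[
D \coloneqq \{(y,w) \in (Y\times \BA^1)^{\an} \,|\, \log|w| + u(y) < 0\},
\]
and then deduce $u \in \PSHlogreg(Y^{\an})$ from Proposition \ref{prop:Runge_domains_to_psh}.

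\textbf{Step 1 (the Runge statement).} Here $\log|w| + u\circ p$ is again weakly psh of logarithmic growth on $Y\times\BA^1$, so it suffices to show that $\{U<0\}$ is Runge for an arbitrary $U \in \WPSHlog$. Let $L \subset \{U<0\}$ be compact and $x \in \widehat L_Y$; we must show $U(x)<0$. Choose a dense affine open $Y_f \subset Y$ over which $\pi$ is an isomorphism, with $Y_f$ smooth. Then $U|_{Y_f^{\an}}$ is psh of logarithmic growth on the smooth affine variety $Y_f$, so it lies in $\PSHlogreg(Y_f^{\an})$ by Proposition \ref{prop:psh_equals_weakly_psh_on_smooth_varieties}~(2). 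When $x \in Y_f^{\an}$ and $L \subset Y_f^{\an}$, Proposition \ref{prop:pshlogreg_runge_domain} gives $U(x)\le \sup_L U<0$. To leave the locus where $\pi$ fails to be an isomorphism, we perturb $U$ by a small strictly psh correction $\epsilon\rho$, using the functions built in the subsection on strictly psh functions (this is where the countable dense subfield enters), and apply Rossi's local maximum principle (Theorem \ref{thm:rossi_local_maximum_principle}) on a relatively compact neighborhood of a hypothetical bad point $x$ with $U(x) \ge 0$. This should produce a point where the maximum is attained in the interior in a way that contradicts Proposition \ref{prop:pshlogreg_runge_domain}. For points on $Z=\{f=0\}$ we argue by noetherian induction on $\dim Z$. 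We combine Lemma \ref{lem:basic_properties_poly_convex_and_Runge}~(3),(4) with norms (traces) of $U$ under a finite projection $Z\to\BA^{\dim Z}$ (the subsection on norms of weakly psh functions), which reduce the problem to $\BA^{\dim Z}$, where the conclusion is already known.

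\textbf{Step 2 (the distance function).} Given Step 1, $D$ is Runge in $(Y\times\BA^1)^{\an}$. Since $u<+\infty$, it contains $(Y\times\{0\})^{\an}$. For fixed $y$ the fibre condition $\log|w|<-u(y)$ gives $d_D(y)=e^{-u(y)}$ (with $d_D=+\infty$ where $u=-\infty$), so $-\log d_D = u$. The hypothesis $u \le v+O(1)$ is exactly the growth condition in Proposition \ref{prop:Runge_domains_to_psh}, which yields $u \in \PSHlogreg(Y^{\an})$.

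\textbf{Main obstacle.} The hard step is Step 1 at points of $\widehat L_Y$ lying over the singular or exceptional locus, that is, extending Proposition \ref{prop:pshlogreg_runge_domain} beyond $Y_f$. The difficulty is that a point need not have a polynomially convex neighborhood basis. We would first try to replace the local polydisc argument of Fornaess--Narasimhan by a global one: apply Theorem \ref{thm:rossi_local_maximum_principle} to $V$ equal to the interior of a tropical polytope avoiding $L$, and control $\widehat L_Y\cap\partial V$ using the trace of $U$ under a finite map. The expectation is that injectivity of this map on the relevant fibres makes the norm argument go through.
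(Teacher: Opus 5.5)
Your Step~2 is correct and is how the paper concludes. It applies Proposition~\ref{prop:Runge_domains_to_psh} to $D=\{\log|w|+u\circ p<0\}$, using $-\log d_D=u$ and the growth bound. The part of Step~1 for hull points off the bad locus is essentially Lemma~\ref{lem:aux_lemma_1}. For that you also need to add $\epsilon\log(\sum_i|f_i|)$ (Example~\ref{example:psh_function_defining_closed_subvariety}), which is $-\infty$ on $Z^{\an}$. This forces the maximum of the perturbed function over $\widehat L_Y$ to lie off $Z^{\an}\cup L$, in a chart where $U$ is regularizable, and only then do Rossi's principle and Proposition~\ref{prop:pshlogreg_runge_domain} apply.

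\textbf{The gap.} The case $x\in\widehat L_Y\cap Z^{\an}$ is the whole difficulty of the theorem, and your proposal only states an expectation there. The mechanism you suggest cannot work. Lemma~\ref{lem:basic_properties_poly_convex_and_Runge}~(3),(4) concern compact subsets of $Z^{\an}$ and go from $Y$ to $Z$. Here $L\not\subset Z^{\an}$, and $x$ enters $\widehat L_Y$ because of polynomial values on $Y^{\an}\setminus Z^{\an}$. A trace of $U|_Z$ under $Z\to\BA^{\dim Z}$ is a function on $Z$ alone. It cannot be compared with suprema over compacts not contained in $Z^{\an}$, and the compacts $\partial\CU\cap\widehat L_Y$ produced by Rossi's principle are of that kind. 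In addition, your hypersurface $Z=\{f=0\}$ is in general singular, and your noetherian induction has no stated inductive hypothesis.

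\textbf{The missing idea.} The paper proceeds as follows.
\begin{enumerate}
\item It stratifies by iterated singular loci $Y_k\setminus Y_{k+1}$. Each bad stratum $Z$ is then nonsingular, and the inductive hypothesis is exactly that of Lemma~\ref{lem:aux_lemma_1}: $u$ is $\PSHlogreg$ on an affine cover of the complement.
\item Near a point of $Z$ it takes a finite surjective $\pi$ from a neighbourhood in $Y$ onto an open $S\subset\BA^n$, $n=\dim Y$, with $\pi|_Z$ an isomorphism onto its image (Lemma~\ref{lem:generic_noether_normalization}).
\item It stratifies further so that $Z$ and $\pi^{-1}(\pi(Z))\setminus Z$ are disjoint and closed (proof of Proposition~\ref{prop:weakly_psh_induces_Runge_under_injective_conditions}).
\item Lemma~\ref{lem:aux_lemma_2} separates the sheet $U_1\ni x$ from the other sheets $U_2$. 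It shows that $\sum_{y\in\pi^{-1}(s)\cap U_1}\mult_y(\pi)=\gamma$ is constant.
\item It sets $\widetilde v=\max\{u+c\rho',-\delta\}$ on $U_1$, with $\rho'$ the peak perturbation of Lemma~\ref{lem:strictly_psh_functions}, and $\widetilde v=0$ on $U_2$.
\item It uses $N_\pi(\widetilde v)\in\PSHlogreg(S^{\an})$. This is Proposition~\ref{prop:norm_finite_morphisms_smooth_target}, which is itself nontrivial and rests on Theorem~\ref{thm:pshlogreg_extension} and Lemma~\ref{lem:psh_extension_on_smooth_curves}.
\item Then $w^\sharp=N_\pi(\widetilde v)\circ\pi$ is regularizable on all of $Y$. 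Rossi's principle gives $x\in\widehat{\CK}_Y$ for $\CK=\partial\CU\cap\widehat L_Y$.
\item Injectivity of $\pi|_Z$ and constancy of $\gamma$ give $w^\sharp(x)=\gamma\widetilde v(x)=0$, while $w^\sharp<0$ on $\CK$. This is a contradiction.
\end{enumerate}
So the finite map must be a Noether normalization of $Y$, not of $Z$, that is injective on $Z$, and the norm is taken of a function on $Y$. Without this step or a substitute, Step~1, and hence the theorem, is not proved.
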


Now we introduce weakly $\omega$-psh functions and a global version of Theorem \ref{thm:NA_FN_theorem}.
\begin{definition}
  Let $X$ be a projective variety over $K$ and $\omega \in \nef(X)$. Then, a function $\varphi \colon X^{\an} \to \R \cup \{-\infty\}$ is called weakly $\omega$-psh if for any $x \in X^{\an}$, there exist an open neighborhood $U$ of $x$ and a potential $g$ of $\omega$ over $U$ such that $g+\varphi \in \WPSH(U)$.
\end{definition}

\begin{thm} \label{thm:global_NA_FN_theorem}
  Assume that $K$ is a complete discretely valued or trivially valued NA field of equicharacteristic $0$ such that $K$ admits a countable dense subfield. Let $X$ be a projective variety over $K$ and $\omega \in \nef(X)$ with $c_1(L(\omega)) \in \Amp(X)$. Then, $\WPSHo(X^{\an})=\GPSHo(X^{\an})$.
\end{thm}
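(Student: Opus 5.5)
The inclusion $\GPSHo(X^{\an}) \subset \WPSHo(X^{\an})$ is immediate. By Corollary \ref{cor:GPSH_is_LPSH}, $g+\varphi \in \PSH(U)$ for any local potential $g$ of $\omega$. Taking $\pi=\mathrm{id}$ in Definition \ref{def:weakly_psh} then shows $g+\varphi$ is weakly psh. The substance is the converse. The plan is to reduce it to Theorem \ref{thm:NA_FN_theorem} on an affine chart given by a Fubini--Study potential, and then pass back to $X$ via Lemma \ref{lem:psh_regularization_with_growth_control} (1).

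Fix $\varphi \in \WPSHo(X^{\an})$. First I reduce to $\varphi$ bounded. Since $\max\{\varphi,M\}$ stays weakly $\omega$-psh (take the max of pullbacks on a common resolution, Lemma \ref{lem:finite_sup_lpsh}), Theorem \ref{thm:decreasing_limit_of_omega-psh_functions_is_omega-psh} lets me assume $\varphi$ is bounded. Next I replace $\omega$ by a closed positive form. By Proposition \ref{prop:def_of_omega_psh_does_not_depend_on_theta} (1) it suffices to show $\varphi \in \GPSH_{\omega+\epsilon\theta}$ for all small $\epsilon$. Up to adding a PL $(\omega+\epsilon\theta)$-psh function and scaling, I can assume $\omega=\theta \in \amp(X)$ with $L(\theta)$ very ample. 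Then I cover $X$ by finitely many charts $Y_j = X\setminus Z(s_j)$, with sections $s_j \in H^0(X,L(\theta))$ having no common zero. On each chart, $g_j=-\log|s_j|_\theta \in \FS(Y_j^{\an})$ by Proposition \ref{prop:potential_of_positive_form_is_Fubini-Study} (1).

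On $Y_j$ I set $u_j \coloneqq g_j+\varphi$. The local weak psh-ness of $g+\varphi$ for local potentials $g$ transfers to $g_j+\varphi$, because potentials differ by $\log|f|$ with $f$ invertible. The key point is that $u_j$ is globally weakly psh on $Y_j^{\an}$, via a single birational proper $\pi$. I would obtain this by taking one resolution $\pi\colon X'\to X$ dominating the finitely many local modifications, which is possible by compactness. I can use a resolution of singularities (Hironaka, equicharacteristic $0$) so that $X'$ is smooth. Then $\pi^*\varphi$ is locally $\pi^*\theta$-psh on $X'$. The pullback of a psh function along a further birational map stays psh, since tropically convex PL functions pull back to tropically convex PL functions. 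Since $\varphi$ is bounded, $u_j \le g_j+O(1)$, so $u_j \in \WPSHlog(Y_j^{\an})$. Theorem \ref{thm:NA_FN_theorem} gives $u_j \in \PSHlogreg(Y_j^{\an})$.

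To conclude, fix $c>1$. Lemma \ref{lem:psh_regularization_with_growth_control} (1), applied with $X$, $s_j$, $\theta$, shows that $\varphi_j^c \coloneqq u_j - c g_j$ on $Y_j^{\an}$, extended by $-\infty$ elsewhere, lies in $\GPSH_{c\theta}(X^{\an})$. Then $\varphi + (1-c)g_j = \varphi_j^c$ on $Y_j^{\an}$. I would glue these as $\max_j\{\varphi_j^c + (c-1)\min(\cdot)\}$, or more cleanly use $\varphi = \lim_{c\to1}$ of suitable maxima. Concretely, $\max_j \varphi_j^c$ is in $\GPSH_{c\theta}$ and equals $\varphi+(1-c)\min_j g_j$. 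Note that $\min_j g_j = -\log\max_j|s_j|_\theta$ is a bounded PL function. So $\varphi$ is the uniform limit, as $c\downarrow1$, of functions in $\GPSH_{c\theta}$ differing from $\varphi$ by $O(c-1)$. Hence $\varphi \in \GPSH_{c\theta}$ for every $c>1$ up to these small corrections. This gives $\varphi\in\GPSH_\theta$, via the nef-class characterization in Definition \ref{def:glocal_omega-psh_functions} and a decreasing rearrangement as in the proof of Proposition \ref{prop:def_of_omega_psh_does_not_depend_on_theta}. The main obstacle is the globalization step in the previous paragraph: producing one birational model on which $u_j$ is honestly psh over all of $Y_j^{\an}$, rather than only locally with chart-dependent modifications. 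If dominating resolutions do not directly work, I would instead invoke Proposition \ref{prop:psh_equals_weakly_psh_on_smooth_varieties} on a resolution and use Theorem \ref{thm:pshlogreg_extension} to extend across the exceptional locus.
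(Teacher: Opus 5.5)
Your proposal is correct and follows essentially the paper's argument. You add $\varphi$ to a Fubini--Study potential on each affine chart of a projective embedding, apply Theorem \ref{thm:NA_FN_theorem} there, and globalize each chart with Lemma \ref{lem:psh_regularization_with_growth_control}~(1) for $c>1$. You then take the maximum over charts and let $c\to 1$.

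The differences are cosmetic.
\begin{enumerate}
\item The paper takes $\omega$ to be the pullback of the trivial model of $(\P^k,\mathcal{O}(1))$. Its potentials $\rho_i=\max_k\log|z_k|-\log|z_i|$ then satisfy $\min_i\rho_i=0$, so $\max_i\psi_i=\varphi$ exactly. This avoids your correction term and your boundedness reduction.
\item The paper treats $\varphi+\rho_i\in\WPSHlog(Y_i^{\an})$ as immediate from the definition. You instead spell out how to obtain a single birational modification on each chart.
\end{enumerate}
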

\begin{proof}
  Since $c_{1}(\omega) \in \Amp(X)$, we may assume that there exists a closed embedding $i \colon X \hookrightarrow \P^k$ such that $\omega$ is the pullback of the trivial model of $(\P^k, \CO(1))$ via $i$. Let $[z_0: z_1: \cdots : z_k]$ denote the homogeneous coordinate of $\P^k$. For each $0 \le i \le k$, denote the affine chart $Y_i \coloneqq X \cap \{z_i \neq 0\}$. Then, 
  \[
  \rho_i \coloneqq   \max\left\{ \log |z_0|, \log|z_1|, \cdots, \log|z_k| \right\}  -\log|z_i|
  \]
  is a potential of $\omega$ on $Y_i^{\an}$. 
  
  For each $0 \le i \le k$,  by definition, $\varphi+\rho_i \in \WPSHlog(Y_i^{\an})$. Hence, by Theorem \ref{thm:NA_FN_theorem}, $\varphi+\rho_i \in \PSHlogreg(Y_i^{\an})$. Fix $c>1$. Define $\psi_i \colon X^{\an} \to \R \cup \{-\infty\}$ by $\psi_i = \varphi-(c-1)\rho_i$  on $Y_i^{\an}$ and $\psi_i =-\infty$ on $X^{\an} \setminus Y_i^{\an}$.
  Then, Lemma \ref{lem:psh_regularization_with_growth_control} (1) implies that $\psi_i \in \GPSH_{c\omega}(X)$. Therefore,  $\psi \coloneqq \max_{0 \le i \le k}\{\psi_i\} \in \GPSH_{c\omega}(X)$.  On the other hand, note that 
  \[
  \max_{0 \le i \le k}\{-\rho_i\}=\max_{0 \le i \le k} \left\{ \log|z_i|-   \max\left\{ \log|z_0|, \log|z_1|, \cdots, \log|z_k| \right\}  \right\}=0. 
  \]
  This implies that $\psi=\varphi \in \GPSH_{c \omega}(X)$. The proof is completed by letting $c \to 1$. 
\end{proof}

In the rest of Section \ref{sec:weakly_psh_functions}, \emph{except} Section \ref{subsubsec:norms_under_finite_flat_morphisms}, we assume that $K$ is a complete discretely valued or trivially valued NA field of equicharacteristic $0$ such that $K$ admits a countable dense subfield.

\subsection{Strictly psh functions} \label{subsec:strictly_psh_functions}
\subsubsection{Strictly convex functions on $\BT^k$} 
We first consider strictly convex function on $\BT^k$. Let $(t_1, \cdots, t_k)$ be an coordinate of $\BT^k$. 

\begin{lemma} \label{lem:smooth_strictly_convex_functions}
  There exists $\chi_k \in \Conv(\BT^k)$ satisfying the following properties. 
  \begin{enumerate}
    \item $\max\{ 0, -t_1, \cdots, -t_k \} \le \chi_k \le \max\{ 0, -t_1, \cdots, -t_k \}+1$ on $\BT^k$.
    \item For each $\mathbf{t} \in \BT^k$, there exists an affine function $l=c_1t_1 +\cdots c_k t_k +C$ with  $c_1, \cdots, c_k \in \R_{\le 0}$ and $C \in \R$ such that, 
    \[
    \chi_k(\mathbf{t})=l(\mathbf{t}), \quad \text{and} \quad  \chi_k(\mathbf{t}')> l(\mathbf{t}) \ \text{for any }  \mathbf{t}' \in \BT^k \setminus \{\mathbf{t}\}.
    \]
    Moreover, for any open neighborhood $V$ of $\ft$ in $\BT^k$, there exists $\delta>0$ such that $l< \chi_k-\delta$ on $\BT^k \setminus V$. 
  \end{enumerate}
\end{lemma}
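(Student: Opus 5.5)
The plan is to take a smoothed, rescaled log-sum-exp. Fix $\epsilon\in(0,1]$ with $\epsilon\log(k+1)\le 1$ and set
\[
\chi_k(\mathbf t)\coloneqq \epsilon\log\Bigl(1+\sum_{1\le i\le k} e^{-t_i/\epsilon}\Bigr),
\]
with the convention $e^{-\infty}=0$. This is the function from Example \ref{example:psh_function_defining_closed_subvariety}, after the substitution $t_i\mapsto t_i/\epsilon$ and adding the extra zero coordinate.

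\textbf{Membership in $\Conv(\BT^k)$ and property (1).} The function $\chi_k$ is continuous and real-valued on $\BT^k$, and it is convex on $\R^k$ (see \cite{BV04}). Lemma \ref{lem:property_tropical_conv_functions} (1) then gives $\chi_k\in\Conv(\BT^k)$. For the two-sided bound, let $m\coloneqq\max\{0,-t_1,\dots,-t_k\}$. Then
\[
e^{m/\epsilon}\le 1+\sum_i e^{-t_i/\epsilon}\le (k+1)e^{m/\epsilon},
\]
so $m\le\chi_k\le m+\epsilon\log(k+1)\le m+1$.

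\textbf{Supporting affine functions in (2).} Fix $\mathbf t$ and let $I\subset\{1,\dots,k\}$ be the set of indices with $t_i<+\infty$. On the face $\R^I\times\{+\infty\}^{I^c}$, the restriction of $\chi_k$ is again a log-sum-exp in the variables $(t_i)_{i\in I}$. Its Hessian is $\epsilon^{-1}(\mathrm{diag}(p)-pp^{T})$, where $p_i=e^{-t_i/\epsilon}/(1+\sum_{j\in I}e^{-t_j/\epsilon})$. Because the constant $1$ contributes the extra weight $p_0=1-\sum_{i\in I} p_i>0$, this matrix is positive definite. Let $l$ be the tangent affine function at $\mathbf t$, namely
\[
c_i=-\frac{e^{-t_i/\epsilon}}{1+\sum_j e^{-t_j/\epsilon}}\le 0 \quad (i\in I),\qquad c_i=0 \quad (i\notin I).
\]
Since $c_i=0$ whenever $t_i=+\infty$, each product $c_it_i$ stays finite near $\mathbf t$, so $l$ is a genuine affine function in the sense of the paper.

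Now take $\mathbf t'\neq\mathbf t$ and write $\mathbf t''$ for the point obtained from $\mathbf t'$ by setting $t'_i=+\infty$ for all $i\notin I$. Each term $e^{-t'_i/\epsilon}$ is nonnegative, and it is positive exactly when $t'_i<+\infty$, so $\chi_k(\mathbf t')\ge\chi_k(\mathbf t'')$, with strict inequality if $\mathbf t'\neq\mathbf t''$. Since $l$ does not involve the coordinates outside $I$, we have $l(\mathbf t')=l(\mathbf t'')$. Strict convexity on the face $\R^I$ gives $\chi_k(\mathbf t'')>l(\mathbf t'')$ whenever $\mathbf t''\neq\mathbf t$. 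Combining these facts yields $\chi_k(\mathbf t')>l(\mathbf t')$ for every $\mathbf t'\neq\mathbf t$. This is the intended reading of the statement, in which ``$l(\mathbf t)$'' on the right should be $l(\mathbf t')$.

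\textbf{The uniform gap (main obstacle).} The remaining claim is that $\chi_k-l\ge\delta$ on $\BT^k\setminus V$, and the difficulty is that $\BT^k$ is not compact in the $-\infty$ directions. My approach is to split off a large region. Let $B\coloneqq\{\mathbf t'\,:\,t'_i\ge -R \text{ for all } i\}$, a compact subset of $\BT^k$ containing $\mathbf t$ in its interior.

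On the compact set $B\setminus V$, the function $\chi_k-l$ is continuous and strictly positive, so it attains a positive minimum there.

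Outside $B$, some coordinate satisfies $t'_i<-R$. I would then use two facts:
\begin{itemize}
\item $\chi_k(\mathbf t')\ge -t'_i$;
\item $l(\mathbf t')\le C+\sum_{j\in I}|c_j|\,\max\{0,-t'_j\}$, where $\sum_j|c_j|<1$ because $p_0>0$.
\end{itemize}
Together these should give $\chi_k-l\ge(1-\sum_j|c_j|)\,m(\mathbf t')-C$, which is linear growth in $m$ and is positive once $R\gg0$. Taking $\delta$ to be the smaller of the two lower bounds finishes the proof. The step I would check most carefully is this estimate outside $B$, in particular its interaction with coordinates $t'_j\to+\infty$, where $c_jt'_j\le 0$ only helps the bound.
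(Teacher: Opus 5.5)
Your proof is correct, and it takes a genuinely different route from the paper.

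\textbf{The paper's route.} The paper smooths $\max\{0,-t_1,\dots,-t_k\}$ by convolution with a Gaussian, following \cite[Lemma 2.7.7]{BPS14}, and invokes \cite[Proposition 5.7]{APW2} to extend $f_\epsilon+\tfrac12$ to a strictly convex function on $\BT^k$. It then gets the uniform gap from three ingredients: compactness of $\CC(\delta)\cap\overline V$, \cite[Lemma 5.11]{APW2}, and the connectedness of the sublevel sets $\CC(\delta)=\{\chi_k\le l+\delta\}$. In other words, convexity is what prevents $\CC(\delta)$ from escaping to infinity.

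\textbf{Your route.} You instead write down $\chi_k=\epsilon\log\bigl(1+\sum_i e^{-t_i/\epsilon}\bigr)$ explicitly.
\begin{itemize}
\item Continuity on $\BT^k$ and the restriction to each boundary stratum can be read off the formula.
\item Strict convexity on every stratum comes from your Hessian computation, where the constant term gives $p_0>0$.
\item The supporting functions are explicit, with $c_i=-p_i$.
\item The non-compactness in the $-\infty$ directions is handled by the direct estimate $\chi_k-l\ge p_0\,m(\mathbf t')-C$, which again rests on $p_0>0$.
\end{itemize}

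\textbf{What each buys.} Your argument is self-contained and quantitative: it gives explicit $l$ and $\delta$, and it avoids both the extension result and the sublevel-set/connectedness machinery of \cite{APW2}. The paper's argument works for any smoothing with the right asymptotics and stays inside the \cite{APW2} framework used elsewhere in the paper.

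\textbf{Two minor points of precision.} First, your $l$ takes the value $-\infty$ at points with $t'_i=+\infty$ for some $i\in I$. So on $B\setminus V$, the function $\chi_k-l$ should be treated as a continuous map to $(0,+\infty]$, or as lower semicontinuous; it still attains a positive minimum on a compact set. Second, for the same reason, the case where $\mathbf t''$ lies in the closure of the face, rather than in $\R^I\times\{+\infty\}^{I^c}$, is trivial.
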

\begin{proof}
  Denote $f \coloneqq \max\{ 0, -t_1, \cdots, -t_k \} \in \Conv(\BT^k)$. Following \cite[Lemma 2.7.7]{BPS14}, $f_{\epsilon} \coloneqq f \star \rho_{\epsilon}$ is a smooth strictly convex function on $\R^k$ with $\left\|f_{\epsilon}-f\right\|_{C^0(\R^k)} \le \frac{1}{3}$ for $0< \epsilon \ll 1$. Here $\rho_{\epsilon} = \frac{\epsilon^k}{(2\pi)^{k/2}} \exp(-\frac{\epsilon^2 \left\|\ft\right\|}{2})$ is the Gaussian function on $\R^k$, and $\star$ denotes the convolution. Moreover, \cite[Proposition 5.7]{APW2} shows that $f_{\epsilon}+\frac{1}{2}$ can be extended to a strictly convex function $\chi_k$ on $\BT^k$. This proves property (1) and the first half of property (2). 

  To see the last part of the lemma, we may assume that $\overline{V}$ is compact in $\BT^k$. For each $\delta>0$, set $\CC(\delta) \coloneqq \{\chi_k \le l+\delta\} \subset \BT^k$. Then, $\CC(\delta) \cap \overline{V}$ is compact, and we have  $\bigcap_{\delta >0} \CC(\delta)=\{\ft\}$.  Take an open neighborhood $W$ of $\ft$ such that $W \Subset V$. Then, \cite[Lemma 5.11]{APW2} implies that there exists $\delta>0$ such that $\CC(\delta) \cap \overline{V} \subset W$. On the other hand, it's not hard to see that $\CC(\delta)$ is connected. This implies that $\CC(\delta) \subset W \Subset V$, and the lemma is proved. 
\end{proof}

\subsubsection{Construction of a strictly psh function}
Let $K_1$ be a countable dense subfield of $K$ such that $Y$ is defined over $K_1$. Note that $K_1[Y]$ is a ring of countable cardinal.  Let $z_1, \cdots, z_k $ be generators of $K_1[Y]$,  and let $\{f_i\}_{i \ge 1}$ be an enumeration of all elements in $K_1[Y]$. For each $j \ge 1$, denote the tropicalization map
\bens
  \Trop_j \colon Y^{\an} &\longrightarrow& \BT^{k+j} \\
  x &\longmapsto& (-\log|z_1|(x), \cdots, -\log|z_k|(x), -\log|f_1|(x), \cdots, -\log|f_j|(x))
\eens
Let $j \ge 1$ and $\chi_{k+j} \in \Conv(\BT^{k+j})$ constructed in Lemma \ref{lem:smooth_strictly_convex_functions}. Then, Lemma \ref{lem:property_tropical_conv_functions} (3) and Lemma \ref{lem:approximable_on_exhausting_tropical_polytopes_implies_pshlogreg}   implies that 
\[
\rho_j \coloneqq (\chi_{k+j} \circ \Trop_j) \in C^0 \cap \PSHlogreg(Y^{\an}).
\] 

Denote $v \coloneqq \max\{\log|z_1|, \cdots \log|z_k|, 0\}+1  \in \FS(Y^{\an})$.
Then, by Lemma \ref{lem:FS_functions_are_comparable} (1), for each $j \ge 1$, there exists $0 <\gamma_j < \frac{1}{2^j}$ such that 
\[
  0 \le \gamma_j \max_{1 \le i \le k, \, 1 \le i' \le j}\{\log|z_i|,  \log|f_{i'}|, 0\} \le \frac{1}{2^j} v. 
\]
Then, we have $0 \le \gamma_j \rho_j \le \frac{1}{2^j} (v+1)$. For each $r \ge 1$, denote the compact subset $E_r \coloneqq \{v \le r\} \subset Y^{\an}$. Then $\sum_{i=1}^{+\infty} \gamma_i \rho_i$  converges uniformly to some function $\rho$ on the compact subset $E_r$ for each $r \ge 1$, and we have $\rho \le v+1$. Therefore, by Lemma \ref{lem:approximable_on_exhausting_tropical_polytopes_implies_pshlogreg},
\[
  \rho \coloneqq \sum_{i=1}^{+\infty} \gamma_i \rho_i \in C^0 \cap \PSHlogreg(Y^{\an}).
\]

\begin{lemma} \label{lem:strictly_psh_functions}
  Let $x \in Y^{\an}$ and $U$ be an open neighborhood of $x$. Then, there exists $\rho' \in \PSHlogreg(Y^{\an})$ satisfying 
  \begin{enumerate}
    \item $\rho' \le \rho$ on $Y^{\an}$ and $\rho'(x)=\rho(x)$.
    \item There exists $\delta>0$ such that $\rho'< \rho-\delta$ on $Y^{\an} \setminus U$.
  \end{enumerate}
\end{lemma}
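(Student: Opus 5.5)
The strategy is to separate $x$ from the complement of $U$ using a single tropicalization $\Trop_j$ with $j$ large. Then the strict convexity of $\chi_{k+j}$ at the point $\Trop_j(x)$ (Lemma \ref{lem:smooth_strictly_convex_functions} (2)) provides a supporting affine function. This function touches $\rho_j$ exactly at $x$ and lies strictly below it away from $x$.

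\textbf{Step 1: find a fine enough tropicalization.} By the remark in Section \ref{subsubsec:tropicalizations} (\cite[Proposition 2.16]{Bou25}), $Y^{\an}$ is the inverse limit of tropicalizations defined over the dense subfield $K_1$. Since $\{f_i\}$ enumerates $K_1[Y]$, there is a $j$ and an open neighborhood $V$ of $\ft\coloneqq\Trop_j(x)$ in $\BT^{k+j}$ with $\Trop_j^{-1}(V)\subset U$. At first this is only on a compact set; we take $E_r\ni x$ and cover the compact set $E_r\setminus U$ by finitely many basic opens that avoid $x$. Outside $E_r$ we handle the complement separately using the growth of $v$ (see Step 3). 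Then we apply Lemma \ref{lem:smooth_strictly_convex_functions} (2) to $\chi_{k+j}$ at $\ft$. This gives an affine function $l=\sum c_it_i+C$ with $c_i\le 0$ such that $\chi_{k+j}(\ft)=l(\ft)$, together with a $\delta_0>0$ such that $l<\chi_{k+j}-\delta_0$ on $\BT^{k+j}\setminus V$.

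\textbf{Step 2: define $\rho'$.} We set $\lambda\coloneqq l\circ\Trop_j=\sum_i(-c_i)\log|h_i|+C$, where $h_i$ runs over $z_1,\dots,z_k,f_1,\dots,f_j$. Since $-c_i\ge0$, the function $\lambda$ is a nonnegative combination of functions $\log|h|$. Each $\log|h|$ lies in $\PSHlogreg(Y^{\an})$ by Example \ref{example:psh_function_defining_closed_subvariety}, and so does a sum (one regularizes by sums of FS functions). We then define
\[
\rho'\coloneqq \rho-\gamma_j\rho_j+\gamma_j\max\{\lambda,\ \rho_j-\delta_0\}.
\]
Formally this reads $\rho'=\sum_{i\ne j}\gamma_i\rho_i+\gamma_j\max\{\lambda,\rho_j-\delta_0\}$, which is a convergent sum of elements of $C^0\cap\PSHlogreg$ plus a finite max of such elements, taken locally uniformly. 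This is problematic because $\max\{\lambda,\rho_j-\delta_0\}$ equals $\rho_j-\delta_0$ only away from $x$. So we instead use
\[
\rho'\coloneqq \textstyle\sum_{i\ne j}\gamma_i\rho_i+\gamma_j\lambda' ,\qquad \lambda'\coloneqq\max\{\lambda,\rho_j-M\},
\]
with a suitable constant, and check the two required properties. Because $\lambda\le\rho_j$ everywhere, we get $\lambda'\le\rho_j$, hence $\rho'\le\rho$. We also have $\lambda'(x)=\rho_j(x)$, since $\lambda(x)=\rho_j(x)$, and so $\rho'(x)=\rho(x)$. On $Y^{\an}\setminus U$ the point $\Trop_j(y)$ lies outside $V$, so $\lambda(y)<\rho_j(y)-\delta_0$. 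Taking $M=\delta_0/2$ gives $\lambda'(y)\le\rho_j(y)-\delta_0/2$, hence $\rho'<\rho-\gamma_j\delta_0/2$ there. Membership in $\PSHlogreg$ follows as in the construction of $\rho$: on each $E_r$ the series converges uniformly, and Lemma \ref{lem:approximable_on_exhausting_tropical_polytopes_implies_pshlogreg} (with Lemma \ref{lem:psh_regularization_with_growth_control} (3) for the decreasing limits) applies, using $\rho'\le\rho\le v+1$ for the growth bound.

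\textbf{Step 3: the main obstacle.} The hardest point is Step 1. We must guarantee that a single finite tropicalization $\Trop_j$ has $\Trop_j^{-1}(V)\subset U$ with $V$ a neighborhood of $\Trop_j(x)$, globally and not only on a compact set. To address this, we first shrink $U$ to a relatively compact neighborhood. The coordinates $z_1,\dots,z_k$ are always included in $\Trop_j$, so unboundedness of $Y^{\an}$ is detected: $\Trop_j^{-1}$ of a bounded set in the $z$-coordinates is compact. By compactness and the inverse-limit (Payne) description over $K_1$, finitely many $f_i$ then suffice to separate $x$ from $E_r\setminus U$. Combining this with the bound on the $z$-coordinates yields the required $V$.
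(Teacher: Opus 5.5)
Your proposal is correct and follows the paper's proof: reduce to $\Trop_j^{-1}(V)\subset U$ via the inverse-limit description over $K_1$, take the supporting affine function $l$ of $\chi_{k+j}$ at $\Trop_j(x)$ from Lemma \ref{lem:smooth_strictly_convex_functions}~(2), and replace $\gamma_j\rho_j$ by $\gamma_j$ times a function built from $l\circ\Trop_j$. The paper simply takes $\rho'=\gamma_j(l\circ\Trop_j)+\sum_{i\neq j}\gamma_i\rho_i$ with no truncation, and your compactness detour in Step~3 is unnecessary, because the basic open sets of this directed inverse limit are already pullbacks of open sets under a single $\Trop_j$ (note also that with your truncation one gets $\rho'=\rho-\gamma_j\delta_0/2$ exactly off $U$, so you should take any $\delta<\gamma_j\delta_0/2$).
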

\begin{proof}
  We may assume that there exists $j \ge 1$ and an open neighborhood $V$ of $\mathbf{t} \coloneqq \Trop_j (x)$ in $\BT^{k+j}$ such that $U=\Trop_j^{-1}(V)$ (see the end of Section \ref{subsubsec:tropicalizations}). Let $l$ be an affine support function of $\chi_{k+j}$ at $\mathbf{t}$ constructed by Lemma \ref{lem:smooth_strictly_convex_functions}. Therefore, we have $l=\sum_{1 \le i \le k+j}c_it_i + C$ with $c_1, \cdots c_k \in \R_{\le 0}, C \in \R$, $\chi_{k+j} \ge l$ on $\BT^{k+j}$, $\chi_{k+j}(\mathrm{t})=l(\mathrm{t})$ and $l< \chi_{k+j}-\delta/\gamma_j$ on $\BT^{k+j} \setminus V$ with $\delta>0$.   Define 
  \[
  \rho' \coloneqq \gamma_j \left( l \circ \Trop_{j} \right) + \sum_{i \ge 1, \, i \neq j} \gamma_i \rho_i.
  \]
  Note that  
  \[
  l \circ \Trop_{j}= \left( \sum_{1 \le i \le k} \left(-c_i \log|z_i| \right) + \sum_{1 \le i' \le j}  \left( -c_{i'} \log|f_{i'}| \right) +C \right) \in \PSHlogreg(Y^{\an}).
  \]
  Therefore, we have $\rho' \in \PSHlogreg(Y^{\an})$. By constructions, 
  \[
  \rho-\rho'=\gamma_j \left( \rho_j-l \circ \Trop_j \right)=\gamma_j \left((\chi_{k+j}-l) \circ \Trop_j \right).
  \]
  Therefore, we have $\rho-\rho' \ge 0$ on $Y^{\an}$, $\rho(x)-\rho'(x)=0$, and $\rho-\rho'>\delta$ on $\Trop^{-1} (\BT^{k+j} \setminus V)=Y^{\an} \setminus U$.
\end{proof}

\subsection{}

For any $u \in \WPSHlog(Y^{\an})$ and $c >0$, denote $u_c \coloneqq u+c\rho \in \WPSHlog(Y^{\an})$.

\begin{lemma} \label{lem:aux_lemma_1}
  Let $u \in \WPSHlog(Y^{\an})$ and $c >0$. Let $Z$ be a closed variety of $Y$. Suppose that there exists  an  affine open cover $\{U_{\alpha}\}_{\alpha \in \CI}$ of $Y \setminus Z$ such that $(u_c)|_{U_{\alpha}} \in \PSHlogreg((U_{\alpha})^{\an})$ for each $\alpha \in \CI$. Then, for any compact subset $L \subset Y^{\an}$ and any $x \in \widehat{L}_Y \setminus Z^{\an}$, we have $u_c(x) \le \sup_{L} u_c$. 
\end{lemma}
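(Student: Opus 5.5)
The plan is to reduce to Proposition \ref{prop:pshlogreg_runge_domain}, applied on a single principal open set containing $x$.

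\textbf{Reductions.} If $u_c(x)=-\infty$ there is nothing to prove, so assume $u_c(x)>-\infty$. Let $\zeta\subset Y$ be the support of the semivaluation $x$. Since $x\notin Z^{\an}$, we have $\zeta\not\subset Z$. Pick $\alpha\in\CI$ with $x\in U_\alpha^{\an}$. Because the principal opens $Y_f$ with $f\in K[Y]$ form a basis of the Zariski topology, we can choose $f\in K[Y]$ such that $x\in(Y_f)^{\an}$, i.e.\ $f|_\zeta\ne 0$, and $Y_f\subset U_\alpha$.

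\textbf{Regularity on $Y_f$.} Restricting the regularizing sequence of Fubini--Study functions from $U_\alpha$ to $Y_f$ shows that $u_c|_{(Y_f)^{\an}}\in\PSHlogreg((Y_f)^{\an})$. Here we use that the restriction of an element of $\FS(U_\alpha^{\an})$ to $(Y_f)^{\an}$ can be dominated appropriately. If this is unclear, we instead use Lemma \ref{lem:psh_PL_functions_are_pshlogreg} (3) or Lemma \ref{lem:approximable_on_exhausting_tropical_polytopes_implies_pshlogreg} to re-establish regularity on $Y_f$. The growth hypothesis of Proposition \ref{prop:pshlogreg_runge_domain} holds because $u_c\le v+O(1)$ with $v\in\FS(Y^{\an})$: $u$ has logarithmic growth and $\rho\le v+1$.

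\textbf{Main obstacle: $L$ may meet $Z(f)^{\an}$.} Proposition \ref{prop:pshlogreg_runge_domain} needs a compact set inside $(Y_f)^{\an}$, but $L$ need not lie there. We handle this by cutting off near $\{f=0\}$. For $\epsilon>0$ put
\[
L_\epsilon\coloneqq L\cap\{\log|f|\ge -1/\epsilon\},
\]
which is compact and contained in $(Y_f)^{\an}$. We first try to show that $x\in\widehat{(L_\epsilon)}_Y$ up to an error that vanishes as $\epsilon\to0$. Failing that, we run the proof of Proposition \ref{prop:pshlogreg_runge_domain} directly with $L$. There we approximate $u_c$ by $w\in\FS((Y_f)^{\an})$ with $w\le 2\max\{u,-c'\log|f|\}+O(1)$, and then replace $w$ by the regularized maximum $\max\{w+c''\log|f|,C\}$ as in Proposition \ref{prop:extending_pshlogreg_by_adding_small_functions}. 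This modified function extends to a Fubini--Study-type function on all of $Y^{\an}$ (Lemma \ref{lem:psh_PL_functions_are_pshlogreg} (1)). For such a function, polynomial convexity (Corollary \ref{cor:FS_convex_hull}) gives
\[
w(x)+c''\log|f|(x)\le \sup_L\max\{w+c''\log|f|,C\}.
\]
Near $Z(f)^{\an}$ the term $c''\log|f|$ is very negative, so the supremum is controlled by $\sup_L u_c$ plus $O(c'')$, or by $C$. Letting $C\to-\infty$ and then $c''\to0$ gives
\[
u_c(x)\le\sup_L u_c.
\]

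The delicate step is making the bound uniform near $Z(f)^{\an}\cap L$, where $u_c$ itself is only defined globally as a weakly psh function. This is precisely where the global term $c\rho$ and the usc property from Proposition \ref{prop:psh_equals_weakly_psh_on_smooth_varieties} (1) should enter: we need $\limsup$ of the approximants near $L\cap Z(f)^{\an}$ to be bounded by $\sup_L u_c$.
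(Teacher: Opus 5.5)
Your reductions are sound. Choosing $f$ with $x\in(Y_f)^{\an}$ and $Y_f\subset U_\alpha$ works. So does getting $u_c|_{(Y_f)^{\an}}\in\PSHlogreg((Y_f)^{\an})$, by replacing the restricted approximants $w_i$ with $\max\{w_i,\log|f^{-1}|-i\}$. The growth bound $u_c\le g+O(1)$ with $g\in\FS(Y^{\an})$ is also fine.

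The proposal, however, stops exactly at the step that carries the content, and neither route you sketch closes it.

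\textbf{The truncation route.} You give no mechanism putting $x$ in $\widehat{(L_\epsilon)}_Y$. Exact membership is what Proposition~\ref{prop:pshlogreg_runge_domain} needs, and ``up to an error'' is not something that proposition can absorb.

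\textbf{The single-approximant route.} The comparison $\sup_{L'}w\le\sup_{L'}u_c+\eta$ between $u_c$ and a Fubini--Study approximant $w$ on $(Y_f)^{\an}$ comes from Dini, so it holds only for compact $L'\subset(Y_f)^{\an}$. Since $L\cap(Y_f)^{\an}$ is not compact, nothing you wrote bounds $\sup_L\max\{w+c''\log|f|,C\}$ by $\max\{\sup_L u_c+O(c''),C\}$. Near $Z(f)^{\an}$, in the part of $L$ where the cut-off is not yet active, a single approximant can exceed $u_c$ by a fixed amount.

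\textbf{The suggested remedy.} Upper semicontinuity of $u_c$ at $L\cap Z(f)^{\an}$ says nothing about approximants that live only on $(Y_f)^{\an}$, and $c\rho$ plays no role at this point. In the paper, $\rho$ serves a different purpose, a localization. Arguing by contradiction, one adds $\epsilon\log\sum_i|f_i|$ (Example~\ref{example:psh_function_defining_closed_subvariety}) so that $u_c+\epsilon v$ attains its maximum over $\widehat L_Y$ at some $y\notin Z^{\an}\cup L$. The peaking function $\rho'$ of Lemma~\ref{lem:strictly_psh_functions}, together with Rossi's principle (Theorem~\ref{thm:rossi_local_maximum_principle}), then replaces $L$ by the compact set $\partial W\cap\widehat L_Y\subset U_\alpha^{\an}$. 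Proposition~\ref{prop:pshlogreg_runge_domain} applies there and gives a contradiction.

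\textbf{The missing idea.} Use approximants that are defined and continuous on all of $L$, and pass to the limit before taking $\sup_L$. Proposition~\ref{prop:extending_pshlogreg_by_adding_small_functions}, applied to $u_c|_{(Y_f)^{\an}}$ and $c''\in\Q_{>0}$, provides exactly this. Let $\widetilde u_{c''}$ be $u_c+c''\log|f|$ on $(Y_f)^{\an}$ and $-\infty$ on $Z(f)^{\an}$. Then $\widetilde u_{c''}$ is the pointwise limit of a decreasing sequence $h_j\in\FS(Y^{\an})$.
\begin{itemize}
\item For each $j$, $h_j(x)\le\sup_L h_j$ by Corollary~\ref{cor:FS_convex_hull}.
\item Since $L$ is compact, $\inf_j\sup_L h_j=\sup_L\widetilde u_{c''}$.
\item Since $\widetilde u_{c''}=-\infty$ on $L\cap Z(f)^{\an}$ and $\sup_L u_c<\infty$, this gives $u_c(x)+c''\log|f|(x)\le\sup_L u_c+c''\sup_L\log|f|$.
\item As $0<|f|(x)\le\sup_L|f|$, letting $c''\to0$ yields $u_c(x)\le\sup_L u_c$.
\end{itemize}
If you prefer a single $w$ with a fixed $C$, what you need is the uniform bound $w_i\le w_1\le 2\max\{g,-c'\log|f|\}+A$ with $c''>2c'$. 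This confines the region of $L$ where $w_i+c''\log|f|\ge C$ to one compact subset of $L\cap(Y_f)^{\an}$, on which Dini applies.

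Completed this way, your argument is a genuinely different and shorter proof than the paper's, using neither Rossi's principle nor $\rho$.
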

\begin{proof}
  We prove this lemma by contradiction. Suppose that there exists $x \in \widehat{L}_Y \setminus Z^{\an}$ such that $u_c(x)>\sup_{L}u_c$. Let $f_1, \cdots, f_k \in K[Y]$ such that $Z=\{f_1=f_2= \cdots =f_k=0\}$, and set $v \coloneqq \log(\sum_{1 \le i \le k} |f_i|)$. By Example \ref{example:psh_function_defining_closed_subvariety}, $v \in \PSHlogreg(Y^{\an})$ and $\{v=-\infty\}=Z^{\an}$. In particular, there exists $M >0$ such that $\sup_{L} v \le M$ and $|v(x)| \le M$. Therefore, one can choose $\epsilon >0$ such that $(u_c+\epsilon v)(x) > \sup_{L}(u_c+\epsilon v)$. 
  
  Take $y \in \widehat{L}_Y$ such that $(u_c+\epsilon v)(y)=\sup_{\widehat{L}_Y}(u_c+\epsilon v)$. By construction, we have $\sup_{\widehat{L}_Y}(u_c+\epsilon v) > \sup_{L}(u_c+\epsilon v)$ and $(u_c+\epsilon v)|_{Z^{\an}}=-\infty$. Therefore, $y \in \widehat{L}_Y \setminus (Z^{\an} \cup L)$. 
  By assumption, there exists $y \in U_{\alpha}$ for some $\alpha \in \CI$. We may assume that $U_{\alpha}=Y_g=Y \setminus \{g=0\}$ for some $g \in K[Y]$.  Since $Y^{\an}$ is locally compact Hausdorff, there exists  a relatively compact open neighborhood $W$ of $y$ in $Y^{\an}$ such that $\overline{W} \subset U_{\alpha}$ and $\overline{W} \cap  L= \emptyset$.

  Consider the compact set  $\CC  \coloneqq \partial W \cap \widehat{L}_{Y} \subset U_{\alpha}$. 
  Applying Lemma \ref{lem:strictly_psh_functions} to $(W, y)$, we obtain $\rho' \in \PSHlogreg(Y^{\an})$ and $\delta >0$ such that $\rho'(y)=\rho(y)$ and $\rho' \le \rho-\delta$ on $\partial W$. Denote $w \coloneqq u+c\rho'+\epsilon v \in \WPSHlog(Y^{\an})$. Then, we have 
  \[
  \sup_{\CC} w \le  \sup_{\CC} (u_c+\epsilon v) -c\delta \le \sup_{\widehat{L}_Y} (u_c+\epsilon v) -c\delta= (u_c+\epsilon v)(y)-c\delta =w(y)-c\delta.
  \] 

  On the other hand,   by assumption, we have $u|_{U_{\alpha}} \in \PSHlogreg(U_{\alpha})$, and hence $w|_{U_{\alpha}} \in \PSHlogreg(U_{\alpha})$. By Rossi's local maximum principle (Theorem \ref{thm:rossi_local_maximum_principle}), we have $|f(y)| \le \sup_{\CC} |f|$ for any $f \in K[Y]$. This implies that $y \in \widehat{\CC}_Y$. Then, Proposition \ref{prop:pshlogreg_runge_domain} implies that $w(y) \le \sup_{\CC} w $. This induces contradictions, and hence the lemma is proved. 
\end{proof}

\begin{cor} \label{cor:pshlogreg_on_an_affine_open_cover_induces_pshlogreg}
  Let $u \in \WPSHlog(Y^{\an})$. Suppose that there exists an affine open cover $\{U_{\alpha}\}_{\alpha \in \CI}$ of $Y$ such that $u|_{U_{\alpha}} \in \PSHlogreg((U_{\alpha})^{\an})$ for each $\alpha \in \CI$. Then, $\{x \in Y^{\an} \, |\,  u(x)<0 \}$ is a Runge open subset in $Y^{\an}$, and $u \in \PSHlogreg(Y^{\an})$.
\end{cor}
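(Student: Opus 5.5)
We plan to deduce the corollary from Lemma \ref{lem:aux_lemma_1} applied with $Z=\emptyset$, followed by a limiting argument in $c$.

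\textbf{Step 1: the hull inequality for $u_c$.} Fix $c>0$. Recall that $\rho \in C^0 \cap \PSHlogreg(Y^{\an})$. Restriction to an affine open subvariety preserves this class: a Fubini--Study function on $Y^{\an}$ restricts to a function bounded above by a Fubini--Study function on $U_{\alpha}^{\an}$. One can then either regularize directly, or combine Lemma \ref{lem:psh_PL_functions_are_pshlogreg} (2) with Lemma \ref{lem:psh_regularization_with_growth_control} (3). Hence $\rho|_{U_\alpha} \in \PSHlogreg(U_\alpha^{\an})$. Adding decreasing approximating sequences shows that $\PSHlogreg$ is stable under sums. Therefore $(u_c)|_{U_\alpha}=u|_{U_\alpha}+c\rho|_{U_\alpha} \in \PSHlogreg(U_\alpha^{\an})$. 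Lemma \ref{lem:aux_lemma_1} with $Z=\emptyset$ now gives, for every compact $L\subset Y^{\an}$ and every $x\in \widehat{L}_Y$,
\[
u_c(x)\le \sup_L u_c .
\]

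\textbf{Step 2: letting $c\to 0$.} Since $\rho$ is continuous and $L\cup\{x\}$ is compact, we have $|\rho|\le M$ there for some $M>0$. Hence $u(x)\le \sup_L u+2cM$, and letting $c\to0$ yields $u(x)\le\sup_L u$ for all $x\in\widehat L_Y$. Runge-ness follows at once. Take a compact $L\subset D:=\{u<0\}$. Since $u$ is usc (Proposition \ref{prop:psh_equals_weakly_psh_on_smooth_varieties} (1)), it attains its supremum on $L$, so $\sup_L u<0$. Consequently $\widehat L_Y\subset D$.

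\textbf{Step 3: regularizability.} Choose $g\in\FS(Y^{\an})$ with $u\le g+O(1)$, so that $u\in\PSHlog$ once local approximability is known. The plan is to apply Lemma \ref{lem:approximable_on_exhausting_tropical_polytopes_implies_pshlogreg}, which requires, on each $\interior(E_r)$, a decreasing sequence of tropically convex PL functions converging to $u$. The natural approach is to use the Runge domain
\[
\Omega:=\{\log|w|+u<0\}\subset (Y\times\BA^1)^{\an}
\]
together with Proposition \ref{prop:Runge_domains_to_psh}. Indeed, on fibres one has $d_\Omega=e^{-u}$, so $-\log d_\Omega=u$, and that proposition gives $u\in\PSHlogreg(Y^{\an})$ provided $\Omega$ is Runge and contains $(Y\times\{0\})^{\an}$. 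The containment holds because $u<+\infty$.

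To see that $\Omega$ is Runge, apply Steps 1--2 to $\tilde u:=\log|w|+u\circ p$ on $\widetilde Y=Y\times\BA^1$. This function is weakly psh of logarithmic growth, since it is the pullback of $u$ plus $\log|w|$. It is also in $\PSHlogreg$ on the affine cover $\{U_\alpha\times\BA^1\}$, except near $\{w=0\}$. This is where the main obstacle lies: $\log|w|$ equals $-\infty$ along $w=0$, so the cover hypothesis fails there. To handle it, we would instead use Lemma \ref{lem:aux_lemma_1} directly on $\widetilde Y$ with $Z=\{w=0\}$. We first need $\tilde u|_{U_\alpha\times\BA^1}\in\PSHlogreg$, which follows from $u|_{U_\alpha}\in\PSHlogreg$ and Proposition \ref{prop:extending_pshlogreg_by_adding_small_functions}, or simply by adding $\log|w|\in\PSHlogreg$. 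On $Z^{\an}$ itself $\tilde u=-\infty$, so the hull inequality is trivial there. Hence $\tilde u(x)\le\sup_L\tilde u$ for all $x\in\widehat L_{\widetilde Y}$, and $\Omega$ is Runge by the argument of Step 2. Finally, Proposition \ref{prop:Runge_domains_to_psh}, using the growth bound $u\le g+O(1)$, gives $u\in\PSHlogreg(Y^{\an})$.
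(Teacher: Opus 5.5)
Your proposal is correct and follows the paper's proof. It applies Lemma~\ref{lem:aux_lemma_1} with $Z=\emptyset$ and lets $c\to 0$; the paper phrases this limit as $\{u<0\}$ being the increasing union of the Runge sets $\{u_c<0\}$, via Lemma~\ref{lem:basic_properties_poly_convex_and_Runge}~(2). It then reruns the argument for $\log|w|+u\circ p^{\an}$ on $Y\times\BA^1_K$ and finishes with Proposition~\ref{prop:Runge_domains_to_psh}.

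The obstacle you flag at $\{w=0\}$ is not real. Functions in $\PSHlogreg$ may take the value $-\infty$; for instance $\log|w|$ is one, by Example~\ref{example:psh_function_defining_closed_subvariety}. So, as you observe yourself, $\log|w|+u\circ p^{\an}\in\PSHlogreg$ on each $U_\alpha\times\BA^1_K$, and the cover hypothesis already holds with $Z=\emptyset$, which is what the paper uses. Your detour through $Z=\{w=0\}$ is valid but unnecessary.
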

\begin{proof}
  Applying Lemma \ref{lem:aux_lemma_1} to $Z=\emptyset$, we obtain that $\{u_c < 0\}$ is Runge in $Y^{\an}$ for any $c >0$. Note that $\left\{ u_c \right\}_{c \searrow 0}$ is a decreasing function sequence converging pointwise to $u$. Thus, by Lemma \ref{lem:basic_properties_poly_convex_and_Runge} (2), $\{u <0\}$ is Runge in $Y^{\an}$. This proves the first part of the statement. For the second part, let $p \colon  \widetilde{Y} \to Y$ be the projection to the first component, and let $w \colon \widetilde{Y} \to \BA^1_K$ be the projection to the second component. We view $w$ as a element in $K[\widetilde{Y}]$. By definition $\log|w|+u \circ p^{\an} \in \WPSHlog(\widetilde{Y}^{\an})$. Note that $\{\widetilde{U_{\alpha}} \coloneqq U_{\alpha} \times \BA^1_K\}_{\alpha \in \CI}$ is an affine open cover of $\widetilde{Y}$, and $\left(\log|w|+u \circ p^{\an} \right)|_{\widetilde{U_{\alpha}}} \in \PSHlogreg((\widetilde{U_{\alpha}})^{\an})$ for each $\alpha  \in \CI$. Then, applying the first part of the statement to $\widetilde{Y}$, we obtain  that  
  \[
  D=\{\log|w|+u \circ p^{\an}<0\} \subset \widetilde{Y}^{\an}
  \]
  is Runge. Note that $(Y \times \{0\})^{\an} \subset D$, and $-\log d_D =u$. Then, Proposition \ref{prop:Runge_domains_to_psh} implies that $u \in \PSHlogreg(Y^{\an})$. 
\end{proof}

\subsection{Norms of weakly psh functions}  \label{subsec:norms_of_weakly_psh_functions}

\subsubsection{Norms under finite flat morphisms} \label{subsubsec:norms_under_finite_flat_morphisms}

In this part (Section \ref{subsubsec:norms_under_finite_flat_morphisms}), we assume that $K$ is \emph{any} complete NA field. Let $Y, S$ be affine varieties over $K$, and let $\pi \colon Y \to S$ be a finite flat surjective morphism. Assume that $S$ is irreducible, and $Y$ is equidimensional. In this case, $\pi|_{Y_{\alpha}}$ is a finite surjective morphism for each irreducible component $Y_{\alpha}$ of $Y$. We define the degree of $\pi$ to be $\deg(\pi) \coloneqq \sum_{\alpha} \deg(K(Y_{\alpha})/K(S))$.

For any $s \in S^{\an}$, denote by $\CH_{S}(s)$ the completed residue field at $s$. Then, $A_{s, \pi} \coloneqq \CH_{S}(s) \times_{S} Y$ is a finite algebra over $\CH_{S}(s)$. Moreover, we have a product decomposition   $A_{s, \pi}=\prod_{y \in \pi^{-1}(s)} A_{y, \pi}$ into local finite algebra over $\CH_{S}(s)$, and residue field of the local algebra $A_{y, \pi}$ is isomorphic to the completed residue field $\CH_Y(y)$. Then, we define the multiplicity of $\pi$ at $y$ by
\be   \label{equation:def_multiplicity_flat_locus}
\mult_{y}(\pi) \coloneqq \dim_{\CH_{S}(s)}  A_{y, \pi}.
\ee

\begin{lemma} \label{lem:sum_of_multiplicity_equals_to_degree_flat_morphism}
  Under the above assumptions, we have $\deg(\pi)=\sum_{y \in \pi^{-1}(s)}  \mult_{y}(\pi)$ for any $s \in S^{\an}$.
\end{lemma}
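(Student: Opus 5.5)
The plan is to reduce the statement to a standard fact of commutative algebra: for a finite flat morphism the fibre dimension is locally constant. Write $B \coloneqq K[Y]$ and $A \coloneqq K[S]$. Since $\pi$ is finite and flat, $B$ is a finitely generated flat $A$-module, hence finite locally free. Because $S$ is irreducible, $\Spec A$ is connected, so $B$ has constant rank; call it $N$. Then for every ring map $A \to F$ into a field we have $\dim_F (B \otimes_A F) = N$.

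First I identify $N$ with $\deg(\pi)$. Take $F = K(S)$, the generic point. Then $B \otimes_A K(S)$ is the generic fibre, and I claim it is reduced and equal to $\prod_\alpha K(Y_\alpha)$. Indeed, $Y$ is reduced and flat over $S$, so every associated point of $Y$ lies over the generic point of $S$. Equidimensionality together with finiteness shows that these associated points are exactly the generic points of the components $Y_\alpha$. The localization $B \otimes_A K(S)$ is therefore an Artinian reduced ring whose factors are the fields $K(Y_\alpha)$. This gives $N = \sum_\alpha [K(Y_\alpha):K(S)] = \deg(\pi)$.

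Next, for $s \in S^{\an}$ I apply constancy of rank to $F = \CH_S(s)$; this is legitimate because $s$ determines a ring map $A \to \CH_S(s)$. This yields $\dim_{\CH_S(s)} A_{s,\pi} = N$. The finite $\CH_S(s)$-algebra $A_{s,\pi}$ is Artinian, so it is the product of its localizations at its finitely many maximal ideals. The remaining step is to match those maximal ideals bijectively with the points of $(\pi^{\an})^{-1}(s)$. I plan to use the standard description of the fibre: the fibre is homeomorphic to $(\Spec A_{s,\pi})^{\an}$ over $\CH_S(s)$. The analytification of a finite $\CH_S(s)$-scheme is a finite set of points, each carrying the unique extension of the absolute value to the residue field of a maximal ideal, that residue field being finite over the complete field $\CH_S(s)$. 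This matching is presumably what the paper intends when it writes $A_{s,\pi}=\prod_{y} A_{y,\pi}$. Summing the dimensions of the factors then gives $\sum_y \mult_y(\pi) = N = \deg(\pi)$.

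I expect the main obstacle to be the generic-fibre identification. The point to check carefully is that flatness together with reducedness of $Y$ forces the generic fibre to be reduced with no embedded components over $K(S)$. Everything else is formal from local freeness and the fibre description.
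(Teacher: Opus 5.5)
Your proof is correct and follows the paper's approach. The paper's entire proof is that $\pi_*\CO_Y$ is a locally free coherent $\CO_S$-module, of constant rank since $S$ is irreducible; you fill in the details it leaves implicit, namely identifying that rank with $\deg(\pi)$ via the generic fibre and matching the local factors of $A_{s,\pi}$ with the points of $(\pi^{\an})^{-1}(s)$.

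The step you flag as the main obstacle is in fact easy. The generic fibre is a localization of the reduced ring $K[Y]$, hence reduced. Its primes are exactly the minimal primes of $K[Y]$: minimal primes lie over $0$ by torsion-freeness (from flatness), and no other prime does, by incomparability (from finiteness).
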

\begin{proof}
  This follows easily from the fact that $\CO_Y$ is a locally free coherent sheaf over $\CO_S$.
\end{proof}

For any function $f \colon Y^{\an} \to \R \cup \{-\infty\}$, define the norm of $f$ under $\pi$ by
\be \label{equation:def_norm_flat_locus}
 N_{\pi}(f) (s) \coloneqq \sum_{y \in \pi^{-1}(s)}  \mult_{y}(\pi) f(y). 
\ee

\begin{prop} \label{prop:norm_finite_flat_morphisms}
  Let $K$ be any complete NA field. Let $\pi \colon Y \to S$ be a finite flat surjective morphism between affine varieties over $K$. Assume that $S$ is irreducible, and $Y$ is equidimensional. Let $u \in \PSHlogreg(Y^{\an})$. Then, 
  \begin{enumerate}
    \item $N_{\pi}(u) \in \PSHlogreg(S^{\an})$. 
    \item Moreover, if $u \in C^0 \cap \PSHlogreg (Y^{\an})$, then $N_{\pi}(u) \in C^0 \cap \PSHlogreg(S^{\an})$.
    \item Let \(\BK/K\) be an extension of complete non-Archimedean fields. Then we have 
    \[
    N_{\pi}(u) \circ r_{\BK/K}=N_{\pi_{\BK}}(u \circ r_{\BK/K}),
    \]
    where \(\pi_{\BK}\) denotes the base change of \(\pi\) to \(\BK\), and \(r_{\BK/K}\) denote the restriction maps on Berkovich spaces induced by the base change $\BK/K$.
  \end{enumerate}
\end{prop}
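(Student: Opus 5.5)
The plan is to reduce everything to Fubini--Study functions, where the norm is computed by resultants (norms of polynomials), and then pass to decreasing limits. The key algebraic input is the classical identity: for $f \in K[Y]$, let $N(f) \in K[S]$ be the norm of $f$ (the determinant of multiplication by $f$ on the locally free $\CO_S$-module $\pi_*\CO_Y$). Then for each $s \in S^{\an}$,
\[
\log|N(f)|(s) = \sum_{y \in \pi^{-1}(s)} \mult_y(\pi)\, \log|f|(y).
\]
This holds because the determinant of $f$ acting on the finite $\CH_S(s)$-algebra $A_{s,\pi} = \prod_y A_{y,\pi}$ factors over the local pieces. On each local piece, $f$ acts with a single "absolute value", namely that of its residue in $\CH_Y(y)$. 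The latter claim follows from the uniqueness of extension of the absolute value of $\CH_S(s)$ to the finite extension $\CH_Y(y)$ along a nilpotent filtration, together with Lemma \ref{lem:sum_of_multiplicity_equals_to_degree_flat_morphism} for the count of multiplicities. From this identity, statement (3) is immediate. Base change preserves flatness, the norm polynomial, and the multiplicities, since $A_{s,\pi}$ base changes compatibly.

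The main obstacle is that $N_\pi$ of a \emph{max} of logarithms is not a max of logarithms of norms, so $N_\pi$ of an FS function is not obviously FS. I will handle this by the standard trick from the complex setting: $N_\pi(\max_i \{c_i\log|f_i|+C_i\})$ is the sum over the fibre of a max. After rationally rescaling, I can assume $c_i = 1/m$ with integer $f_i$-powers. Using the non-archimedean identity $\max_i |a_i| = \lim_{p}$ over polynomial combinations, I will approximate $\max_i\{\log|f_i|+C_i\}$ on compact sets uniformly by $\frac{1}{p}\log|\sum_{|\alpha|=p} \lambda_\alpha f^\alpha|$ with generic coefficients $\lambda_\alpha$. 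Concretely, I will use the auxiliary variable $T$: take $F_T = \sum_i \lambda_i T^{i} f_i \in K[Y][T]$. Then $N(F_T) \in K[S][T]$, and
\[
\max_{\text{coeffs}} \log|\text{coeff}_j(N(F_T))|
\]
recovers the sum over the fibre of $\max_i \log|\lambda_i f_i|$. This holds because evaluating at the Gauss point in $T$ turns $|F_T|$ into $\max_i |\lambda_i f_i|$, and norms commute with extending scalars to $\CH(s)(T)^\wedge$, by (3) applied to the Gauss norm field. Hence $N_\pi(u)$ is a max of finitely many $\log|g_j|+C'_j$ with $g_j \in K[S]$, plus constants. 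Adding generators via the bound in Lemma \ref{lem:FS_functions_are_comparable} (2), and absorbing them with a small error if needed, shows $N_\pi(u)$ is FS, or at least a uniform limit of FS functions, which suffices.

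Having $N_\pi(\FS(Y^{\an})) \subset \PSHlogreg(S^{\an})$, I prove (1) as follows. Write $u$ as a decreasing limit of $u_i \in \FS(Y^{\an})$. Since fibres are finite and the multiplicities are positive, $N_\pi(u_i)$ decreases pointwise to $N_\pi(u)$. Logarithmic growth follows from Lemma \ref{lem:FS_functions_are_comparable} (2). Generic finiteness holds because $u > -\infty$ on a dense set of divisorial points, whose preimages are divisorial (Theorem \ref{thm:global_divisorial_points_are_nonpluripolar} via a compactification). Then Lemma \ref{lem:psh_regularization_with_growth_control} (3) gives $N_\pi(u) \in \PSHlogreg(S^{\an})$. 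For (2), continuity of $N_\pi(u)$ for continuous $u$ follows from the fact that the fibre map $s \mapsto \sum \mult_y(\pi)\,\delta_y$ is continuous for finite flat $\pi$, i.e. $\pi^{\an}$ is open and finite with constant total multiplicity, so pushforward of continuous functions is continuous.
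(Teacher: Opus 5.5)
Your route differs genuinely from the paper's, and its core is correct; in fact it proves more.

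\textbf{Comparison with the paper.} The paper never computes $N_\pi$ of a Fubini--Study function directly. Its steps are:
\begin{enumerate}
\item It compactifies $\pi$ to a finite flat $\tau\colon X\to T$ by Raynaud--Gruson flattening.
\item It turns each truncation $\max\{u_i-c\,(g\circ\pi^{\an}),-cr\}$ into a psh model metric via Theorem~\ref{thm:GPSH_PL_equals_to_LPSH_PL} (i.e.\ Chambert-Loir--Ducros).
\item It takes the Boucksom--Eriksson norm of that metric, which stays semipositive, and deduces $N_\pi(u_i)\in\PSH\cap\PL(\interior(P_r))$.
\item It concludes by Lemma~\ref{lem:psh_PL_functions_are_pshlogreg}~(3), and quotes (3) from the base-change compatibility of the norm functor.
\end{enumerate}
You replace all of this by the identity $\log|N(f)|(s)=\sum_y\mult_y(\pi)\log|f|(y)$ and a Gauss-point evaluation of the norm of a linear form. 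This shows $N_\pi(\FS(Y^{\an}))\subset\FS(S^{\an})$ outright, with no flattening, no compactification and no model metrics, over any complete $K$. Your argument for (3) is also cleaner. $A_{s',\pi_{\BK}}=\CH(s')\otimes_{\CH(s)}A_{s,\pi}$ gives $\sum_{y'\mapsto y}\mult_{y'}(\pi_{\BK})=\mult_y(\pi)$, which proves (3) for every function $u$ without the norm identity. What the paper's route buys is reuse of the global positivity machinery, at the cost of heavy inputs.

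\textbf{Steps that need repair.} None of these is serious.

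(i) \emph{Constants.} Elements $\lambda_i\in K$ with $\log|\lambda_i|=C_i$ need not exist, e.g.\ if $K$ is trivially valued or $C_i\notin\Gamma_K$. Use several variables and radii instead. Write $mu=\max_{0\le i\le k}\{\log|h_i|+D_i\}$ with $h_0=1$ and $D_i\in\Q$, and set $N(\sum_i h_iT_i)=\sum_\alpha g_\alpha T^\alpha$. Evaluating at the Gauss point of polyradius $(e^{D_0},\dots,e^{D_k})$ over $s$ gives
\[
\sum_{y\in\pi^{-1}(s)}\mult_y(\pi)\max_i\{\log|h_i|(y)+D_i\}=\max_\alpha\{\log|g_\alpha|(s)+\langle\alpha,D\rangle\},
\]
and $g_{(\deg(\pi),0,\dots,0)}=N(1)=1$ supplies the constant term.

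(ii) \emph{The Gauss-point step.} This is not a consequence of (3), because $\sum_i h_iT_i$ is not pulled back from $Y$. What you need is that, for a finite extension $L/F$ of complete fields, the only point of $(\BA^n_L)^{\an}$ over the (weighted) Gauss point of $(\BA^n_F)^{\an}$ is the Gauss point. For instance, $|N_{L(T)/F(T)}(P)|=|P|^{[L:F]}$ for Gauss norms, via conjugates in a normal closure.

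(iii) \emph{Adding generators $z_j$ of $K[S]$.} You need a lower bound, not Lemma~\ref{lem:FS_functions_are_comparable}~(2). For $v\in\FS(Y^{\an})$ you have $v\ge c\max\{\log|f_1|,\dots,\log|f_k|,0\}-C$. The proof of Lemma~\ref{lem:FS_functions_are_comparable}~(1) gives $\log|z_j\circ\pi|\le A\max\{\log|f_1|,\dots,\log|f_k|,0\}+B$. Together these give $N_\pi(v)\ge\epsilon\max_j\log|z_j|-C'$, so $N_\pi(v)$ is exactly FS. The $N_\pi(u_i)$ then form a decreasing sequence in $\FS(S^{\an})$. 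You do not need Lemma~\ref{lem:psh_regularization_with_growth_control}~(3), which in any case assumes $K$ discretely or trivially valued.

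(iv) \emph{Continuity in (2).} Openness plus constant total degree does not by itself give continuity of $s\mapsto\sum_y\mult_y(\pi)\delta_y$. You also need local constancy of the multiplicity sums near each fibre point, which comes from flatness. It is simpler to use Dini with $\sup_P|N_\pi(u_i)-N_\pi(u)|\le\deg(\pi)\sup_{(\pi^{\an})^{-1}(P)}|u_i-u|$ and continuity of FS functions, as the paper does.

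\textbf{Generic finiteness of $N_\pi(u)$.} Your divisorial-point argument needs $K$ discretely or trivially valued. For reducible $Y$ the claim is actually false. Take $Y=\{xy=0\}$ and $\pi=x+y$ onto $\BA^1$, so $K[Y]$ is free with basis $1,x$, and take $u=\log|x|\in\PSHlogreg(Y^{\an})$. Then $N(x)=0$, so $N_\pi(u)\equiv-\infty$. The paper's proof passes over this point too. Some hypothesis such as $u\not\equiv-\infty$ on each irreducible component, plus an argument valid for general $K$, is needed. In the paper's applications $Y$ is irreducible or normal, which rules out this particular counterexample.
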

\begin{proof}
  Let $S \hookrightarrow \BA^{k}_K$ and $Y \hookrightarrow \BA^{k}_K \times \BA^{l}_K$ be closed embeddings such that $\pi=\fp_1|_{Y}$, where $\fp_1 \colon \BA^{k}_K \times \BA^{l}_K \to \BA^{k}_K$ be the projection morphism to the first component. Let $T'$ be the closure of $S$ in $\P^{k}_K$, and let $X'$ be the closure of $Y$ in $\P^{k}_K \times \P^{l}_K$.  Denote by $\tau'=(\fq_1)|_{X'} \colon X' \to T'$, where $\fq_1 \colon \P^{k}_K \times \P^{l}_K \to \P^{k}_K$ is the projection. Note that $\tau'|_Y=\pi$. On the other hand,  since $\pi$ is proper, $Y$ is a closed subvariety of $\BA^{k}_K \times \P^{l}_K$. This implies that $Y=X' \cap (\BA^{k}_K \times \P^{l}_K)$. Equivalently, we have $(\tau')^{-1}(S)=Y$. 
  
  \lnote{Let $X$ be the compactification of $Y$. Then, by definition, $Y \times_S (S \times X) \to X \times S$ is a closed map. The map $Y \to X \times S$ induces a closed embeddinng $Y \to Y \times_S (S \times X)$. }

  By the Raynaud-Gruson flattening theorem (\cite[Théorème 5.2.2, Première partie]{RG71}), there exists a closed subscheme $Z \subset T' \setminus S$ such that $\tau \colon X \to T$ is a flat morphism, where $T \coloneqq \mathrm{Bl}_{Z}T'$, and $X$ is the proper transform.  Flatness together with properness implies that the fiber dimension is locally constant, and hence $\tau$ is quasi-finite. These imply further that $\tau$ is also finite by \cite[Lemma 37.44.1]{stacks-project}. Moreover, by construction, we have $\tau|_Y=\pi$ and $\tau^{-1}(S)=Y$.  

  Let $D$ be an effective ample divisor on $T'$ with $\mathrm{Supp}(D)=T' \setminus S$. By Lemma \ref{lem:blowup_pi_ample_divisor}, there exists an effective ample divisor $H$ on $T$ with $\mathrm{Supp}(H)=T \setminus S$. Take $m \in \N$ such that $L \coloneqq \CO(mH)$ is very ample on $T$ with $s_{mH} \in H^0(T, L)$. Let $\theta \in \amp(T)$ such that it induces  a Fubini-Study metric $|\cdot|_{\theta}$ on $L$.  Then, by Proposition \ref{prop:potential_of_positive_form_is_Fubini-Study} (1), $g \coloneqq -\log|s_{mH}|_{\theta} \in \FS(S^{\an})$. For each $r \ge 1$, set $P_r \coloneqq \{g \le r\}$. Then, Lemma \ref{lem:tropical_polytopes_and_FS_functions} implies that  $P_1 \subset \cdots \subset P_r \subset \cdots \subset S^{\an}$ is a sequence of tropical polytopes exhausting $S^{\an}$. On the other hand, since $\tau$ is finite, $\tau^{*}L$ is an ample line bundle on $X$, and $\tau^{*}\theta \in \nef(X)$. Then, we have $Z(\tau^{*}s_{nH})=X \setminus Y$, and $g  \circ \pi^{\an}=-\log|\tau^{*}s_{mH}|_{\tau^{*}\theta} \in \PSH \cap \PL(Y^{\an})$ is a potential of $\tau^{*} \theta$ on $Y^{\an}$. 

  \smallskip

  Fix $u \in \PSHlogreg(Y^{\an})$ and a decreasing function sequence $\{u_i\}_{i \ge 1} \subset \FS(Y^{\an})$ converging pointwise to $u$ on $Y^{\an}$. By definition, $\{N_{\pi}(u_i)\}_{i \ge 1}$ is a decreasing function sequence converging pointwise to $N_{\pi}(u)$. By Lemma \ref{lem:FS_functions_are_comparable}, we may assume that  $u_1 \le g \circ \pi^{\an}$. By Lemma \ref{lem:sum_of_multiplicity_equals_to_degree_flat_morphism}, $N_{\pi}(g \circ \pi^{\an})=\deg(\pi)\cdot g$. This implies that for each $i \ge 1$, we have $N_{\pi}(u_i) \le N_{\pi}(u_1) \le N_{\pi}(g \circ \pi^{\an}) \le \deg(\pi)\cdot g \in \FS(Y^{\an})$. Thus, by Lemma \ref{lem:psh_PL_functions_are_pshlogreg} (3), to show that $N_{\pi}(u) \in \PSHlogreg(Y^{\an})$,  it suffices to show that for each $i, r \ge 1$, we have $N_{\pi}(u_i)|_{\interior(P_r)} \in \PSH \cap \PL(\interior(P_r))$.
  
  Fix $i,r \ge 1$. Fix $c \in \Q$ such that $c>1$. After adding a constant to $u_i$, we may assume that $u_i \ge 0$ on $Y^{\an}$. To simplify the notations, denote $\omega \coloneqq c\tau^{*}\theta \in \nef(X)$ with $L(\omega) \simeq M \coloneqq c \tau^{*}L \in \Pic_{\Q}(X)$. Define the function $\varphi_i \colon X^{\an} \to \R \cup \{-\infty\}$ by
  \[
  \varphi_i =\max\left\{u_i-c(g \circ \pi^{\an}), \, -cr \right\} \text{ on } Y^{\an}, \qquad \text{and} \qquad \varphi_i=-cr \text{ on } X^{\an} \setminus Y^{\an}.
  \]
  By Lemma \ref{lem:finite_sup_lpsh}, we have $\varphi_i +c(g \circ \pi^{\an}) \in \PSH \cap \PL(Y^{\an})$.  On the other hand, we have $\{u_i-c(g \circ \pi^{\an}) \ge -cr\} \subset \{(c-1)(g \circ \pi^{\an}) \le cr\} \subset (\pi^{\an})^{-1}(P_{r'})$ with $r' =\frac{cr}{r-1}$. Note that $(\pi^{\an})^{-1}(P_{r'})$ is a compact subset of $Y^{\an}$, and hence a closed subset of $X^{\an}$. This implies that $\varphi_i=-cr$ on an open neighborhood of $(X \setminus Y)^{\an}$, and hence   $\varphi_i \in \LPSH_{\omega} \cap \PL(X^{\an})$.  By Theorem \ref{thm:GPSH_PL_equals_to_LPSH_PL}, $\varphi_i \in \GPSH_{\omega} \cap \PL(X^{\an})$. Therefore, $\phi_i \coloneqq |\cdot|_{\omega_{\varphi_i}}=|\cdot|_{\omega} \cdot e^{-\varphi_i}$ is a psh model metric on $M$. 

  Denote by $N_{\tau}(M) \in \Pic_{\Q}(T)$ the norm of the $\Q$-line bundle $M$ under the finite flat morphism $\tau$ (see \cite[Section 6.5]{EGAII}). Recall that in \cite[Section 8.5]{BE21}, the author define the norm metric $N_{\tau}(\phi_i)$ of $\phi_i$ on $N_{\tau}(M)$ satisfying  
  \[
  \log|N_{\tau}(s)|_{N_{\tau}(\phi_i)}= \sum_{y \in \pi^{-1}(s)} \mult_{y}(\pi) \, \log|s|_{\phi_i} \qquad \text{for any } s\in H^0(X, M).
  \]
  \lnote{actually I don't know whether the condition of flatness is optimal. It seems that we can define the norm for line bundles when the target is normal. (on \cite{stacks-project}) However, we are not sure in this case whether the norm will still be given by this multiplicity.}
  Moreover, \cite[Proposition 8.22]{BE21} implies that $N_{\tau}(\phi_i)$ is a psh model metric on $N_{\tau}(L)$. \footnote{More precisely, there exists a model $(\CX, \CM)$ representing $\phi_i$, a model $\CT$ of $T$, and a proper flat morphism $\CX \to \CT$ extending $\tau$ such that $(\CT, N_{\tau}(\CM))$ represents $N_{\tau}(\phi_i)$.}  Combining the above facts and applying Corollary \ref{cor:GPSH_is_LPSH}, we obtain that $N_{\pi}\left(\varphi_i+c(g \circ \pi^{\an}) \right)=\log|N_{\tau}(\tau^{*}s_{rH})|_{N_{\tau}(\phi_i)} \in \PSH \cap \PL(S^{\an})$. On the other hand, since $u_i \ge 0$ on $Y^{\an}$,   we have $\varphi_i=u_i-c(g \circ \pi^{\an})$ on $(\pi^{\an})^{-1}(P_r)$, which implies that $N_{\pi}\left(\varphi_i+c(g \circ \pi^{\an})\right)=N_{\pi}(u_i)$ on $P_r$. Thus, we obtain that  $N_{\pi}(u_i)|_{\interior(P_r)} \in \PSH \cap \PL(\interior(P_r))$, and statement (1) is proved.   

  \smallskip

  For statement (2), for each $r \ge 1$, $\{u_i\}_{i \ge 1}$ converges uniformly to $u$ on $E_r$ by Dini's theorem. Then,  Lemma \ref{lem:sum_of_multiplicity_equals_to_degree_flat_morphism} implies that $\left\|N_{\pi}(u_i)-N_{\pi}(u_j)\right\|_{C^0(P_r)} \le \deg(\pi) \,  \left\|u_i-u_j\right\|_{C^0(E_r)}$.  Therefore, $\{N_{\pi}(u_i)\}_{i \ge 1}$ converges uniformly to $N_{\pi}(u)$ on $P_r$, and statement (2) follows.  Statement (3) follows from the fact that the norm functor for line bundles, and thereby the norm functor for model metrics commutes with base changes (see \cite[Theorem A.21]{BE21}).
\end{proof}

\subsubsection{Norms under finite surjective morphisms with smooth targets}
\begin{prop} \label{prop:norm_finite_morphisms_smooth_target}
  Let $K$ be a discretely valued or trivially valued NA field of equicharacteristic $0$ such that $K$ admits a countable dense subfield. Let $Y$ be an irreducible affine variety over $K$ with $\dim Y=n$. Let $S$ be an affine open subvariety of $\BA^n_K$. Let $\pi \colon Y \to S$ be a finite surjective morphism. Then, there exists a map 
  \bens
  Y^{\an} &\longrightarrow& \Z_{>0} \\
  y &\longmapsto& \mult_y(\pi)
  \eens
  such that
  \begin{enumerate}
    \item There exists an affine open subvariety  $\mathscr{W}$ of $S$ such that $\pi$ is flat over $\mathscr{W}$, and $\mult_y(\pi)=\mult_y(\pi|_{\pi^{-1}(\mathscr{W})})$ for any $y \in \pi^{-1}(\mathscr{W})$, where $\mult_y(\pi|_{\pi^{-1}(\mathscr{W})})$ is the multiplicity at $y$ of the finite flat surjective morphism $\pi|_{\pi^{-1}(\mathscr{W})} \colon \pi^{-1}(\mathscr{W}) \to \mathscr{W}$ defined by Equation \ref{equation:def_multiplicity_flat_locus}.
    \item For any $u \in \WPSHlog(Y^{\an})$, we have $N_{\pi}(u) \in \PSHlogreg(S^{\an})$, with  \\
    $N_{\pi}(u) (s) \coloneqq \sum_{y \in \pi^{-1}(s)}  \mult_{y}(\pi) u(y)$.
  \end{enumerate}
\end{prop}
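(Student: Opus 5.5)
The plan has three parts: define $\mult_y(\pi)$ as a local degree of $\pi^{\an}$, show $N_{\pi}(u)$ is regularizable over a flat and smooth Zariski open subset of $S$, and then extend across the complement using Theorem \ref{thm:pshlogreg_extension}.

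\textbf{Definition of $\mult_y(\pi)$ and property (1).} Since $S$ is normal and $Y$ is irreducible of the same dimension, going-down holds. I expect this to make $\pi^{\an}$ open; this is not proved earlier in the paper and needs a reference or an argument. Choose a dense affine open $\mathscr W\subset S$ over which $\pi$ is flat (generic flatness). For $y\in Y^{\an}$ with $s=\pi^{\an}(y)$, pick a small open $V\ni s$ and the connected component $V_y$ of $(\pi^{\an})^{-1}(V)$ containing $y$. Set $\mult_y(\pi)$ to be $\sum_{y'\in V_y\cap(\pi^{\an})^{-1}(s')}\mult_{y'}(\pi|_{\pi^{-1}(\mathscr W)})$ for $s'\in V\cap\mathscr W^{\an}$. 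This needs two facts. First, the sum is independent of $s'$, which would follow from local constancy of the local degree of finite flat analytic maps (Ducros) together with connectedness of $V\cap\mathscr W^{\an}$, using that $S$ is normal. Second, the sum is independent of $V$ once $V$ is small, because the fibre is finite. For $y$ over $\mathscr W^{\an}$, the sum equals $\mult_y(\pi|_{\pi^{-1}(\mathscr W)})$, which is exactly (1). The same construction gives the semicontinuity I will need later: for $s'$ near $s$, the fibre points near $y$ carry total multiplicity $\mult_y(\pi)$.

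\textbf{Regularizable over a good open subset.} Let $u\in\WPSHlog(Y^{\an})$, with modification $p\colon\widetilde Y\to Y$. Shrink $\mathscr W$ to a basic open affine such that, over $\mathscr W$, the map $\pi$ is flat, $\pi^{-1}(\mathscr W)$ is smooth (generic smoothness, using characteristic $0$), and $p$ is an isomorphism. Then $u|_{\pi^{-1}(\mathscr W)}$ is psh, and it has logarithmic growth by Lemma \ref{lem:FS_functions_are_comparable} (2). So Proposition \ref{prop:pshlog_equals_pshlogreg_on_smooth_affine_varieties} puts it in $\PSHlogreg$. Proposition \ref{prop:norm_finite_flat_morphisms} (1) then gives $N_{\pi}(u)|_{\mathscr W^{\an}}\in\PSHlogreg(\mathscr W^{\an})$.

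\textbf{Extension to $S$.} By Lemma \ref{lem:FS_functions_are_comparable} (2) there is $w\in\FS(S^{\an})$ with $u\le C\,w\circ\pi^{\an}+O(1)$, hence $N_{\pi}(u)\le\deg(\pi)\,C\,w+O(1)$. Since $S$ is smooth, Theorem \ref{thm:pshlogreg_extension} extends $N_{\pi}(u)|_{\mathscr W^{\an}}$ to some $\widetilde N\in\PSHlogreg(S^{\an})$. By Proposition \ref{prop:local_comparison_divisorial_points_to_full_spaces} (1), $\widetilde N(s)=\limsup_{s'\to s,\,s'\in\mathscr W^{\an}\cap S^{\div}}N_{\pi}(u)(s')$.

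It remains to show $\widetilde N=N_{\pi}(u)$ on $S^{\an}$; I expect this to be the main obstacle.
\begin{itemize}
\item[$\le$:] Each $u$ is usc and the multiplicities are semicontinuous in the sense above. So for $s'\to s$, the contributions near each $y\in\pi^{-1}(s)$ are bounded above by roughly $\mult_y(\pi)u(y)$. This gives $\widetilde N\le N_{\pi}(u)$.
\item[$\ge$:] One has to realise, along a single net $s'\to s$, the values $u(y)$ for all $y\in\pi^{-1}(s)$ simultaneously. Proposition \ref{prop:psh_equals_weakly_psh_on_smooth_varieties} (1) only approximates one $y$ at a time. I would try to reduce to continuous $u$ by showing that $\min$-type truncations and decreasing limits are compatible with $N_{\pi}$. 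For continuous $u$ the limit is automatic from the local-degree description. A decreasing limit then passes through by Lemma \ref{lem:psh_regularization_with_growth_control} (3), provided one can approximate $u$ from above by continuous weakly psh functions, for instance pulled back from the resolution.
\end{itemize}
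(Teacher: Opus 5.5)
\textbf{Gap: the reverse inequality at points outside $\mathscr W^{\an}$.} Your outline is sound up to the last step. The local-degree definition of $\mult_y(\pi)$ works, given standard Berkovich facts such as connectedness of $V\cap\mathscr W^{\an}$ for small connected $V$ in the normal space $S^{\an}$; openness of $\pi^{\an}$ is not really needed. Regularity over the flat smooth locus, the growth bound, the extension $\widetilde N$ via Theorem~\ref{thm:pshlogreg_extension}, and the inequality $\widetilde N\le N_\pi(u)$ from upper semicontinuity are also fine. The gap is the reverse inequality $\widetilde N(s)\ge N_\pi(u)(s)$ for $s\notin\mathscr W^{\an}$. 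That is the real content of the proposition, and your proposed route does not close it.

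Your reduction needs a decreasing sequence of \emph{continuous} functions in $\WPSHlog(Y^{\an})$ converging to $u$, and no such approximation is available.
\begin{itemize}
\item Continuous psh approximants of $u\circ p^{\an}$ on a resolution need not be constant on the fibres of $p^{\an}$. Those fibres can be disconnected because $Y$ is not normal; in the application $Y$ is a neighbourhood of singular points. So the approximants do not descend to $Y^{\an}$.
\item Regularizing $u$ directly on $Y$ is essentially Theorem~\ref{thm:NA_FN_theorem}. Its proof uses this very proposition, through Lemma~\ref{lem:aux_lemma_2}.
\end{itemize}
So the crucial step is circular. Lemma~\ref{lem:psh_regularization_with_growth_control}~(3) does not help, since it presupposes approximants that are already psh.

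\textbf{What the paper does instead.} It never realises all the values $u(y)$, $y\in\pi^{-1}(s)$, along one net. It reduces to dimension one.
\begin{enumerate}
\item Base change to $\BK=\CH(s)$, so that $s$ lifts to a $\BK$-rational point $s'$.
\item Choose a line $C_s\ni s'$ that lies generically in $\mathscr W_\BK$.
\item Normalize the reduced preimage curve, and let $\fq\colon\CC_s^{\nu}\to C_s$ be the resulting map; it is finite flat by miracle flatness.
\item On the smooth curve $\CC_s^{\nu}$, $u_\BK\circ\fp$ is psh, hence regularizable by Proposition~\ref{prop:pshlog_equals_pshlogreg_on_smooth_affine_varieties}. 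Proposition~\ref{prop:norm_finite_flat_morphisms} then gives $N(\fq;u_\BK\circ\fp)\in\PSHlogreg(C_s^{\an})$.
\item This function agrees with $\widetilde N_\BK|_{C_s}$ off finitely many closed points.
\item Lemma~\ref{lem:psh_extension_on_smooth_curves} says a psh function on a smooth curve is determined at a closed point by its $\limsup$ from the complement. This forces equality at $s'$.
\end{enumerate}
Plurisubharmonicity of the norm on the curve does the ``simultaneous realisation'' for you.

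If you keep your local-degree definition of $\mult$, you must also check that it coincides with $\sum_{x'\in\fp^{-1}(z')}\mult_{x'}(\fq)$. That is, local degree must be compatible with base change to $\CH(s)$ and with slicing by $C_s$. This is plausible but needs an argument.
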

\begin{proof}
  \textbf{Step 1.} We define $\mult_y(\pi)$ for each $y \in Y^{\an}$. Since $\mathrm{char}\, K=0$ and being unramified is an open condition, these together with the generic flatness (\cite[Proposition 29.28.1]{stacks-project}) imply that there exists an affine open subvariety $\mathscr{W} \subset S$ such that $\pi$ is flat and unramified over $\mathscr{W}$. Then, $\pi$ is \'etale and in particular smooth over $\mathscr{W}$ (\cite[Lemma 29.37.5 and Lemma 29.37.15]{stacks-project}). Thus,    $\mathscr{U} \coloneqq \pi^{-1}(\mathscr{W})$ is a smooth affine variety over $K$.  
  
  Denote by $\pi_{\mathscr{U}} = \pi|_{\mathscr{U}} \colon \mathscr{U} \to \mathscr{W}$ the restriction of $\pi$ on $\mathscr{U}$. By assumption, $\pi_{\mathscr{U}}$ is a finite flat surjective morphism. For any $y \in \mathscr{U}^{\an}$, define $\mult_y(\pi) \coloneqq \mult_y(\pi_{\mathscr{U}})$, where $\mult_y(\pi_{\mathscr{U}})$ is the multiplicity of  $\pi_{\mathscr{U}}$ at $y$ defined by Equation \ref{equation:def_multiplicity_flat_locus}. 

  Now, we define $\mult_z(\pi)$ for $z \in Z^{\an}$, where $Z \coloneqq Y \setminus \mathscr{U}$. Let $s=\pi(z) \in S^{\an}$, and let $\BK \coloneqq \CH(s)$ denote the completed residue field of $s$. Let $\pi_{\BK} \colon Y_{\BK} \to S_{\BK}$ be the base change of $\pi$ to $\BK$, and let $r_{\BK/K}$ denote the restriction maps on Berkovich spaces induced by the base change $\BK/K$. Then, there exists an $\BK$-rational point $s' \in S_{\BK}^{\an}$ such that $r_{\BK/K}(s')=s$. Moreover, it is not hard to see that $r_{\BK/K} \colon Y_{\BK}^{\an} \to Y^{\an}$ induces a homeomorphism from $(\pi_{\BK}^{\an})^{-1}(s')$ to $(\pi^{\an})^{-1}(s)$. 
  
  Recall that $S_{\BK}$ is an affine open subvariety of $\BA_{\BK}^n$. Since $\mathscr{W}_{\BK}$ is Zariski open dense in $S_{\BK}$, we may take an affine line $C_s$ in $S_{\BK} \subset \BA^n_{\BK}$ passing $s'$ such that $C_s$ is contained in $\mathscr{W}_{\BK}$ generically. Let $t_1, \cdots, t_{n-1}$ be linear functions on $S_{\BK} \subset \BA_{\BK}^n$ such that $\BK[C_s] \cong \BK[S]/(t_1, \cdots, t_{n-1})$. Consider the scheme theoretical preimage $C_s \times_{S_{\BK}} Y_{\BK}$.   Then, Krull's  principal ideal theorem implies that each irreducible component of $C_s \times_{S_{\BK}} Y_{\BK}=\Spec \BK[Y]/(t_1, \cdots, t_{n-1})$ is of dimension at least $1$. On the other hand, since $\pi$ is finite, $\dim(C_s \times_{S_{\BK}} Y_{\BK}) \le 1$. This implies that $C_s \times_{S_{\BK}} Y_{\BK}$ is a scheme of equidimension $1$. 

  Let $\CC_s \coloneqq \pi_{\BK}^{-1}(C_s)$, be the reduction of $C_s \times_{S_{\BK}} Y_{\BK}$.  Let $\fp \colon \CC_s^{\nu} \to \CC_s$ be the normalization map, and denote $\fq = (\pi_{\BK}) \circ \fp \colon \CC_s^{\nu} \to C_s$. By construction, $\CC_s$ is a variety of equidimension $1$, and $\fq$ is a finite surjective morphism. Since both $\CC_s^{\nu}$ and $C_s$ are smooth, the miracle flatness (\cite[26.2.11]{Vak25}) implies that $\fq$ is flat. 

  Consider $\mult_{x}(\fq)$, the multiplicity of the finite flat surjective morphism $\fq$ at $x$ defined by Equation \ref{equation:def_multiplicity_flat_locus} for $x \in (\CC^{\nu}_s)^{\an}$. Let $z'$ be the unique point in $(\pi_{\BK}^{\an})^{-1}(s')$ such that $r_{\BK/K}(z')=s'$. Then, $z' \in \CC_s^{\an} = (\pi_{\BK}^{\an})^{-1}(C_s^{\an})$. Now, we define 
  \[
    \mult_{z}(\pi) \coloneqq \sum_{x' \in \fp^{-1}(z')}\mult_{x'}(\fq).
  \]

  We would like to say more about $\mult_{x}(\fq)$ before we start the next step. Since $\pi_{\BK}$ is \'etale over $\mathscr{W}_{\BK}$ and $C_s$ is a smooth variety over $\BK$, we obtain that $(C_s \cap \mathscr{W}_{\BK}) \times_{S_{\BK}} Y_{\BK}$ is a smooth scheme over $K$. In particular, $(C_s \cap \mathscr{W}_{\BK}) \times_{S_{\BK}} Y_{\BK}$ is reduced, and hence the reduction morphism $\CC_s \cap \mathscr{U}_{\BK} = \pi_{\BK}^{-1}(C_s \cap \mathscr{W}_{\BK}) \to (C_s \cap \mathscr{W}_{\BK}) \times_{S_{\BK}} Y_{\BK}$ is an isomorphism. This implies that $\CC_s \cap \mathscr{U}_{\BK}$ is smooth over $K$, and thereby $\fp$ is an isomorphism over $\CC_s \cap \mathscr{U}_{\BK}$. It also implies that $\pi_{\BK}|_{(\CC_s \cap \mathscr{U}_{\BK})} \colon \CC_s \cap \mathscr{U}_{\BK} \to C_s \cap \mathscr{W}_{\BK}$ is a finite flat surjective morphism. \footnote{We remind the readers that $C_s \times_S K$ might not be reduced, and therefore $\pi_{\BK}|_{\CC_s} \colon \CC_s \to C_s$ might not be flat.} Moreover, for each $t \in (C_s \cap \mathscr{W}_{\BK})^{\an}$, we have
  \bens
  \CH_{S_{\BK}}(t) \times_{S_{\BK}} Y_{\BK} =\CH_{C_s}(t) \times_{C_s}(C_s \times_{S_{\BK}} Y_{\BK}) \cong \CH_{C_s}(t) \times_{C_s} \CC_s \cong \CH_{C_s}(t) \times_{C_s} \CC_s^{\nu},
  \eens 
  where $\CH_{S_{\BK}}(t)=\CH_{C_s}(t)$ denote the completed residue field at $t$. 
  In particular, $\CH_{S_{\BK}}(t) \times_{S_{\BK}} Y_{\BK}$ and $\CH_{C_s}(t) \times_{C_s} \CC_s^{\nu}$ admit isomorphic decompositions as products of local finite algebras, indexed by fibres $\pi_{\BK}^{-1}(t)=\fq^{-1}(t)$.  
  Thus, for each $x \in \fp^{-1}(\CC_s \cap \mathscr{U}_{\BK})^{\an}$, we have 
  \be \label{equation:mult_x_q_proof:prop:norm_finite_morphisms_smooth_target}
  \mult_x(\fq)=\mult_{\fp(x)}(\pi_{\BK}|_{\CC_s \cap \mathscr{U}_{\BK}})=\mult_{\fp(x)}(\pi_{\BK}|_{\mathscr{U}_{\BK}}).
  \ee
  Here, $\mult_{\fp(x)}(\pi_{\BK}|_{(\CC_s \cap \mathscr{U}_{\BK})})$ and $\mult_{\fp(x)}(\pi_{\BK}|_{\mathscr{U}_{\BK}})$ denote the multiplicity at $\fp(x)$ of the finite flat surjective morphisms $\pi_{\BK}|_{(\CC_s \cap \mathscr{U}_{\BK})} \colon \CC_s \cap \mathscr{U}_{\BK} \to C_s \cap \mathscr{W}_{\BK}$ and $\pi_{\BK}|_{\mathscr{U}_{\BK}} \colon \mathscr{U}_{\BK} \to \mathscr{W}_{\BK}$ respectively. Combining this with Lemma \ref{lem:sum_of_multiplicity_equals_to_degree_flat_morphism}, we obtain that 
  \[
  \deg(\fq)=\sum_{x \in (\fq^{\an})^{-1}(t)} \mult_{x}(\fq)=\sum_{y \in (\pi^{\an})^{-1}(t)} \mult_{y}(\pi_{\BK}|_{\mathscr{U}})=\deg(\pi),
  \]
  where $t \in (C_s \cap \mathscr{W})^{\an}$.

  \smallskip

  \textbf{Step 2.} Fix $u \in \WPSHlog(Y^{\an})$, and  we would like to  show that $N_{\pi}(u) \in \PSHlogreg(S^{\an})$, where $N_{\pi}(u)(s) \coloneqq \sum_{y \in \pi^{-1}(s)}  \mult_{y}(\pi) u(y)$ with $\mult_{y}(\pi)$ defined in Step 1. To emphasize the role of $\pi$, we use the notation $N(\pi;u) \coloneqq N_\pi(u)$ interchangeably  in the rest of the proof.
  
  Since $\mathscr{U}$ is smooth, Proposition \ref{prop:psh_equals_weakly_psh_on_smooth_varieties} (2) implies that $u|_{\mathscr{U}} \in \PSHlogreg(\mathscr{U}^{\an})$. Since $\pi|_{\mathscr{U}} \colon \mathscr{U} \to \mathscr{W}$ is a finite flat surjective morphism,  Proposition \ref{prop:norm_finite_flat_morphisms} implies that 
  \[
  N_{\pi}(u)|_{\mathscr{W}^{\an}}=N(\pi|_{\mathscr{U}}; u|_{\mathscr{U}^{\an}}) \in \PSHlogreg(\mathscr{W}^{\an}).
  \]  
  
  Fix $g \in \FS(S^{\an})$. By Lemma \ref{lem:FS_functions_are_comparable} (2), there exists $C >0$ such that $u \le C(g \circ \pi^{\an})+O(1)$. Then, by Lemma \ref{lem:sum_of_multiplicity_equals_to_degree_flat_morphism},  we have  
  \[
  N_{\pi}(u)|_{\mathscr{W}^{\an}}=N(\pi|_{\mathscr{U}}; u|_{\mathscr{U}^{\an}}) \le C \cdot N(\pi|_{\mathscr{U}};(g \circ \pi^{\an})|_{\mathscr{U}^{\an}}) +O(1)=C \deg(\pi) \cdot g|_{\mathscr{W}^{\an}}+O(1). 
  \] 
  Therefore,  Theorem \ref{thm:pshlogreg_extension}  implies that there exists $v \in \PSHlogreg(S^{\an})$ such that $v|_{\mathscr{W}^{\an}}=N_{\pi}(u)|_{\mathscr{W}^{\an}}$.

  \smallskip

  \textbf{Step 3.} It remains to show that $N_{\pi}(u)(s)=v(s)$ for any $s \in (S \setminus \mathscr{W})^{\an}$. Fix $s \in (S \setminus \mathscr{W})^{\an}$ and denote $\BK \coloneqq \CH_S(s)$. Let $s' \in S_{\BK}^{\an}$ be the $\BK$-rational point and $C_s \subset S_{\BK}$ be the curve passing $s'$ chosen in Step 1 to define $\mult_{z}(\pi)$ for  $z \in \pi^{-1}(s)$. For any function $f$ on $S^{\an}$ (\emph{resp.} $Y^{\an}$), denote by $f_{\BK} \coloneqq f \circ r_{\BK/K}$.
  
  Then, we have $v_{\BK} \in \PSHlogreg(S_{\BK}^{\an})$, and thereby $v_{\BK}|_{C_{s}} \in \PSHlogreg(C_{s}^{\an})$. By Proposition \ref{prop:norm_finite_flat_morphisms} (3), we have 
  \be \label{equation:v_K_generic_proof:prop:norm_finite_morphisms_smooth_target}
  v_{\BK}|_{\mathscr{W}_{\BK}^{\an}}= \left( N_{\pi}(u)_{\BK} \right)|_{\mathscr{W}_{\BK}^{\an}}=  N( \pi|_{\mathscr{U}} ; u|_{\mathscr{U}^{\an}}) \circ r_{\BK/K}  =N\left(\pi_{\BK}|_{\mathscr{U}_\BK}; (u_{\BK})|_{\mathscr{U}^{\an}} \right).
  \ee
  
  Now, we consider $u_{\BK}|_{\CC_s} \in \WPSHlog(\CC_s^{\an})$. Let $\mu \colon \widetilde{Y} \to Y$ be a proper birational morphism such that $u \circ \mu^{\an} \in \PSHlog(\widetilde{Y}^{\an})$. Let $\widetilde{\CC_s} \subset \widetilde{Y}_{\BK}$ be the proper transform of $\CC_s$ under $\mu_{\BK}$. Then, there exists a proper birational morphism $\fr \colon \CC_s^{\nu} \to \widetilde{\CC_s}$ such that $\fp =\mu_{\BK} \circ \fr$.  By definition, 
  \be
  N_{\pi}(u)(s)=\sum_{z \in \pi^{-1}(s)} \mult_{z}(\pi)\, u(z) =\sum_{x' \in \fq^{-1}(s')} \mult_{x'}(\fq)\, u_{\BK}(x') =N(\fq; u_{\BK} \circ \fp)(s'), 
  \ee
  where $N(\fq; u_{\BK} \circ \fp)$ is the norm under the finite flat surjective morphism $\fq=(\pi_{\BK}) \circ \fp \colon \CC_s^{\nu} \to C_s$. Note that $(u \circ \mu^{\an})_{\BK} \in \PSHlog(\widetilde{Y}_{\BK}^{\an})$, and thereby $u_{\BK} \circ \fp=(u \circ \mu^{\an})_{\BK} \circ \fr \in \PSHlog((\CC_s^{\nu})^{\an})$. Since $\CC_s^{\nu}$ is an affine smooth curve, we have $u_{\BK} \circ \fp \in \PSHlogreg((\CC_s^{\nu})^{\an})$ by Proposition \ref{prop:pshlog_equals_pshlogreg_on_smooth_affine_varieties}. Then, Proposition \ref{prop:norm_finite_flat_morphisms} implies that $N(\fq; u_{\BK} \circ \fp) \in \PSHlogreg(C_s^{\an})$. On the other hand, recall that $\fp$  induces an isomorphism from $\fp^{-1}(\CC_s \cap \mathscr{U}_{\BK})$ to $ \CC_s \cap \mathscr{U}_{\BK}$.  Equation \ref{equation:mult_x_q_proof:prop:norm_finite_morphisms_smooth_target} implies that for any $t \in (C_s \cap \mathscr{W}_{\BK})^{\an}$,  we have
  \be
  N(\fq; u_{\BK} \circ \fp)(t)=\sum_{x\in \fq^{-1}(t)} \mult_x(\fq) \, u_{\BK} ( \fp(x))=\sum_{y \in \pi_{\BK}^{-1}(t)} \mult_{y}(\pi_{\BK}|_{\mathscr{U}_{\BK}})\, u_{\BK}(y) = N(\pi_{\BK}|_{\mathscr{U}_{\BK}} ; u_{\BK}|_{\mathscr{U}^{\an}}).
  \ee

  In summary, we obtain that $v_{\BK}|_{C_s} \in \PSHlogreg(C_s^{\an})$ and $N(\fq; u_{\BK} \circ \fp) \in \PSHlogreg(C_s^{\an})$ satisfying $v_{\BK}|_{C_s \cap \mathscr{W}}=N(\fq; u_{\BK} \circ \fp)|_{C_s \cap \mathscr{W}}$. Moreover, we have $N_{\pi}(u)(s)=N(\fq; u_{\BK} \circ \fp)(s')$. 
  Since $C_s$ is a smooth curve over the complete NA field $\BK=\CH_S(s)$,  it follows from Lemma \ref{lem:psh_extension_on_smooth_curves} $v_{\BK}|_{C_s}  =N(\fq; u_{\BK} \circ \fp)$ on $C_s^{\an}$. Combining these, we obtain that $v(s)=v_{\BK}(s')=N(\fq; u_{\BK} \circ \fp)(s')=N_{\pi}(u)(s)$. This shows that $N_{\pi}(u)=v \in \PSHlogreg(S^{\an})$ and finishes the proof.  
\end{proof}

\begin{lemma} \label{lem:psh_extension_on_smooth_curves}
  Let $F$ be \emph{any} complete NA field, and $C$ be a smooth algebraic curve over $F$. Let $z \in C^{\an}$ be a closed point. Then, for any $u \in \PSH(C^{\an})$, we have
  \[
    u(z) =\limsup_{x \to z,\,  x \in C^{\an} \setminus \{z\}} u(x)
  \]
\end{lemma}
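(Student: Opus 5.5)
The inequality $\limsup_{x \to z,\, x \neq z} u(x) \le u(z)$ is immediate because $u$ is usc. So the content of the lemma is the reverse inequality $u(z) \le \limsup_{x \to z,\, x \ne z} u(x)$. I would prove it by exhibiting explicit points $\zeta_r \ne z$ with $\zeta_r \to z$ and $u(\zeta_r) \ge u(z)$.

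\textbf{Step 1 (the curve near $z$).} Since $C$ is smooth at the closed point $z$, choose $t \in \CO(C)$, after shrinking $C$ to an affine neighbourhood, generating the maximal ideal of $\CO_{C,z}$. Set $F' \coloneqq \CH(z)$, a finite extension of $F$. For $0<r \ll 1$, the connected component $W_r$ of $\{|t| \le r\}$ containing $z$ is an affinoid domain isomorphic to a closed disc of radius $r$ over $F'$. Its Shilov boundary is a single point $\zeta_r \ne z$, namely the Gauss point of the disc. These points satisfy $\zeta_r \to z$ as $r \to 0$, and $\{W_r\}_r$ is a neighbourhood basis of $z$. Hence it suffices to show $u(z) \le u(\zeta_r)$ for all small $r$.

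\textbf{Step 2 (maximum principle for the approximants).} Shrinking further, I may assume $u$ is the pointwise limit of a decreasing sequence $\{u_i\}_{i \ge 1} \subset \TConv \cap \PL(U)$ on a neighbourhood $U \supset W_{r_0}$. I then claim that each $u_i$ satisfies the maximum principle on $W_r$, namely
\[
\max_{W_r} u_i = u_i(\zeta_r).
\]
To prove this, I would use that $\dm \dmb u_i$ is a nonnegative measure on $U$ (Proposition \ref{prop:properties_local_MA_PL_functions} (1) with $n=1$). A PL function on a curve is affine along the edges of a finite subgraph of the skeleton and is locally constant off it, with $\dm \dmb u_i$ equal to the sum of outgoing slopes at the vertices. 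On the disc $W_r$ the relevant skeleton is a tree rooted at $\zeta_r$. Nonnegativity of the outgoing slope sums at every vertex forces $u_i$ to be nonincreasing along every path from a point of $W_r$ towards $\zeta_r$ read backwards, that is, to increase towards $\zeta_r$. This is the standard discrete maximum principle on a tree with boundary the root.

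If the local combinatorial description is awkward over a general field $F$, I would instead argue by Green's formula. The potential $\log|t|$ is harmonic off $z$ (Remark \ref{rmk:mixed_Monge-Ampere_constant_functions}), and combining this with the integration by parts of Lemma \ref{lem:integration-by-parts_PL_functions} against suitable PL cut-offs supported in $W_r$ shows that a maximum attained at an interior point propagates to $\zeta_r$.

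\textbf{Step 3 (passing to the limit).} From Step 2, $u_i(z) \le u_i(\zeta_r)$ for every $i$. Letting $i \to \infty$ along the decreasing sequences gives $u(z) \le u(\zeta_r)$. Letting $r \to 0$ then yields $u(z) \le \limsup_{r \to 0} u(\zeta_r) \le \limsup_{x \to z,\, x \ne z} u(x)$, which is the required inequality.

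The main obstacle is Step 2, the maximum principle for tropically convex PL functions on a disc over a possibly non-algebraically closed and non-discretely valued field. I would first try to base change to $\widehat{\overline{F}}$, where the disc becomes a disjoint union of $[F':F]$ standard discs whose Gauss points map to $\zeta_r$, and then run the tree argument there, using that $\TConv \cap \PL$ is preserved under base change.
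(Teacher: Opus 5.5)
Your proof is correct, but it follows a different route from the paper's.

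\textbf{The paper's route.} The paper first reduces to $F$ algebraically closed via $\widehat{\overline{F}}$, then splits into two cases.
\begin{itemize}
\item In the trivially valued case it quotes Proposition \ref{prop:local_comparison_divisorial_points_to_full_spaces}~(1), using that closed points are not divisorial. That proposition rests on Proposition \ref{prop:passing_from_local_to_global} and the global result \cite[Theorem 4.22]{BJ22}.
\item In the nontrivially valued case it shows that $u$ is subharmonic in Thuillier's sense. It applies Wanner's comparison theorem to the approximants and Thuillier's stability of subharmonicity under decreasing limits, and then cites the known $\limsup$ property of subharmonic functions at type I points.
\end{itemize}

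\textbf{Your route.} You prove the inequality $u_i(z)\le u_i(\zeta_r)$ by hand for the PL approximants and pass to the decreasing limit. This buys several things.
\begin{itemize}
\item It treats both valuation types uniformly. In the trivially valued case $W_r$ is just the segment $\{s\,\ord_z : s\ge -\log r\}$, on which $u_i$ is convex in $s$ and bounded near $s=\infty$, hence nonincreasing in $s$.
\item It avoids the divisorial-point machinery entirely.
\item It gives the explicit inequality $u(z)\le u(\zeta_r)$ rather than only a $\limsup$ statement.
\end{itemize}

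\textbf{The price.} You need the structure of PL functions on Berkovich discs and the slope-sum description of $\dm\dmb$. Identifying CLD positivity with nonnegative slope sums is essentially the comparison the paper outsources. Also, Proposition \ref{prop:properties_local_MA_PL_functions}~(1) is stated under the discretely/trivially valued standing hypothesis, whereas $\widehat{\overline{F}}$ is typically neither. You can bypass both points. Locally $u_i=\max_j (l_j\circ\Trop)$, and each $l_j\circ\Trop$ is a real combination of $\log|g_m|$ with $g_m$ invertible nearby, hence harmonic. So the outgoing slope sums of $u_i$ are $\ge 0$ directly.

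Moreover, you only need monotonicity along the path from $z$ to $\zeta_r$, not a full maximum principle. At each point $q$ of that path, sum the slope sums over the finitely many vertices in the component of $W_r\setminus\{q\}$ containing $z$. This shows that the slope of $u_i$ at $q$ pointing toward $z$ is $\le 0$.

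\textbf{Two small corrections.}
\begin{itemize}
\item If $\CH(z)/F$ is inseparable (possible in positive characteristic), $W_r$ is not a disc over $\CH(z)$. Also, the number of discs over $\widehat{\overline{F}}$ is the separable degree, not $[F':F]$. Doing the base change first, as the paper does, removes this issue. The reduction is valid because $r^{-1}(z)$ is finite and $r$ is continuous.
\item Your Green's-formula alternative is only a sketch. The tree argument is the one that actually carries the proof.
\end{itemize}
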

\begin{proof}
  Note that $r \colon (C_{\widehat{\overline{F}}})^{\an} \to C^{\an}$ is a surjective continuous map, and $u \circ r \in \PSH((C_{\widehat{\overline{F}}})^{\an})$, where $\widehat{\overline{F}}$ denotes the completion of the algebraic closure of $F$. Therefore, it is enough to prove the lemma when $F$ is algebraically closed. When $F$ is trivially valued, the lemma is proven in Proposition \ref{prop:local_comparison_divisorial_points_to_full_spaces} (1). Thus, we may assume that $F$ is nontrivially valued and algebraically closed. In this setting, we say that a function $f$ on an open subset $W$ is subharmonic if it is subharmonic on $W$ under the definition of Thuillier (\cite[Definition 3.1.5]{Thu05}, \cite[Definition 3.1.5]{Wan19thesis}). Let $\{W_{\alpha}\}_{\alpha \in \CI}$ be an open cover of $C^{\an}$ such that for each $\alpha \in \CI$, there exists a decreasing function sequence $\{u_{\alpha, i}\}_{i \ge 1} \in \TConv \cap \PL(W_{\alpha})$ converging pointwise to $u|_{W_{\alpha}}$. Since $\MA(u_{\alpha, i})=\dm \dmb u_{\alpha,i } \ge 0$,  \cite[Theorem 2]{Wan19comparison} implies that $u_{\alpha, i}$ is subharmonic on $W_{\alpha}$. Then, \cite[Proposition 3.1.8]{Thu05} implies that $u|_{W_{\alpha}}$ is subharmonic. Therefore, $u$ is subharmonic on $C^{\an}$, and  the lemma follows from \cite[Lemma 3.1.33]{Wan19thesis} (see also \cite[Proposition 8.11]{BR10}).
\end{proof}

\subsection{}
\begin{prop} \label{prop:weakly_psh_induces_Runge_under_injective_conditions}
  Let $Y$ be an irreducible affine variety over $K$ and let $Z$ be a closed subvariety of $Y$. Let $u \in \WPSHlog(Y^{\an})$.  Suppose that there exists an affine open cover $\{U_{\alpha}\}_{\alpha \in \CI}$ of $Y \setminus Z$ such that $u|_{U_{\alpha}} \in \PSHlogreg((U_{\alpha})^{\an})$ for each $\alpha \in \CI$. Suppose further that $Y$ admits a finite surjective morphism $\pi \colon Y \to S$ such that $S$ is an affine open subvariety of $\BA_K^n$, and $\pi|_Z \colon Z \to \pi(Z)$ is an injective scheme morphism. Then, $\{x \in Y^{\an} \, |\,  u(x)<0 \}$ is a Runge open subset in $Y^{\an}$, and $u \in \PSHlogreg(Y^{\an})$.
\end{prop}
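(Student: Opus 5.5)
The plan is to reduce to Corollary \ref{cor:pshlogreg_on_an_affine_open_cover_induces_pshlogreg}, or more precisely to rerun the argument of Lemma \ref{lem:aux_lemma_1} with the exceptional locus $Z$ handled through the norm $N_\pi$. I will first show that for each $c>0$ and each compact $L \subset Y^{\an}$, every $x \in \widehat{L}_Y$ satisfies $u_c(x) \le \sup_L u_c$, where $u_c = u + c\rho$. Granted this, $\{u_c<0\}$ is Runge. Letting $c \searrow 0$ and applying Lemma \ref{lem:basic_properties_poly_convex_and_Runge} (2) shows that $\{u<0\}$ is Runge. The regularizability then follows as in the proof of Corollary \ref{cor:pshlogreg_on_an_affine_open_cover_induces_pshlogreg}. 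We pass to $\widetilde Y = Y\times\BA^1$ with $\pi\times \mathrm{id}\colon \widetilde Y\to S\times \BA^1\subset \BA^{n+1}$, which is still finite, surjective and injective on $Z\times\BA^1$. Applying the Runge statement to $\log|w| + u\circ p$ and then Proposition \ref{prop:Runge_domains_to_psh} gives $u = -\log d_D \in \PSHlogreg(Y^{\an})$.

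By Lemma \ref{lem:aux_lemma_1}, the inequality already holds for $x \notin Z^{\an}$, so only $x \in \widehat{L}_Y \cap Z^{\an}$ needs treatment. Here I use the norm: by Proposition \ref{prop:norm_finite_morphisms_smooth_target}, $N_\pi(u_c) \in \PSHlogreg(S^{\an})$. Set $s = \pi^{\an}(x)$. Since $\pi$ is finite, $\pi^{\an}(\widehat{L}_Y) \subset \widehat{\pi(L)}_S$ (pull back polynomials), so $s \in \widehat{\pi(L)}_S$. Lemma \ref{lem:psh_to_Runge_domain_smooth_varieties} then gives $N_\pi(u_c)(s) \le \sup_{\pi(L)} N_\pi(u_c)$.

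The key point is to turn this bound on the norm into a bound on $u_c(x)$. Because $\pi|_Z$ is injective, the points of $(\pi^{\an})^{-1}(s)$ other than $x$ lie outside $Z^{\an}$. To control those other fibre points, I use a variant of the same argument: replace $L$ by the compact set $L' := \widehat{L}_Y$ and use that, off $Z^{\an}$, points of $\widehat{L}_Y$ satisfy $u_c \le \sup_L u_c$. Suppose by contradiction that $u_c(x) > \sup_L u_c =: A$. Modify by a strictly psh bump: by Lemma \ref{lem:strictly_psh_functions}, choose $\rho'$ equal to $\rho$ at $x$ and strictly smaller away from a small neighbourhood $W$ of $x$, so that it is also smaller at the finitely many other fibre points. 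Then add a large multiple of a polynomial-type function $\log|h\circ\pi|$ separating $\pi(L)$ from $s$, or work with the maximum point of $u_c$ on $\widehat L_Y$ as in Lemma \ref{lem:aux_lemma_1}, so that the maximum of $u_c$ on $\widehat{L}_Y$ is attained at a point $y \in Z^{\an}$. The norm inequality, applied to $w := u + c\rho'$ (adding a constant so that everything is nonnegative), then yields $\mult_y(\pi)\, w(y) \le$ a quantity strictly smaller than $\deg(\pi)\, w(y)$, which is a contradiction.

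I expect the main obstacle to be the last step: norms sum values over all fibre points with multiplicities, so converting a bound on $N_\pi$ into a bound at the single point $y$ requires making the other fibre contributions strictly smaller. This is where both the injectivity of $\pi|_Z$ (distinct fibre points lie off $Z$, where Lemma \ref{lem:aux_lemma_1} applies) and the strict-psh perturbation $\rho'$ (which lowers values away from $y$ by $\delta$) are essential. A further point to handle carefully is that $u_c$ may be $-\infty$ at other fibre points; I would treat this with $\max\{\cdot, -M\}$ truncations.
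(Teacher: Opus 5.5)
Your outer reductions are fine. Lemma~\ref{lem:aux_lemma_1} handles $x\notin Z^{\an}$, and the inclusion $\pi^{\an}(\widehat L_Y)\subset\widehat{\pi(L)}_S$ holds. The bound $N_\pi(u_c)(s)\le\sup_{\pi(L)}N_\pi(u_c)$ follows from Proposition~\ref{prop:norm_finite_morphisms_smooth_target} and Lemma~\ref{lem:psh_to_Runge_domain_smooth_varieties}. Passing to $Y\times\BA^1$ and applying Proposition~\ref{prop:Runge_domains_to_psh} is also how regularizability is obtained.

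The central case $x\in\widehat L_Y\cap Z^{\an}$, however, has a genuine gap. The norm inequality involves $u_c$ at \emph{all} fibre points over $s$ and over $\pi(L)$, and these need not lie in $L$, or even in $\widehat L_Y$. Lemma~\ref{lem:aux_lemma_1} only controls $u_c$ on $\widehat L_Y\setminus Z^{\an}$. So injectivity of $\pi|_Z$ tells you that the other points of $(\pi^{\an})^{-1}(s)$ lie off $Z^{\an}$, but it gives no bound on $u_c$ there. Worse, points of $(\pi^{\an})^{-1}(\pi(L))\setminus L$ can carry arbitrarily large values, so $\sup_{\pi(L)}N_\pi(u_c)$ is not controlled by $\sup_L u_c$ at all.

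None of your proposed fixes repairs this:
\begin{itemize}
\item No polynomial can separate $s$ from $\pi(L)$, because $s\in\widehat{\pi(L)}_S$.
\item Lowering $\rho'$ at the other points over $s$ lowers $N_\pi(w)(s)$, which is the wrong direction.
\item For $w(y)\ge 0$, an inequality $\mult_y(\pi)\,w(y)\le(\cdots)<\deg(\pi)\,w(y)$ is not a contradiction.
\end{itemize}

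The missing ideas are those of Lemma~\ref{lem:aux_lemma_2}, followed by a reduction to it.
\begin{enumerate}
\item Rossi's local maximum principle (Theorem~\ref{thm:rossi_local_maximum_principle}) replaces $L$ by $\CK=\partial\CU\cap\widehat L_Y$, where $\CU$ is a small neighbourhood of the maximizer $x$. Then $x\in\widehat{\CK}_Y$, and all fibres over $\pi(\CK)$ stay near $(\pi^{\an})^{-1}(\pi(x))$.
\item Assume $Z$ and $Z'\coloneqq\pi^{-1}(\pi(Z))\setminus Z$ are disjoint closed subvarieties. Then the preimage of a polytope neighbourhood $U$ of $\widehat{\{\pi(x)\}}_S$ splits into disjoint opens $U_1$ and $U_2$, with $\overline{U_1}$ missing $Z'^{\an}$ and $\overline{U_2}$ missing $Z^{\an}$. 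Taking the norm of a psh PL function equal to $1$ on $U_1$ and $0$ on $U_2$ shows that $\sum_{y\in\pi^{-1}(s)\cap U_1}\mult_y(\pi)=\gamma$ is constant on $U$.
\item One takes the norm not of $u_c$ but of a glued function $\widetilde v$. It equals $\max\{u+c\rho',-\delta\}$ on $U_1$ and $0$ on $U_2$, so the far part of each fibre contributes nothing. Since only $x$ lies over $\pi(x)$ in $U_1$, you get $N_\pi(\widetilde v)(\pi(x))=\gamma\,\widetilde v(x)=0$. On the other hand $N_\pi(\widetilde v)\circ\pi<0$ on $\CK$. This uses Lemma~\ref{lem:aux_lemma_1} on $\CK\setminus Z^{\an}$, the bump $\rho'$ on $\CK\cap Z^{\an}$, and $u_c<\delta/2(\gamma-1)$ near $x$. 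Together these contradict $x\in\widehat{\CK}_Y$.
\item In general $\overline{Z'}$ may meet $Z$, so the proposition itself needs a further reduction. One uses the stratification $Z_{k+1}=Z_k\cap\overline{\pi^{-1}(\pi(Z_k))\setminus Z_k}$ and an induction. At each step Lemma~\ref{lem:aux_lemma_2} is applied on $\pi^{-1}(V)\times\BA^1$ with $V\cap\pi(Z_{k+1})=\emptyset$. One concludes with Corollary~\ref{cor:pshlogreg_on_an_affine_open_cover_induces_pshlogreg}.
\end{enumerate}

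Your plan has no substitute for any of these steps, so the case $x\in Z^{\an}$ remains unproved.
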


\medskip

We first prove the following lemma.

\begin{lemma} \label{lem:aux_lemma_2}
  Under the hypotheses of Proposition \ref{prop:weakly_psh_induces_Runge_under_injective_conditions}, suppose further that $Z$ and \\ 
  $\pi^{-1}(\pi(Z)) \setminus Z$ are disjoint closed subvariety of $Y$. Then,  $\{x \in Y^{\an} \,|\, u(x) <0\}$ is a Runge open subset of $Y^{\an}$. 
\end{lemma}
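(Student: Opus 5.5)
The plan is to follow the proof of Lemma \ref{lem:aux_lemma_1}, using norms under $\pi$ to handle the points of $Z^{\an}$ that Lemma \ref{lem:aux_lemma_1} cannot reach. By Lemma \ref{lem:basic_properties_poly_convex_and_Runge} (2) and the fact that $u_c\searrow u$ as $c\searrow 0$, it suffices to prove that $\{u_c<0\}$ is Runge for every $c>0$. So fix a compact $L\subset\{u_c<0\}$; we must show $u_c\le \sup_L u_c$ on $\widehat{L}_Y$. Lemma \ref{lem:aux_lemma_1} already gives this at every point of $\widehat{L}_Y\setminus Z^{\an}$. Suppose, for contradiction, that some $z\in \widehat{L}_Y\cap Z^{\an}$ has $u_c(z)>\sup_L u_c$. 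As in Lemma \ref{lem:aux_lemma_1}, we may replace $z$ by a point $y\in \widehat{L}_Y\setminus L$ where $u_c$ attains its maximum on $\widehat{L}_Y$. If $y\notin Z^{\an}$, Lemma \ref{lem:aux_lemma_1} applies directly, so we may assume $y\in Z^{\an}$. Subtracting $c\rho$ and adding $c\rho'$ from Lemma \ref{lem:strictly_psh_functions} (for a small neighbourhood $W$ of $y$) makes this maximum strict, with a gap $c\delta$ outside $W$.

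Next we use the disjointness hypothesis. Since $Z$ and $Z':=\pi^{-1}(\pi(Z))\setminus Z$ are disjoint closed subvarieties of the affine variety $Y$, there is $h\in K[Y]$ with $h|_Z=1$ and $h|_{Z'}=0$. For $A\gg 0$ and $R\gg 0$, put
\[
\tilde w \coloneqq \max\{u+c\rho'+A\log|h|,\,-R\}.
\]
This function lies in $\WPSHlog(Y^{\an})$ and equals $u+c\rho'$ near $Z^{\an}$ wherever it exceeds $-R$. Since $\pi|_Z$ is injective, the fibre of $\pi^{\an}$ over $s:=\pi^{\an}(y)$ consists of $y$ together with points of $Z'^{\an}$, where $\log|h|=-\infty$. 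Proposition \ref{prop:norm_finite_morphisms_smooth_target} then gives $N_\pi(\tilde w)\in \PSHlogreg(S^{\an})$, and
\[
N_\pi(\tilde w)(s)=\mult_y(\pi)\,\tilde w(y)-(\deg\pi-\mult_y(\pi))R .
\]
Moreover $s\in \widehat{\pi(L)}_S$, because polynomials on $S$ pull back to polynomials on $Y$. Proposition \ref{prop:pshlogreg_runge_domain} (with $f=1$), or Corollary \ref{cor:FS_convex_hull} after regularizing, therefore yields
\[
N_\pi(\tilde w)(s)\le \sup_{\pi(L)}N_\pi(\tilde w).
\]

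The main obstacle is to obtain a contradiction from this inequality. One has to show that for every $s'\in\pi(L)$, or for every $s'$ in $\pi(\partial W\cap\widehat{L}_Y)$ if one first applies Rossi's principle (Theorem \ref{thm:rossi_local_maximum_principle}) in $S$, the fibre sum $N_\pi(\tilde w)(s')$ is strictly smaller than $N_\pi(\tilde w)(s)$. The $-R$ contributions are harmless, since they occur with the same total weight $\deg\pi-\mult_y(\pi)$ only if the fibre over $s'$ also consists mostly of points where the max equals $-R$. I would choose $A$ so large, relative to the compact sets involved, that $u+c\rho'+A\log|h|<-R$ on a neighbourhood of $Z'^{\an}\cap\pi^{-1}(\text{compact})$. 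This makes the fibres over points $s'$ near $\pi(Z)$ look exactly like the fibre over $s$, with only one point near $Z^{\an}$ contributing. Far from $\pi(Z)$ I would use the $c\delta$-gap together with the lower semicontinuity of the multiplicities to dominate the extra fibre points. If this balancing fails, the fallback is to localise first: replace $L$ by $\partial W\cap \widehat{L}_Y$ using Rossi's principle, with $W$ so small that $\pi^{-1}(\pi(\overline W))\cap \overline W$ contains only points near $Z$, and only then take norms.
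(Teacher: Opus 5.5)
There is a genuine gap: the step that produces the contradiction is never carried out. Everything up to $N_\pi(\tilde w)(s)\le\sup_{\pi(L)}N_\pi(\tilde w)$ is fine. Your reduction to a maximum point $y\in Z^{\an}$ of $u_c$ on $\widehat{L}_Y$ is exactly Step~1 of the paper. Your polynomial $h$ is a reasonable substitute for the paper's separating neighbourhoods $\mathfrak{A}_1,\mathfrak{A}_2$ of the fibres over $\widehat{\{s\}}_S$; note that $\log|h|=0$ exactly on $\{|h-1|<1\}$ by the ultrametric inequality. However, the global comparison over $\pi(L)$ cannot produce a contradiction. The value $N_\pi(\tilde w)(s)$ carries the penalty $-(\deg(\pi)-\mult_y(\pi))R$. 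The fibre over a point $s'\in\pi(L)$ may lie far from $Z^{\an}\cup Z'^{\an}$, where $\tilde w$ need not equal $-R$ and $A\log|h|$ can even be large and positive. So $N_\pi(\tilde w)(s')$ may exceed $N_\pi(\tilde w)(s)$ by an arbitrary amount, and your claim that the $-R$ contributions are harmless is unfounded.

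Your fallback is the right direction and is what the paper does, with two constraints on the localisation. Theorem \ref{thm:rossi_local_maximum_principle} must be applied in $Y$, to $\mathcal{K}=\partial\mathcal{U}\cap\widehat{L}_Y$ for a small neighbourhood $\mathcal{U}$ of the maximum point, which gives $s\in\widehat{\pi(\mathcal{K})}_S$. Applying it in $S$ does not work, because points of $\widehat{\pi(L)}_S$ need not have fibres in $\widehat{L}_Y$, the only place where $u_c$ is controlled. Also, you need all of $\pi^{-1}(\pi(\mathcal{K}))$ to lie in a small neighbourhood of $y$ union a neighbourhood of $\pi^{-1}(s)\setminus\{y\}\subset Z'^{\an}$ on which $\tilde w\equiv -R$; controlling only its intersection with $\overline{\mathcal U}$ is not enough. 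Properness of $\pi^{\an}$ gives this localisation, but two ingredients are still missing.

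First, the total multiplicity of the part of the fibre over $s'$ near $y$ must not exceed $\gamma\coloneqq\mult_y(\pi)$, or the $-R$ penalty returns. For the ad hoc multiplicities at non-flat points this is not an available semicontinuity fact. The paper proves local constancy in its Step~3 by taking the norm of a psh function equal to $1$ on the $Z$-side and $0$ on the $Z'$-side, using Propositions \ref{prop:norm_finite_morphisms_smooth_target} and \ref{prop:local_comparison_divisorial_points_to_full_spaces}. In your setting, upper semicontinuity of $N_\pi(\max\{A\log|h|,-R\})$ would give the needed inequality.

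Second, the other fibre points near $y$ cannot be dominated by the gap $c\delta$ from $\rho'$. That gap depends on the neighbourhood $W$, while the bound $u_c<u_c(y)+\epsilon$ at those points needs $W$ small in terms of $\epsilon$, and $\epsilon$ small in terms of $c\delta$; this is circular. The paper breaks the circle with a case split on $y'\in\mathcal{K}$:
\begin{itemize}
\item If $y'\in Z^{\an}$, injectivity of $\pi^{\an}|_{Z^{\an}}$ and the separation from $Z'^{\an}$ make $y'$ the only fibre point on the $Z$-side, so the $\rho'$-gap alone suffices.
\item If $y'\notin Z^{\an}$, Lemma \ref{lem:aux_lemma_1} gives $u_c(y')\le\sup_L u_c=u_c(y)-\eta$, with $\eta>0$ fixed before any neighbourhood is chosen. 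One then shrinks the neighbourhood of $y$ so that $u_c$ exceeds $u_c(y)$ by at most a small multiple of $\eta$ there.
\end{itemize}
With these two ingredients your $\tilde w$ could play the role of the paper's $\widetilde v$; without them the argument does not close.
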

\begin{proof}
  \textbf{Step 1.} By Lemma \ref{lem:basic_properties_poly_convex_and_Runge} (2), it is sufficient to show that $\{u_c<0\}$ is Runge in $Y^{\an}$ for any $c>0$. We prove this by contracdition. Suppose that there exists $c>0$ such that $\{u_c < 0\}$ is not Runge. After adding a constant to $u_c$, we may assume there exists a compact subset $L \subset Y^{\an}$ such that $\sup_{\widehat{L}_Y} u_c=0$ and  $\sup_{L}u_c <0$.  Let $x \in \widehat{L}_Y$ such that $u_c(x)=\sup_{\widehat{L}_Y} u_c = 0$. Then,  Lemma \ref{lem:aux_lemma_1} implies that  $x \in Z^{\an}$.  

  \smallskip

  \textbf{Step 2.} Consider $\CC \coloneqq \widehat{\{\pi(x)\}}_{S} \subset S^{\an}$. By Lemma \ref{lem:basic_properties_poly_convex_and_Runge} (3),  $\CC  \subset (\pi(Z))^{\an}$. Denote $Z' \coloneqq \pi^{-1}(\pi(Z)) \setminus Z$.  By assumption, $Z^{\an}$ and $(Z')^{\an}$ are disjoint closed subset of $Y^{\an}$. Since $\pi^{\an}$ is a proper map (\cite[1.3.22]{Duc18}),
  \[
    \CC_1 \coloneqq (\pi^{\an})^{-1}(\CC) \cap Z^{\an}, \quad \text{ and } \quad \CC_2 \coloneqq (\pi^{\an})^{-1}(\CC) \cap (Z')^{\an}. 
  \]
  are compact subsets of $Y^{\an}$. By construction,  $\pi^{-1}(\CC)=\CC_1 \cup \CC_2$, and $\CC_1 \cap \CC_2=\emptyset$. 
  Since $Y^{\an}$ is locally compact Hausdorff, there exists open neighborhoods $\fA_1$ and $\fA_2$ of $\CC_1$ and $\CC_2$ respectively such that 
  \[
    \overline{\fA_1} \cap \overline{\fA_2}=\emptyset, \quad \overline{\fA_1} \cap (Z')^{\an}=\emptyset, \quad \overline{\fA_2} \cap Z^{\an}=\emptyset. 
  \]
  By \cite[1.3.22]{Duc18}, $\pi^{\an}$ is a closed map. Thus,   
  \[
    \fA \coloneqq S^{\an} \setminus \left(\pi^{\an} \left(Y^{\an} \setminus (\fA_1 \cup \fA_2) \right) \right) \subset S^{\an}
  \] is an open neighborhood of  $\CC \subset \fA$ satisfying $\pi^{-1}(\fA) \subset \fA_1 \cup \fA_2$.  
  
  By Proposition \ref{prop:exhausion_of_Runge_domains_by_tropical_polytopes} (2), there exists tropical polytopes $P, Q$ such that 
  \[
  \CC \subset \interior(P) \subset P \subset \interior(Q) \subset Q \subset \fA.
  \]
  Moreover, by Lemma \ref{lem:tropical_polytopes_and_FS_functions}, we may assume that there exists $g \in \FS(Y^{\an})$ such that $g \ge 0$ on $Y^{\an}$, $P=\{g=0\}$ and $Q=\{g \le 1\}$.  Denote 
  \[
  U \coloneqq \interior(P), \ \  W \coloneqq \interior(Q); \qquad \quad \text{and} \qquad \quad U^\sharp \coloneqq   \pi^{-1}(U), \ \  W^{\sharp} \coloneqq \pi^{-1}(W).
  \]
  For each $i \in \{1,2\}$, set $U_i \coloneqq U^\sharp \cap \fA_i$, and $W_i \coloneqq W^{\sharp} \cap \fA_i$. Then, by constructions, we have 
  \[
    U=U_1 \cup U_2, \  W=W_1 \cup W_2;  \quad x \in \CC_1 \subset U_1; \quad \overline{W_1} \cap \overline{W_2}=\emptyset,  \ \  \text{ and } \ \  U_i \Subset W_i \Subset \fA_i. 
  \]  
  In particular, we have 
  \[
    \overline{U_1} \cap \overline{U_2}=\emptyset, \ \left( \overline{U_1} \cap (Z')^{\an} \right) \subset \left( \overline{\fA_1} \cap (Z')^{\an} \right)=\emptyset, \ \text{and } \left( \overline{U_2} \cap (Z)^{\an} \right) \subset \left( \overline{\fA_2} \cap (Z)^{\an} \right)=\emptyset.
  \]
  Moreover, since $\pi^{\an}$ is proper, $\overline{U_i}$ and $\overline{W_i}$ are compact subsets of $Y^{\an}$. 

  \smallskip

  \textbf{Step 3.}  Now we show that there exists $\gamma \in \Z_{>0}$ such that 
  \be \label{equation:sum_of_multiplicity_is_locally_contant}
    \sum_{y \in \pi^{-1}(s) \cap U_1} \mult_y(\pi)= \gamma \qquad \text{for each } s \in U. 
  \ee
  Set $h \colon W^{\sharp} \to \R$ by $h=1$ on $W_1$ and $h=0$ on $W_2$. Then, $h \in \TConv \cap \PL(W^{\sharp})$. Denote
  \[
   g^{\sharp} \coloneqq g \circ \pi. 
  \]
  By constructions, we have $\{y \in Y^{\an} \,|\, 3 g^{\sharp}(y)-1 \le 2\}=\pi^{-1}(Q) \subset Y^{\an}$. 
  Set 
  \[
  \mathscr{D} \coloneqq \{y \in W^{\sharp} \,|\,   3 g^{\sharp}(y)-1 < h(y)\} \subset W^{\sharp}. 
  \]
  Then, we have
  \[
    \overline{\mathscr{D}} \subset \{y \in Y^{\an} \,|\,  3 g^{\sharp}(y)-1 \le 1\} \subset \pi^{-1}(\interior(Q)) = W^{\sharp}.    
  \]
  Define $\widetilde{h} \colon Y^{\an} \to \R$ by
  \[
    \widetilde{h}=\max\{3g^{\sharp}-1, h\} \text{ on } W^{\sharp}, \qquad \text{and} \qquad \widetilde{h}=3 g^{\sharp} -1 \text{ on } Y^{\an} \setminus W^{\sharp}. 
  \]
  Then, Remark \ref{rmk:extension_of_psh_functions_by_sup} and Lemma \ref{lem:psh_PL_functions_are_pshlogreg} (2) imply that  $\widetilde{h} \in \PSH \cap \PL(Y^{\an}) \subset \PSHlogreg(Y^{\an})$. On the other hand, since $3g^{\sharp}-1=-1$ on $U^{\sharp}$, we have $\widetilde{h}=h$ on  $U^{\sharp}$. 

  Consider the norm of $\widetilde{h}$ under $\pi$ defined by Proposition \ref{prop:norm_finite_morphisms_smooth_target}. By Proposition \ref{prop:norm_finite_morphisms_smooth_target} (2), we have $N_{\pi}(\widetilde{h}) \in \PSHlogreg(S^{\an})$. Moreover, by Proposition \ref{prop:norm_finite_morphisms_smooth_target} (1), there exists an affine open neighborhood $\mathscr{W}$ of $S$ such that $\pi$ is flat over $\mathscr{W}$, and $N_{\pi}(\widetilde{h})|_{\mathscr{W}}$ equals to the norm of $\widetilde{h}|_{\pi^{-1}(\mathscr{W})}$ under the finite flat surjective morphism $\pi|_{\pi^{-1}(\mathscr{W})}$ defined by Equation \ref{equation:def_multiplicity_flat_locus} and \ref{equation:def_norm_flat_locus}. Since $\widetilde{h}|_{\pi^{-1}(\mathscr{W})}$ is continuous, by  Proposition \ref{prop:norm_finite_flat_morphisms} (2), we obtain that $N_{\pi}(\widetilde{h})|_{\mathscr{W}}$ is continuous. 
  On the other hand,    by constructions,
  \[
   N_{\pi}(\widetilde{h})(s)=\sum_{y \in \pi^{-1}(s) \cap U_1} \mult_y(\pi) \qquad \text{for each } s\in U. 
  \]
  In particular, $N_{\pi}(\widetilde{h})$ takes positive integer values on $U$. Thus, there exists $\gamma \in \Z_{>0}$ such that $N_{\pi}(\widetilde{h})=\gamma$ on $U \cap \mathscr{W}$. Then, Proposition \ref{prop:local_comparison_divisorial_points_to_full_spaces} implies further that 
  \[
    N_{\pi}(\widetilde{h})(s') = \limsup_{s \to s', s \in U^{\div} \subset U \cap \mathscr{W}} N_{\pi}(\widetilde{h})(s) =\gamma  \qquad \text{for any } s' \in U.
  \]
  Therefore, Equation \ref{equation:sum_of_multiplicity_is_locally_contant} is proven.

  \smallskip

  \textbf{Step 4.} Let $\delta >0$ such that $-\delta=\sup_{L} u_c$. Since $u_c$ is usc, there exists an open neighborhood $\fV$ of $x$ in $U_1$ such that $u_c < \delta/2(\gamma-1)$ on $\fV$. Then, $V \coloneqq S^{\an} \setminus \left(\pi^{\an} \left(Y^{\an} \setminus (\fV \cup U_2) \right) \right) \subset U$ is an open neighborhood of $\pi(x)$, and we have $\pi^{-1}(V) \subset \fV \cup U_2$. Set $V_1 \coloneqq \pi^{-1}(V) \cap U_1 \subset \fV$. Then, $x \in V_1$.  Take a compact open neighborhood $\CU$ of $x$ such that $\overline{\CU} \subset V_1$.

  Applying Lemma \ref{lem:strictly_psh_functions} to $(\CU, x)$, we obtain $\rho' \in \PSHlogreg(Y^{\an})$ and $\delta' >0$ such that $\rho'(x)=\rho(x)$, $\rho' \le \rho$ on $Y^{\an}$ and $\rho' \le \rho-\delta'$ on $\partial \CU$. Set $u'_c \coloneqq u+c\rho' \in \WPSHlog(Y^{\an})$. Then, we have 
  \[
  u'_c(x)=0, \quad \quad \text{and} \quad \quad u'_c \le u_c < \delta /2(\gamma-1) \  \text{ on } V_1 \subset \fV. 
  \]
  Define the compact subset $\CK \coloneqq \partial \CU \cap \widehat{L}_Y$. Then, we have
  \[
  \sup_{\CK} u'_c \le \sup_{\CK} u_c -\delta' \le \sup_{\widehat{L}_Y} u_c -\delta' = -\delta'.
  \]

  Set $v \colon W^{\sharp} \to \R \cup \{-\infty\}$ by $v=u'_c$ on $W_1$ and $v=0$ on $W_2$. Then, $v \in \WPSH(W^{\sharp})$ since $W_1 \cap W_2=\emptyset$.  Let $M \coloneqq \sup_{W^{\sharp}} v  \ge 0$. Set $C \coloneqq M+1 +\delta>0$. Then, we have $Q=\{s \in S^{\an} \,|\, Cg(s)-\delta \le M+1\} \subset S^{\an}$. Define $\mathscr{D}' \coloneqq \{y \in W^{\sharp} \,|\,   3 g^{\sharp}(y)-1 < v(y)\} \subset W^{\sharp}$.   Then, we have  $\mathscr{D}' \subset \{y \in Y^{\an} \,|\, Cg^{\sharp}(y)-\delta < M \}$. This implies that 
  \[
    \overline{\mathscr{D}'} \subset \{y \in Y^{\an} \,|\, Cg^{\sharp}(y)-\delta  \le M \} \subset \pi^{-1}(\interior(Q)) = W^{\sharp}.    
  \]
  Define $\widetilde{v} \colon Y^{\an} \to \R$ by
  \[
    \widetilde{v}=\max\{C g^{\sharp}-\delta, v \} \text{ on } W^{\sharp}, \qquad \text{and} \qquad \widetilde{v}=C g^{\sharp}-\delta \text{ on } Y^{\an} \setminus W^{\sharp}. 
  \]
  Then, by Remark \ref{rmk:extension_of_psh_functions_by_sup}, $\widetilde{v} \in \WPSH(Y^{\an})$. Since $\widetilde{v} \le \max\{Cg^{\sharp}-\delta, M\}$ and $Cg^{\sharp}-\delta \in \PSHlog(Y^{\an})$, we have $\widetilde{v} \in \WPSHlog(Y^{\an})$. Since $Cg^{\sharp}-\delta=-\delta$ on $U^{\sharp}$, we have 
  \[
    \widetilde{v}=\max\{u'_c, -\delta\} \text{ on } U_1; \qquad \qquad  \widetilde{v}=0 \text{ on } U_2.
  \]
  In particular, we have
  \be \label{equation:estimation_tilde_v}
    \widetilde{v}(x)=0, \quad \widetilde{v} < \delta/2(\gamma-1) \text{ on } V_1, \quad \text{ and } \sup_{\CK} \widetilde{v} \le -\min\{\delta, \delta'\}.   
  \ee

  \smallskip

  \textbf{Step 5.} Consider the norm $w \coloneqq N_{\pi}(\widetilde{v}) \in \PSHlogreg(S^{\an})$ defined by Proposition \ref{prop:norm_finite_morphisms_smooth_target}. Set $w^{\sharp} \coloneqq w \circ \pi$. Let $\{w_i\}_{i \ge 1} \subset \FS(Y^{\an})$ be a decreasing function sequence converging to $w$ pointwise. Then, for each $i \ge 1$, $w_i \circ \pi \in \PSHlogreg \cap \PL(Y^{\an})$, and  Lemma \ref{lem:psh_regularization_with_growth_control} (3) implies that $w^{\sharp} \in \PSHlogreg(Y^{\an})$. By Rossi's local maximum principle (Theorem \ref{thm:rossi_local_maximum_principle}), we have $x \in \widehat{\CK}_Y$. Then, Lemma \ref{lem:psh_to_Runge_domain_smooth_varieties} implies that 
  \[
  w^{\sharp}(x) \le \sup_{\CK} w^{\sharp}.
  \]
  
  \smallskip

  Since $\pi|_{Z}$ is an injective scheme morphism, \cite[Proposition 3.4.6]{Ber90} implies that $\pi^{\an}|_{Z^{\an}}$ is injective. On the other hand, since $\left( U_1 \cap (Z')^{\an} \right) =\emptyset$, we have 
  \[
    \left( \pi^{-1}(\pi(z)) \cap U_1 \right) \subset \left( \pi^{-1}(\pi(z)) \cap Z^{\an} \right)=\{z\} \qquad \text{ for any } z \in U_1 \cap Z^{\an}.
  \] 
  This  together with Equation \ref{equation:sum_of_multiplicity_is_locally_contant} implies that
  \[
    \mult_z(\pi)=\gamma, \  \ w^{\sharp}(z)=\gamma \widetilde{v}(z) \qquad \text{ for any } z \in U_1 \cap Z^{\an}.
  \]
  In particular, we have $w^{\sharp}(x)=0$, and Equation \ref{equation:estimation_tilde_v} implies that 
  \[
    \sup_{\CK \cap Z^{\an}} w^{\sharp} \le -\gamma \min\{\delta, \delta'\}<0.
  \]

  \smallskip

  Now fix $y \in (\CK \setminus Z^{\an}) \subset (\widehat{L}_Y \setminus Z^{\an})$. Then, $u'_c(y) \le u_c(y) \le \sup_L u_c=-\delta$, where the second inequality is implied by  Lemma \ref{lem:aux_lemma_1}. Therefore, we have $\widetilde{v}(y) =-\delta$. On the other hand,  note that $y \in \CK \subset V_1$, and $\pi(y) \in V$. Thus, 
  we have
  $\left( \pi^{-1}(\pi(y)) \cap U_1 \right) \subset \left( \pi^{-1}(V) \cap U_1 \right)=V_1$. Hence, $\widetilde{v}(y')< \delta/2(\gamma-1)$ for any $y' \in \pi^{-1}(\pi(y)) \cap U_1$. These together with Equation \ref{equation:sum_of_multiplicity_is_locally_contant} imply that
  \bens
    w^{\sharp}(y) &=& \mult_y(\pi) \, \widetilde{v}(y) +\sum_{y' \in \pi^{-1}(\pi(y)), y' \neq y} \mult_{y'}(\pi) \, \widetilde{v}(y') \\
    &\le& -\delta+(\gamma-1) \cdot  \delta/2(\gamma-1) =-\delta/2<0.
  \eens

  In conclusion, we have shown that $\sup_{\CK} w^{\sharp} <0=w^{\sharp}(x)$. This induces a contradiction, and the lemma is proved. 
\end{proof}

\bigskip

Now we prove Proposition \ref{prop:weakly_psh_induces_Runge_under_injective_conditions} in full generality. 

\begin{proof} [Proof of Proposition \ref{prop:weakly_psh_induces_Runge_under_injective_conditions}] \label{proof:prop:weakly_psh_induces_Runge_under_injective_conditions}
  Let $\dim Y=n$. We first  inductively construct a strictly descending sequence of subvarieties
  \[
  Z=Z_1 \supsetneq Z_2 \supsetneq \cdots \supsetneq Z_n \supsetneq Z_{n+1}=\emptyset.
  \]
  such that 
  \begin{enumerate}
    \item $\dim Z_k \ge \dim Z_{k+1} +1$  for each  $k \ge 1$,
    \item $ \pi^{-1} \left( \pi(Z_k \setminus Z_{k+1}) \right) \setminus Z_k $ is a closed subvariety of $Y \setminus \pi^{-1} \left(\pi(Z_{k+1}) \right)$.
  \end{enumerate}

  We start with $Z_1=Z$. Suppose that $k \ge 1$ and $Z_k$ has been constructed. Then, we define
  \[
    Z_{k+1} \coloneqq Z_k \cap \overline{ \left( \pi^{-1} \left( \pi(Z_k) \right) \setminus Z_k \right) }^{\mathrm{Zar}, Y}. 
  \]
  Now we verify the two properties. Let $Z_{k,\alpha}$ be an irreducible component of $Z_k$ with $\dim Z_{k, \alpha}=\dim Z_k$. Since $\pi$ is finite, we have $\dim Z_k =\dim \pi(Z_k)=\dim \pi^{-1} \left(\pi(Z_k)\right)$. This implies that $Z_{k,\alpha}$ is also an irreducible component of $\pi^{-1} \left(\pi(Z_k)\right)$. Thus, we have $Z_{k,\alpha} \nsubseteq \overline{ \left( \pi^{-1} \left( \pi(Z_k) \right) \setminus Z_k \right) }^{\mathrm{Zar}, Y}$, and hence $\dim \left( Z_{k, \alpha} \cap \overline{ \left( \pi^{-1} \left( \pi(Z_k) \right) \setminus Z_k \right) }^{\mathrm{Zar}, Y} \right) \le \dim Z_k -1$. Therefore, we conclude that $\dim Z_{k+1} \le \dim Z_{k}-1$, and thereby $Z_{n+1}=\emptyset$.  

  On the other hand, we have
  \bens
  && \overline{\left( \pi^{-1} \left( \pi(Z_k \setminus Z_{k+1}) \right) \setminus Z_k \right)}^{\mathrm{Zar}, Y \setminus \pi^{-1}\left(\pi(Z_{k+1}) \right)} \\
  &\subset& \overline{\pi^{-1}\!\bigl(\pi(Z_k)\bigr)\setminus Z_k}^{\,\mathrm{Zar},\,Y} \setminus \pi^{-1}\!\bigl(\pi(Z_{k+1})\bigr) \\
  &=& \left( \left( \pi^{-1} \left( \pi(Z_k) \right) \setminus Z_k \right) \cup Z_{k+1} \right) \setminus \pi^{-1}\left(\pi(Z_{k+1}) \right)  \\
  &=&  \pi^{-1} \left( \pi(Z_k) \right) \setminus \left(  \pi^{-1}\left(\pi(Z_{k+1}) \right) \cup Z_k \right) 
  =\left( \pi^{-1} \left( \pi(Z_k \setminus Z_{k+1}) \right) \setminus Z_k \right).
  \eens
Here, the equality between the second line and the third line follows from the definition of $Z_{k+1}$ and the inclusion $ \overline{\pi^{-1}\!\bigl(\pi(Z_k)\bigr)\setminus Z_k}^{\,\mathrm{Zar},\,Y} \subset \pi^{-1} \left( \pi(Z_k) \right)$. This proves the second property.

  \bigskip

  Now consider the following statement.

  \medskip

  \textbf{Statement} $\mathbf{\Pi}_k$: For any closed scheme point $y \in |Y \setminus Z_k|_{\cl}$, there exists an affine open neighborhood  $U$ of $y$ in  $Y \setminus Z_k$ such that  $\{x \in U^{\an} \,|\, u(x)<0 \}$ is Runge in $U^{\an}$, and $u|_{U} \in \PSHlogreg(U^{\an})$. 

  \medskip

  Note that the proposition follows from $\mathbf{\Pi}_{n+1}$ and Corollary \ref{cor:pshlogreg_on_an_affine_open_cover_induces_pshlogreg}. We prove $\mathbf{\Pi}_k$ by induction. When $k=1$, this follows from the assumption of the proposition. Now suppose that $k \ge 1$ and $\mathbf{\Pi}_k$ has been proven. Let $y \in |Y \setminus Z_{k+1}|_{\cl}$. If $y \in \pi^{-1} \left( \pi(Z_{k+1})\right) \setminus Z_{k+1}$. Since $\pi|_Z$ is an injective scheme morphism and $Z_{k+1} \subset Z$, we have $y \in Y \setminus Z$. Then by assumption there exists affine open neighborhood $U_{\alpha}$ of $y$ satisfying required properties. 

  Now we assume that $y \in Y \setminus \pi^{-1} \left( \pi(Z_{k+1})\right)$. Let $V$ be an affine open neighborhood of $\pi(y) \in S \setminus \pi(Z_{k+1})$. Then,  $U \coloneqq \pi^{-1}(V) \subset Y \setminus \pi^{-1} \left( \pi(Z_{k+1})\right)$ is an affine open neighborhood of $y$, and $\pi|_U \colon U \to V$ is a finite surjective morphism.   

  For any variety $\CW$ over $K$, denote $\widetilde{\CW} \coloneqq \CW \times \BA^1_K$. Let $p \colon  \widetilde{Y} \to Y$ be the projection to the first component, and let $w \colon \widetilde{Y} \to \BA^1_K$ be the projection to the second component. We view $w$ as a element in $K[\widetilde{Y}]$. By definition, $\log|w|+u \circ p^{\an} \in \WPSHlog(\widetilde{Y}^{\an})$. Denote $\widetilde{\pi}  =(\pi, \mathrm{id}_{\BA^1_K})\colon \widetilde{Y} \to \widetilde{S}$. Then,  we have $\widetilde{U}=\widetilde{\pi}^{-1}(\widetilde{V})$, and $\widetilde{\pi}|_{\widetilde{U}} \colon \widetilde{U} \to \widetilde{V}$ is a finite surjective map.

  By the induction assumption $\mathbf{\Pi}_k$, for any  $\fm \in |U \setminus Z_k|_{\cl}$, there exists an affine open neighborhood $U_{\fm}$ of $\fm$ in $U$ such that $u|_{U_{\fm}} \in \PSHlogreg(U_{\fm})$. Note that $\left\{\widetilde{U_{\fm}}, \ \fm \in |U \setminus Z_k|_{\cl} \right\}$ is an affine open cover of $\widetilde{U} \setminus \widetilde{Z_k}=(U \setminus Z_k) \times \BA^1_K$, and $\left(\log|w| +u \circ p^{\an}\right)|_{\widetilde{U_{\fm}}} \in \PSHlogreg((\widetilde{U_{\fm}})^{\an})$ for each $\fm \in |U \setminus Z_k|_{\cl}$. 

  Recall that by assumption, $\pi|_Z$ is an injective scheme morphism. Thus, $\widetilde{\pi}|_{\widetilde{U} \cap \widetilde{Z_k}}$ is an injective scheme morphism. \lnote{since $Z_k \subset Z$.} On the other hand, we have 
  \bens
  &&\pi^{-1} \left( \pi(U \cap Z_k ) \right) \setminus (U \cap Z_k)=U \cap \left( \pi^{-1} \left( \pi( Z_k ) \right) \setminus Z_k \right) \\
  &=&U \cap \left( \pi^{-1} \left( \pi(Z_k \setminus Z_{k+1}) \right) \setminus Z_k \right) \qquad  \qquad \qquad \left(\ \text{since } U  \subset Y \setminus \pi^{-1} \left( \pi(Z_{k+1})\right) \ \right).
  \eens
  Since $\pi^{-1} \left( \pi(Z_k \setminus Z_{k+1}) \right) \setminus Z_k $ is Zariski closed in $Y \setminus \pi^{-1} \left( \pi(Z_{k+1}) \right)$ and $U \subset Y \setminus \pi^{-1} \left( \pi(Z_{k+1}) \right)$, we obtain that 
  $\pi^{-1} \left( \pi(U \cap Z_k ) \right) \setminus (U \cap Z_k)$ is Zariski closed in $U$.
  This implies further that 
  \[
  \widetilde{\pi}^{-1} \left( \widetilde{\pi}(\widetilde{U} \cap \widetilde{Z_k} ) \right) \setminus (\widetilde{U} \cap \widetilde{Z_k})= \big( \pi^{-1} \left( \pi(U \cap Z_k ) \right) \setminus (U \cap Z_k) \big) \times \BA^1_K 
  \]
  is Zariski  closed  in $\widetilde{U}$. 

  Applying Lemma \ref{lem:aux_lemma_2} to the irreducible affine variety $\widetilde{U}$, its closed subvariety $(\widetilde{U} \cap \widetilde{Z_k}) \subset \widetilde{U}$, $\left(\log|w|+u \circ p^{\an} \right)|_{\widetilde{U}^{\an}} \in \WPSHlog(\widetilde{U}^{\an})$ and $\widetilde{\pi}|_{\widetilde{U}} \colon \widetilde{U} \to \widetilde{V}$, we obtain  that  
  \[
  D=\{x \in \widetilde{U}^{\an} \,|\, \left( \log|w|+u \circ p^{\an} \right)(x)<0\} 
  \]
  is Runge in $\widetilde{U}^{\an}$. Note that $(U \times \{0\})^{\an} \subset D$, and $-\log d_D =u$ on $U^{\an}$. Then, Proposition \ref{prop:Runge_domains_to_psh} implies that $u \in \PSHlogreg(Y^{\an})$, and Lemma \ref{lem:psh_to_Runge_domain_smooth_varieties} implies further that $\{x \in U^{\an} \,|\, u(x)<0\}$ is Runge in $Y^{\an}$.  This proves statement $\mathbf{\Pi}_{k+1}$, and the induction is completed. 
  \end{proof}

\subsection{Proof of Theorem \ref{thm:NA_FN_theorem}}

We first recall the following version of Noether normalization lemma. 

\begin{lemma} \label{lem:generic_noether_normalization}
  Let $K$ be an infinite field. Let $Y$ be an affine variety of dimension $n$ over $K$, and let $Z \subsetneq Y$ be a nonsingular closed subvariety of $Y$ with $\dim Z \le n-1$. Then, for each closed  scheme point $z \in |Z|_{\cl}$, there exists an affine open neighborhood $U$ of $z$ in $Y$, an affine open subvariety $S$ of $\BA^n_K$ and a finite surjective morphism $\pi \colon U \to S$ such that $\pi|_{Z_U} \colon Z_U \to \pi(Z_U)$ is an isomorphism with $Z_U \coloneqq Z \cap U$. 
\end{lemma}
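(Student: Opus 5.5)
We plan to reduce to a linear Noether normalization that is generic enough to be an isomorphism on $Z$ near $z$, and then to shrink to an affine neighbourhood. First, choose a closed embedding $Y \hookrightarrow \BA^N_K$. Since $K$ is infinite, a general linear projection $\BA^N_K \to \BA^n_K$ restricts to a finite surjective morphism $p \colon Y \to \BA^n_K$; this is the standard Noether normalization with generic linear forms, in which the set of good projections contains a nonempty Zariski open set of linear maps defined over $K$. The key extra requirement is that $p|_Z$ be unramified and injective near $z$. Because $\dim Z \le n-1 < n$, a general projection to $\BA^n$ restricted to $Z$ is finite and birational onto its image. It is moreover injective and immersive at $z$: since $Z$ is nonsingular at $z$, it suffices that the kernel of the projection meets neither the tangent space $T_zZ$ nor the secant variety of $Z$ through $z$. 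Both of these are conditions avoiding subvarieties of dimension at most $2\dim Z \le 2n-2$ in the appropriate parameter space, so they hold generically over an infinite field.

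Next, we pass from this pointwise condition to a neighbourhood. Let $Z' = \overline{p(Z)}$, which is closed since $p$ is finite. The locus of points of $Z'$ over which $p|_Z \colon Z \to Z'$ fails to be an isomorphism is a closed subset $B \subset Z'$. Finiteness gives this closedness, because the support of the cokernel of $\CO_{Z'} \to p_*\CO_Z$ is closed. By the previous step, $p(z) \notin B$. Since $p^{-1}(p(z)) \cap Z = \{z\}$ scheme-theoretically, the cokernel vanishes at $p(z)$.

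Then choose $h \in K[\BA^n]$ vanishing on $B$, together with the (closed) images under $p$ of the other points of $Z$, but with $h(p(z)) \ne 0$. Set $S = \BA^n_K \setminus \{h=0\}$, and set $U = p^{-1}(S) = Y_{h\circ p}$, which is affine. Then $\pi = p|_U \colon U \to S$ is finite surjective as a base change of $p$. The restriction $Z_U \to \pi(Z_U) = Z' \cap S$ is an isomorphism, because over $S$ the map $\CO_{Z'} \to p_*\CO_Z$ is an isomorphism. Surjectivity of this map is what we obtain after removing $B$; injectivity holds since $Z'$ is the scheme-theoretic image.

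The main obstacle is the genericity step: we must verify that the three generic conditions (finiteness on $Y$, injectivity of $Z$ over $p(z)$, and injectivity on $T_zZ$) can be imposed simultaneously by a $K$-rational linear map. Our approach is to show that each condition is a nonempty Zariski open condition on the affine space of linear maps $\BA^N \to \BA^n$, using a dimension count on bad kernels. Nonemptiness is clear after base change to $\overline{K}$, and rational points of this open set exist because $K$ is infinite.
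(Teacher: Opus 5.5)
Your argument is correct in outline and follows a genuinely different route from the paper's. The paper works projectively. It compactifies $Y \subset X \subset \P^d$ and enlarges $Z$ to $\CZ = \overline{Z} \cup (X \setminus Y)$. It then iterates the Stacks project's generic projection from a $K$-rational point, which is finite on $X$ and an isomorphism of $\CZ$ onto its image near the image of $z$, down to $r \colon X \to \P^n$. Putting the boundary into $\CZ$ is what forces $r(z) \notin r(X \setminus Y)$, so that a small affine $S$ around $r(z)$ satisfies $r^{-1}(S) \subset Y$.

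You stay affine instead. Finiteness of $p \colon Y \to \BA^n$ comes from affine Noether normalization: the centre $\P(\ker L)$, which lies in the hyperplane at infinity of $\P^N \supset \BA^N$, avoids the boundary of the closure of $Y$. So no boundary bookkeeping is needed, $U = p^{-1}(S)$ is automatically a principal open of $Y$, and $S$ lands in $\BA^n$ directly. You also prove the passage from the fibre over $p(z)$ to a neighbourhood by hand, via the support of the cokernel of $\CO_{Z'} \to p_*\CO_Z$ and Nakayama, whereas the paper delegates this to the cited lemmas. The paper's version is shorter by citation; yours is self-contained, at the cost of writing out the genericity argument yourself.

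Three points in your write-up need fixing.

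\emph{The dimension count.} Take a geometric point $\bar z$ over $z$. Inside the space $\P^{N-1}$ of directions, the bad set for the conditions at $z$ is the closure of the secant cone $\{[\bar z' - \bar z] : \bar z' \in Z(\overline{K}) \setminus \{\bar z\}\}$, which contains the tangent directions $\P(T_{\bar z}Z)$. It has dimension $\le \dim Z \le n-1$. Since $\P(\ker L)$ has dimension $N-n-1$ and $(N-n-1)+(n-1) < N-1$, a general kernel misses it. The bound $2\dim Z$ you quote belongs to the full secant variety, and with it the count fails as soon as $n \ge 2$. It would amount to embedding all of $Z$ in $\BA^n$, which needs $n \ge 2\dim Z+1$ and is false in general here.

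\emph{Residue fields at non-rational $z$.} Your Nakayama step needs the scheme-theoretic fibre of $Z$ over $p(z)$ to be $\Spec \kappa(p(z))$. This includes $\kappa(z) = \kappa(p(z))$ when $z$ is not $K$-rational. It holds because the secant cone at $\bar z$ contains the Galois conjugates of $\bar z$, so $p$ separates them. Separability is automatic in the characteristic-$0$ setting where the lemma is used.

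\emph{The choice of $h$.} You only need $h$ to vanish on $B$ with $h(p(z)) \neq 0$. Drop the clause about images of other points of $Z$: read literally, it forces $h(p(z))=0$ as soon as $z$ is not an isolated point of $Z$. It is also unnecessary, since $Z_U = (p|_Z)^{-1}(S)$ and $Z_U \to Z' \cap S$ is just the base change of $Z \to Z'$.
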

\begin{proof}
  We may assume that $Y$ is a dense open affine subvariety of a projective variety $X \subset \P_K^d$ with $d \ge n+1$. By assumption, $\CZ \coloneqq \overline{Z} \cup (X \setminus Y)$ the closed subvariety of $X$ with $\dim \CZ \le n-1$, and $z$ is a nonsingular point of $\CZ$. 
  
  For any $K$-rational closed point $p \in \P^d(K)$, one has a projection $r_{p} \colon \P_K^d \setminus \{p\} \to \P_K^{d-1}$ over $K$. We claim that if $d \ge n+1$, then there exists $p \in \P^d(K)$  such that $(r_p)|_X \colon 
  X \to r_p(X)$ is a finite surjective morphism, and $(r_p)|_{\CZ} \colon \CZ \to r_p(\CZ)$ is an isomorphism over a Zariski open neighborhood of $r_p(z)$. When  $K$ is algebraically closed,  \cite[Lemma 43.23.1, Lemma 43.23.2]{stacks-project} show that there exists a Zariski open subset $W \subset \P^d_K$ such that any $p \in \P^d(K)$ satisfies the desired properties. When $K$ is not necessarily algebraically closed, the claim follows from the fact that $\P^d(K)$ is Zariski dense in $\P^d_{\overline{K}}$ if $K$ is infinite.

  Applying the above claim iteratively, we obtain a finite surjective morphism $r \colon X \to \P_K^n$ such that $r|_{\CZ} \colon \CZ \to r(\CZ)$ is an isomorphism over a Zariski open neighborhood $W$ of $r(z)$ in $\P_K^n$. In particular, this implies that $z \notin r(X \setminus Y)$. Since $r(X \setminus Y)$ is Zariski closed, we may take an affine open neighborhood $S$ of $r(z)$ in $W \setminus r(X \setminus Y)$. Then $U \coloneqq r^{-1}(W)$ is an affine open subvariety of $Y$. Set $\pi=r|_U \colon U \to S$. Thus, by construction, $\pi$ is finite surjective and $\pi|_{Z_{U}} $ is an isomorphism. 
\end{proof}

\label{proof:thm:NA_FN_theorem}
Now we are ready to prove Theorem \ref{thm:NA_FN_theorem}. Let $\dim Y=n$. Consider the filtration of $Y$ successive singular loci
\[
Y=Y_0 \supsetneq Y_1 \supsetneq \cdots \supsetneq Y_n \supsetneq Y_{n+1}=\emptyset, \qquad Y_{i+1}=Y_i^{\sing},
\]
and consider the following statement.

\medskip
\textbf{Statement} $\mathbf{\Sigma}_k$: For any closed scheme point $y \in |Y \setminus Y_k|_{\cl}$, there exists an affine open neighborhood  $U$ of $y$ in  $Y \setminus Y_k$ such that  $\{x \in U^{\an} \,|\, u(x)<0 \}$ is Runge in $U^{\an}$, and $u|_{U} \in \PSHlogreg(U^{\an})$. 
\medskip

Note that Theorem \ref{thm:NA_FN_theorem} follows from $\mathbf{\Sigma}_{k+1}$ and Corollary \ref{cor:pshlogreg_on_an_affine_open_cover_induces_pshlogreg}. Now we prove $\mathbf{\Sigma}_k$ inductively. When $k=1$, for any affine open subvariety $U \subset (Y \setminus Y_1)=Y^{\reg}$, Proposition \ref{prop:psh_equals_weakly_psh_on_smooth_varieties} implies that $u|_U \in \PSHlogreg(U^{\an})$, and Lemma \ref{lem:psh_to_Runge_domain_smooth_varieties} implies that $\{u<0\} \cap U^{\an}$ is Runge in $U^{\an}$. Now suppose that $k \ge 1$ and $\mathbf{\Sigma}_k$ has been proven, we would like to prove $\mathbf{\Sigma}_{k+1}$. Take $y \in |Y \setminus Y_{k+1}|_{\cl}$. If $y \in |Y \setminus Y_{k}|_{\cl}$, then such an affine open neighborhood of $y$ exists by the inductive assumption $\mathbf{\Sigma}_k$.  Thus, henceforth we assume  that $y \in |Y_{k} \setminus Y_{k+1}|_{\cl}$.

Take an affine open neighborhood $V$ of $y$ in $Y \setminus Y_{k+1}$. Then,  $Z_V \coloneqq \left( V \cap (Y_{k} \setminus Y_{k+1}) \right)$ is a nonsingular closed variety of $V$ of positive codimension. Also, note that $K$ is an infinite field since we assume that $K$ is of equicharacteristic $0$. Then, we may apply Lemma \ref{lem:generic_noether_normalization} to $y \in Z_V \subsetneq V$, and we obtain   an affine open neighborhood $U$ of $y$ in $V$, an affine open subvariety $S$  of $\BA^n_K$ and a finite surjective morphism $\pi \colon U \to S$ such that $\pi|_{Z_U} \colon Z_U \to \pi(Z_U)$ is an isomorphism, where $Z_U \coloneqq Z_V \cap U$.  

By the inductive assumption $\mathbf{\Sigma}_k$, for each  $\fm \in |U \setminus Z_U |_{\cl} = |U \setminus Y_k|_{\cl}$, there exists an affine open neighborhood $U_{\fm}$ of $\fm$ in $U$ such that $u|_{U_{\fm}} \in \PSHlogreg(U_{\fm}^{\an})$. Note that $\{U_{\fm}, \ \fm \in |U \setminus Z_U|_{\cl} \}$ is an affine open cover of $U \setminus Z_U$. Then, applying Proposition \ref{prop:weakly_psh_induces_Runge_under_injective_conditions} to the irreducible affine variety $U$, its closed subvariety $Z_U \subset U$ and $u|_U \in \WPSHlog(U^{\an})$, we conclude that $\{x \in U^{\an} \,|\, u(x)<0 \}$ is Runge in $U^{\an}$, and $u|_{U} \in \PSHlogreg(U^{\an})$. Therefore, $\mathbf{\Sigma}_{k+1}$ is proven, and the induction is completed.

\section{Envelope conjecture} \label{sec:envelope_conjecture}
Making use of the results in previous sections, we prove the envelope conjecture in the following setting.

\begin{thm}  \label{thm:envelope_conjecture_equichar_zero}
  Let $K$ be a discretely valued or trivially valued  non-archimedean field of equicharacteristic $0$ and $X$ be a geometrically unibranch projective variety over $K$, then $X$ satisfies the envelope property over $K$. 
\end{thm}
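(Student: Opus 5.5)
We follow the strategy sketched in the introduction, in three steps: reduce to $\omega$ ample, reduce to a field with a countable dense subfield, then descend from a resolution and apply Theorem \ref{thm:global_NA_FN_theorem}.

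\textbf{Step 1: reductions on $\omega$ and $X$.} Fix $\omega \in \nef(X)$. By the definition of $\GPSHo$ through the forms $\omega+\epsilon\theta$, and by Theorem \ref{thm:decreasing_limit_of_omega-psh_functions_is_omega-psh}, it suffices to treat $\omega$ with $c_1(L(\omega)) \in \Amp(X)$. Concretely, if the envelope property holds for each $\omega+\epsilon\theta$, then $(\sup_i\varphi_i)^\star$ lies in every $\GPSH_{\omega+\epsilon\theta}(X)$, hence in $\GPSHo(X)$. Since $X$ is geometrically unibranch, its connected components are irreducible, so we may assume $X$ irreducible.

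\textbf{Step 2: reduction of the field.} Since $(X,\omega)$ is of finite type, it is defined over a complete subfield $K_1 \subset K$ that has a countable dense subfield and is again discretely or trivially valued of equicharacteristic $0$. Let $r\colon X^{\an}\to X_{K_1}^{\an}$ be the restriction map. We use the criterion of Lemma \ref{lem:equivalent_form_of_envelope_conjecture}(2), taking $f\in\PL$ defined over $K_1$ after enlarging $K_1$. We then need $\envomega(f\circ r)=\env_{\omega_{K_1}}(f)\circ r$.
\begin{itemize}
\item The inequality $\ge$ holds because pullbacks of psh functions are psh.
\item For $\le$, the most direct route I would try first is to average over a Galois or $\operatorname{Aut}(K/K_1)$ action, using compatibility of the envelope with base change in the spirit of \cite{BE21} and \cite{BJ22}.
\end{itemize}
This continuity and base-change compatibility of envelopes is standard but must be checked. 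Once it holds, continuity of the envelope over $K_1$ gives continuity over $K$. We may therefore assume $K$ itself admits a countable dense subfield.

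\textbf{Step 3: descent from a resolution.} Let $\pi\colon \widetilde X\to X$ be a resolution of singularities (Hironaka). By Theorem \ref{thm:smooth_envelope_property}, $\GPSH_{\pi^*\omega}(\widetilde X)$ has the envelope property. For a bounded-above family $\{\varphi_i\}\subset\GPSHo(X)$, set
\[
\Phi\coloneqq(\sup_i \varphi_i\circ\pi^{\an})^\star \in \GPSH_{\pi^*\omega}(\widetilde X^{\an}).
\]
Because $X$ is geometrically unibranch (so normalization is a universal homeomorphism), Zariski's main theorem makes every geometric fibre of $\pi$ connected. Proposition \ref{prop:descent_psh_functions_resolution} then gives a usc $\psi$ on $X^{\an}$ with $\Phi=\psi\circ\pi^{\an}$ and
\[
\psi(z)=\limsup_{y\to z,\ y\in X^{\div}}\psi(y).
\]
Next we identify $\psi$ with $\Psi\coloneqq(\sup_i\varphi_i)^\star$.
\begin{itemize}
\item On $X^{\div}$, $\pi^{\an}$ is a bijection onto $\widetilde X^{\div}$. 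Theorem \ref{thm:global_divisorial_points_is_nonnegligible} applied on $\widetilde X$ gives $\psi=\sup_i\varphi_i\le\Psi$ there.
\item Conversely, $\psi$ is usc and $\psi\ge\varphi_i$ everywhere, so $\psi\ge\Psi$.
\item Using the displayed limsup formula, $\psi\le\Psi$ off $X^{\div}$ as well, so $\psi=\Psi$.
\end{itemize}
Thus $\Psi\circ\pi^{\an}\in\GPSH_{\pi^*\omega}(\widetilde X^{\an})$. Locally this says $g+\Psi$ is weakly psh: take local potentials $g$ of $\omega$ on affine charts $U$ and use Corollary \ref{cor:GPSH_is_LPSH} on $\widetilde X$ to get $(g+\Psi)\circ\pi^{\an}\in\PSH((\pi^{\an})^{-1}(U))$. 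Hence $\Psi\in\WPSHo(X^{\an})$, and Theorem \ref{thm:global_NA_FN_theorem} gives $\Psi\in\GPSHo(X^{\an})$.

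The main obstacle is Step 2. I would first attempt to show, via Lemma \ref{lem:equivalent_form_of_envelope_conjecture}, that $r^*$ commutes with envelopes of PL functions defined over $K_1$, reducing to approximation of $\omega$-psh functions by Fubini--Study functions whose data can be taken over $K_1$.
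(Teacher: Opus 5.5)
Your Steps 1 and 3 follow the paper's argument. The paper cites \cite[Lemma 5.9]{BJ22} for the reduction to ample $\omega$. Your identification $\psi=(\sup_i\varphi_i)^\star$ via the divisorial $\limsup$ formula is exactly how the descended function is recognized before invoking Theorem~\ref{thm:global_NA_FN_theorem}.

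The genuine gap is Step 2. The inequality $\mathrm{P}_{\omega}(f\circ r)\le \mathrm{P}_{\omega_{K_1}}(f)\circ r$ is the entire content of passing from fields with a countable dense subfield to general $K$, and neither of your suggested routes works.
\begin{itemize}
\item \emph{Averaging over $\operatorname{Aut}(K/K_1)$.} There is no invariant probability measure on this group to average against. Taking the usc-regularized supremum of a competitor over an infinite orbit is $\omega$-psh only if you already know the envelope property over $K$, so this is circular.
\item \emph{Fubini--Study data over $K_1$.} A competitor $\psi\le f\circ r$ is in general defined only over a strictly larger subfield. In the trivially valued case, $K$-sections cannot be approximated by $K_1$-sections at all.
\end{itemize}
Note also that Step 2 cannot logically precede Step 3, since it needs the envelope property over countable-type fields.

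The missing idea is to combine the orthogonality property with the domination principle (Theorem~\ref{thm:Domination_Principle}) over an \emph{intermediate} field. Set $\phi_{K_1}=\mathrm{P}_{\omega_{K_1}}(f_{K_1})$ and $\phi_K\coloneqq\phi_{K_1}\circ r_{K/K_1}$. By \cite[Lemma 5.19]{BJ22}, it suffices to show $\psi\le\phi_K$ for every $\psi\in\GPSHo\cap\PL(X^{\an})$ with $\psi\le f$. The argument then runs as follows.
\begin{enumerate}
\item Such a $\psi$ is defined over a complete subfield $K_2$ with $K_1\subset K_2\subset K$ that again has a countable dense subfield. Your Step 3, applied over $K_2$, gives the envelope property on $X_{K_2}$, hence the domination principle there.
\item The pullback $\phi_{K_2}=\phi_{K_1}\circ r_{K_2/K_1}$ is continuous and $\omega_{K_2}$-psh. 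Since $(r_{K_2/K_1})_*\mathrm{MA}(\phi_{K_2})=\mathrm{MA}(\phi_{K_1})$, orthogonality over $K_1$ yields $\Supp\mathrm{MA}(\phi_{K_2})\subset r_{K_2/K_1}^{-1}\{f_{K_1}=\phi_{K_1}\}=\{f_{K_2}=\phi_{K_2}\}$.
\item Thus $\psi_{K_2}\le f_{K_2}=\phi_{K_2}$ on the support of $\mathrm{MA}(\phi_{K_2})$. Domination gives $\psi_{K_2}\le\phi_{K_2}$, i.e.\ $\psi\le\phi_K$.
\end{enumerate}
Hence $\mathrm{P}_{\omega}(f)=\phi_K$ is continuous, and Lemma~\ref{lem:equivalent_form_of_envelope_conjecture} concludes.
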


\begin{remark}
  Since any normal scheme is geometrically unibranch (\cite[Lemma 28.16.2]{stacks-project}), every projective normal variety satisfies the envelope property over $K$.
\end{remark}

\begin{proof}
  By \cite[Lemma 5.9]{BJ22}, it is enough to prove that $\GPSHo(X)$ satisfies the envelope property for any $\omega \in \nef(X)$ with $c_1(L(\omega)) \in \Amp(X)$.
  
  We first prove the theorem in the case when $K$ admits a countable dense subfield. Let $\{\varphi_i\}_{i \in I} \subset \GPSHo(X)$ be a class of functions.  Let $\pi \colon \widetilde{X} \to X$ be a resolution of singularities, and denote $\widetilde{\varphi}_i \coloneqq \varphi_i \circ \pi^{\an} \in \GPSH_{\pi^{*}\omega}(\widetilde{X})$. By Theorem \ref{thm:smooth_envelope_property}, $\widetilde{\varphi} \coloneqq (\sup_{i \in I} \widetilde{\varphi}_i )^{\star} \in \GPSH_{\pi^{*}\omega}(\widetilde{X})$.  Since $X$ is geometrically unibranch, every geometric fibre of $\pi$ is connected (\cite[Corollary 4.3.7]{EGAIII1}). Then, by Proposition \ref{prop:descent_psh_functions_resolution}, there exists a usc function $\varphi$ on $X^{\an}$ such that $\widetilde{\varphi}=\varphi \circ \pi^{\an}$ and $\varphi$ is the minimum usc extension of $\varphi|_{X^{\div}}$. On the other hand, for any $x \in X^{\div}$,  Theorem \ref{thm:global_divisorial_points_is_nonnegligible} implies that $\varphi(x)=\widetilde{\varphi}(x)=\sup_{i \in I}\widetilde{\varphi}_i(x)=\sup_{i \in I}\varphi_i(x)$. Therefore, $\varphi=(\sup_{i \in I} \varphi_i)^{\star}$. Now, by definition, $\varphi \in \WPSHo(X)$, and Theorem \ref{thm:global_NA_FN_theorem} implies that $\varphi \in \WPSHo(X)=\GPSHo(X)$. Therefore, the theorem is proved when $K$ admits a countable dense subfield. 
  
  Now we consider $K$ without the cardinality restriction.  By Lemma \ref{lem:equivalent_form_of_envelope_conjecture}, it is enough to show that $\envomega(f) \in C^0(X^{\an})$ for each $f \in \PL(X)$.  Since $Y$ is of finite type over $K$, there exists a countable subfield $F_1 \subset K$ such that $X$, $\omega$ and $f$ is defined over $F_1$. More precisely, let $K_1$ be the completion of $F_1$. Denote by $(X_{K_1})^{\an}$ the Berkovich analytification of $X$ over $K_1$, and denote by $r_{K/K_1} \colon X^{\an}=(X_K)^{\an} \to (X_{K_1})^{\an}$ the morphism induced by the base change $K/K_1$. Then, there exists $\omega_{K_1} \in \nef(X_{K_1})$ and  $f_{K_1} \in \PL((X_{K_1})^{\an})$ such that $\omega =\omega_{K_1} \circ r_{K/K_1}$ and $f=f_{K_1} \circ r_{K/K_1}$. 

  Consider the $\omega_{K_1}$-psh envelope $\phi_{K_1} \coloneqq \env_{\omega_{K_1}}(f_{K_1})$ on $(X_{K_1})^{\an}$. Since we have already shown that $\GPSH_{\omega_{K_1}}((X_{K_1})^{\an})$ satisfies the envelope property, Lemma \ref{lem:equivalent_form_of_envelope_conjecture} implies that $\phi_{K_1} \in C^0((X_{K_1})^{\an})$. Now set $\phi_K \coloneqq \env_{\omega_{K_1}}(f_{K_1}) \circ r_{K_1} \in C^{0}(X^{\an})$, and we claim that $\envomega(f)=\phi_K$. By \cite[Lemma 5.19]{BJ22}, it is enough to show that $\psi \le \phi$ for each $\psi \in \PL(X)$. There exists a countable field $F_2$ such that $F_1 \subset F_2 \subset K$ and $\psi$ is defined over $F_2$. Let $K_2$ be the completion of $F_2$, and $\psi_{K_2} \in \PSH_{\omega_{K_2}} \cap \PL((X_{K_2})^{\an})$ such that $\psi_{K_2}=\psi \circ r_{K_2}$. Here, $\omega_{K_2}, f_{K_2}, \phi_{K_2}$ denote the pullback of $\omega_{K_1}, f_{K_1}, \phi_{K_1}$ via   the  morphism $r_{K_2/ K_1} \colon (X_{K_2})^{\an} \to (X_{K_1})^{\an}$  induced by the base change $K_2/K_1$. The orthogonality property over $K_1$ implies that 
  \[
  \mathrm{Supp}(\MA_{\omega_{K_1}}(\phi_{K_1})) \subset \{f_{K_1}=\phi_{K_1}\}  \qquad \text{ on } (X_{K_1})^{\an}. 
  \]
  Note that 
  \[
  (r_{K_2, K_1})_{*}(\MA_{\omega_{K_2}}(\phi_{K_2})) = \MA_{\omega_{K_1}}(\phi_{K_1})  \qquad \text{ on } (X_{K_1})^{\an}.
  \]
  Therefore, on $\qquad \text{ on } (X_{K_2})^{\an}$, we have 
  \[
  \mathrm{Supp}(\MA_{\omega_{K_2}}(\phi_{K_2})) \subset r_{K_2/ K_1}^{-1}(\mathrm{Supp}(\MA_{\omega_{K_1}}(\phi_{K_1})))=r_{K_2/ K_1}^{-1}(\{f_{K_1}=\phi_{K_1}\})=\{f_{K_2}=\phi_{K_2}\}. 
  \]
  This implies that $\psi_{K_2} \le f_{K_2}= \phi_{K_2}$ on $\mathrm{Supp}(\MA_{\omega_{K_2}}(\phi_{K_2}))$.  Since $\GPSH_{\omega_{K_2}}((X_{K_2})^{\an})$ satisfies the envelope property, the domination principle (Theorem \ref{thm:Domination_Principle}  implies that $\psi_{K_2} \le \phi_{K_2}$ on $(X_{K_2})^{\an}$. This proves that $\psi \le \phi_K$ on $X^{\an}$, and $\envomega(f)=\phi_K \in C^0(X^{\an})$. Hence, the theorem is proved.
\end{proof}

\appendix

\section{PSH functions d'apr\`es Chambert-Loir--Ducros} \label{appendix:psh_functions_CLD}
In this appendix, we assume that $K$ is a discretely valued or trivially valued NA field. Let $Y$ be an affine variety over $K$, and let $U \subset Y^{\an}$ be an open subset. Following \cite{CLD}, we say that a function $u$ on $U$ is smooth if for any $x \in U$, there exists an open neighborhood $U_x$ of $x$ in $U$, a tropicalization map $\Trop \colon Y^{\an} \to \BT^k$, an open subset $W \subset \R^k$ and $f \in C^{\infty}(W)$ such that $\Trop(U_x) \subset W \subset \R^k$, and $u=f \circ \Trop$ on $U_x$. We say that $u$ is a smooth convex function if moreover, there exists a polyhedral structure $\Pi$ on the tropical variety $\Trop(Y^{\an})$ such that $f|_{\sigma \cap U_x}$ is convex for each polyhedron $\sigma \in \Pi$. We denote by $C^{\infty}(U)$ (\emph{resp.} $\LConv \cap C^{\infty}(U)$) the space of smooth (\emph{resp.} smooth convex) functions  on $U$.

Following \cite[Definition 8.1.1]{CLD}, we give the following definition.
\begin{definition}
  Let $u \colon U \to \R \cup \{-\infty\}$ be a function. We say that $u$ is a smooth approximable psh function if
  \begin{enumerate}
    \item $u$ is generically finite and usc on $U$. 
    \item For each $x \in U$, there exists an open neighborhood $U_x$ of $x$ in $U$, and a decreasing function sequence  $\{u_i\}_{i \ge 1} \subset \LConv \cap C^{\infty}(U_x)$ which converges pointwise to $u$ on $U_x$. 
  \end{enumerate}
  We use $\PSHsm(U)$ to denote the space of smooth approximable psh functions on $U$. 
\end{definition}

\begin{remark}
  Note that this definition is slightly different from \cite[Definition 8.1.1]{CLD}. 
  \begin{enumerate}
    \item Firstly, the original definition of smooth functions in \cite{CLD} allows the tropicalization map to be given by analytic functions on $U$. However, following similar arguments to the ones in Remark \ref{rmk:psh_is_analytic_invariant}, this doesn't affect the definition of $\PSHsm(U)$ when $U$ is an open subset of a quasi-projective variety.
    \item Secondly, similar to Remark \ref{rmk:decreasing_limit_of_psh_functions_is_not_known_to_be_psh_in_general}, we don't know \emph{a priori} whether the pointwise limit $u$ of a decreasing function sequence $\{u_i\}_{i \ge 1} \subset \PSHsm(U)$ is a smooth approximable psh function on $U$ in our definition, while  such a function is \emph{defined} to be psh in \cite[Definition 8.1.1]{CLD},. However, if $Y$ satisfies the envelope property over $K$ (for example when $Y$ is normal and $K$ is of equicharacteristic $0$ following Theorem \ref{thm:envelope_conjecture_equichar_zero}), Theorem \ref{thm:psh_equals_to_smooth_app_psh} and Corollary \ref{cor:properties_PSH_after_envelope_properties} (1) together imply that $u \in \PSHsm(U)$, and thereby our definition is equivalent to \cite[Definition 8.1.1]{CLD} in this case. \qedhere
  \end{enumerate}
\end{remark}

\begin{thm} \label{thm:psh_equals_to_smooth_app_psh}
  For any open subset $U \subset Y^{\an}$, we have $\PSH(U)=\PSHsm(U)$.
\end{thm}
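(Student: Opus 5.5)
We show the two inclusions separately. Both classes are defined by the same two conditions: generic finiteness and upper semicontinuity, together with local decreasing approximation. So it suffices to show that each approximant of one type is, locally, a decreasing limit (or a uniform limit) of approximants of the other type. A diagonal argument with small additive constants then recovers a decreasing sequence.

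\emph{Step 1: $\PSH(U)\subset\PSHsm(U)$.} Let $u\in\PSH(U)$. Near $x\in U$ we have $u=\lim_i u_i$, decreasing, with $u_i=h_i\circ\Trop_i$ and $h_i$ a convex PL function on the tropical variety. We would invoke the regularization of convex functions on tropical varieties from \cite{APW2}, i.e.\ the smoothing used in Lemma \ref{lem:smooth_strictly_convex_functions} via convolution, \cite[Proposition 5.7]{APW2}. On a relatively compact neighborhood $V\Subset U_x$ this gives smooth functions $g_i$, convex on each face, with $\|g_i\circ\Trop_i-u_i\|_{C^0(V)}<2^{-i-2}$.

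Set $v_i\coloneqq g_i\circ\Trop_i+2^{-i}$. Then $u_i+2^{-i-1}<v_i<u_i+2^{-i}+2^{-i-2}$. Since $u_{i+1}\le u_i$, this gives $v_{i+1}<u_i+2^{-i-1}<v_i$. Hence $(v_i)$ is decreasing in $\LConv\cap C^\infty(V)$ and converges pointwise to $u$. One point needs care: the smoothing near boundary strata $\BT^k\setminus\R^k$ must stay convex on each face. This is exactly what \cite{APW2} provides, so we accept it as the regularization theorem quoted in the introduction.

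\emph{Step 2: $\PSHsm(U)\subset\PSH(U)$.} Let $u=\lim u_i$ locally, with $u_i=f_i\circ\Trop_i$ smooth and convex on each polyhedron of $\Trop_i(Y^{\an})$. On a compact neighborhood, $\Trop_i(\overline V)$ is a compact subset of a polyhedral complex in $\R^k$. We approximate $f_i$ uniformly there, within $2^{-i-2}$, by a convex PL function. Two natural ways to do this are:
\begin{enumerate}
\item take the maximum of finitely many rational affine supporting functions of $f_i$ (tangent planes) at a fine rational grid of points;
\item apply Lemma \ref{lem:property_tropical_conv_functions} (3) after extending $f_i$ to a convex function on a neighborhood in $\BT^k$.
\end{enumerate}
The catch is that $f_i$ is convex only facewise, not globally on $\R^k$. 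So we would use Lemma \ref{lem:property_tropical_conv_functions} (1)/(3), which are stated for convex functions on tropical varieties in the sense of \cite{APW2}, and check that facewise convexity plus smoothness of an ambient function gives convexity in that sense. Here convexity is needed only locally, on each star of a polyhedron, and a smooth ambient function restricted to the support has supporting affine functions on each star by a Taylor argument plus the facewise condition.

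This step is the main obstacle: producing a genuine convex PL approximant on the tropical variety, with rational slopes, from a facewise convex smooth function. Our first attempt would be to reduce it to \cite[Theorem 5.14]{APW2} via Lemma \ref{lem:property_tropical_conv_functions} (3), possibly after refining the tropicalization so that the polyhedral structure is fixed. Once the PL approximants $w_i$ are obtained, the same shift trick as in Step 1, namely $w_i+2^{-i}$, yields a decreasing sequence in $\TConv\cap\PL(V)$ converging to $u$. This gives $u\in\PSH(U)$ and completes the proof. Remark \ref{rmk:psh_is_analytic_invariant} ensures that working with polynomial rather than analytic tropicalizations causes no loss.
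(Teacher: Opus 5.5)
Your Step 1 is essentially the paper's argument: smooth each tropically convex PL approximant, then restore monotonicity by adding $2^{-i}$. Two small corrections:
\begin{itemize}
\item Your intermediate bound should read $v_{i+1}<u_i+5\cdot 2^{-i-3}$. This still lies below $v_i>u_i+6\cdot 2^{-i-3}$, so the conclusion $v_{i+1}<v_i$ stands.
\item At points with infinite coordinates, proceed as the paper does. A PL function is locally independent of those coordinates (Remark \ref{rmk:PL_functions_near_boundaries}), so you can drop them and mollify in the finite ones. This yields functions factoring through a tropicalization into $\R^k$, as the definition of $C^{\infty}(U)$ requires. \cite[Proposition 5.7]{APW2}, which extends strictly convex functions to $\BT^k$, does not produce functions of that form.
\end{itemize}

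Step 2 has a genuine gap, exactly at the point you flag. By your Taylor argument, smoothness plus convexity on each face gives only that $f_i$ is \emph{locally convex on the tropical variety} $P=\Trop(Y^{\an})$ in the sense of \cite{APW2}. That is, the tangent affine function at $\ft$ supports $f_i$ only on the faces through $\ft$; the paper proves precisely this and no more.

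Lemma \ref{lem:property_tropical_conv_functions} (1) and (3), however, concern convex functions on open subsets of $\BT^k$, not functions on tropical varieties. Local convexity on $P$ does not imply being the restriction of an ambient convex function, even near a single point of $P$. For example, let $P\subset\R^2$ contain the three lines through $0$ with directions $(1,0)$, $(1,1)$, $(1,-1)$. Then $t_1^2-t_2^2$ is convex on each ray and $\ge 0$ on $P$. Yet any convex $H$ agreeing with it on $P$ near $0$ would give $s^2=H(s,0)\le\frac12(H(s,s)+H(s,-s))=0$. So your route (2) is unavailable.

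Route (1) fails for the same reason. Take the tropical line with rays $(1,0),(0,1),(-1,-1)$ and $f=t_1t_2$. The tangent plane at $(a,0)$ is $at_2$, which exceeds $f=0$ along $\R_{\ge 0}(0,1)$ by an amount that does not shrink with the mesh. If you keep only nearby tangent planes, you get at best a PL function that is again merely locally convex on $P$ (this is what \cite[Proposition 5.22]{APW2} provides), and the problem is unchanged.

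Refining the polyhedral structure does not help. One must \emph{enlarge the tropicalization} by new functions, and this needs an algebro-geometric input your plan lacks. The paper supplies it globally:
\begin{itemize}
\item Let $X$ be the closure of $Y\subset\BA^k$ in $\P^k$, let $\omega$ come from the trivial model of $(\P^k,\CO(1))$, and let $g$ be a potential of $\omega$.
\item By \cite[Proposition 5.20]{APW2} and \cite[Theorem 6.13]{APW1}, on a neighborhood of $x$ independent of $i$ one has $\varphi_i+g=Cu_i$ with $\varphi_i\in\GPSHo\cap\PL(X)$.
\item Proposition \ref{prop:regularization_omega_psh_by_omega_FS_functions} (1) then approximates $\varphi_i$ uniformly by $\omega$-Fubini--Study functions.
\item By Proposition \ref{prop:potential_of_positive_form_is_Fubini-Study} (1), each of these plus a potential is a maximum of finitely many $\frac{1}{m}(\log|f_j|+c_j)$ and a constant, with new regular functions $f_j$. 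That is a tropically convex PL function for a larger tropicalization.
\end{itemize}
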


The rest of this appendix is devoted to prove this theorem.

\smallskip

Let $u \in \TConv \cap \PL(U)$. Following the definition and Remark \ref{rmk:PL_functions_near_boundaries}, there exists an open neighborhood $U_x$ of $x$ in $U$, a tropicalization map $\Trop \colon Y^{\an} \to \BT^k$ and $f \in \Conv \cap \PL(\Trop(U_x))$ such that $\Trop(U_x) \subset \R^k$ and $u=f \circ \Trop$ on $U_x$. Note that any convex function on $\R^k$ can be approximated locally uniformly by a sequence of smooth convex functions $\{f_i\}_{i \ge 1}$. In particular, the restriction of $f_i$ on any polyhedron of $\Trop(U)$ is convex, and thereby $f_i \circ \Trop$ is a smooth convex function on their domain of definition.  This shows that any tropically convex PL function can be approximated locally uniformly by a sequence of smooth convex functions on $U$. Thus, we obtain $\PSH(U) \subset \PSHsm(U)$.

\smallskip

It remains to show that $\PSHsm(U) \subset \PSH(U)$. We first recall the following definition introduced in \cite{APW2}.  For a tropical variety $P \subset \BT^k$ and $f \in C^0(W \cap P)$ for an open subset $W \subset \BT^k$, we say that $f$ is a \emph{locally convex function} on $W \cap P$ if for any $\ft \in W \cap P$, there exists an open neighborhood $V_{\ft}$ of $\ft$ in $\BT^k$ and $l \in \Aff_{\R}(V_{\ft})$ such that $f(\ft)=l(\ft)$, and $f(\ft') \ge l(\ft')$ for any $\ft' \in V_{\ft} \cap P$. Denote by $\LConv(W \cap P)$ the space of locally convex functions on $W \cap P$. We remark that the restriction of any convex function on $W$ to $W \cap P$ is a locally convex function on $W \cap P$, but the converse does not hold. We refer the readers to \cite{APW2} for more properties about locally convex functions on tropical varieties.

Now fix $u \in C^{\infty}(U)$. We may assume that there exists a tropicalization map $\Trop \colon Y^{\an} \to \BT^k$, an open subset $W \subset \R^k$ and $f \in C^{\infty}(W)$ such that  $U=\Trop^{-1}(W)$, $u=f \circ \Trop$ and $f|_{W \cap \sigma}$ is convex for each polyhedron $\sigma \in \Pi$, with $\Pi$ being a polyhedral structure on  $\Trop(Y^{\an})$. Fix $x \in U$, and denote  $\ft \coloneqq \Trop(x) \in W \cap \Trop(Y^{\an})$. Set the affine function $l \coloneqq df_{\ft}+ f(\ft)$ on $\BT^k$, where $df_{\ft}$ is the differential of the smooth function $f$ at $\ft$. Let $\Pi^{(\ft)}$ be the union of the support of the polyhedra in $\Pi$ containing $x$. After subdividing $\Pi$, we may assume that $\Pi^{(\ft)} \subset W$, and we may take an open neighborhood $V_{\ft}$ of $\ft$ in $\BT^k$ satisfying $V_{\ft} \cap \Trop(Y^{\an}) \subset \Pi^{(\ft)}$. Note that for any $\sigma \in \Pi$ satisfying $x \in \sigma$, $f|_{\sigma}$ is a smooth convex function with differential $\left(df_{\ft} \right)|_{\sigma}$ at $\ft$. This implies that $f \ge l$ on $\sigma$, and thereby $f \ge l$ on $V_{\ft}$.  This proves that $f \in \LConv(W \cap \Trop(Y^{\an}))$. 

Following \cite[Proposition 5.22]{APW2}, for any $\ft \in W \cap \Trop(Y^{\an})$, there exists an open neighborhood $\CW_{\ft}$ of $\ft$ in $\BT^k$ such that  $f$ is the uniform limit of a function  sequence $\{f_i\}_{i \ge 1} \subset \LConv \cap \PL(\CW_{\ft})$. On the other hand, we may assume that the tropicalization map $\Trop \colon Y^{\an} \to \BT^k$ is associated to a closed embedding $Y \hookrightarrow \BA^k_{K}$. Let $X$ be the Zariski closure of $Y$ in $\P^k_{K}$, and $\omega \in \nef(X)$ be the restriction of the closed semipositive $(1,1)$-form on $\P^k$ corresponding to the trivial model of $(\P^k, \CO(1))$. Then, there exists a potential of $\omega$ on $Y^{\an}$ factoring through $\Trop$. 

Combining \cite[Proposition 5.20]{APW2} with \cite[Theorem 6.13]{APW1}, there exists an open neighborhood $\CV_{\ft}$ of $\ft$ with $\CV_{\ft} \Subset \CW_{\ft}$, a constant $C>0$, $g \in \PL(\CW_{\ft})$ and $\varphi_i \in \GPSHo \cap \PL(X)$ for each $i \ge 1$ such that $g \circ \Trop$ is a potential of $\omega$ on $\Trop^{-1}(\CW_{\ft})$, and $\varphi_i+g=C u_i$ on $\Trop^{-1}(\CV_{\ft})$ for each $i \ge 1$, where  $u_i \coloneqq f_i \circ \Trop \in \PSH \cap \PL(\Trop^{-1}(\CV_{\ft}))$.  Then, by Proposition \ref{prop:regularization_omega_psh_by_omega_FS_functions} (1) and Dini's theorem, for each $i \ge 1$, there exists a compact open neighborhood $U_x \Subset \Trop^{-1}(\CV_{\ft})$ of $x$ and a function sequence $\{v_{i,j}\}_{j \ge 1} \in \TConv \cap \PL(U_x)$ converging uniformly to $u_i$ on $U_x$. As $\{u_i\}_{i \ge 1}$ converges uniformly to $u$ on $U_x$, this proves that $u$ can be approximated locally uniformly by a sequence of tropically convex PL functions. Thus, we obtain $\PSHsm(U) \subset \PSH(U)$, and the theorem is proved.

\bibliographystyle{alpha}
\bibliography{bibliography}

\end{document}